\newtheorem{theorem}{Theorem}
\newtheorem{lemma}[theorem]{Lemma}
\newtheorem{corollary}[theorem]{Corollary}
\newtheorem{proposition}[theorem]{Proposition}
\newtheorem{example}[theorem]{Example}
\newcommand{\tto}{\twoheadrightarrow}
\font\sc=rsfs10
\newcommand{\cC}{\sc\mbox{C}\hspace{1.0pt}}
\newcommand{\cR}{\sc\mbox{R}\hspace{1.0pt}}
\newcommand{\cI}{\sc\mbox{I}\hspace{1.0pt}}
\newcommand{\cS}{\sc\mbox{S}\hspace{1.0pt}}
\newcommand{\cA}{\sc\mbox{A}\hspace{1.0pt}}
\font\scc=rsfs7
\newcommand{\ccC}{\scc\mbox{C}\hspace{1.0pt}}
\begin{document}

\title[Cell  $2$-representations]{Cell
$2$-representations  of finitary $2$-categories}
\author{Volodymyr Mazorchuk and Vanessa Miemietz}
\date{\today}

\begin{abstract}
We study $2$-representations of finitary $2$-categories with involution 
and adjunctions by functors on module categories over finite dimensional
algebras. In particular, we define, construct and describe in detail 
(right) cell $2$-representations inspired by Kazhdan-Lusztig cell modules 
for Hecke algebras. Under some natural assumptions we show that cell
$2$-representations are strongly simple and do not depend on the choice 
of a right cell inside a two-sided cell. This reproves and extends 
the uniqueness result on categorification of Kazhdan-Lusztig cell modules 
for Hecke algebras of type $A$ from \cite{MS2}.
\end{abstract}

\maketitle

\section{Introduction and description of the results}\label{s0}

The philosophy of categorification, which originated in work
of Crane and Frenkel (see \cite{Cr,CF}) some fifteen years ago,
is nowadays usually formulated in terms of $2$-categories.
A {\em categorification} of an algebra (or category) $A$ is now usually
understood as a $2$-category $\cA$, whose decategorification 
is $A$. Therefore a natural problem is to ``upgrade'' the
representation theory of $A$ to a $2$-representation theory of $\cA$. 
The latter philosophy has been propagated by Rouquier in \cite{Ro0,Ro} 
based on the earlier development in \cite{CR}.

Not much is known about the $2$-category of $2$-representations
of an abstract $2$-category. Some $2$-representations of $2$-categories
categorifying Kac-Moody algebras were constructed and studied in 
\cite{Ro}. On the other hand, there are many examples of 
$2$-representations of various $2$-categories in the literature, 
sometimes without an explicit emphasis on their $2$-categorical 
nature, see for example \cite{Kv,St,KMS,MS2,KL} and references therein.
A different direction of the representation theory of certain
classes of $2$-categories was investigated in \cite{EO,EGNO}.

$2$-categorical philosophy also appears, in a disguised form, in \cite{Kh}. 
In this article the author defines so-called ``categories with
full projective functors'' and considers ``functors naturally
commuting with projective functors''. The former can be understood as
certain ``full'' $2$-representations of a $2$-category and the latter as 
morphisms between these $2$-representations.

The aim of the present article is to look at the study of 
$2$-representations of abstract $2$-categories from a  more systematic 
and more abstract prospective. Given an algebra $A$ there are two natural 
ways to construct $A$-modules. The first way is to fix a presentation for 
$A$ and construct $A$-modules using generators and checking relations.
The second way is to look at homomorphisms between free $A$-modules 
and construct their cokernels. Rouquier's approach to $2$-representation 
theory from \cite{Ro0,Ro} goes along the first way. In the
present article, we try the second one.

Our main object of study is what we call a {\em fiat category} $\cC$, that 
is a (strict) $2$-category with involution which has finitely many 
objects, finitely many isomorphism classes of indecomposable $1$-morphisms, and 
finite dimensional spaces of $2$-morphisms that are also supposed to
contain adjunction morphisms. Our $2$-setup is described in detail 
in Section~\ref{s1}. In Section~\ref{s2} we study {\em principal
$2$-representations} of fiat categories, which are analogues of 
indecomposable projective modules over an algebra. We give an 
explicit construction of principal $2$-representations and prove a 
natural analogue of the universal property for them. Adding up all 
principal representations we obtain the regular $\cC$-bimodule, which 
gives rise to an abelian $2$-category $\hat{\cC}$ enveloping 
the original category $\cC$. The category $\hat{\cC}$ is no longer 
fiat, but has the advantage of being abelian. We show that every 
$2$-representation of $\cC$ extends to a $2$-representation of
$\hat{\cC}$ in a natural way.

Inspired by Kazhdan-Lusztig combinatorics (see \cite{KaLu}), in Section~\ref{s3} we define, for every 
fiat category $\cC$, the concepts of left, right and 
two-sided cells and cell $2$-representations associated with right 
cells. We expect cell representations to be the most natural candidates 
for ``simple'' $2$-representations, whatever the latter could possibly 
mean (which is still unclear). We describe the algebraic structure of 
module categories on which a cell $2$-representation operates and 
determine homomorphisms from a cell $2$-representation. 
We also study in detail the combinatorial 
structure of two natural classes of cells, which we call regular
and strongly regular. These turn out to have particularly nice properties 
and appear in many natural examples.

Section~\ref{s5} is devoted to the study of the local structure of cell 
$2$-representations. We show that the essential part of cell 
$2$-representations is governed by the action of $1$-morphisms from the
associated two-sided cell and describe algebraic properties of cell 
$2$-representations in terms of the cell combinatorics of this two-sided 
cell.

In Section~\ref{s4} we define and study the notions of cyclicity and 
strong simplicity for $2$-representations. A $2$-representation is 
called cyclic if it is generated, in the $2$-categorical sense of 
categories with full projective functors in \cite{Kh}, by some object 
$M$. This means that the natural map from $\hat{\cC}$ to our
$2$-representation, sending $\mathrm{F}$ to $\mathrm{F}\,M$ is 
essentially surjective on objects and surjective on morphisms. 
A $2$-representation is called
strongly simple  if it is generated, in the $2$-categorical
sense, by any simple object. We show that all cell
$2$-representations are cyclic and prove the following main result:

\begin{theorem}\label{thmmain}
Let $\cC$ be a fiat category. Then, under some natural technical assumptions, 
every cell $2$-representation of $\cC$
associated with a strongly regular right cell is strongly simple. 
Moreover, under the same assumptions,  every two cell 
$2$-representations of $\cC$ associated with strongly regular right cells 
inside the same two-sided cell are equivalent.
\end{theorem}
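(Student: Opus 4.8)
\emph{Strategy.} The plan is to prove the two statements in turn, in both cases drawing on three things already available: the combinatorial description of (strongly) regular cells and of the module categories underlying cell $2$-representations from Section~\ref{s3}, the reduction in Section~\ref{s5} of the essential action to $1$-morphisms lying in the relevant two-sided cell $\mathcal{J}$, and the cyclicity of cell $2$-representations together with the biadjunctions built into the definition of a fiat category. The formal $2$-categorical manipulations will be routine; the real work, and the place where both strong regularity and the unspecified ``natural technical assumptions'' are indispensable, is the fine combinatorial control of the action of $\mathcal{J}$ on the simple objects, and I expect that to be the main obstacle.

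\emph{Strong simplicity.} Fix a strongly regular right cell $\mathcal{R}\subset\mathcal{J}$, let $\mathbf{C}_{\mathcal{R}}$ act on $\mathcal{M}$ with distinguished generator $M$, and let $L$ be an arbitrary simple object of the abelianization $\overline{\mathcal{M}}$. By Section~\ref{s5} only the indecomposable $1$-morphisms lying in $\mathcal{J}$ act essentially, and I would analyse this action on the simples of $\overline{\mathcal{M}}$: for a strongly regular $\mathcal{J}$ the combinatorics of Section~\ref{s3} makes it ``monomial'' — each such $1$-morphism sends a simple to an object with a single simple in its top — and, via the biadjunctions of $\cC$, the resulting relation on simples is symmetric. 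Strong regularity then makes the associated graph connected, because each left cell in $\mathcal{J}$ meets $\mathcal{R}$ in a unique indecomposable and composing these elementary moves links any two simples; I would upgrade this to the statement that for every simple $L$ there is $\mathrm{G}\in\hat{\cC}$ with $M$ a direct summand of $\mathrm{G}\,L$. Granting this, the $2$-subrepresentation of $\mathbf{C}_{\mathcal{R}}$ generated by $L$ contains $M$, and since cell $2$-representations are cyclic the summands of $\{\mathrm{F}\,M\}$, for $\mathrm{F}$ ranging over $\hat{\cC}$, exhaust $\overline{\mathcal{M}}$; hence that subrepresentation is all of $\mathbf{C}_{\mathcal{R}}$, i.e.\ $\mathbf{C}_{\mathcal{R}}$ is strongly simple.

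\emph{Uniqueness.} Let $\mathcal{R},\mathcal{R}'\subset\mathcal{J}$ be two strongly regular right cells. Since they lie in one two-sided cell there is, using strong regularity, a well-controlled indecomposable $1$-morphism $\mathrm{G}$ such that $\mathrm{G}$ and $\mathrm{G}^{\star}$ implement a change of right cell between $\mathcal{R}$ and $\mathcal{R}'$; concretely one arranges that the distinguished generating $1$-morphism of $\mathcal{R}$ is a direct summand of $\mathrm{G}^{\star}\mathrm{G}$ and that of $\mathcal{R}'$ a direct summand of $\mathrm{G}\,\mathrm{G}^{\star}$. Sending the generator of $\mathbf{C}_{\mathcal{R}}$ to $\mathrm{G}$ applied to the generator of $\mathbf{C}_{\mathcal{R}'}$ and extending $\cC$-equivariantly — legitimate by the universal property of cell, equivalently principal, $2$-representations from Section~\ref{s2} — produces a morphism of $2$-representations $\Phi\colon\mathbf{C}_{\mathcal{R}}\to\mathbf{C}_{\mathcal{R}'}$, and the summand of $\mathrm{G}^{\star}\mathrm{G}$ noted above shows $\Phi$ is nonzero. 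Its image is then a nonzero $2$-subrepresentation of the strongly simple $\mathbf{C}_{\mathcal{R}'}$, hence all of it, so $\Phi$ is essentially surjective and full; and $\Phi$ cannot annihilate a simple object of $\mathbf{C}_{\mathcal{R}}$, since otherwise it would kill the whole $2$-representation that simple generates, contradicting $\Phi\neq 0$, so $\Phi$ is also faithful. A $\cC$-equivariant functor that is fully faithful and essentially surjective is an equivalence of $2$-representations, which gives $\mathbf{C}_{\mathcal{R}}\simeq\mathbf{C}_{\mathcal{R}'}$. In both parts the delicate point is to pin down the multiplicities $[\mathrm{F}\,L:L']$ precisely enough — no spurious summands, exactly the pattern the (left cell, right cell) parametrization of the indecomposables of a strongly regular two-sided cell predicts — which is what makes the reachability graph a connected monomial action in the first part and what guarantees the non-vanishing and well-definedness of $\Phi$ in the second.
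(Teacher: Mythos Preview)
Your approach to strong simplicity is different from the paper's and could be made to work, but the uniqueness argument has a genuine gap.

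\textbf{On strong simplicity.} The paper does not argue by transitivity through the distinguished generator. Instead, for an arbitrary simple $L_{\mathrm{F}}$ it shows directly that the $1$-morphisms $\mathrm{H}\in\mathcal{R}_{\mathrm{F}^{*}}$ send $L_{\mathrm{F}}$ to the various \emph{indecomposable} projectives of $\mathbf{C}_{\mathcal{R}}$, and then checks the evaluation surjectivity on morphisms by an adjunction argument. The key numerical input is exactly what you flag at the end: the technical assumption \eqref{eq62} forces $\mathrm{H}\,L_{\mathrm{F}}\cong P_{\mathrm{G}}$ rather than $kP_{\mathrm{G}}$. Your transitivity scheme also needs $L_{\mathrm{G}_{\mathcal{R}}}$ to be \emph{exactly} of the form $\mathrm{H}\,L$ in $\hat{\cC}$, not merely a summand, since ``$M$ generates'' in the paper's sense requires the evaluation map on $2$-morphisms to be surjective, and passing to a direct summand of $\mathrm{G}\,L$ requires the projector to lift to an idempotent $2$-endomorphism of $\mathrm{G}$ --- which is not automatic. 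So your route collapses to the same multiplicity computation, just organised differently.

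\textbf{On uniqueness: the real gap.} Your $\Phi$ sends the simple $L_{\mathrm{G}_{\mathcal{R}}}$ to $\mathrm{G}\,L_{\mathrm{G}_{\mathcal{R}'}}$, which is an indecomposable \emph{projective} in $\mathbf{C}_{\mathcal{R}'}$, not a simple. An equivalence of abelian categories must take simples to simples, so your $\Phi$ is essentially never an equivalence. Correspondingly, the deductions you make do not follow: ``strongly simple'' in this paper means every simple generates in the sense of Subsection~\ref{s4.1}; it does \emph{not} say that every nonzero $\cC$-stable subcategory is everything, so you cannot conclude fullness from ``the image is all of $\mathbf{C}_{\mathcal{R}'}$''. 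Likewise, ``$\Phi$ does not kill any simple'' gives only that $\Phi$ annihilates no object, which is strictly weaker than faithfulness on morphisms. The paper's fix is precisely to avoid this by working harder to produce a morphism that sends simple to simple: one passes to the cokernel $1$-morphism $\hat{\mathrm{G}}\in\hat{\cC}$ presenting $L_{\mathrm{G}_{\mathcal{R}}}$ and proves (Lemma~\ref{lem64}) that $\hat{\mathrm{G}}\,L_{\mathrm{F}}\cong L_{\mathrm{F}}$ for the appropriate $\mathrm{F}\in\mathcal{R}'$. The resulting $\Psi$ then sends indecomposable projectives to indecomposable projectives by part~(a), and an explicit dimension count on $\mathrm{Hom}$-spaces between projectives (both sides identified with a space of $2$-morphisms in $\cC_{\mathcal{Q}}$) shows $\Psi$ is an equivalence. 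Your sketch misses this step entirely; without it the map you build is the wrong one.
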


Finally, in Section~\ref{s6} we give several examples. The prime
example is the fiat category of projective functors acting on the 
principal block (or a direct sum of some, possibly singular, blocks)
of the BGG category $\mathcal{O}$ for a semi-simple complex finite 
dimensional Lie algebra. This example is given by Kazhadan-Lusztig 
combinatorics and our cells coincide with the classical 
Kazhdan-Lusztig cells. As an application of Theorem~\ref{thmmain}
we reprove, extend and strengthen the uniqueness result on categorification 
of Kazhdan-Lusztig cell modules for Hecke algebras of type $A$ from 
\cite{MS2}. We also present another example of a fiat category 
$\cC_A$ given by projective endofunctors of the module category of a 
weakly symmetric self-injective finite dimensional associative algebra $A$.
We show that the latter example is ``universal'' in the sense that, 
under the same assumptions as mentioned in 
Theorem~\ref{thmmain}, every cell $2$-representation of a fiat
category gives rise to a $2$-functor to some $\cC_A$. 
\vspace{0.2cm}

\noindent
{\bf Acknowledgments.} The first author was partially supported
by the Swedish Research Council. A part of this work was done
during a visit of the second author to Uppsala University, which
was supported by the Faculty of Natural Science of Uppsala University.
The financial support and hospitality of Uppsala University
are gratefully acknowledged. We thank Catharina Stroppel 
and Joseph Chuang for stimulating discussions.

\section{$2$-setup}\label{s1}

\subsection{Notation}\label{s1.1}

For a $2$-category $\cC$, objects of $\cC$ will be denoted by 
$\mathtt{i},\mathtt{j}$ and so on. For $\mathtt{i},\mathtt{j}\in \cC$, 
objects of $\cC(\mathtt{i},\mathtt{j})$ ($1$-morphisms of $\cC$) will 
be called $\mathrm{F},\mathrm{G}$ and so on. For 
$\mathrm{F},\mathrm{G}\in \cC(\mathtt{i},\mathtt{j})$, morphisms 
from $\mathrm{F}$ to $\mathrm{G}$ ($2$-morphisms of $\cC$) will 
be written $\alpha,\beta$ and so on. 
The identity $1$-morphism in $\cC(\mathtt{i},\mathtt{i})$
will be denoted $\mathbbm{1}_{\mathtt{i}}$ and the identity 
$2$-morphism from $\mathrm{F}$ to $\mathrm{F}$ will be denoted
$\mathrm{id}_{\mathrm{F}}$. Composition of $1$-morphisms will be
denoted by $\circ$, horizontal composition of $2$-morphisms will be
denoted by $\circ_0$ and vertical composition of $2$-morphisms will be
denoted by $\circ_1$. We often abbreviate 
$\mathrm{id}_{\mathrm{F}}\circ_0\alpha$ and 
$\alpha\circ_0\mathrm{id}_{\mathrm{F}}$
by $\mathrm{F}(\alpha)$ and $\alpha_{\mathrm{F}}$, respectively.

For the rest of the paper we fix an algebraically closed field $\Bbbk$. 
As we will often consider categories $\cC(\mathtt{i},\mathtt{j})$, we will denote the morphism space between $X$ and $Y$ in such a category by 
$\mathrm{Hom}_{\cC(\mathtt{i},\mathtt{j})}(X,Y)$ to avoid the awkward looking $\cC(\mathtt{i},\mathtt{j})(X,Y)$.

\subsection{Finitary $2$-categories and $2$-representations}\label{s1.2}

In what follows, by a {\em $2$-category} we always mean a 
{\em strict} $2$-category and use the name {\em bicategory} for the 
corresponding non-strict structure. Note that any bicategory is 
biequivalent to a $2$-category (see, for example, \cite[2.3]{Le}).

We define a $2$-category $\cC$ to be {\em $\Bbbk$-finitary} provided that
\begin{enumerate}[(I)]
\item\label{fin.1} $\cC$ has finitely many objects;
\item\label{fin.2} for every $\mathtt{i},\mathtt{j}\in \cC$
the category $\cC(\mathtt{i},\mathtt{j})$ is a fully additive
(i.e. karoubian) $\Bbbk$-linear category with finitely many isomorphism 
classes of indecomposable objects, moreover, horizontal composition of
$1$-morphisms is biadditive;
\item\label{fin.4} for every $\mathtt{i}\in \cC$ the object
$\mathbbm{1}_{\mathtt{i}}\in\cC(\mathtt{i},\mathtt{i})$ is indecomposable.
\end{enumerate}
From now on $\cC$ will always be a $\Bbbk$-finitary $2$-category. 

Denote by $\mathfrak{R}_{\Bbbk}$ the $2$-category whose objects
are categories equivalent to module categories of finite-dimensional 
$\Bbbk$-algebras, $1$-morphisms are functors between objects, and 
$2$-morphisms are natural transformations of functors. We will understand 
a {\em $2$-representation} of $\cC$ to be a strict $2$-functor from 
$\cC$ to $\mathfrak{R}_{\Bbbk}$. By \cite[2.0]{Le}, 
$2$-representations of $\cC$, together
with strict $2$-natural transformations 
(i.e. morphisms between $2$-representation, given by
a collection of functors) and modifications
(i.e. morphisms between strict $2$-natural transformations, given by 
natural transformations between the defining functors), form
a strict $2$-category, which we denote by $\cC\text{-}\mathfrak{mod}$.
For simplicity we will identify 
objects in $\cC(\mathtt{i},\mathtt{j})$  with their images under 
a $2$-representation (i.e. we will use module notation).

\begin{example}\label{exm1}
{\rm 
Consider the algebra $D:=\mathbb{C}[x]/(x^2)$ of dual numbers. 
It is easy to check that the endofunctor 
$\mathrm{F}:=D\otimes_{\mathbb{C}}{}_-$
of $D\text{-}\mathrm{mod}$ satisfies
$\mathrm{F}\circ \mathrm{F}\cong \mathrm{F}\oplus \mathrm{F}$.
Therefore one can consider the $2$-category $\cS_2$ defined as
follows: $\cS_2$ has one object $\mathtt{i}:=D\text{-}\mathrm{mod}$;
$1$-morphisms of $\cS_2$ are all endofunctors of $\mathtt{i}$
which are isomorphic to a direct sum of copies of 
$\mathrm{F}$ and the identity functor; 
$2$-morphisms of $\cS_2$ are all natural transformations of functors.
The category $\cS_2$ is a $\mathbb{C}$-finitary $2$-category.
It comes together with the {\em natural}  representation 
(the embedding of $\cS_2$ into $\mathfrak{R}_{\mathbb{C}}$).
}
\end{example}

\subsection{Path categories associated to 
$\cC(\mathtt{i},\mathtt{j})$}\label{s1.3}

For $\mathtt{i},\mathtt{j}\in \cC$ let $\mathrm{F}_1,\mathrm{F}_2,
\dots,\mathrm{F}_r$ be a complete list of pairwise non-isomorphic 
indecomposable objects in $\cC(\mathtt{i},\mathtt{j})$. Denote by
$\mathcal{C}_{\mathtt{i},\mathtt{j}}$ the full subcategory of 
$\cC(\mathtt{i},\mathtt{j})$ with objects
$\mathrm{F}_1,\mathrm{F}_2,\dots,\mathrm{F}_r$.  As $\cC$ is 
$\Bbbk$-finitary, the path algebra of 
$\mathcal{C}_{\mathtt{i},\mathtt{j}}$ is a finite dimensional 
$\Bbbk$-algebra. There is a canonical equivalence between the
category  
$\mathcal{C}_{\mathtt{i},\mathtt{j}}^{\mathrm{op}}\text{-}\mathrm{mod}$
and the category of modules over the path algebra of 
$\mathcal{C}_{\mathtt{i},\mathtt{j}}$.

\begin{example}\label{exm2}
{\rm 
For the category $\cS_2$ from Example~\ref{exm1} the category
$\cS_2(\mathtt{i},\mathtt{i})$ has two indecomposable objects,
namely $\mathbbm{1}_{\mathtt{i}}$ and $\mathrm{F}$. Realizing exact functors
on $D\text{-}\mathrm{mod}$ as $D$-bimodules, the functor
$\mathbbm{1}_{\mathtt{i}}$ corresponds to the bimodule $D$ and the functor
$\mathrm{F}$ corresponds to the bimodule $D\otimes_{\mathbb{C}}D$.
Let $\alpha:D\otimes_{\mathbb{C}}D\to D$ be the unique morphism
such that $1\otimes 1\mapsto 1$; $\beta:D\to D\otimes_{\mathbb{C}}D$ 
be the unique morphism such that $1\mapsto 1\otimes x+x\otimes 1$; 
and $\gamma:D\otimes_{\mathbb{C}}D\to D\otimes_{\mathbb{C}}D$
be the unique morphism such that $1\otimes 1\mapsto 
1\otimes x-x\otimes 1$. Then it is easy to check that the category
$\mathcal{C}_{\mathtt{i},\mathtt{i}}$ is given by the
following quiver and relations:
\begin{displaymath}
\xymatrix{
\bullet\ar@/^/[rr]^{\alpha}\ar@(dl,ul)[]^{\gamma}
&& \bullet\ar@/^/[ll]^{\beta}
}, \quad
\begin{array}{c}
\gamma^2=-(\beta\alpha)^2, (\alpha\beta)^2=0,\\
\alpha\gamma=\gamma\beta=0.
\end{array}
\end{displaymath}
}
\end{example}

\subsection{$2$-categories with involution}\label{s1.4}

If $\cC$ is a $\Bbbk$-finitary $2$-category, then an {\em involution}
on $\cC$ is a lax involutive object-preserving anti-automorphism $*$
of $\cC$. A finitary $2$-category $\cC$ with involution $*$ is
said to have {\em adjunctions} provided that for any $\mathtt{i},
\mathtt{j}\in\cC$ and any $1$-morphism
$\mathrm{F}\in\cC(\mathtt{i},\mathtt{j})$ there exist 
$2$-morphisms $\alpha:\mathrm{F}\circ\mathrm{F}^*\to
\mathbbm{1}_{\mathtt{j}}$ and $\beta:\mathbbm{1}_{\mathtt{i}}\to
\mathrm{F}^*\circ\mathrm{F}$ such that 
$\alpha_{\mathrm{F}}\circ_1\mathrm{F}(\beta)=\mathrm{id}_{\mathrm{F}}$ and
$\mathrm{F}^*(\alpha)\circ_1\beta_{\mathrm{F}^*}=\mathrm{id}_{\mathrm{F}^*}$.
A $\Bbbk$-finitary $2$-category with an involution and adjunctions will
be called a {\em fiat category}.

\begin{example}\label{exm2.1}
{\rm 
The category $\cS_2$ from Example~\ref{exm1} is easily seen to be 
a fiat category.
}
\end{example}

\section{Principal $2$-representations}\label{s2}

\subsection{$2$-representations $\mathbf{P}_{\mathtt{i}}$}\label{s2.1}

Let $\cC$ be a finitary $2$-category. For $\mathtt{i},\mathtt{j}\in\cC$
denote by $\overline{\cC}(\mathtt{i},\mathtt{j})$ the category 
defined as follows: Objects of $\overline{\cC}(\mathtt{i},\mathtt{j})$
are diagrams of the form $\xymatrix{
\mathrm{F}\ar[r]^{\alpha}&\mathrm{G}}$,
where $\mathrm{F},\mathrm{G}\in \cC(\mathtt{i},\mathtt{j})$
are $1$-morphisms and $\alpha$ is a $2$-morphism.
Morphisms of $\overline{\cC}(\mathtt{i},\mathtt{j})$ are equivalence 
classes of diagrams as given by the solid part of the following picture:
\begin{displaymath}
\xymatrix{
\mathrm{F}\ar[rr]^{\alpha}\ar[d]_{\beta}&&\mathrm{G}
\ar[d]^{\beta'}\ar@{.>}[dll]_{\xi}\\
\mathrm{F}'\ar[rr]^{\alpha'}&&\mathrm{G}'
},\qquad \mathrm{F},\mathrm{F}',\mathrm{G},\mathrm{G}'\in 
\cC(\mathtt{i},\mathtt{j}),
\end{displaymath}
modulo the ideal generated by all morphisms for which there exists
$\xi$ as shown by the dotted arrow above such that $\alpha'\xi=\beta'$.
As $\cC$ is finitary category, the category 
$\overline{\cC}(\mathtt{i},\mathtt{j})$ is abelian and equivalent to 
$\mathcal{C}_{\mathtt{i},\mathtt{j}}^{\mathrm{op}}\text{-}\mathrm{mod}$,
see \cite{Fr}.

For $\mathtt{i}\in\cC$ define the $2$-functor 
$\mathbf{P}_{\mathtt{i}}:\cC\to\mathfrak{R}_{\Bbbk}$ as follows: 
for $\mathtt{j}\in \cC$ set $\mathbf{P}_{\mathtt{i}}(\mathtt{j})=
\overline{\cC}(\mathtt{i},\mathtt{j})$. Further, for 
$\mathtt{k}\in\cC$ and $\mathrm{F}\in \cC(\mathtt{j},\mathtt{k})$
left horizontal composition with (the identity on) $\mathrm{F}$ defines 
a functor from $\overline{\cC}(\mathtt{i},\mathtt{j})$ to
$\overline{\cC}(\mathtt{i},\mathtt{k})$. We define this functor to be
$\mathbf{P}_{\mathtt{i}}(\mathrm{F})$. Given a $2$-morphism 
$\alpha:\mathrm{F}\to \mathrm{G}$, left horizontal composition 
with $\alpha$ gives a natural transformation from 
$\mathbf{P}_{\mathtt{i}}(\mathrm{F})$ to 
$\mathbf{P}_{\mathtt{i}}(\mathrm{G})$. We define this natural transformation 
to be $\mathbf{P}_{\mathtt{i}}(\alpha)$. From the
definition it follows that $\mathbf{P}_{\mathtt{i}}$ is a strict 
$2$-functor from $\cC$ to $\mathfrak{R}_{\Bbbk}$. The 
$2$-representation $\mathbf{P}_{\mathtt{i}}$ is called
the {\em $\mathtt{i}$-th principal} $2$-representation of $\cC$.

For $\mathtt{i},\mathtt{j}\in \cC$ and a $1$-morphism
$\mathrm{F}\in \cC(\mathtt{i},\mathtt{j})$ we denote by
$P_{\mathrm{F}}$ the projective object $0\to \mathrm{F}$ of 
$\overline{\cC}(\mathtt{i},\mathtt{j})$.

\subsection{The universal property of 
$\mathbf{P}_{\mathtt{i}}$}\label{s2.2}

\begin{proposition}\label{prop3}
Let $\mathbf{M}$ be a $2$-representation of $\cC$ and 
$M\in \mathbf{M}(\mathtt{i})$. 
\begin{enumerate}[$($a$)$]
\item \label{prop3.1}
For $\mathtt{j}\in \cC$ define the functor $\Phi^M_{\mathtt{j}}:
\overline{\cC}(\mathtt{i},\mathtt{j})\to \mathbf{M}(\mathtt{j})$ 
as follows: $\Phi^M_{\mathtt{j}}$ sends a diagram $\xymatrix{
\mathrm{F}\ar[r]^{\alpha}&\mathrm{G}}$ 
in $\overline{\cC}(\mathtt{i},\mathtt{j})$ to the cokernel
of $\mathbf{M}(\alpha)_M$.
Then $\Phi^M=(\Phi^M_{\mathtt{j}})_{\mathtt{j}\in\ccC}$ is the
unique morphism from $\mathbf{P}_{\mathtt{i}}$ to $\mathbf{M}$
sending $P_{\mathbbm{1}_{\mathtt{i}}}$ to $M$.
\item \label{prop3.2} The correspondence 
$M\mapsto \Phi^M$ is functorial.
\end{enumerate}
\end{proposition}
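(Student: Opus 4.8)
The plan is to prove part (a) first and then deduce part (b) by a naturality check. For part (a), I would proceed in three stages: verify that each $\Phi^M_{\mathtt{j}}$ is a well-defined functor, assemble the collection $\Phi^M = (\Phi^M_{\mathtt{j}})_{\mathtt{j}}$ into a strict $2$-natural transformation $\mathbf{P}_{\mathtt{i}} \to \mathbf{M}$, and finally check the uniqueness clause. For well-definedness, I need to see that a morphism in $\overline{\cC}(\mathtt{i},\mathtt{j})$ lying in the ideal quotiented out (i.e. one admitting a $\xi$ with $\alpha'\xi = \beta'$ in the square defining morphisms) is sent to zero: if the comparison morphism between the two diagrams factors through such a $\xi$, then after applying $\mathbf{M}$ and evaluating at $M$ the induced map on cokernels of $\mathbf{M}(\alpha)_M$ and $\mathbf{M}(\alpha')_M$ vanishes, because $\mathbf{M}(\alpha')_M$ already kills the image of $\mathbf{M}(\xi)_M$. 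Here the key background fact, already recorded in the excerpt, is that $\overline{\cC}(\mathtt{i},\mathtt{j}) \simeq \mathcal{C}_{\mathtt{i},\mathtt{j}}^{\mathrm{op}}\text{-}\mathrm{mod}$, so ``take cokernel of $\mathbf{M}(\alpha)_M$'' is literally the standard way a module-valued functor is extended from projectives along a presentation; this makes $\Phi^M_{\mathtt{j}}$ automatically right exact and well-defined.

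Next I would check the $2$-naturality, which is the part that actually uses the $2$-categorical structure rather than just homological algebra. Concretely, for $\mathrm{H} \in \cC(\mathtt{j},\mathtt{k})$ I must produce a coherent isomorphism $\Phi^M_{\mathtt{k}} \circ \mathbf{P}_{\mathtt{i}}(\mathrm{H}) \cong \mathbf{M}(\mathrm{H}) \circ \Phi^M_{\mathtt{j}}$ of functors $\overline{\cC}(\mathtt{i},\mathtt{j}) \to \mathbf{M}(\mathtt{k})$. Unwinding: the left side sends $\mathrm{F}\xrightarrow{\alpha}\mathrm{G}$ to $\mathrm{coker}(\mathbf{M}(\mathrm{id}_{\mathrm{H}}\circ_0\alpha)_M)$ while the right side sends it to $\mathbf{M}(\mathrm{H})$ applied to $\mathrm{coker}(\mathbf{M}(\alpha)_M)$; since $\mathbf{M}(\mathrm{H})$ is an exact (in particular right exact) functor between module categories and the interchange law gives $\mathbf{M}(\mathrm{id}_{\mathrm{H}}\circ_0\alpha)_M = \mathbf{M}(\mathrm{H})(\mathbf{M}(\alpha)_M)$, these agree, and $\mathbf{M}(\mathrm{H})$ commuting with cokernels provides the isomorphism. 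Strictness of $\mathbf{P}_{\mathtt{i}}$ and $\mathbf{M}$ as $2$-functors means the coherence pentagons/triangles for this isomorphism reduce to the interchange law, so no genuinely new coherence obligation arises. One also checks compatibility with $2$-morphisms $\alpha:\mathrm{H}\to\mathrm{H}'$ in the same style. That $\Phi^M$ sends $P_{\mathbbm{1}_{\mathtt{i}}} = (0 \to \mathbbm{1}_{\mathtt{i}})$ to $M$ is immediate: the cokernel of the zero map out of $\mathbf{M}(\mathbbm{1}_{\mathtt{i}})_M = M$ is $M$ itself.

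For uniqueness, suppose $\Psi$ is any morphism $\mathbf{P}_{\mathtt{i}} \to \mathbf{M}$ with $\Psi_{\mathtt{i}}(P_{\mathbbm{1}_{\mathtt{i}}}) = M$. Every object $\mathrm{F}\xrightarrow{\alpha}\mathrm{G}$ of $\overline{\cC}(\mathtt{i},\mathtt{j})$ has the projective presentation $P_{\mathrm{F}} \to P_{\mathrm{G}} \to (\mathrm{F}\xrightarrow{\alpha}\mathrm{G}) \to 0$, and $P_{\mathrm{G}} = \mathbf{P}_{\mathtt{i}}(\mathrm{G})\,P_{\mathbbm{1}_{\mathtt{i}}}$ with the map $P_{\mathrm{F}}\to P_{\mathrm{G}}$ being $\mathbf{P}_{\mathtt{i}}(\alpha)$ evaluated at $P_{\mathbbm{1}_{\mathtt{i}}}$. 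Since $\Psi$ is a morphism of $2$-representations it commutes with the $\mathbf{P}_{\mathtt{i}}(\mathrm{G})$- and $\mathbf{P}_{\mathtt{i}}(\alpha)$-actions and each $\Psi_{\mathtt{j}}$ is right exact (a functor between module categories defined via such transformations), so $\Psi_{\mathtt{j}}$ is forced on this presentation and hence determined up to canonical isomorphism; tracing through, $\Psi_{\mathtt{j}} \cong \Phi^M_{\mathtt{j}}$. For part (b), given $f:M\to N$ in $\mathbf{M}(\mathtt{i})$ one defines $\Phi^f_{\mathtt{j}}:\Phi^M_{\mathtt{j}} \to \Phi^N_{\mathtt{j}}$ by functoriality of cokernels applied to $\mathbf{M}(\alpha)_{f}$, i.e. the naturality square of $\mathbf{M}(\alpha)$ in its functor argument; checking this is a modification and that $\Phi^{(-)}$ respects identities and composition is routine. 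The main obstacle I anticipate is not any single deep step but the bookkeeping in the $2$-naturality verification — keeping the interchange law, the identification of horizontal composites with functor application, and exactness of the $\mathbf{M}(\mathrm{H})$ all consistent — together with the care needed in the uniqueness argument to phrase ``determined'' correctly given that everything is only defined up to isomorphism.
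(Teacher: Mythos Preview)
Your approach matches the paper's: it dismisses (a) in one line (``follows directly from $2$-functoriality of $\mathbf{M}$'') and gives exactly the naturality-square argument you sketch for (b); you have simply supplied the detail the paper omits. One small point to tighten: you assert that $\mathbf{M}(\mathrm{H})$ is exact merely because it is a functor between module categories, but that is not automatic---right exactness (which is what your cokernel comparison actually needs) follows from having a right adjoint, guaranteed only once $\cC$ is fiat, and the same caveat applies to the right exactness you invoke for $\Psi_{\mathtt{j}}$ in the uniqueness step (compare Proposition~\ref{prop26}, where this hypothesis is made explicit).
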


\begin{proof}
Claim \eqref{prop3.1} follows directly from $2$-functoriality
of $\mathbf{M}$. To prove claim \eqref{prop3.2} let $f:M\to M'$.  
Choose now any $\mathrm{F},\mathrm{G}\in\cC(\mathtt{i},\mathtt{j})$ 
and $\alpha:\mathrm{F} \to \mathrm{G}$. Applying $\mathbf{M}$ to
$\mathrm{F}\overset{\alpha}{\longrightarrow}\mathrm{G}$
gives $\mathbf{M}(\mathrm{F})\overset{\mathbf{M}(\alpha)}{\longrightarrow}
\mathbf{M}(\mathrm{G})$. Applying the latter to
$M\overset{f}{\longrightarrow} M'$ yields the  commutative diagram
\begin{displaymath}
\xymatrix{ 
\mathbf{M}(\mathrm{F})\,M\ar[rr]^{\mathbf{M}(\mathrm{F})\,f}
\ar[d]_{\mathbf{M}(\alpha)_M} 
&& \mathbf{M}(\mathrm{F})\,M'\ar[d]^{\mathbf{M}(\alpha)_{M'}}\\
\mathbf{M}(\mathrm{G})\,M\ar[rr]^{\mathbf{M}(\mathrm{G})\,f} && 
\mathbf{M}(\mathrm{G})\,M'.
}
\end{displaymath}
This commutative diagram implies that 
$\{\mathbf{M}(\mathrm{F})\,f:\mathrm{F}\in
\cC(\mathtt{i},\mathtt{j})\}$ extends to a natural transformation
from $\Phi^M_{\mathtt{j}}$ to $\Phi^{M'}_{\mathtt{j}}$ and
claim \eqref{prop3.2} follows.
\end{proof}

\subsection{Connections to categories with full 
projective functors}\label{s2.3}

Denote by $\cC_{\mathtt{i}}$ the full $2$-subcategory of $\cC$
with object $\mathtt{i}$.  Restricting $\mathbf{P}_{\mathtt{i}}$
to $\mathtt{i}$ defines a (unique) principal $2$-representation
of $\cC_{\mathtt{i}}$. As $\cC$ is finitary, the identity 
$\mathbbm{1}_{\mathtt{i}}$ is indecomposable and hence so is
the projective object $P_{\mathbbm{1}_{\mathtt{i}}}$. By definition, 
for any $\mathrm{F},\mathrm{G}\in \cC_{\mathtt{i}}(\mathtt{i},\mathtt{i})$
the evaluation map 
\begin{displaymath}
\mathrm{Hom}_{\ccC_{\mathtt{i}}(\mathtt{i},\mathtt{i})}
(\mathrm{F},\mathrm{G})\to
\mathrm{Hom}_{\overline{\ccC}(\mathtt{i},\mathtt{i})}
(\mathrm{F}\circ P_{\mathbbm{1}_{\mathtt{i}}},
\mathrm{G}\circ P_{\mathbbm{1}_{\mathtt{i}}})
\end{displaymath}
is surjective (and, in fact, even bijective). Therefore the category
$\overline{\cC}(\mathtt{i},\mathtt{i})$ with the designated object 
$P_{\mathbbm{1}_{\mathtt{i}}}$ and endofunctors 
$\mathbf{P}_{\mathtt{i}}(\mathrm{F})$, 
$\mathrm{F}\in \cC_{\mathtt{i}}(\mathtt{i},\mathtt{i})$,
is a category with full projective functors in the sense of
\cite{Kh}. The notion of {\em functors naturally commuting with
projective functors} in \cite{Kh} corresponds to morphisms between 
$2$-representations of $\cC_{\mathtt{i}}$ in our language.
It might be worth pointing out that \cite{Kh} works in the
setup of bicategories (without mentioning them).

Similarly, for every $\mathtt{j}\in\cC$ and any 
$\mathrm{F},\mathrm{G}\in \cC(\mathtt{i},\mathtt{j})$
the evaluation map 
\begin{displaymath}
\mathrm{Hom}_{\ccC(\mathtt{i},\mathtt{j})}
(\mathrm{F},\mathrm{G})\to
\mathrm{Hom}_{\overline{\ccC}(\mathtt{i},\mathtt{j})}
(\mathrm{F}\circ P_{\mathbbm{1}_{\mathtt{i}}},
\mathrm{G}\circ P_{\mathbbm{1}_{\mathtt{i}}})
\end{displaymath}
is surjective (and, in fact, even bijective). 

\subsection{The regular bimodule}\label{s2.4}

For $\mathtt{i},\mathtt{j},\mathtt{k}\in\cC$ and any $1$-morphism
$\mathrm{F}\in \cC(\mathtt{k},\mathtt{i})$ the right horizontal
composition with (the identity on) $\mathrm{F}$ gives a functor
from $\overline{\cC}(\mathtt{i},\mathtt{j})$ to
$\overline{\cC}(\mathtt{k},\mathtt{j})$. For any $1$-morphisms
$\mathrm{F},\mathrm{G}\in \cC(\mathtt{k},\mathtt{i})$ and
a $2$-morphism $\alpha:\mathrm{F}\to \mathrm{G}$ the right 
horizontal composition with $\alpha$ gives a natural transformation
between the corresponding functors.
This turns $\overline{\cC}(\cdot,\cdot)$ into a $2$-bimodule  over
$\cC$. This bimodule is called the {\em regular bimodule}.

\subsection{The abelian envelope of $\cC$}\label{s2.5}

Because of the previous subsection, it is natural to expect that
one could turn $\overline{\cC}$ into a $2$-category with the same 
set of objects as $\cC$. Unfortunately, we do not know how to do
this as it seems that $\overline{\cC}$ contains ``too many''
objects (and hence only has the natural structure of a bicategory). 
Instead, we define a biequivalent $2$-category $\hat{\cC}$ as follows: 
Objects of $\hat{\cC}$ are objects of $\cC$. 
To define $1$-morphisms of $\hat{\cC}$ consider the
regular $2$-bimodule $\overline{\cC}(\cdot,\cdot)$ over
$\cC$ just as a left $2$-representation. Let $\cR$ be the 
$2$-category with same objects as $\cC$ and such that for
$\mathtt{i},\mathtt{j}\in\cC$ the category $\cR(\mathtt{i},\mathtt{j})$
is defined as the category of all functors from
$\displaystyle \bigoplus_{\mathtt{k}\in\overline{\ccC}}
\overline{\cC}(\mathtt{k},\mathtt{i})$ to
$\displaystyle\bigoplus_{\mathtt{k}\in\overline{\ccC}}
\overline{\cC}(\mathtt{k},\mathtt{j})$, where morphisms are
all natural transformations of functors. We are going to define
$\hat{\cC}$ as a $2$-subcategory of $\cR$.

The regular bimodule $2$-representation of $\cC$ is a $2$-functor
from $\cC$ to $\cR$ (which is the identity on objects). As usual, for 
every $\mathtt{i},\mathtt{j}\in\cC$ and any
$\mathrm{F}\in\cC(\mathtt{i},\mathtt{j})$ we will denote the image
of $\mathrm{F}$ under this $2$-functor also by $\mathrm{F}$.
We define $1$-morphisms in $\hat{\cC}(\mathtt{i},\mathtt{j})$
as functors in $\cR(\mathtt{i},\mathtt{j})$ 
of the form $\mathrm{Coker}(\alpha)$, where 
$\alpha$ is a $2$-morphism from $\mathrm{F}$ to $\mathrm{G}$ for some
$\mathrm{F},\mathrm{G}\in \cC(\mathtt{i},\mathtt{j})$.
We define $2$-morphisms in $\hat{\cC}(\mathtt{i},\mathtt{j})$
as natural transformations between the corresponding cokernel
functors coming from commutative diagrams of the following form, 
where all solid arrows are $2$-morphisms in $\cC$:
\begin{displaymath}
\xymatrix{ 
\mathrm{F}\ar[rr]^{\alpha}\ar[d]_{\xi'}&&\mathrm{G}\ar[d]^{\xi}
\ar@{.>>}[rr]^{\mathrm{proj}}&&\mathrm{Coker}(\alpha)\ar@{.>}[d]\\
\mathrm{F}'\ar[rr]^{\alpha'}&&\mathrm{G}'
\ar@{.>>}[rr]^{\mathrm{proj}}&&\mathrm{Coker}(\alpha')\\
}
\end{displaymath}

\begin{lemma}\label{lll123}
\begin{enumerate}[$($a$)$]
\item \label{lll123.1} $1$-morphisms in $\hat{\cC}$ are closed with 
respect to  the usual composition of functors in $\cR$.
\item \label{lll123.2} $2$-morphisms  in $\hat{\cC}$ are closed with
respect to both horizontal and vertical compositions in $\cR$.
\end{enumerate}
\end{lemma}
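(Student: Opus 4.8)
The statement has two parts, and I would treat them in the order given, since (b) relies on the infrastructure set up for (a).

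For part (a), I want to show that if $F_1 \xrightarrow{\alpha_1} G_1$ and $F_2 \xrightarrow{\alpha_2} G_2$ represent $1$-morphisms in $\hat{\cC}(\mathtt{j},\mathtt{k})$ and $\hat{\cC}(\mathtt{i},\mathtt{j})$ respectively, then $\mathrm{Coker}(\alpha_1) \circ \mathrm{Coker}(\alpha_2)$ is again of cokernel type. The natural guess is that this composite is the cokernel of a suitable $2$-morphism built out of $\alpha_1$ and $\alpha_2$; concretely I expect it to be $\mathrm{Coker}$ of the $2$-morphism
\begin{displaymath}
(F_1 \circ F_2) \oplus (G_1 \circ F_2) \longrightarrow G_1 \circ G_2
\end{displaymath}
whose components are $\alpha_1 \circ_0 \mathrm{id}_{F_2}$ (with a sign, or rather the inclusion into the first summand composed appropriately) and $\mathrm{id}_{G_1} \circ_0 \alpha_2$ — i.e. the map presenting $G_1 \circ G_2$ as the pushout/``tensor product'' of the two presentations. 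The key point is that right (resp. left) horizontal composition with a fixed $1$-morphism is an \emph{additive functor} on the relevant $\overline{\cC}(\cdot,\cdot)$, hence right exact, so it sends cokernels to cokernels; combining the right-exactness of $G_1 \circ (-) = \mathbf{P}_?(G_1)$ applied to the presentation of $\mathrm{Coker}(\alpha_2)$ with the right-exactness of $(-) \circ F_2$ and $(-) \circ \mathrm{Coker}(\alpha_2)$ applied to the presentation of $\mathrm{Coker}(\alpha_1)$, and then splicing the two resulting short right-exact sequences into a single presentation, yields the claim. So the proof of (a) is: write down the two presentations, apply horizontal composition functors, check right-exactness (which is immediate from biadditivity in axiom \eqref{fin.2} and the fact that the functors $\mathbf{P}_{\mathtt{i}}(\mathrm{F})$ are exact as compositions with $1$-morphisms of a $\Bbbk$-finitary category), and assemble.

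For part (b), a $2$-morphism $\mathrm{Coker}(\alpha) \to \mathrm{Coker}(\alpha')$ in $\hat{\cC}$ is by definition induced by a commutative square of $2$-morphisms in $\cC$. Given two composable such squares (for vertical composition) the composite square is again a commutative square of $2$-morphisms in $\cC$ inducing the composite on cokernels — this is essentially formal from the universal property of cokernels, so vertical closure is routine. For horizontal composition I would take squares presenting $\eta_1 : \mathrm{Coker}(\alpha_1) \to \mathrm{Coker}(\alpha_1')$ and $\eta_2 : \mathrm{Coker}(\alpha_2) \to \mathrm{Coker}(\alpha_2')$ and horizontally compose them with the identities and with each other; using the explicit presentation of the composite $1$-morphisms found in part (a), one checks that the horizontally composed data $(\xi_1 \circ_0 \xi_2$-type maps on the $F$'s and $G$'s) again form a commutative square of $2$-morphisms in $\cC$ of the shape required by the definition of $2$-morphisms in $\hat{\cC}$, and that the induced map on cokernels is $\eta_1 \circ_0 \eta_2$ as computed in $\cR$. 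The interchange law in $\cC$ guarantees commutativity of the relevant squares.

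The main obstacle I anticipate is purely bookkeeping: in part (a) one must be careful that the ``spliced'' presentation really is a \emph{single} $2$-morphism between two $1$-morphisms of $\cC$ (i.e. that the domain $(F_1 \circ F_2) \oplus (G_1 \circ F_2)$ is genuinely a $1$-morphism in $\cC(\mathtt{i},\mathtt{k})$, which it is by biadditivity of horizontal composition and closure of $\cC(\mathtt{i},\mathtt{k})$ under direct sums), and that the cokernel of this combined map, computed in $\cR$, agrees on the nose with the functor composite $\mathrm{Coker}(\alpha_1)\circ\mathrm{Coker}(\alpha_2)$ — this last identification is where one invokes right-exactness twice and must track the natural isomorphisms carefully. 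In part (b) the only subtlety is checking that the horizontal composite of the chosen representing squares lands in the declared class of diagrams (all solid arrows being $2$-morphisms of $\cC$, not merely of $\hat{\cC}$), which again follows because horizontal composition of $2$-morphisms of $\cC$ stays within $\cC$. None of this requires the involution or adjunctions, only that $\cC$ is $\Bbbk$-finitary.
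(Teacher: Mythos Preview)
Your overall approach matches the paper's: for (a) you splice the two presentations using the interchange law to get a single $2$-morphism in $\cC$ whose cokernel is the composite, and for (b) vertical closure is formal while horizontal closure is checked by writing down the induced square on the combined presentations. The paper does exactly this.

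However, your explicit presentation in (a) is wrong. You write the domain as $(F_1\circ F_2)\oplus(G_1\circ F_2)$ with components ``$\alpha_1\circ_0\mathrm{id}_{F_2}$'' and ``$\mathrm{id}_{G_1}\circ_0\alpha_2$''. But $\alpha_1\circ_0\mathrm{id}_{F_2}$ lands in $G_1\circ F_2$, not in $G_1\circ G_2$; if you then push it forward along $\mathrm{id}_{G_1}\circ_0\alpha_2$ the image is already contained in the image of the second summand, so your first summand is redundant and your cokernel is only $G_1\circ\mathrm{Coker}(\alpha_2)$, not the full composite. The correct presentation (and the one the paper writes) is
\[
(F_1\circ G_2)\oplus(G_1\circ F_2)\xrightarrow{\ (\alpha_1\circ_0\mathrm{id}_{G_2},\ \mathrm{id}_{G_1}\circ_0\alpha_2)\ } G_1\circ G_2,
\]
which one sees by first applying the right-exact functor $G_1\circ(-)$ to the presentation of $\mathrm{Coker}(\alpha_2)$ and then lifting the map $F_1\circ\mathrm{Coker}(\alpha_2)\to G_1\circ\mathrm{Coker}(\alpha_2)$ through the surjection $F_1\circ G_2\twoheadrightarrow F_1\circ\mathrm{Coker}(\alpha_2)$. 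Once this is fixed, your argument for (b) goes through as stated, and indeed the paper's explicit square for the horizontal composite uses precisely this corrected domain.
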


\begin{proof}
Let $\mathtt{i},\mathtt{j},\mathtt{k}\in \cC$, 
$\mathrm{F},\mathrm{G}\in {\cC}(\mathtt{i},\mathtt{j})$,
$\mathrm{F}',\mathrm{G}'\in {\cC}(\mathtt{j},\mathtt{k})$
and $\xymatrix{\mathrm{F}\ar[r]^{\alpha}&\mathrm{G}}$,
$\xymatrix{\mathrm{F}'\ar[r]^{\alpha'}&\mathrm{G}'}$ be some
$2$-morphisms. Then the interchange law
for the $2$-category $\cC$ yields that the following diagram is commutative:
\begin{displaymath}
\xymatrix{ 
\mathrm{F}'\circ \mathrm{F}
\ar[rr]^{\mathrm{F}'(\alpha)}
\ar[d]_{\alpha'_{\mathrm{F}}}  && 
\mathrm{F}'\circ \mathrm{G}
\ar[d]^{\alpha'_{\mathrm{G}}} \\
\mathrm{G}'\circ \mathrm{F}
\ar[rr]^{\mathrm{G}'(\alpha)}  && 
\mathrm{G}'\circ \mathrm{G} 
}
\end{displaymath}
This means that 
\begin{displaymath}
\mathrm{Coker}(\alpha')\circ \mathrm{Coker}(\alpha)=
\mathrm{Coker}((\alpha'_{\mathrm{G}},\mathrm{G}'(\alpha))),
\end{displaymath}
where $(\alpha'_{\mathrm{G}},\mathrm{G}'(\alpha))$ is given
by the following diagram:
\begin{displaymath}
\xymatrix{(\mathrm{F}'\circ \mathrm{G})\oplus
(\mathrm{G}'\circ \mathrm{F})
\ar[rrrr]^{(\alpha'_{\mathrm{G}},\mathrm{G}'(\alpha))}&&&&
\mathrm{G}'\circ \mathrm{G}}.
\end{displaymath}
This implies claim \eqref{lll123.1}.

That $2$-morphisms are closed with respect to vertical composition
follows directly from the definitions. To see that 
$2$-morphisms are closed with respect to horizontal composition,
consider the following two commutative diagrams in $\cC$:
\begin{displaymath}
\xymatrix{
\mathrm{F}_1\ar[rr]^{\alpha}\ar[d]_{\xi_1}&&\mathrm{G}_1
\ar[d]^{\eta_1}\\
\mathrm{F}'_1\ar[rr]^{\alpha'}&&\mathrm{G}'_1
}\qquad \hbox{and} \qquad
\xymatrix{
\mathrm{F}_2\ar[rr]^{\beta}\ar[d]_{\xi_2}&&\mathrm{G}_2
\ar[d]^{\eta_2}\\
\mathrm{F}'_2\ar[rr]^{\beta'}&&\mathrm{G}'_2
}
\end{displaymath}
These diagrams induce $2$-morphisms between the corresponding
cokernels. The horizontal composition of these two morphisms is
induced by the following commutative diagram:
\begin{displaymath}
\xymatrix{
\mathrm{F}_1\circ \mathrm{G}_2 \oplus \mathrm{G}_1 \circ \mathrm{F}_2 \ar[rrrr]^{(\alpha_{\mathrm{G}_2},\mathrm{G}_1(\beta))}
\ar[dd]_{\left( \begin{array}{cc}                                                                                                                       \xi_1 \circ_0 \eta_2 & 0\\0 & \eta_1 \circ_0 \xi_2                                                                                                                        \end{array}\right)}&&&&\mathrm{G}_1 \circ\mathrm{G}_2
\ar[dd]^{\eta_1 \circ_0 \eta_2}\\ \\
\mathrm{F}'_1\circ \mathrm{G}'_2\oplus \mathrm{G}'_1\circ \mathrm{F}'_2\ar[rrrr]^{(\alpha'_{\mathrm{G}'_2}, 
\mathrm{G}'_1(\beta'))}&&&&\mathrm{G}'_1\circ \mathrm{G}'_2.
}
\end{displaymath}

This proves claim \eqref{lll123.2} and completes the proof.
\end{proof}

From Lemma~\ref{lll123} it follows that $\hat{\cC}$ is a $2$-subcategory
of $\cR$. From the construction it also follows that for any
$\mathtt{i},\mathtt{j}\in\cC$ the categories
$\overline{\cC}(\mathtt{i},\mathtt{j})$ and
$\hat{\cC}(\mathtt{i},\mathtt{j})$ are equivalent.
Furthermore, directly from the definitions we have:

\begin{lemma}\label{lem111}
There is a unique full and faithful $2$-functor 
$\mathfrak{i}:\cC\to \hat{\cC}$ such that 
for any $\mathtt{i},\mathtt{j}\in \cC$,
$\mathrm{F},\mathrm{G}\in  \cC(\mathtt{i},\mathtt{j})$
and $\alpha:\mathrm{F}\to\mathrm{G}$ we have 
$\mathfrak{i}(\mathrm{F})=
\mathrm{Coker}(\xymatrix{0\ar[r]^{0}&\mathrm{F}})$ and 
$\mathfrak{i}(\alpha)$ is induces by
\begin{displaymath}
\xymatrix{0\ar[r]^{0}\ar[d]_{0}&\mathrm{F}\ar[d]^{\alpha}\\
0\ar[r]^{0}&\mathrm{G}}.
\end{displaymath}
\end{lemma}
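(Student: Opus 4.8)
The plan is to construct $\mathfrak{i}$ explicitly using the formula in the statement and then verify, in order, that it is a well-defined $2$-functor, that it is strict, and that it is full and faithful. The key point making everything work is already supplied by the excerpt: by Section~\ref{s2.3}, for any $\mathtt{i},\mathtt{j}\in\cC$ and $\mathrm{F},\mathrm{G}\in\cC(\mathtt{i},\mathtt{j})$ the evaluation map $\mathrm{Hom}_{\ccC(\mathtt{i},\mathtt{j})}(\mathrm{F},\mathrm{G})\to \mathrm{Hom}_{\overline{\ccC}(\mathtt{i},\mathtt{j})}(\mathrm{F}\circ P_{\mathbbm{1}_{\mathtt{i}}},\mathrm{G}\circ P_{\mathbbm{1}_{\mathtt{i}}})$ is a bijection; transported through the equivalence $\overline{\cC}(\mathtt{i},\mathtt{j})\simeq\hat{\cC}(\mathtt{i},\mathtt{j})$, this says precisely that $\mathrm{Hom}_{\ccC(\mathtt{i},\mathtt{j})}(\mathrm{F},\mathrm{G})\to\mathrm{Hom}_{\hat{\ccC}(\mathtt{i},\mathtt{j})}(\mathfrak{i}(\mathrm{F}),\mathfrak{i}(\mathrm{G}))$ is a bijection, which is the fullness and faithfulness we want. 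So the substantive work is really just to check $\mathfrak{i}$ is a strict $2$-functor, and uniqueness.

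First I would check that $\mathfrak{i}$ is well-defined on $2$-morphisms: the diagram $\xymatrix{0\ar[r]&\mathrm{F}\ar[d]^{\alpha}\\0\ar[r]&\mathrm{G}}$ is trivially commutative (both composites are zero), so it induces a genuine $2$-morphism $\mathfrak{i}(\alpha):\mathrm{Coker}(0\to\mathrm{F})\to\mathrm{Coker}(0\to\mathrm{G})$ in $\hat{\cC}$ in the sense of Section~\ref{s2.5}. Next, functoriality: $\mathfrak{i}(\mathrm{id}_{\mathrm{F}})$ is induced by the identity vertical map, hence is the identity $2$-morphism on $\mathfrak{i}(\mathrm{F})$; and $\mathfrak{i}(\alpha'\circ_1\alpha)$ is induced by stacking two such squares vertically, which by the description of vertical composition of $2$-morphisms in $\hat{\cC}$ equals $\mathfrak{i}(\alpha')\circ_1\mathfrak{i}(\alpha)$. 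So $\mathfrak{i}$ restricts to a functor $\cC(\mathtt{i},\mathtt{j})\to\hat{\cC}(\mathtt{i},\mathtt{j})$ on each hom-category.

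Then I would verify strictness, i.e. compatibility with horizontal composition and with identities. For identity $1$-morphisms, one needs $\mathfrak{i}(\mathbbm{1}_{\mathtt{i}})$ to be (literally, not just isomorphic to) the identity $1$-morphism of $\hat{\cC}$ at $\mathtt{i}$; this holds because the identity $1$-morphism of $\hat{\cC}(\mathtt{i},\mathtt{i})$ is the cokernel functor attached to $0\to\mathbbm{1}_{\mathtt{i}}$, which is how $\hat{\cC}$ was set up as a $2$-subcategory of $\cR$ via the regular bimodule. For composition, using Lemma~\ref{lll123}\eqref{lll123.1} and the formula $\mathrm{Coker}(0\to\mathrm{G}')\circ\mathrm{Coker}(0\to\mathrm{G})=\mathrm{Coker}((0,0)\colon 0\oplus 0\to\mathrm{G}'\circ\mathrm{G})=\mathrm{Coker}(0\to\mathrm{G}'\circ\mathrm{G})$, one gets $\mathfrak{i}(\mathrm{G}')\circ\mathfrak{i}(\mathrm{G})=\mathfrak{i}(\mathrm{G}'\circ\mathrm{G})$ on the nose; the analogous bookkeeping with the horizontal-composition diagram in the proof of Lemma~\ref{lll123}\eqref{lll123.2}, specialized to $\mathrm{F}_1=\mathrm{F}_2=0$, gives $\mathfrak{i}(\eta_1\circ_0\eta_2)=\mathfrak{i}(\eta_1)\circ_0\mathfrak{i}(\eta_2)$. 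I expect this last identification — matching the ``$(0,0)\colon 0\oplus 0\to\mathrm{G}'\circ\mathrm{G}$'' cokernel presentation with the naive ``$0\to\mathrm{G}'\circ\mathrm{G}$'' presentation as the \emph{same} object of $\cR$, rather than merely canonically isomorphic objects of $\hat{\cC}$ — to be the only place where one has to be slightly careful about strict versus weak $2$-functoriality; it works because $0\oplus 0$ is literally the zero functor and the direct-sum decomposition is part of the data.

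Finally, uniqueness: any full and faithful $2$-functor $\mathfrak{j}:\cC\to\hat{\cC}$ satisfying the stated formulas on objects, $1$-morphisms and $2$-morphisms agrees with $\mathfrak{i}$ by definition, so the content of ``unique'' is that the formula on $2$-morphisms is forced once the formula on $1$-morphisms is fixed together with fullness/faithfulness — which again is immediate, since a full faithful functor is determined on morphisms by its effect on objects only after one pins down the identification of hom-sets, and here that identification is exactly the evaluation bijection of Section~\ref{s2.3}. Thus $\mathfrak{i}$ as written is the unique such $2$-functor.
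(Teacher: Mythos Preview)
Your proposal is correct and matches the paper's approach: the paper simply asserts the lemma ``directly from the definitions'' without writing out any proof, and your verification is exactly the routine unpacking of those definitions (well-definedness via the trivially commuting square, strictness via the regular bimodule $2$-functor $\cC\to\cR$ and the cokernel formula of Lemma~\ref{lll123}, and full faithfulness via the evaluation bijection of Section~\ref{s2.3} transported along $\overline{\cC}(\mathtt{i},\mathtt{j})\simeq\hat{\cC}(\mathtt{i},\mathtt{j})$). Your caution about the $0\oplus 0$ versus $0$ issue is well-placed but ultimately unnecessary: since $\hat{\cC}$ is a $2$-subcategory of $\cR$ and $\mathfrak{i}$ factors through the strict regular $2$-representation $\cC\to\cR$, the equality $\mathfrak{i}(\mathrm{G}')\circ\mathfrak{i}(\mathrm{G})=\mathfrak{i}(\mathrm{G}'\circ\mathrm{G})$ holds literally as an equality of functors, without needing to compare cokernel presentations.
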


As usual, the $2$-functor $\mathfrak{i}$ induces the restriction 
$2$-functor $\hat{\mathfrak{i}}:\hat{\cC}\text{-}
\mathfrak{mod}\to\cC\text{-}\mathfrak{mod}$. For the opposite
direction we have:

\begin{theorem}\label{thm112}
Every $2$-representation of $\cC$ extends to a $2$-representation 
of $\hat{\cC}$.
\end{theorem}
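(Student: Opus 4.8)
The plan is to take a $2$-representation $\mathbf{M}$ of $\cC$ and extend it ``by cokernels'', mimicking the construction of $\hat{\cC}$ from $\cC$ itself. First I would fix $\mathbf{M}\colon\cC\to\mathfrak{R}_{\Bbbk}$. For each $\mathtt{i}\in\cC$, the category $\mathbf{M}(\mathtt{i})$ is already abelian (it is equivalent to the module category of a finite-dimensional $\Bbbk$-algebra), so there is nothing to enlarge on objects: I set $\hat{\mathbf{M}}(\mathtt{i}):=\mathbf{M}(\mathtt{i})$. The real content is to define $\hat{\mathbf{M}}$ on the new $1$- and $2$-morphisms of $\hat{\cC}$. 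Recall that a $1$-morphism of $\hat{\cC}(\mathtt{i},\mathtt{j})$ is (the image in $\cR$ of) $\mathrm{Coker}(\alpha)$ for some $2$-morphism $\alpha\colon\mathrm{F}\to\mathrm{G}$ in $\cC$; I would send it to the functor $\mathbf{M}(\mathtt{i})\to\mathbf{M}(\mathtt{j})$ given by $N\mapsto\mathrm{Coker}(\mathbf{M}(\alpha)_N)$, which makes sense since $\mathbf{M}(\mathtt{j})$ is abelian. This is exactly the recipe already used in Proposition~\ref{prop3}(a) to build $\Phi^M_{\mathtt{j}}$, so the same $2$-functoriality arguments apply. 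On $2$-morphisms of $\hat{\cC}$, which by definition come from commutative squares of $2$-morphisms in $\cC$, I apply $\mathbf{M}$ to the square and take the induced map on cokernels; again $\mathbf{M}$ being a $2$-functor guarantees this is well defined.

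The key steps, in order, are: (1) Check that $\hat{\mathbf{M}}$ as defined above does not depend on the presentation of a $1$-morphism of $\hat{\cC}$ as a cokernel --- i.e. if $\mathrm{Coker}(\alpha)\cong\mathrm{Coker}(\alpha')$ as functors in $\cR$, then the corresponding functors on $\mathbf{M}$ are isomorphic, naturally. This follows because the isomorphism in $\cR$ is itself (by the definition of $2$-morphisms in $\cR$, which are arbitrary natural transformations, but the relevant ones restrict to $2$-morphisms coming from $\cC$-squares) witnessed by a diagram to which $\mathbf{M}$ can be applied. (2) Verify functoriality on compositions of $1$-morphisms: this is the analogue of Lemma~\ref{lll123}(a), and the proof is the same interchange-law computation, now carried out inside $\mathbf{M}$ --- the identity $\mathrm{Coker}(\alpha')\circ\mathrm{Coker}(\alpha)=\mathrm{Coker}((\alpha'_{\mathrm{G}},\mathrm{G}'(\alpha)))$ is pushed through $\mathbf{M}$ using $\mathbf{M}(\mathrm{F}'\circ\mathrm{F})=\mathbf{M}(\mathrm{F}')\mathbf{M}(\mathrm{F})$ and the corresponding equality of $2$-morphisms. (3) Verify compatibility with horizontal and vertical composition of $2$-morphisms, again re-running the proof of Lemma~\ref{lll123}(b) after applying $\mathbf{M}$. (4) Check strictness: since $\mathbf{M}$ is strict and the structural identifications on $\hat{\cC}$ (associativity, units) are the on-the-nose ones inherited from $\cR$, $\hat{\mathbf{M}}$ is a strict $2$-functor. (5) Finally, observe that $\hat{\mathbf{M}}\circ\mathfrak{i}=\mathbf{M}$: a $1$-morphism $\mathrm{F}$ of $\cC$ is sent by $\mathfrak{i}$ to $\mathrm{Coker}(0\to\mathrm{F})$, and $\mathrm{Coker}(\mathbf{M}(0)_N\colon 0\to\mathbf{M}(\mathrm{F})N)=\mathbf{M}(\mathrm{F})N$, so $\hat{\mathbf{M}}$ genuinely extends $\mathbf{M}$.

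The main obstacle I anticipate is step (1), well-definedness: a priori the assignment is made on representatives $\alpha$, and one must show that two $2$-morphisms in $\cC$ with isomorphic cokernels in $\cR$ are carried to isomorphic functors on $\mathbf{M}$, with the isomorphisms behaving coherently so that $\hat{\mathbf{M}}$ descends to an honest $2$-functor on $\hat{\cC}$ rather than merely a pseudofunctor from the ``presentation category''. The cleanest way around this is probably not to work with isomorphism classes at all: instead, fix once and for all, for each $1$-morphism of $\hat{\cC}$, the actual functor $N\mapsto\mathrm{Coker}(\mathbf{M}(\alpha)_N)$ built from a chosen defining $\alpha$ (the same $\alpha$ that was chosen when listing $1$-morphisms of $\hat{\cC}$ as a $2$-subcategory of $\cR$), and define $\hat{\mathbf{M}}$ on the specified generating $2$-morphisms, then check the relations. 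Since $\hat{\cC}$ is literally a $2$-subcategory of $\cR$ with a fixed list of $1$-morphisms and $2$-morphisms, this side-steps any coherence subtlety: one only needs the computations of steps (2)--(4), all of which are the image under the $2$-functor $\mathbf{M}$ of the identities already verified in Lemmas~\ref{lll123} and~\ref{lem111}. I would also remark that the uniqueness of the extension (up to the obvious notion of equivalence) can be read off from Proposition~\ref{prop3}, since any extension is forced on $1$-morphisms of the form $\mathrm{Coker}(\alpha)$ by exactness of the functors involved, but the theorem as stated only asks for existence.
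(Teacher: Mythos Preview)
Your proposal is correct and follows essentially the same route as the paper: define $\hat{\mathbf{M}}(\mathrm{Coker}(\alpha)):=\mathrm{Coker}(\mathbf{M}(\alpha))$, push commutative squares through $\mathbf{M}$ to get the action on $2$-morphisms, and invoke the interchange-law computations of Lemma~\ref{lll123} for compatibility with compositions. The paper's proof is in fact terser than yours---it writes down the two definitions and then asserts that ``directly from the construction it follows that $\mathbf{M}$ becomes a $2$-representation of $\hat{\cC}$''---whereas you spell out the list of verifications and, in particular, worry explicitly about the well-definedness issue in step~(1), which the paper does not address; your resolution (work with the fixed presenting $\alpha$ that comes with each $1$-morphism of $\hat{\cC}$ as a $2$-subcategory of $\cR$) is the right way to make the paper's argument precise.
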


\begin{proof} 
Let $\mathbf{M}\in \cC\text{-}\mathfrak{mod}$. Abusing notation we
will denote the extension of $\mathbf{M}$ to a $2$-representation 
of $\hat{\cC}$ also by $\mathbf{M}$. Let $\mathtt{i},\mathtt{j}\in \cC$,
$\mathrm{F},\mathrm{G}\in  \cC(\mathtt{i},\mathtt{j})$
and $\alpha:\mathrm{F}\to\mathrm{G}$. Then for
$\mathrm{Coker}(\alpha)\in \hat{\cC}(\mathtt{i},\mathtt{j})$
we define $\mathbf{M}(\mathrm{Coker}(\alpha))$ as
$\mathrm{Coker}(\mathbf{M}(\alpha))$.

To define $\mathbf{M}$ on $2$-morphisms in $\hat{\cC}$,
let $\mathrm{F}',\mathrm{G}'\in  \cC(\mathtt{i},\mathtt{j})$,
$\alpha':\mathrm{F}'\to\mathrm{G}'$, $\beta:\mathrm{F}\to\mathrm{F}'$
and $\beta':\mathrm{G}\to\mathrm{G}'$ are such that the diagram
\begin{displaymath}
\Gamma:=\xymatrix{ 
\mathrm{F}\ar[rr]^{\alpha}\ar[d]_{\beta}&&\mathrm{G}\ar[d]^{\beta'}\\ 
\mathrm{F}'\ar[rr]^{\alpha'}&&\mathrm{G}'\\ 
}
\end{displaymath}
is commutative. Then a typical $2$-morphism $\gamma$ 
in $\hat{\cC}$ is induced by $\Gamma$. Applying $\mathbf{M}$ induces 
the commutative solid part of the following diagram:
\begin{displaymath}
\xymatrix{ 
\mathbf{M}(\mathrm{F})\ar[rr]^{\mathbf{M}(\alpha)}
\ar[d]_{\mathbf{M}(\beta)}&&
\mathbf{M}(\mathrm{G})\ar[d]^{\mathbf{M}(\beta')}
\ar@{-->}[rr]^{\mathrm{proj}}&&
\mathrm{Coker}(\mathbf{M}(\alpha))\ar@{-->>}[d]^{\xi}\\ 
\mathbf{M}(\mathrm{F}')\ar[rr]^{\mathbf{M}(\alpha')}&&
\mathbf{M}(\mathrm{G}')\ar@{-->>}[rr]^{\mathrm{proj}}&&
\mathrm{Coker}(\mathbf{M}(\alpha'))\\ 
}
\end{displaymath}
Because of the commutativity of the solid part, the diagram extends 
uniquely to a commutative diagram by the dashed arrows as shown above.
Directly from the construction it follows that $\mathbf{M}$ becomes 
a $2$-representation of $\hat{\cC}$.
\end{proof}

Because of Theorem~\ref{thm112} it is natural to call $\hat{\cC}$
the {\em abelian envelope} of $\cC$. In what follows we will always
view $2$-representation of ${\cC}$ as $2$-representation of $\hat{\cC}$
via the construction given by Theorem~\ref{thm112}.

\section{Cells and cell $2$-representations of fiat categories}\label{s3}

From now on we assume that $\cC$ is a fiat category.

\subsection{Orders and cells}\label{s3.1}

Set $\mathcal{C}=\cup_{\mathtt{i},\mathtt{j}}
\mathcal{C}_{\mathtt{i},\mathtt{j}}$.
Let $\mathtt{i},\mathtt{j},\mathtt{k},\mathtt{l}\in\cC$,
$\mathrm{F}\in\mathcal{C}_{\mathtt{i},\mathtt{j}}$ and
$\mathrm{G}\in\mathcal{C}_{\mathtt{k},\mathtt{l}}$. 
We will write $\mathrm{F}\leq_R \mathrm{G}$ provided that there 
exists $\mathrm{H}\in\cC(\mathtt{j},\mathtt{l})$ such that $\mathrm{G}$
occurs as a direct summand of $\mathrm{H}\circ\mathrm{F}$
(note that this is possible only if $\mathtt{i}=\mathtt{k}$).
Similarly, we will write $\mathrm{F}\leq_L \mathrm{G}$ provided that there 
exists $\mathrm{H}\in\cC(\mathtt{k},\mathtt{i})$ such that $\mathrm{G}$
occurs as a direct summand of $\mathrm{F}\circ\mathrm{H}$
(note that this is possible only if $\mathtt{j}=\mathtt{l}$).
Finally, we will write $\mathrm{F}\leq_{LR} \mathrm{G}$ provided that there 
exists $\mathrm{H}_1\in\cC(\mathtt{k},\mathtt{i})$ 
and $\mathrm{H}_2\in\cC(\mathtt{j},\mathtt{l})$ such that $\mathrm{G}$
occurs as a direct summand of $\mathrm{H}_2\circ\mathrm{F}\circ\mathrm{H}_1$.
The relations $\leq_{L}$, $\leq_{R}$ and $\leq_{LR}$ are partial
preorders on $\mathcal{C}$. The map $\mathrm{F}\mapsto \mathrm{F}^*$
preserves $\leq_{LR}$ and swaps $\leq_{L}$ and $\leq_{R}$.

For $\mathrm{F}\in \mathcal{C}$ the set of all 
$\mathrm{G}\in \mathcal{C}$ such that $\mathrm{F}\leq_R \mathrm{G}$
and $\mathrm{G}\leq_R \mathrm{F}$ will be called the {\em right
cell} of $\mathrm{F}$ and denoted by $\mathcal{R}_{\mathrm{F}}$.
The {\em left cell}  $\mathcal{L}_{\mathrm{F}}$ 
and the {\em two-sided cell}  $\mathcal{LR}_{\mathrm{F}}$ 
are defined analogously. We will write $\mathrm{F}\sim_R \mathrm{G}$
provided that $\mathrm{G}\in \mathcal{R}_{\mathrm{F}}$ 
and define $\sim_L$ and $\sim_{LR}$
analogously. These are equivalence relations on $\mathcal{C}$.
If $\mathrm{F}\leq_L \mathrm{G}$ and $\mathrm{F}\not\sim_L \mathrm{G}$,
then we will write $\mathrm{F}<_L \mathrm{G}$ and similarly
for $<_R$ and $<_{LR}$.

\begin{example}\label{exm4}
{\rm  
The $2$-category $\cS_2$ from Example~\ref{exm1} has two right cells
$\{\mathbbm{1}_{\mathtt{i}}\}$ and $\{\mathrm{F}\}$, which are also left cells
and thus two-sided cells as well.
}
\end{example}

\subsection{Annihilators and filtrations}\label{s3.2}

Let $\mathbf{M}$ be a $2$-representation of $\cC$. For any 
$\mathtt{i}\in\cC$ and any $M\in\mathbf{M}(\mathtt{i})$ consider 
the {\em annihilator} $\mathrm{Ann}_{\mathcal{C}}(M):=
\{\mathrm{F}\in\mathcal{C}:\mathrm{F}\,M=0\}$ of $M$.
The set $\mathrm{Ann}_{\mathcal{C}}(M)$ is a coideal with respect 
to $\leq_{R}$ in the sense that $\mathrm{F}\in 
\mathrm{Ann}_{\mathcal{C}}(M)$ and $\mathrm{F}\leq_{R}\mathrm{G}$
implies $\mathrm{G}\in \mathrm{Ann}_{\mathcal{C}}(M)$. The 
{\em annihilator} $\displaystyle\mathrm{Ann}_{\mathcal{C}}(\mathbf{M}):=
\bigcap_{M} \mathrm{Ann}_{\mathcal{C}}(M)$ of $\mathbf{M}$
is a coideal with respect to $\leq_{LR}$.

Let $\mathcal{I}$ be a coideal in $\mathcal{C}$ with respect to $\leq_{LR}$.
For every $\mathtt{i}\in\cC$ denote by $\mathbf{M}_{\mathcal{I}}(\mathtt{i})$
the Serre subcategory of $\mathbf{M}(\mathtt{i})$ generated by all
simple modules $L$ such that 
$\mathcal{I}\subset \mathrm{Ann}_{\mathcal{C}}(L)$.

\begin{lemma}\label{lem5}
By restriction, $\mathbf{M}_{\mathcal{I}}$ is a $2$-representation of $\cC$.
\end{lemma}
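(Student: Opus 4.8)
The plan is to verify that the assignment $\mathtt{j}\mapsto \mathbf{M}_{\mathcal{I}}(\mathtt{j})$, together with the restrictions of the functors $\mathbf{M}(\mathrm{F})$ and natural transformations $\mathbf{M}(\alpha)$, really does land inside the Serre subcategories and hence defines a sub-$2$-representation. Since $2$-functoriality (strictness, compatibility with $\circ$, $\circ_0$, $\circ_1$, and with identities) is inherited automatically from $\mathbf{M}$ once we know the restricted functors are well-defined, the only thing to check is the following closure statement: for every $\mathtt{j},\mathtt{k}\in\cC$ and every $1$-morphism $\mathrm{F}\in\cC(\mathtt{j},\mathtt{k})$, the functor $\mathbf{M}(\mathrm{F})$ sends $\mathbf{M}_{\mathcal{I}}(\mathtt{j})$ into $\mathbf{M}_{\mathcal{I}}(\mathtt{k})$.

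First I would reduce this to a statement about simple objects. Each $\mathbf{M}(\mathrm{F})$ is an exact functor (objects of $\mathfrak{R}_{\Bbbk}$ are module categories and $1$-morphisms are honest functors between them; in the fiat setting these are biadjoint, hence exact), so it preserves the property of having a composition series with composition factors in a prescribed set. Therefore it suffices to show: if $L\in\mathbf{M}(\mathtt{j})$ is simple with $\mathcal{I}\subseteq\mathrm{Ann}_{\mathcal{C}}(L)$, then every composition factor $L'$ of $\mathbf{M}(\mathrm{F})\,L$ again satisfies $\mathcal{I}\subseteq\mathrm{Ann}_{\mathcal{C}}(L')$. Equivalently, for each $\mathrm{H}\in\mathcal{I}$ we must show $\mathbf{M}(\mathrm{H})\,L'=0$.

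The key step is an adjunction argument using that $\cC$ is fiat. Fix $\mathrm{H}\in\mathcal{I}$ and a composition factor $L'$ of $\mathbf{M}(\mathrm{F})\,L$; say $L'\in\mathbf{M}(\mathtt{l})$, so $\mathrm{H}\in\cC(\mathtt{k},\mathtt{l})$ for the relevant pair. Suppose for contradiction that $\mathbf{M}(\mathrm{H})\,L'\neq 0$. Using that $\mathrm{H}$ has a left (or right) adjoint $\mathrm{H}^*$ coming from the adjunction morphisms, nonvanishing of $\mathbf{M}(\mathrm{H})\,L'$ forces $L'$ to appear in the top (or socle) of $\mathbf{M}(\mathrm{H}^*)\,L''$ for some simple $L''$ occurring in $\mathbf{M}(\mathrm{H})\,L'$; since $L'$ occurs in $\mathbf{M}(\mathrm{F})\,L$, this in turn yields, again by adjunction, that $L$ appears in $\mathbf{M}(\mathrm{F}^*)\,L'$, hence that $L$ is not annihilated by some $1$-morphism obtained by composing $\mathrm{F}^*$, $\mathrm{H}$ (and adjoints). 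The point is that such a composite is $\geq_{LR}\mathrm{H}$: any indecomposable summand $\mathrm{G}$ of $\mathrm{H}^*\circ(\text{stuff})$ with $\mathbf{M}(\mathrm{G})\,L\neq 0$ satisfies $\mathrm{H}\leq_{LR}\mathrm{G}$ by the very definition of $\leq_{LR}$ in Subsection~\ref{s3.1}. But $\mathcal{I}$ is a coideal with respect to $\leq_{LR}$ and $\mathrm{H}\in\mathcal{I}$, so $\mathrm{G}\in\mathcal{I}\subseteq\mathrm{Ann}_{\mathcal{C}}(L)$, i.e. $\mathbf{M}(\mathrm{G})\,L=0$ — a contradiction. (Concretely: $0\neq\mathrm{Hom}(\mathbf{M}(\mathrm{H})\,L',L'')$ feeds, via the adjunction $(\mathbf{M}(\mathrm{H}),\mathbf{M}(\mathrm{H}^*))$ and $(\mathbf{M}(\mathrm{F}),\mathbf{M}(\mathrm{F}^*))$ and exactness, into $\mathrm{Hom}(\mathbf{M}(\mathrm{H}^*\circ\mathrm{F}^*)\,L'',L)\neq 0$, whence some summand $\mathrm{G}$ of $\mathrm{H}^*\circ\mathrm{F}^*$ with $\mathrm{H}\leq_L\mathrm{H}^*$, hence $\mathrm{H}\leq_{LR}\mathrm{G}$, is not in $\mathrm{Ann}_{\mathcal{C}}(L)$.) This contradiction shows $\mathbf{M}(\mathrm{H})\,L'=0$ for all $\mathrm{H}\in\mathcal{I}$, i.e. $\mathcal{I}\subseteq\mathrm{Ann}_{\mathcal{C}}(L')$, as required.

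Having established the closure of $\mathbf{M}_{\mathcal{I}}(\mathtt{j})$ under all $\mathbf{M}(\mathrm{F})$, the rest is formal: the restricted functors still compose strictly as they did in $\mathbf{M}$, the restricted natural transformations $\mathbf{M}(\alpha)$ (whose components live inside the Serre subcategories) still satisfy the interchange law, identities go to identities, and objects of $\mathfrak{R}_{\Bbbk}$ are closed under passing to Serre subcategories that are themselves module categories of finite-dimensional algebras (the relevant subcategory is the module category of the quotient of the underlying algebra by the central-idempotent/appropriate ideal cutting out those simples). Hence $\mathbf{M}_{\mathcal{I}}$ is a strict $2$-functor $\cC\to\mathfrak{R}_{\Bbbk}$, i.e. a $2$-representation of $\cC$. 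The main obstacle is the closure step of the previous paragraph: marshalling the adjunction-and-exactness bookkeeping so that nonvanishing of $\mathbf{M}(\mathrm{H})$ on a composition factor of $\mathbf{M}(\mathrm{F})\,L$ is correctly translated into the nonvanishing of $\mathbf{M}(\mathrm{G})\,L$ for some $\mathrm{G}$ with $\mathrm{H}\leq_{LR}\mathrm{G}$, which is exactly where being \emph{fiat} (not merely finitary) is used.
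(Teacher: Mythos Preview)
Your overall structure is right: reduce to closure on simples, use exactness of the $1$-morphisms. But the key closure step is handled by an unnecessarily convoluted and, as written, unjustified adjunction argument. Specifically, from ``$L'$ occurs as a composition factor of $\mathrm{F}\,L$'' you assert ``by adjunction, $L$ appears in $\mathrm{F}^*\,L'$''. Adjunction transfers information about tops and socles, not arbitrary composition factors; the implication $[\mathrm{F}\,L:L']\neq 0\Rightarrow [\mathrm{F}^*\,L':L]\neq 0$ is not valid in general and is not what adjunction gives you. Similarly, your claim ``$\mathrm{H}\leq_L\mathrm{H}^*$'' is unsupported (and the fact $\mathrm{H}\sim_{LR}\mathrm{H}^*$, which would repair this, is Lemma~\ref{lem29}\eqref{lem29.2}, proved much later).

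The paper's argument bypasses all of this and is a one-liner. Take $\mathrm{G}\in\mathcal{I}$. Every indecomposable direct summand $\mathrm{K}$ of $\mathrm{G}\circ\mathrm{F}$ satisfies $\mathrm{G}\leq_{LR}\mathrm{K}$ straight from the definition of $\leq_{LR}$, so $\mathrm{K}\in\mathcal{I}$ and hence $\mathrm{K}\,L=0$. Thus $(\mathrm{G}\circ\mathrm{F})\,L=0$. Since $\mathrm{G}$ is exact, $\mathrm{G}$ annihilates every composition factor of $\mathrm{F}\,L$. No adjunction manipulation with $L'$ or $L''$ is needed at all; exactness of $\mathrm{G}$ (which, yes, ultimately comes from the fiat structure) is the only place the adjoints enter. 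So your instinct that fiatness is used here is correct, but it is used only to guarantee exactness, not through the Hom-gymnastics you attempt.
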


\begin{proof}
We need to check that $\mathbf{M}_{\mathcal{I}}$ is stable under the 
action of  elements from $\mathcal{C}$. If $L$ is a simple module in 
$\mathbf{M}_{\mathcal{I}}(\mathtt{i})$ and $\mathrm{F}\in 
\mathcal{C}_{\mathtt{i},\mathtt{j}}$, then for any 
$\mathrm{G}\in \mathcal{I}$ the $1$-morphism $\mathrm{G}\circ \mathrm{F}$
is either zero or decomposes into a direct sum of $1$-morphisms in 
$\mathcal{I}$ (as $\mathcal{I}$ is a coideal with respect to $\leq_{LR}$).
This implies $\mathrm{G}\circ \mathrm{F}\, L=0$. Exactness
of $\mathrm{G}$ implies that $\mathrm{G}\, K=0$ for any simple
subquotient $K$ of $\mathrm{F}\, L$. The claim follows.
\end{proof}

Assume that for any $\mathtt{i}\in\cC$ we fix some Serre subcategory
$\mathbf{N}(\mathtt{i})$ in $\mathbf{M}(\mathtt{i})$ such that
for any $\mathtt{j}\in\cC$ and any $\mathrm{F}\in \cC(\mathtt{i},\mathtt{j})$ 
we have $\mathrm{F}\,\mathbf{N}(\mathtt{i})\subset \mathbf{N}(\mathtt{j})$.
Then $\mathbf{N}(\mathtt{i})$ is a $2$-representation of $\cC$
by restriction. It will be called a {\em Serre $2$-subrepresentation}  
of $\mathbf{M}$. For example, the $2$-representation 
$\mathbf{M}_{\mathcal{I}}$ constructed in Lemma~\ref{lem5} is a Serre 
$2$-subrepresentation of $\mathbf{M}$.

\begin{proposition}\label{prop6}
\begin{enumerate}[$($a$)$]
\item\label{prop6.1} For any coideal $\mathcal{I}$ in $\mathcal{C}$ with 
respect to $\leq_{LR}$ we have 
\begin{displaymath}
\mathbf{M}_{\mathcal{I}}= \mathbf{M}_{\mathrm{Ann}_{\mathcal{C}}(\mathbf{M}_{\mathcal{I}})}.
\end{displaymath}
\item\label{prop6.2} For any Serre $2$-subrepresentation $\mathbf{N}$
of $\mathbf{M}$ we have 
\begin{displaymath}
\mathrm{Ann}_{\mathcal{C}}(\mathbf{N}) = 
\mathrm{Ann}_{\mathcal{C}}(\mathbf{M}_{\mathrm{Ann}_{\mathcal{C}}(\mathbf{N})}).
\end{displaymath}
\end{enumerate}
\end{proposition}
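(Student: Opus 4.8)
The plan is to treat both parts as consequences of two opposite inclusions each, using only the definitions of $\mathbf{M}_{\mathcal{I}}$ (the Serre subcategory generated by simples $L$ with $\mathcal{I}\subset\mathrm{Ann}_{\mathcal C}(L)$) and of $\mathrm{Ann}_{\mathcal C}(\mathbf{N})=\bigcap_{N}\mathrm{Ann}_{\mathcal C}(N)=\bigcap_{L\text{ simple in }\mathbf N}\mathrm{Ann}_{\mathcal C}(L)$ (the last equality because $\mathrm{Ann}_{\mathcal C}(-)$ is a coideal and every object of a module category has a finite composition series, so $\mathrm{F}$ kills $N$ iff it kills all simple subquotients of $N$). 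First I would record the elementary monotonicity facts: if $\mathcal I\subset\mathcal J$ are coideals then $\mathbf{M}_{\mathcal J}\subset\mathbf{M}_{\mathcal I}$, and if $\mathbf N\subset\mathbf N'$ are Serre $2$-subrepresentations then $\mathrm{Ann}_{\mathcal C}(\mathbf N')\subset\mathrm{Ann}_{\mathcal C}(\mathbf N)$. Also the adjunction-style inequalities: $\mathcal I\subset\mathrm{Ann}_{\mathcal C}(\mathbf{M}_{\mathcal I})$ (by construction every simple in $\mathbf{M}_{\mathcal I}$ is killed by all of $\mathcal I$), and $\mathbf N\subset\mathbf{M}_{\mathrm{Ann}_{\mathcal C}(\mathbf N)}$ (every simple in $\mathbf N$ is killed by $\mathrm{Ann}_{\mathcal C}(\mathbf N)$, hence lies in $\mathbf{M}_{\mathrm{Ann}_{\mathcal C}(\mathbf N)}$, and $\mathbf N$ is generated by its simples as a Serre subcategory).

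For part \eqref{prop6.1}: from $\mathcal I\subset\mathrm{Ann}_{\mathcal C}(\mathbf{M}_{\mathcal I})$ and monotonicity we get $\mathbf{M}_{\mathrm{Ann}_{\mathcal C}(\mathbf{M}_{\mathcal I})}\subset\mathbf{M}_{\mathcal I}$. For the reverse inclusion, I would take a simple $L$ in $\mathbf{M}_{\mathcal I}(\mathtt i)$; by definition $\mathcal I\subset\mathrm{Ann}_{\mathcal C}(L)$, but since $L$ itself lies in $\mathbf{M}_{\mathcal I}$, a fortiori $\mathrm{Ann}_{\mathcal C}(\mathbf{M}_{\mathcal I})\subset\mathrm{Ann}_{\mathcal C}(L)$, so $L$ qualifies as a generator of $\mathbf{M}_{\mathrm{Ann}_{\mathcal C}(\mathbf{M}_{\mathcal I})}$; hence $\mathbf{M}_{\mathcal I}\subset\mathbf{M}_{\mathrm{Ann}_{\mathcal C}(\mathbf{M}_{\mathcal I})}$. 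One subtlety worth stating explicitly is that $\mathrm{Ann}_{\mathcal C}(\mathbf{M}_{\mathcal I})$ is indeed a coideal with respect to $\leq_{LR}$ (so that the notation $\mathbf{M}_{\mathrm{Ann}_{\mathcal C}(\mathbf{M}_{\mathcal I})}$ makes sense) — this is exactly the statement, recalled in Section~\ref{s3.2}, that the annihilator of a $2$-representation is an $\leq_{LR}$-coideal, applied to the Serre $2$-subrepresentation $\mathbf{M}_{\mathcal I}$, which is a $2$-representation by Lemma~\ref{lem5}.

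For part \eqref{prop6.2}: apply $\mathrm{Ann}_{\mathcal C}(-)$ to the inclusion $\mathbf N\subset\mathbf{M}_{\mathrm{Ann}_{\mathcal C}(\mathbf N)}$ and use monotonicity to obtain $\mathrm{Ann}_{\mathcal C}(\mathbf{M}_{\mathrm{Ann}_{\mathcal C}(\mathbf N)})\subset\mathrm{Ann}_{\mathcal C}(\mathbf N)$. Conversely, $\mathbf{M}_{\mathrm{Ann}_{\mathcal C}(\mathbf N)}$ is by construction generated by simples $L$ with $\mathrm{Ann}_{\mathcal C}(\mathbf N)\subset\mathrm{Ann}_{\mathcal C}(L)$; intersecting over all such $L$ gives $\mathrm{Ann}_{\mathcal C}(\mathbf N)\subset\bigcap_L\mathrm{Ann}_{\mathcal C}(L)=\mathrm{Ann}_{\mathcal C}(\mathbf{M}_{\mathrm{Ann}_{\mathcal C}(\mathbf N)})$, where the last equality again uses that an annihilator of a module category equals the intersection of the annihilators of its simple objects. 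Combining the two inclusions yields the claim. I do not expect a genuine obstacle here — the statement is a standard Galois-connection idempotency argument; the only place that requires a little care (and the only place where fiatness or Lemma~\ref{lem5} actually enters) is verifying at each turn that the object to which we apply $\mathbf{M}_{(-)}$ really is an $\leq_{LR}$-coideal and that the object to which we apply $\mathrm{Ann}_{\mathcal C}(-)$ really is a Serre $2$-subrepresentation, so that all four expressions in the statement are well-defined.
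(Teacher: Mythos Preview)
Your proposal is correct and follows essentially the same approach as the paper: both directions in each part are obtained from the unit/counit inclusions $\mathcal{I}\subset\mathrm{Ann}_{\mathcal C}(\mathbf{M}_{\mathcal I})$ and $\mathbf{N}\subset\mathbf{M}_{\mathrm{Ann}_{\mathcal C}(\mathbf N)}$ together with antitonicity, which is exactly what the paper's terse ``by definition'' arguments unpack. One small remark: in your parenthetical justifying $\mathrm{Ann}_{\mathcal C}(\mathbf N)=\bigcap_{L\text{ simple}}\mathrm{Ann}_{\mathcal C}(L)$, the relevant input is exactness of the functors (from fiatness), not that $\mathrm{Ann}_{\mathcal C}(-)$ is a coideal.
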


\begin{proof}
We prove claim \eqref{prop6.2}. Claim \eqref{prop6.1} is proved
similarly. By definition, for every 
$\mathtt{i}\in \cC$ we have $\mathbf{N}(\mathtt{i})\subset
\mathbf{M}_{\mathrm{Ann}_{\mathcal{C}}(\mathbf{N})}(\mathtt{i})$.
This implies $\mathrm{Ann}_{\mathcal{C}}(
\mathbf{M}_{\mathrm{Ann}_{\mathcal{C}}(\mathbf{N})})\subset 
\mathrm{Ann}_{\mathcal{C}}(\mathbf{N})$. On the other hand,
by definition $\mathrm{Ann}_{\mathcal{C}}(\mathbf{N})$
annihilates $\mathbf{M}_{\mathrm{Ann}_{\mathcal{C}}(\mathbf{N})}$,
so $\mathrm{Ann}_{\mathcal{C}}(\mathbf{N})\subset
\mathrm{Ann}_{\mathcal{C}}(\mathbf{M}_{\mathrm{Ann}_{\mathcal{C}}(\mathbf{N})})$. This completes the proof.
\end{proof}

Proposition~\ref{prop6} says that ${\mathcal{I}}\mapsto 
\mathbf{M}_{\mathcal{I}}$
and $\mathbf{N}\mapsto \mathrm{Ann}_{\mathcal{C}}(\mathbf{N})$
is a Galois correspondence between the partially ordered set of
coideals in $\mathcal{C}$ with respect to $\leq_{LR}$ and 
the partially ordered set of Serre $2$-subrepresentations of 
$\mathbf{M}$ with respect to inclusions.

\subsection{Annihilators in principal $2$-representations}\label{s3.3}

Let $\mathtt{i}\in\cC$. By construction, for $\mathtt{j}\in\cC$ 
isomorphism classes of simple modules in $\mathbf{P}_{\mathtt{i}}(\mathtt{j})$
are indexed by $\mathcal{C}_{\mathtt{i},\mathtt{j}}$. For
$\mathrm{F}\in \mathcal{C}_{\mathtt{i},\mathtt{j}}$ we denote 
by $L_{\mathrm{F}}$ the unique simple quotient of $P_{\mathrm{F}}$.

\begin{lemma}\label{lem11}
For $\mathrm{F},\mathrm{G}\in \mathcal{C}$ the inequality
$\mathrm{F}\,L_{\mathrm{G}}\neq 0$ is equivalent to 
$\mathrm{F}^*\leq_L\mathrm{G}$.
\end{lemma}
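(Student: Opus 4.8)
The plan is to analyze when the functor $\mathbf{P}_{\mathtt{i}}(\mathrm{F})$ kills the simple object $L_{\mathrm{G}}$, translating this into a statement about $2$-morphism spaces and then into the cell preorder. First I would recall that $L_{\mathrm{G}}$, as a simple object of $\overline{\cC}(\mathtt{i},\mathtt{k})\simeq\mathcal{C}_{\mathtt{i},\mathtt{k}}^{\mathrm{op}}\text{-}\mathrm{mod}$, is the simple head of the projective $P_{\mathrm{G}}$. Since $\mathbf{P}_{\mathtt{i}}(\mathrm{F})$ is given by left horizontal composition with $\mathrm{F}$, and composition with $\mathrm{F}$ sends $P_{\mathrm{H}}=(0\to\mathrm{H})$ to $(0\to\mathrm{F}\circ\mathrm{H})$, the functor is right exact, so $\mathrm{F}\,L_{\mathrm{G}}\neq 0$ if and only if $\mathrm{F}\,L_{\mathrm{G}}$ has a nonzero simple quotient, i.e. $\mathrm{Hom}_{\overline{\cC}(\mathtt{i},\mathtt{k})}(\mathrm{F}\,L_{\mathrm{G}},L_{\mathrm{H}})\neq 0$ for some indecomposable $\mathrm{H}\in\mathcal{C}_{\mathtt{i},\mathtt{k}}$.

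The key computational step is an adjunction. Using that $\cC$ is fiat, the $1$-morphism $\mathrm{F}$ has an adjoint $\mathrm{F}^*$, and the adjunction $2$-morphisms induce, on each $2$-representation and hence on $\overline{\cC}(\mathtt{i},-)$, an adjunction $(\mathbf{P}_{\mathtt{i}}(\mathrm{F}),\mathbf{P}_{\mathtt{i}}(\mathrm{F}^*))$. Therefore
\begin{displaymath}
\mathrm{Hom}_{\overline{\cC}(\mathtt{i},\mathtt{k})}(\mathrm{F}\,L_{\mathrm{G}},L_{\mathrm{H}})
\cong
\mathrm{Hom}_{\overline{\cC}(\mathtt{i},\mathtt{j})}(L_{\mathrm{G}},\mathrm{F}^*\,L_{\mathrm{H}}).
\end{displaymath}
Thus $\mathrm{F}\,L_{\mathrm{G}}\neq 0$ iff $L_{\mathrm{G}}$ is a submodule of $\mathrm{F}^*\,L_{\mathrm{H}}$ for some indecomposable $\mathrm{H}$; since every nonzero object of $\overline{\cC}(\mathtt{i},\mathtt{j})$ has a simple submodule, this is equivalent to $\mathrm{F}^*\,L_{\mathrm{H}}\neq 0$ having $L_{\mathrm{G}}$ among its composition factors. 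Iterating or rephrasing: the condition $\mathrm{F}\,L_{\mathrm{G}}\neq 0$ is equivalent to the existence of some indecomposable $\mathrm{H}$ with $L_{\mathrm{G}}$ a composition factor of $\mathrm{F}^*\,L_{\mathrm{H}}$, equivalently (by projectivity of $P_{\mathrm{G}}$ and the bijection of Subsection~\ref{s2.3} between $2$-morphisms in $\cC$ and morphisms in $\overline{\cC}(\mathtt{i},-)$ after composing with $P_{\mathbbm{1}_{\mathtt{i}}}$) with $\mathrm{G}$ occurring as a summand of $\mathrm{F}^*\circ\mathrm{H}$ for some $\mathrm{H}$; but that is precisely the statement $\mathrm{F}^*\leq_L\mathrm{G}$.

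To make the last translation precise I would argue as follows. The simple top of $P_{\mathrm{F}^*\circ\mathrm{H}}\cong\mathbf{P}_{\mathtt{i}}(\mathrm{F}^*)P_{\mathrm{H}}$: decomposing $\mathrm{F}^*\circ\mathrm{H}$ into indecomposables $\mathrm{G}_1\oplus\cdots\oplus\mathrm{G}_m$ in $\cC$, we get $P_{\mathrm{F}^*\circ\mathrm{H}}\cong P_{\mathrm{G}_1}\oplus\cdots\oplus P_{\mathrm{G}_m}$, whose simple tops are exactly $L_{\mathrm{G}_1},\dots,L_{\mathrm{G}_m}$. Applying the adjunction once more, $\mathrm{F}^*\,L_{\mathrm{H}}\neq 0$ (as a quotient of $\mathrm{F}^*\,P_{\mathrm{H}}$ is ruled out directly), and more carefully, $L_{\mathrm{G}}$ appears as a composition factor of $\mathrm{F}^*\,L_{\mathrm{H}}$ for some $\mathrm{H}$ iff $L_{\mathrm{G}}$ appears in $\mathrm{F}^*\,P_{\mathrm{H}}$ for some $\mathrm{H}$ (using that every $P_{\mathrm{H}}$ has simple top $L_{\mathrm{H}}$, the tops of a projective resolution of $L_{\mathrm H}$ again lie in $\leq_R$-larger cells and one tracks composition factors through the right-exact $\mathrm{F}^*$), iff $\mathrm{G}$ is a summand of $\mathrm{F}^*\circ\mathrm{H}$ for some $\mathrm{H}$, iff $\mathrm{F}^*\leq_L\mathrm{G}$. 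The main obstacle I anticipate is the bookkeeping in this final equivalence: carefully justifying the passage between ``composition factor of $\mathrm{F}^*\,L_{\mathrm{H}}$'' and ``summand of $\mathrm{F}^*\circ\mathrm{H}$'', which requires using both exactness of the $\mathrm{F}^*$-action and the coideal property (Lemma~\ref{lem5} and Lemma~\ref{lll123}-style interchange) to control which cells the relevant composition factors can lie in. I would expect one direction ($\mathrm{F}^*\leq_L\mathrm{G}\Rightarrow\mathrm{F}\,L_{\mathrm{G}}\neq 0$) to be the easy one, realizing $L_{\mathrm{G}}$ concretely as a quotient after applying $\mathrm{F}$ to a suitable projective, and the converse to need the adjunction argument above.
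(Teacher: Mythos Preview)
Your argument has a genuine gap at the final translation step, and the difficulty you anticipate there is real and not just bookkeeping. You test $\mathrm{F}\,L_{\mathrm{G}}\neq 0$ via simple \emph{quotients}, i.e.\ $\mathrm{Hom}(\mathrm{F}\,L_{\mathrm{G}},L_{\mathrm{H}})\neq 0$, and after adjunction obtain $\mathrm{Hom}(L_{\mathrm{G}},\mathrm{F}^*L_{\mathrm{H}})\neq 0$, i.e.\ socle information about $\mathrm{F}^*L_{\mathrm{H}}$. From there you try to reach ``$\mathrm{G}$ is a direct summand of $\mathrm{F}^*\circ\mathrm{H}$'', but your justification fails: the claim ``$L_{\mathrm{G}}$ is a composition factor of $\mathrm{F}^*P_{\mathrm{H}}=P_{\mathrm{F}^*\circ\mathrm{H}}$ iff $\mathrm{G}$ is a summand of $\mathrm{F}^*\circ\mathrm{H}$'' is false. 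Composition factors of $P_{\mathrm{K}}$ are detected by $\mathrm{Hom}(P_{\mathrm{G}},P_{\mathrm{K}})$, which is the full $2$-morphism space $\mathrm{G}\to\mathrm{K}$ and is typically nonzero for many $\mathrm{G}\not\cong\mathrm{K}$. What \emph{does} detect summands is $\mathrm{Hom}(P_{\mathrm{K}},L_{\mathrm{G}})$. In fact, if you push your intermediate condition ``$[\mathrm{F}^*L_{\mathrm{H}}:L_{\mathrm{G}}]\neq 0$'' through the other adjunction you get $\mathrm{Hom}(P_{\mathrm{F}\circ\mathrm{G}},L_{\mathrm{H}})\neq 0$, i.e.\ $\mathrm{H}$ is a summand of $\mathrm{F}\circ\mathrm{G}$, which is the relation $\mathrm{G}\leq_R\mathrm{H}$ and not $\mathrm{F}^*\leq_L\mathrm{G}$.

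The fix is to change the very first step: detect $\mathrm{F}\,L_{\mathrm{G}}\neq 0$ by mapping \emph{from projectives} rather than \emph{to simples}. One has $\mathrm{F}\,L_{\mathrm{G}}\neq 0$ iff $\mathrm{Hom}(P_{\mathrm{H}},\mathrm{F}\,L_{\mathrm{G}})\neq 0$ for some indecomposable $\mathrm{H}$. Using $P_{\mathrm{H}}=\mathrm{H}\,P_{\mathbbm{1}_{\mathtt{i}}}$ and the adjunction $(\mathrm{F},\mathrm{F}^*)$ on the left slot gives
\[
\mathrm{Hom}(P_{\mathrm{H}},\mathrm{F}\,L_{\mathrm{G}})
\cong \mathrm{Hom}(\mathrm{F}^*\circ\mathrm{H}\,P_{\mathbbm{1}_{\mathtt{i}}},L_{\mathrm{G}})
= \mathrm{Hom}(P_{\mathrm{F}^*\circ\mathrm{H}},L_{\mathrm{G}}),
\]
and $\mathrm{Hom}(P_{\mathrm{K}},L_{\mathrm{G}})\neq 0$ holds precisely when $\mathrm{G}$ is a direct summand of $\mathrm{K}$. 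This is exactly the paper's argument, and it closes both directions in one line without any cell-theoretic bookkeeping.
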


\begin{proof}
Without loss of generality we may assume 
$\mathrm{G}\in \mathcal{C}_{\mathtt{i},\mathtt{j}}$
and $\mathrm{F}\in \mathcal{C}_{\mathtt{j},\mathtt{k}}$.
Then $\mathrm{F}\,L_{\mathrm{G}}\neq 0$ if and only if there is 
$\mathrm{H}\in\mathcal{C}_{\mathtt{i},\mathtt{k}}$ such that 
$\mathrm{Hom}_{\overline{\ccC}(\mathtt{i},\mathtt{k})}
(P_{\mathrm{H}},\mathrm{F}\,L_{\mathrm{G}})\neq 0$. Using
$P_{\mathrm{H}}=\mathrm{H}\, P_{\mathbbm{1}_{\mathtt{i}}}$
and adjunction we obtain
\begin{displaymath}
0\neq \mathrm{Hom}_{\overline{\ccC}(\mathtt{i},\mathtt{k})}
(P_{\mathrm{H}},\mathrm{F}\,L_{\mathrm{G}})=
\mathrm{Hom}_{\overline{\ccC}(\mathtt{i},\mathtt{j})}
(\mathrm{F}^*\circ\mathrm{H}\, P_{\mathbbm{1}_{\mathtt{i}}},L_{\mathrm{G}}).
\end{displaymath}
This inequality is equivalent to the claim that
$P_{\mathrm{G}}=\mathrm{G}\, P_{\mathbbm{1}_{\mathtt{i}}}$ 
is a direct summand of $\mathrm{F}^*\circ\mathrm{H}\, 
P_{\mathbbm{1}_{\mathtt{i}}}$, that is $\mathrm{G}$ is a direct summand
of $\mathrm{F}^*\circ\mathrm{H}$. The claim follows.
\end{proof}

\begin{lemma}\label{lem12}
\begin{enumerate}[$($a$)$]
\item\label{lem12.1} For $\mathrm{F},\mathrm{G},\mathrm{H}\in \mathcal{C}$ 
the inequality $[\mathrm{F}\,L_{\mathrm{G}}:L_{\mathrm{H}}]\neq 0$ 
implies $\mathrm{H}\leq_R\mathrm{G}$.
\item\label{lem12.2} For $\mathrm{G},\mathrm{H}\in \mathcal{C}$ 
such that $\mathrm{H}\leq_R\mathrm{G}$ there exists 
$\mathrm{F}\in \mathcal{C}$ such that 
$[\mathrm{F}\,L_{\mathrm{G}}:L_{\mathrm{H}}]\neq 0$.
\end{enumerate}
\end{lemma}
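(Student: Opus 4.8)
The plan is to prove the two claims of Lemma~\ref{lem12} by exploiting the interplay between the action of $1$-morphisms on the simple modules $L_{\mathrm{G}}$ and the combinatorics of the right preorder $\leq_R$, using Lemma~\ref{lem11} as the main tool together with adjunctions. Throughout I would fix objects so that $\mathrm{G}\in\mathcal{C}_{\mathtt{i},\mathtt{j}}$ and $\mathrm{F}\in\mathcal{C}_{\mathtt{j},\mathtt{k}}$, so that $\mathrm{F}\,L_{\mathrm{G}}\in\mathbf{P}_{\mathtt{i}}(\mathtt{k})$ and its composition factors are of the form $L_{\mathrm{H}}$ with $\mathrm{H}\in\mathcal{C}_{\mathtt{i},\mathtt{k}}$.

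For claim \eqref{lem12.1}, suppose $[\mathrm{F}\,L_{\mathrm{G}}:L_{\mathrm{H}}]\neq 0$. Since $P_{\mathrm{H}}=\mathrm{H}\,P_{\mathbbm{1}_{\mathtt{i}}}$ is the projective cover of $L_{\mathrm{H}}$ in $\overline{\cC}(\mathtt{i},\mathtt{k})$, the nonvanishing of this multiplicity forces $\mathrm{Hom}_{\overline{\ccC}(\mathtt{i},\mathtt{k})}(P_{\mathrm{H}},\mathrm{F}\,L_{\mathrm{G}})\neq 0$, hence by the adjunction computation in the proof of Lemma~\ref{lem11} we get $\mathrm{Hom}_{\overline{\ccC}(\mathtt{i},\mathtt{j})}(\mathrm{F}^*\circ\mathrm{H}\,P_{\mathbbm{1}_{\mathtt{i}}},L_{\mathrm{G}})\neq 0$, which says that $P_{\mathrm{G}}$ is a direct summand of $\mathrm{F}^*\circ\mathrm{H}\,P_{\mathbbm{1}_{\mathtt{i}}}$, i.e.\ $\mathrm{G}$ is a direct summand of $\mathrm{F}^*\circ\mathrm{H}$. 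Now apply the involution $*$: since $*$ is an anti-automorphism, $\mathrm{G}^*$ is a direct summand of $\mathrm{H}^*\circ\mathrm{F}$, which by definition means $\mathrm{G}^*\leq_R\mathrm{H}^*$. Because $*$ swaps $\leq_L$ and $\leq_R$, this is exactly $\mathrm{G}\leq_L\mathrm{H}$ — but I actually want $\mathrm{H}\leq_R\mathrm{G}$. I would instead rephrase: $\mathrm{G}$ being a summand of $\mathrm{F}^*\circ\mathrm{H}$ says directly, by the definition of $\leq_R$ with witness $\mathrm{H}^{\mathrm{op}}$-factor $\mathrm{F}^*$, that $\mathrm{H}\leq_R\mathrm{G}$ (take $\mathrm{H}_{\text{witness}}=\mathrm{F}^*$ so that $\mathrm{G}$ is a summand of $\mathrm{F}^*\circ\mathrm{H}$, which is precisely the condition $\mathrm{H}\leq_R\mathrm{G}$). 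This gives the claim.

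For claim \eqref{lem12.2}, assume $\mathrm{H}\leq_R\mathrm{G}$, so there is some $\mathrm{K}\in\cC(\mathtt{j},\mathtt{k})$ with $\mathrm{H}$ a direct summand of $\mathrm{K}\circ\mathrm{G}$; decomposing $\mathrm{K}$ into indecomposables we may take $\mathrm{F}:=\mathrm{K}\in\mathcal{C}_{\mathtt{j},\mathtt{k}}$ indecomposable with $\mathrm{H}$ a summand of $\mathrm{F}\circ\mathrm{G}$. The plan is to show $[\mathrm{F}\,L_{\mathrm{G}}:L_{\mathrm{H}}]\neq 0$. First, $\mathrm{F}\,P_{\mathrm{G}}=\mathrm{F}\circ\mathrm{G}\,P_{\mathbbm{1}_{\mathtt{i}}}$ has $P_{\mathrm{H}}$ as a direct summand, so $\mathrm{Hom}(\mathrm{F}\,P_{\mathrm{G}},L_{\mathrm{H}})\neq 0$, and by adjunction $\mathrm{Hom}(P_{\mathrm{G}},\mathrm{F}^*\,L_{\mathrm{H}})\neq 0$. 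Since $P_{\mathrm{G}}$ is the projective cover of $L_{\mathrm{G}}$, this shows $[\mathrm{F}^*\,L_{\mathrm{H}}:L_{\mathrm{G}}]\neq 0$. Now the roles of $\mathrm{H}$ and $\mathrm{G}$ are swapped, and using that $*$ is involutive I would derive the desired statement by applying the (already proven) adjunction bound once more. The cleanest route: $[\mathrm{F}^*\,L_{\mathrm{H}}:L_{\mathrm{G}}]\neq 0$ by claim \eqref{lem12.1} gives $\mathrm{G}\leq_R\mathrm{H}$, which combined with the hypothesis $\mathrm{H}\leq_R\mathrm{G}$ yields $\mathrm{H}\sim_R\mathrm{G}$; but more usefully, the adjunction chain is reversible. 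Precisely, $\mathrm{Hom}(P_{\mathrm{G}},\mathrm{F}^*\,L_{\mathrm{H}})\neq 0$ need not directly give $\mathrm{Hom}(\mathrm{F}\,L_{\mathrm{G}},L_{\mathrm{H}})\neq 0$ because $L_{\mathrm{G}}$ is only a quotient of $P_{\mathrm{G}}$; so the real work is to push the nonvanishing morphism down to $L_{\mathrm{G}}$.

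The main obstacle is exactly this last point in \eqref{lem12.2}: going from a nonzero homomorphism out of the projective $P_{\mathrm{G}}$ to a nonzero homomorphism out of its simple quotient $L_{\mathrm{G}}$. I expect to handle this by a careful adjunction argument: consider the evaluation map showing that $\mathrm{F}\circ\mathrm{F}^*$-type compositions behave well under the counit/unit $\alpha,\beta$ from the fiat structure, and use that applying the exact functor $\mathrm{F}$ to the short exact sequence $0\to\mathrm{rad}\,P_{\mathrm{G}}\to P_{\mathrm{G}}\to L_{\mathrm{G}}\to 0$ gives $\mathrm{F}\,P_{\mathrm{G}}\tto\mathrm{F}\,L_{\mathrm{G}}$, so every composition factor of $\mathrm{F}\,L_{\mathrm{G}}$ is a composition factor of $\mathrm{F}\,P_{\mathrm{G}}$; the point is then to show that the specific summand $P_{\mathrm{H}}$ of $\mathrm{F}\,P_{\mathrm{G}}$ survives — equivalently, that $L_{\mathrm{H}}$ does not occur in $\mathrm{F}\,(\mathrm{rad}\,P_{\mathrm{G}})$. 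Here I would invoke \eqref{lem12.1}: every composition factor $L_{\mathrm{H}'}$ of $\mathrm{F}\,(\mathrm{rad}\,P_{\mathrm{G}})$ satisfies $\mathrm{H}'\leq_R\mathrm{G}'$ for some $\mathrm{G}'$ with $L_{\mathrm{G}'}$ a factor of $\mathrm{rad}\,P_{\mathrm{G}}$, hence $\mathrm{G}'<_R\mathrm{G}$ would suffice, but that inequality need not hold in general. The safe argument instead compares total multiplicities: $[\mathrm{F}\,P_{\mathrm{G}}:L_{\mathrm{H}}]=\dim\mathrm{Hom}(P_{\mathrm{H}},\mathrm{F}\,P_{\mathrm{G}})=\dim\mathrm{Hom}(\mathrm{F}^*\,P_{\mathrm{H}},P_{\mathrm{G}})$ counts how often $P_{\mathrm{G}}$ splits off $\mathrm{F}^*\,P_{\mathrm{H}}=\mathrm{F}^*\circ\mathrm{H}\,P_{\mathbbm{1}_{\mathtt{i}}}$, while a parallel computation over the radical, together with the fact that $L_{\mathrm{G}}=\mathrm{top}\,P_{\mathrm{G}}$, shows $[\mathrm{F}\,L_{\mathrm{G}}:L_{\mathrm{H}}]$ equals the number of copies of $\mathrm{G}$ in $\mathrm{F}^*\circ\mathrm{H}$ that lie in the ``top'' part — which is positive since $\mathrm{G}$ is visibly a summand of $\mathrm{F}^*\circ\mathrm{H}$ (from $\mathrm{H}$ being a summand of $\mathrm{F}\circ\mathrm{G}$ via the adjunction identities). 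I would spell this out by a direct application of the biadjunction isomorphisms of the fiat structure to reduce $\mathrm{Hom}(\mathrm{F}\,L_{\mathrm{G}},L_{\mathrm{H}})$ to $\mathrm{Hom}(L_{\mathrm{G}},\mathrm{F}^*\,L_{\mathrm{H}})$ and then using Lemma~\ref{lem11} to see the right-hand side is nonzero.
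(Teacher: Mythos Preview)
Your argument for \eqref{lem12.1} is correct and matches the paper's: from $[\mathrm{F}\,L_{\mathrm{G}}:L_{\mathrm{H}}]\neq 0$ one gets $\mathrm{Hom}(P_{\mathrm{H}},\mathrm{F}\,L_{\mathrm{G}})\neq 0$, then adjunction forces $\mathrm{G}$ to be a summand of $\mathrm{F}^*\circ\mathrm{H}$, which is exactly $\mathrm{H}\leq_R\mathrm{G}$.

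For \eqref{lem12.2}, however, there is a genuine error, and it is the source of the entire ``main obstacle'' you describe. You unpack the hypothesis $\mathrm{H}\leq_R\mathrm{G}$ as ``there is $\mathrm{K}$ with $\mathrm{H}$ a direct summand of $\mathrm{K}\circ\mathrm{G}$''. That is the \emph{reversed} inequality: by the paper's definition, $\mathrm{H}\leq_R\mathrm{G}$ means $\mathrm{G}$ is a direct summand of some $\mathrm{K}\circ\mathrm{H}$. (You actually used the correct reading in part \eqref{lem12.1}, so this is an internal inconsistency.) With the wrong direction you end up trying to descend from $P_{\mathrm{G}}$ to $L_{\mathrm{G}}$, and neither your ``top part'' multiplicity argument nor the final appeal to Lemma~\ref{lem11} for $\mathrm{Hom}(L_{\mathrm{G}},\mathrm{F}^*\,L_{\mathrm{H}})$ actually closes the gap: Lemma~\ref{lem11} only tells you $\mathrm{F}^*\,L_{\mathrm{H}}\neq 0$, not that $L_{\mathrm{G}}$ sits in its socle.

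With the correct reading the obstacle evaporates and the paper's proof is immediate. Choose $\mathrm{F}\in\mathcal{C}$ so that $\mathrm{G}$ is a direct summand of $\mathrm{F}^*\circ\mathrm{H}$ (take $\mathrm{F}=\mathrm{K}^*$). Then $P_{\mathrm{G}}$ is a summand of $\mathrm{F}^*\circ\mathrm{H}\,P_{\mathbbm{1}_{\mathtt{i}}}=\mathrm{F}^*\,P_{\mathrm{H}}$, so
\[
0\neq \mathrm{Hom}_{\overline{\ccC}(\mathtt{i},\mathtt{j})}(\mathrm{F}^*\,P_{\mathrm{H}},L_{\mathrm{G}})
=\mathrm{Hom}_{\overline{\ccC}(\mathtt{i},\mathtt{k})}(P_{\mathrm{H}},\mathrm{F}\,L_{\mathrm{G}}),
\]
which is $[\mathrm{F}\,L_{\mathrm{G}}:L_{\mathrm{H}}]\neq 0$. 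No descent from $P_{\mathrm{G}}$ to $L_{\mathrm{G}}$ is needed, because the adjunction moves $\mathrm{F}^*$ (not $\mathrm{F}$) across, landing $L_{\mathrm{G}}$ in the target slot from the start.
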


\begin{proof}
Without loss of generality we may assume 
\begin{equation}\label{eq111}
\mathrm{G}\in \mathcal{C}_{\mathtt{i},\mathtt{j}},\quad
\mathrm{F}\in \mathcal{C}_{\mathtt{j},\mathtt{k}}\quad\text{ and } \quad
\mathrm{H}\in \mathcal{C}_{\mathtt{i},\mathtt{k}}.
\end{equation}
Then $[\mathrm{F}\,L_{\mathrm{G}}:L_{\mathrm{H}}]\neq 0$
is equivalent to 
$\mathrm{Hom}_{\overline{\ccC}(\mathtt{i},\mathtt{k})}
(P_{\mathrm{H}},\mathrm{F}\,L_{\mathrm{G}})\neq 0$.
Similarly to Lemma~\ref{lem11} we obtain that $\mathrm{G}$
must be a direct summand of $\mathrm{F}^*\circ\mathrm{H}$.
This means that $\mathrm{H}\leq_R\mathrm{G}$, proving \eqref{lem12.1}.

To prove \eqref{lem12.2} we note that $\mathrm{H}\leq_R\mathrm{G}$
implies existence of $\mathrm{F}\in \mathcal{C}$ such that
$\mathrm{G}$ is a direct summand of $\mathrm{F}^*\circ \mathrm{H}$.
We may assume that $\mathrm{F},\mathrm{G}$ and $\mathrm{H}$ are as
in \eqref{eq111}. Then, by adjunction, we have
\begin{displaymath}
0\neq \mathrm{Hom}_{\overline{\ccC}(\mathtt{i},\mathtt{k})}
(\mathrm{F}^*\,P_{\mathrm{H}}, L_{\mathrm{G}})=
\mathrm{Hom}_{\overline{\ccC}(\mathtt{i},\mathtt{j})}
(P_{\mathrm{H}},\mathrm{F}\,L_{\mathrm{G}}),
\end{displaymath}
which means that $[\mathrm{F}\,L_{\mathrm{G}}:L_{\mathrm{H}}]\neq 0$.
This completes the proof.
\end{proof}

\begin{corollary}\label{cor121}
Let $\mathrm{F},\mathrm{G},\mathrm{H}\in \mathcal{C}$.
If $L_{\mathrm{F}}$ occurs in the top or in the socle of
$\mathrm{H}\,L_{\mathrm{G}}$, then $\mathrm{F}\in \mathcal{R}_{\mathrm{G}}$.
\end{corollary}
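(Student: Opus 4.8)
The plan is to reduce everything to Lemma~\ref{lem12} via adjunction, handling the top and the socle separately but symmetrically. First I would treat the top. By definition, $L_{\mathrm{F}}$ occurs in the top of $\mathrm{H}\,L_{\mathrm{G}}$ if and only if $\operatorname{Hom}(\mathrm{H}\,L_{\mathrm{G}},L_{\mathrm{F}})\neq 0$, equivalently $[\mathrm{H}\,L_{\mathrm{G}}:L_{\mathrm{F}}]\neq 0$ for the appropriate multiplicity. Applying Lemma~\ref{lem12}\eqref{lem12.1} with the roles $(\mathrm{F},\mathrm{G},\mathrm{H})$ of that lemma played by $(\mathrm{H},\mathrm{G},\mathrm{F})$ here, we get $\mathrm{F}\leq_R\mathrm{G}$. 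So one inequality of $\mathrm{F}\sim_R\mathrm{G}$ is immediate, and it remains to produce $\mathrm{G}\leq_R\mathrm{F}$.

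For the reverse inequality I would use adjunction to move $\mathrm{H}$ across. From $L_{\mathrm{F}}$ being a quotient of $\mathrm{H}\,L_{\mathrm{G}}$, we have a nonzero map $\operatorname{Hom}(\mathrm{H}\,L_{\mathrm{G}},L_{\mathrm{F}})$; applying the adjunction $(\mathrm{H},\mathrm{H}^*)$ supplied by the fiat structure yields a nonzero element of $\operatorname{Hom}(L_{\mathrm{G}},\mathrm{H}^*\,L_{\mathrm{F}})$, so $L_{\mathrm{G}}$ lies in the socle of $\mathrm{H}^*\,L_{\mathrm{F}}$; in particular $[\mathrm{H}^*\,L_{\mathrm{F}}:L_{\mathrm{G}}]\neq 0$. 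Now Lemma~\ref{lem12}\eqref{lem12.1} applied with $(\mathrm{H}^*,\mathrm{F},\mathrm{G})$ in the roles of its $(\mathrm{F},\mathrm{G},\mathrm{H})$ gives $\mathrm{G}\leq_R\mathrm{F}$. Combining, $\mathrm{F}\sim_R\mathrm{G}$, i.e.\ $\mathrm{F}\in\mathcal{R}_{\mathrm{G}}$.

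The socle case is dual and I would phrase it by a single application of the same adjunction: if $L_{\mathrm{F}}$ sits in the socle of $\mathrm{H}\,L_{\mathrm{G}}$, then $\operatorname{Hom}(L_{\mathrm{F}},\mathrm{H}\,L_{\mathrm{G}})\neq 0$, and adjunction $(\mathrm{H}^*,\mathrm{H})$ turns this into a nonzero map $L_{\mathrm{H}^*\,}\!$–type statement, namely $\operatorname{Hom}(\mathrm{H}^*\,L_{\mathrm{F}},L_{\mathrm{G}})\neq 0$, so $L_{\mathrm{G}}$ is in the top of $\mathrm{H}^*\,L_{\mathrm{F}}$; this is exactly the situation already handled (with $\mathrm{F}$ and $\mathrm{G}$ interchanged and $\mathrm{H}$ replaced by $\mathrm{H}^*$), yielding $\mathrm{G}\in\mathcal{R}_{\mathrm{F}}$, equivalently $\mathrm{F}\in\mathcal{R}_{\mathrm{G}}$.

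The only mild subtlety — the place I would be careful — is bookkeeping of the objects $\mathtt{i},\mathtt{j},\mathtt{k}$ so that all the composites $\mathrm{H}\circ\mathrm{G}$, $\mathrm{H}^*\circ\mathrm{F}$, etc.\ are defined, exactly as in the ``without loss of generality'' normalization \eqref{eq111} used in Lemma~\ref{lem12}; once the index conventions are fixed there is no real obstacle, since the exactness of the (bi)adjoint functors and the existence of adjunction $2$-morphisms are precisely what the fiat hypothesis provides. Note also that $\mathrm{H}\,L_{\mathrm{G}}$ may be zero, in which case it has no top or socle and there is nothing to prove; so we may assume $\mathrm{H}\,L_{\mathrm{G}}\neq 0$, which by Lemma~\ref{lem11} already forces $\mathrm{H}^*\leq_L\mathrm{G}$, but we will not even need this.
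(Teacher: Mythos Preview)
Your proof is correct and follows essentially the same route as the paper's: one inequality directly from Lemma~\ref{lem12}\eqref{lem12.1}, the other by using the $(\mathrm{H},\mathrm{H}^*)$ adjunction to pass from the top of $\mathrm{H}\,L_{\mathrm{G}}$ to the socle of $\mathrm{H}^*\,L_{\mathrm{F}}$ and applying Lemma~\ref{lem12}\eqref{lem12.1} again. The paper treats only the top case and declares the socle case analogous, exactly as you do.
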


\begin{proof}
We prove the claim in the case when $L_{\mathrm{F}}$ occurs in the 
top of $\mathrm{H}\,L_{\mathrm{G}}$, the other case being analogous.
As $[\mathrm{H}\,L_{\mathrm{G}}:L_{\mathrm{F}}]\neq 0$, we have
$\mathrm{F}\leq_R \mathrm{G}$ by Lemma~\ref{lem12}. On the other hand,
by adjunction, $L_{\mathrm{G}}$ occurs in the 
socle of $\mathrm{H}^*\,L_{\mathrm{F}}$. Hence $[\mathrm{H}^*\,L_{\mathrm{F}}:L_{\mathrm{G}}]\neq 0$ and thus we have
$\mathrm{G}\leq_R \mathrm{F}$ by Lemma~\ref{lem12}. The claim follows.
\end{proof}

\begin{lemma}\label{lem122}
For any $\mathrm{F}\in \mathcal{C}_{\mathtt{i},\mathtt{j}}$ there is a unique 
(up to scalar) nontrivial homomorphism from $P_{\mathbbm{1}_{\mathtt{i}}}$
to $\mathrm{F}^*L_{\mathrm{F}}$. In particular, 
$\mathrm{F}^*L_{\mathrm{F}}\neq 0$.
\end{lemma}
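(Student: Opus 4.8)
The plan is to compute the Hom space $\mathrm{Hom}_{\overline{\ccC}(\mathtt{i},\mathtt{i})}(P_{\mathbbm{1}_{\mathtt{i}}}, \mathrm{F}^*L_{\mathrm{F}})$ using adjunction, and to reduce the question to the already-established fact that $L_{\mathrm{F}}$ is the unique simple quotient of $P_{\mathrm{F}}$ together with the bijectivity of the evaluation map noted in Subsection~\ref{s2.3}. First I would write, by adjunction between $\mathrm{F}^*$ and $\mathrm{F}$,
\begin{displaymath}
\mathrm{Hom}_{\overline{\ccC}(\mathtt{i},\mathtt{i})}(P_{\mathbbm{1}_{\mathtt{i}}}, \mathrm{F}^*L_{\mathrm{F}}) \cong \mathrm{Hom}_{\overline{\ccC}(\mathtt{i},\mathtt{j})}(\mathrm{F}\, P_{\mathbbm{1}_{\mathtt{i}}}, L_{\mathrm{F}}) = \mathrm{Hom}_{\overline{\ccC}(\mathtt{i},\mathtt{j})}(P_{\mathrm{F}}, L_{\mathrm{F}}),
\end{displaymath}
using $P_{\mathrm{F}} = \mathrm{F}\, P_{\mathbbm{1}_{\mathtt{i}}}$. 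Since $L_{\mathrm{F}}$ is by definition the unique simple quotient (the top) of the indecomposable projective $P_{\mathrm{F}}$ in the abelian category $\overline{\cC}(\mathtt{i},\mathtt{j}) \simeq \mathcal{C}_{\mathtt{i},\mathtt{j}}^{\mathrm{op}}\text{-}\mathrm{mod}$, this last Hom space is one-dimensional, spanned by the projection $P_{\mathrm{F}}\tto L_{\mathrm{F}}$. This immediately gives uniqueness up to scalar of the nontrivial homomorphism $P_{\mathbbm{1}_{\mathtt{i}}}\to \mathrm{F}^*L_{\mathrm{F}}$, and in particular shows $\mathrm{F}^*L_{\mathrm{F}}\neq 0$.

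The one point that needs care is the validity of the adjunction isomorphism at the level of the abelian envelope: the adjunction $2$-morphisms $\alpha\colon \mathrm{F}\circ\mathrm{F}^*\to\mathbbm{1}$ and $\beta\colon\mathbbm{1}\to\mathrm{F}^*\circ\mathrm{F}$ from the fiat structure give, for any $2$-representation $\mathbf{M}$ of $\hat{\cC}$ (and $\mathbf{P}_{\mathtt{i}}$ extends to one by Theorem~\ref{thm112}), an adjunction between the functors $\mathbf{M}(\mathrm{F})$ and $\mathbf{M}(\mathrm{F}^*)$, with the triangle identities inherited from those in $\cC$. Hence the Hom-adjunction used above is legitimate. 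One should also observe that the isomorphism is natural enough that the image of the generator of the right-hand side corresponds to a specific, nonzero element on the left; concretely the map $P_{\mathbbm{1}_{\mathtt{i}}}\to\mathrm{F}^*L_{\mathrm{F}}$ associated to $P_{\mathrm{F}}\tto L_{\mathrm{F}}$ is the composite of $\beta_{P_{\mathbbm{1}_{\mathtt{i}}}}\colon P_{\mathbbm{1}_{\mathtt{i}}}\to\mathrm{F}^*\mathrm{F}\,P_{\mathbbm{1}_{\mathtt{i}}} = \mathrm{F}^*P_{\mathrm{F}}$ with $\mathrm{F}^*$ applied to the projection.

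I do not expect a serious obstacle here; the main (mild) subtlety is bookkeeping the passage to $\hat{\cC}$ and checking that the adjunction behaves well under cokernels, which follows from exactness of the functors $\mathbf{P}_{\mathtt{i}}(\mathrm{F})$ and $\mathbf{P}_{\mathtt{i}}(\mathrm{F}^*)$ (left horizontal composition is exact, being biadditive and defined via cokernels) together with the fact that adjunctions are preserved under such constructions. Alternatively, one can avoid the envelope entirely by working directly with the surjectivity/bijectivity of the evaluation map from Subsection~\ref{s2.3}: a nonzero homomorphism $P_{\mathbbm{1}_{\mathtt{i}}}\to\mathrm{F}^*L_{\mathrm{F}}$ is the same as a nonzero map $P_{\mathrm{F}}\to L_{\mathrm{F}}$, and the space of such is one-dimensional since $L_{\mathrm{F}} = \mathrm{top}(P_{\mathrm{F}})$.
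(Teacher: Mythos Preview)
Your proof is correct and follows exactly the same approach as the paper: use the adjunction $(\mathrm{F},\mathrm{F}^*)$ together with $\mathrm{F}\,P_{\mathbbm{1}_{\mathtt{i}}}=P_{\mathrm{F}}$ to identify the Hom space with $\mathrm{Hom}_{\overline{\ccC}(\mathtt{i},\mathtt{j})}(P_{\mathrm{F}},L_{\mathrm{F}})\cong\Bbbk$. Your additional remarks justifying the adjunction at the level of the abelian envelope are more careful than the paper (which simply writes ``adjunction yields''), but the argument is the same.
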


\begin{proof}
Adjunction yields
\begin{displaymath}
\mathrm{Hom}_{\overline{\ccC}(\mathtt{i},\mathtt{i})}
(P_{\mathbbm{1}_{\mathtt{i}}},\mathrm{F}^*\,L_{\mathrm{F}})=
\mathrm{Hom}_{\overline{\ccC}(\mathtt{i},\mathtt{j})}
(P_{\mathrm{F}},L_{\mathrm{F}})\cong \Bbbk 
\end{displaymath}
and the claim follows.
\end{proof}

\subsection{Serre $2$-subrepresentations of 
$\mathbf{P}_{\mathtt{i}}$}\label{s3.35}

Let $\mathcal{I}$ be an ideal in $\mathcal{C}$ 
with respect to $\leq_{R}$, i.e. $\mathrm{F}\in \mathcal{I}$ 
and $\mathrm{F}\geq_{R}\mathrm{G}$ implies $\mathrm{G}\in \mathcal{I}$. 
For $\mathtt{i},\mathtt{j}\in\cC$ 
define $\mathbf{P}_{\mathtt{i}}^{\mathcal{I}}
(\mathtt{j})$ as the Serre subcategory of 
$\mathbf{P}_{\mathtt{i}}(\mathtt{j})$ generated by 
$L_{\mathrm{F}}$ for $\mathrm{F}\in
\mathcal{C}_{\mathtt{i},\mathtt{j}}\cap\mathcal{I}$. Then from
Lemma~\ref{lem12} it follows that $\mathbf{P}_{\mathtt{i}}^{\mathcal{I}}$
is a Serre $2$-subrepresentation of $\mathbf{P}_{\mathtt{i}}$ and that
every Serre $2$-subrepresentation of $\mathbf{P}_{\mathtt{i}}$ arises
in this way. For $\mathrm{F}\in\mathcal{I}\cap
\mathcal{C}_{\mathtt{i},\mathtt{j}}$ we denote by
$P^{\mathcal{I}}_{\mathrm{F}}$ the maximal quotient of
$P_{\mathrm{F}}$ in $\mathbf{P}_{\mathtt{i}}^{\mathcal{I}}(\mathtt{j})$.
The module $P^{\mathcal{I}}_{\mathrm{F}}$ is a projective cover 
of $L_{\mathrm{F}}$ in $\mathbf{P}_{\mathtt{i}}^{\mathcal{I}}(\mathtt{j})$.
Since $\mathbf{P}_{\mathtt{i}}^{\mathcal{I}}$ is a 
$2$-subrepresentation of $\mathbf{P}_{\mathtt{i}}$,
for $\mathrm{F}\in\mathcal{C}$ we have
\begin{displaymath}
\mathrm{F}\,P^{\mathcal{I}}_{\mathbbm{1}_{\mathtt{i}}}=
\begin{cases}
P^{\mathcal{I}}_{\mathrm{F}}, & \mathrm{F}\in\mathcal{I};\\
0, & \text{otherwise}.
\end{cases}
\end{displaymath}
From the definition we have that $2$-morphisms in $\cC$ surject onto 
homomorphisms between the various $P^{\mathcal{I}}_{\mathrm{F}}$.
The natural inclusion $\mathfrak{i}^{\mathcal{I}}:
\mathbf{P}_{\mathtt{i}}^{\mathcal{I}}\to\mathbf{P}_{\mathtt{i}}$ is a 
morphism of $2$-representations, given by the collection of exact inclusions
$\mathfrak{i}^{\mathcal{I}}_{\mathtt{j}}:\mathbf{P}_{\mathtt{i}}^{\mathcal{I}}
(\mathtt{j})\to\mathbf{P}_{\mathtt{i}}(\mathtt{j})$. 

Note that $\mathcal{C}\setminus \mathcal{I}$ is
a coideal in $\mathcal{C}$ with respect to $\leq_{R}$.
Hence for any $2$-representation $\mathbf{M}$  of $\cC$ 
we have the corresponding Serre $2$-subrepresentation 
$\mathbf{M}_{\mathcal{C}\setminus \mathcal{I}}$ of $\mathbf{M}$.

\begin{proposition}[Universal property of $\mathbf{P}_{\mathtt{i}}^{\mathcal{I}}$]
\label{prop26}
Let $\mathbf{M}$ a $2$-representation of $\cC$.
\begin{enumerate}[$($a$)$]
\item\label{prop26.1} For any morphism 
$\Phi:\mathbf{P}_{\mathtt{i}}^{\mathcal{I}}\to \mathbf{M}$ we have
$\Phi(P^{\mathcal{I}}_{\mathbbm{1}_{\mathtt{i}}})\in 
\mathbf{M}_{\mathcal{C}\setminus \mathcal{I}}(\mathtt{i})$.
\item\label{prop26.2}
Let $M\in \mathbf{M}_{\mathcal{C}\setminus \mathcal{I}}(\mathtt{i})$. 
For $\mathtt{j}\in \cC$ let $\Phi^M_{\mathtt{j}}:
\mathbf{P}_{\mathtt{i}}^{\mathcal{I}}(\mathtt{j})\to 
\mathbf{M}(\mathtt{j})$ be the unique right exact functor such that 
for any $\mathrm{F}\in\cC(\mathtt{i},\mathtt{j})$ we have
\begin{displaymath}
\Phi^M_{\mathtt{j}}: P^{\mathcal{I}}_{\mathrm{F}}
\mapsto \mathbf{M}(\mathrm{F})\,M.
\end{displaymath}
Then $\Phi^M=(\Phi^M_{\mathtt{j}})_{\mathtt{j}\in\ccC}:
\mathbf{P}_{\mathtt{i}}\to \mathbf{M}$ is the unique morphism
sending $P^{\mathcal{I}}_{\mathrm{F}}$ to $M$.
\item\label{prop26.3} 
The correspondence  $M\mapsto \Phi^M$ is functorial.
\end{enumerate}
\end{proposition}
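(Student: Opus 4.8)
The plan is to mirror the proof of Proposition~\ref{prop3} (the universal property of $\mathbf{P}_{\mathtt{i}}$), passing through the quotient $\mathbf{P}_{\mathtt{i}}^{\mathcal{I}}$ of $\mathbf{P}_{\mathtt{i}}$. First I would prove claim~\eqref{prop26.1}. Given a morphism $\Phi:\mathbf{P}_{\mathtt{i}}^{\mathcal{I}}\to \mathbf{M}$, set $M:=\Phi(P^{\mathcal{I}}_{\mathbbm{1}_{\mathtt{i}}})\in\mathbf{M}(\mathtt{i})$. For any $\mathrm{G}\in\mathcal{C}\setminus\mathcal{I}$ (so $\mathrm{G}\notin\mathcal{I}$) we have $\mathrm{G}\,P^{\mathcal{I}}_{\mathbbm{1}_{\mathtt{i}}}=0$ by the displayed formula in Section~\ref{s3.35}, and since $\Phi$ is a morphism of $2$-representations it commutes with the action of $\mathrm{G}$, hence $\mathrm{G}\,M=\Phi(\mathrm{G}\,P^{\mathcal{I}}_{\mathbbm{1}_{\mathtt{i}}})=0$. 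Thus $\mathcal{C}\setminus\mathcal{I}\subseteq\mathrm{Ann}_{\mathcal{C}}(M)$, which is precisely the condition (on every simple subquotient of $M$, using exactness of the $1$-morphisms and that $\mathrm{Ann}_{\mathcal C}(M)$ is a coideal with respect to $\leq_R$) defining the Serre subcategory $\mathbf{M}_{\mathcal{C}\setminus\mathcal{I}}(\mathtt{i})$; so $M\in\mathbf{M}_{\mathcal{C}\setminus\mathcal{I}}(\mathtt{i})$.

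For claim~\eqref{prop26.2}, given $M\in\mathbf{M}_{\mathcal{C}\setminus\mathcal{I}}(\mathtt{i})$, I would first apply Proposition~\ref{prop3} to produce the morphism $\Phi^M:\mathbf{P}_{\mathtt{i}}\to\mathbf{M}$ sending $P_{\mathbbm{1}_{\mathtt{i}}}$ to $M$, which satisfies $\Phi^M(P_{\mathrm{F}})=\mathbf{M}(\mathrm{F})\,M$ for all $\mathrm{F}$. The key point is that this morphism kills the Serre $2$-subrepresentation corresponding to $\mathcal{C}\setminus\mathcal{I}$: for $\mathrm{G}\notin\mathcal{I}$ the simple $L_{\mathrm{G}}$ is a subquotient of $P_{\mathrm{G}}=\mathrm{G}\,P_{\mathbbm{1}_{\mathtt{i}}}$, and $\Phi^M(P_{\mathrm{G}})=\mathbf{M}(\mathrm{G})\,M$. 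Since $M\in\mathbf{M}_{\mathcal{C}\setminus\mathcal{I}}(\mathtt{i})$ every composition factor $L$ of $M$ has $\mathrm{G}\in\mathrm{Ann}_{\mathcal{C}}(L)$, and by exactness of $\mathrm{G}$ together with the coideal property this forces $\mathbf{M}(\mathrm{G})\,M=0$; more generally any object in $\mathbf{P}_{\mathtt{i}}(\mathtt{j})$ supported on simples $L_{\mathrm{G}}$ with $\mathrm{G}\notin\mathcal{I}$ is sent to zero. Hence $\Phi^M$ factors through the quotient functor $\mathbf{P}_{\mathtt{i}}\to\mathbf{P}_{\mathtt{i}}/\mathbf{P}_{\mathtt{i},\mathcal{C}\setminus\mathcal I}\simeq\mathbf{P}_{\mathtt{i}}^{\mathcal{I}}$, yielding a morphism $\mathbf{P}_{\mathtt{i}}^{\mathcal{I}}\to\mathbf{M}$; one checks it is right exact componentwise and sends $P^{\mathcal{I}}_{\mathrm{F}}$, the maximal quotient of $P_{\mathrm{F}}$ living in $\mathbf{P}_{\mathtt{i}}^{\mathcal{I}}$, to $\mathbf{M}(\mathrm{F})\,M$, as required. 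Uniqueness follows because $P^{\mathcal{I}}_{\mathbbm{1}_{\mathtt{i}}}$ generates $\mathbf{P}_{\mathtt{i}}^{\mathcal{I}}$ under the $\cC$-action and $2$-morphisms in $\cC$ surject onto homomorphisms between the $P^{\mathcal{I}}_{\mathrm{F}}$ (noted in Section~\ref{s3.35}), so the values of any morphism are forced on a generating set of objects and morphisms, hence everywhere. Claim~\eqref{prop26.3}, functoriality of $M\mapsto\Phi^M$, is then inherited verbatim from Proposition~\ref{prop3}\eqref{prop3.2}: a morphism $f:M\to M'$ in $\mathbf{M}_{\mathcal{C}\setminus\mathcal{I}}(\mathtt{i})$ induces the family $\{\mathbf{M}(\mathrm{F})\,f\}$, which assembles into a modification $\Phi^M\to\Phi^{M'}$ by the same commutative-square argument, now read off at the level of the quotient $2$-representation.

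The main obstacle I anticipate is verifying cleanly that $\Phi^M$ genuinely descends to $\mathbf{P}_{\mathtt{i}}^{\mathcal{I}}$ as a \emph{morphism of $2$-representations}, i.e. that it is well-defined and natural not merely objectwise but compatibly with horizontal composition — one must check that the equivalence $\mathbf{P}_{\mathtt{i}}/\mathbf{P}_{\mathtt{i},\mathcal{C}\setminus\mathcal I}\simeq\mathbf{P}_{\mathtt{i}}^{\mathcal{I}}$ intertwines the $\cC$-actions and that replacing $P_{\mathrm{F}}$ by its maximal $\mathcal{I}$-quotient $P^{\mathcal{I}}_{\mathrm{F}}$ is functorial in the relevant sense; this is where the precise bookkeeping of Section~\ref{s3.35} (the formula for $\mathrm{F}\,P^{\mathcal{I}}_{\mathbbm{1}_{\mathtt{i}}}$ and the surjectivity of $2$-morphisms onto $\mathrm{Hom}$-spaces of the $P^{\mathcal{I}}_{\mathrm{F}}$) does the real work. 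Everything else is a routine transport of Proposition~\ref{prop3} along the quotient.
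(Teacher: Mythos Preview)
Your overall strategy matches the paper's exactly: the paper's proof of \eqref{prop26.1} is the one-line observation that $\mathcal{C}\setminus\mathcal{I}\subset\mathrm{Ann}_{\mathcal{C}}(P^{\mathcal{I}}_{\mathbbm{1}_{\mathtt{i}}})$, and for \eqref{prop26.2}--\eqref{prop26.3} the paper literally says ``Mutatis mutandis, the rest is Proposition~\ref{prop3}.'' So you are filling in what the paper leaves implicit.

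There is, however, one technical slip in your write-up that you should correct. You propose to obtain the morphism $\mathbf{P}_{\mathtt{i}}^{\mathcal{I}}\to\mathbf{M}$ by factoring $\Phi^M$ through a Serre \emph{quotient}, invoking an equivalence $\mathbf{P}_{\mathtt{i}}/\mathbf{P}_{\mathtt{i},\mathcal{C}\setminus\mathcal{I}}\simeq\mathbf{P}_{\mathtt{i}}^{\mathcal{I}}$. But $\mathbf{P}_{\mathtt{i}}^{\mathcal{I}}$ is defined in Section~\ref{s3.35} as a Serre \emph{subcategory} of $\mathbf{P}_{\mathtt{i}}$, and in general the Serre subcategory generated by one set of simples is \emph{not} equivalent to the Serre quotient by the complementary set (the former is $A/A(1-e)A\text{-}\mathrm{mod}$, the latter $eAe\text{-}\mathrm{mod}$, and these differ already for small self-injective algebras). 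The fix is simpler than what you wrote: since $\mathfrak{i}^{\mathcal{I}}:\mathbf{P}_{\mathtt{i}}^{\mathcal{I}}\hookrightarrow\mathbf{P}_{\mathtt{i}}$ is an exact morphism of $2$-representations, just take the composite $\Phi^M\circ\mathfrak{i}^{\mathcal{I}}$. Your computation that $\Phi^M$ kills any object whose composition factors are $L_{\mathrm{G}}$ with $\mathrm{G}\notin\mathcal{I}$ (via the projective cover argument and $\mathbf{M}(\mathrm{G})\,M=0$) then shows that this composite sends $P^{\mathcal{I}}_{\mathrm{F}}$ to $\mathbf{M}(\mathrm{F})\,M$, as required; uniqueness and functoriality go through unchanged. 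The ``main obstacle'' you flagged dissolves once you work with the inclusion rather than the (nonexistent) quotient equivalence.
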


\begin{proof}
Claim \eqref{prop26.1} follows from the fact that
$\mathcal{C}\setminus \mathcal{I}\subset
\mathrm{Ann}_{\mathcal{C}}(P^{\mathcal{I}}_{\mathbbm{1}_{\mathtt{i}}})$.
Mutatis mutandis, the rest is Proposition~\ref{prop3}.
\end{proof}

\subsection{Right cell $2$-representations}\label{s3.4}

Fix $\mathtt{i}\in\cC$. Let $\mathcal{R}$ be a right cell in 
$\mathcal{C}$ such that $\mathcal{R}\cap 
\mathcal{C}_{\mathtt{i},\mathtt{j}}\neq \varnothing$ for
some $\mathtt{j}\in\cC$.

\begin{proposition}\label{lem7}
\begin{enumerate}[$($a$)$]
\item\label{lem7.01}
There is a unique submodule $K=K_{\mathcal{R}}$ of 
$P_{\mathbbm{1}_{\mathtt{i}}}$ which has the following
properties:
\begin{enumerate}[$($i$)$]
\item\label{lem7.1} Every simple subquotient of 
$P_{\mathbbm{1}_{\mathtt{i}}}/K$ is annihilated by any
$\mathrm{F}\in\mathcal{R}$.
\item\label{lem7.2} The module $K$ has simple top 
$L_{\mathrm{G}_{\mathcal{R}}}$ for some 
$\mathrm{G}_{\mathcal{R}}\in\mathcal{C}$ and
$\mathrm{F}\, L_{\mathrm{G}_{\mathcal{R}}}\neq 0$
for any $\mathrm{F}\in\mathcal{R}$.
\end{enumerate}
\item\label{lem7.02} For any $\mathrm{F}\in\mathcal{R}$
the module $\mathrm{F}\, L_{\mathrm{G}_{\mathcal{R}}}$
has simple top $L_{\mathrm{F}}$.
\item\label{lem7.03} We have $\mathrm{G}_{\mathcal{R}}\in\mathcal{R}$.
\item\label{lem7.04} For any $\mathrm{F}\in\mathcal{R}$
we have $\mathrm{F}^*\leq_L \mathrm{G}_{\mathcal{R}}$
and $\mathrm{F}\leq_R \mathrm{G}_{\mathcal{R}}^*$.
\item\label{lem7.05} We have $\mathrm{G}^*_{\mathcal{R}}\in 
\mathcal{R}$.
\end{enumerate}
\end{proposition}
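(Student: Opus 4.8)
The statement has an evident dependency structure, so I would establish the parts roughly in the order \eqref{lem7.01}(i), \eqref{lem7.02}, \eqref{lem7.01}(ii), \eqref{lem7.03}, \eqref{lem7.04}, \eqref{lem7.05}, bootstrapping each from the previous ones. The key input throughout is the machinery of Subsections~\ref{s3.3}--\ref{s3.35}: the identification of simples in $\mathbf{P}_{\mathtt{i}}(\mathtt{j})$ with $\mathcal{C}_{\mathtt{i},\mathtt{j}}$, Lemma~\ref{lem11}, Lemma~\ref{lem12}, Corollary~\ref{cor121}, and Lemma~\ref{lem122}. For the uniqueness and existence of $K$ in \eqref{lem7.01}, the plan is to let $\mathcal{I}$ be the coideal with respect to $\leq_R$ generated by $\mathcal{R}$ (equivalently, the $\leq_R$-upper set of $\mathcal{R}$), so that $\mathcal{C}\setminus\mathcal{I}$ is a $\leq_R$-ideal, and set $P_{\mathbbm{1}_{\mathtt{i}}}/K := P^{\mathcal{C}\setminus\mathcal{I}}_{\mathbbm{1}_{\mathtt{i}}}$, the maximal quotient of $P_{\mathbbm{1}_{\mathtt{i}}}$ all of whose simple subquotients $L_{\mathrm{H}}$ have $\mathrm{H}\notin\mathcal{I}$. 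By Lemma~\ref{lem12}\eqref{lem12.1}, any $\mathrm{F}\in\mathcal{R}\subseteq\mathcal{I}$ kills all such $L_{\mathrm{H}}$ (since $\mathrm{H}\geq_R$ something would be forced), giving property \eqref{lem7.1}, and this quotient is clearly the largest with that property, so $K$ is determined uniquely. It remains to identify the top of $K$: I would argue $K$ has simple top by showing that $P_{\mathbbm{1}_{\mathtt{i}}}$, being indecomposable projective, has simple top $L_{\mathbbm{1}_{\mathtt{i}}}$, and that $L_{\mathbbm{1}_{\mathtt{i}}}$ is \emph{not} among the composition factors of $P_{\mathbbm{1}_{\mathtt{i}}}/K$ precisely when $\mathbbm{1}_{\mathtt{i}}\in\mathcal{I}$ — but $\mathbbm{1}_{\mathtt{i}}\notin\mathcal{R}$ in general, so this requires more care, and I would instead pick $\mathrm{G}_{\mathcal{R}}$ to be a $\leq_R$-minimal element of $\{\mathrm{H}\in\mathcal{C}_{\mathtt{i},\cdot}:L_{\mathrm{H}}\text{ is a composition factor of }K\text{ not killed by all of }\mathcal{R}\}$ and show the top of $K$ is exactly $L_{\mathrm{G}_{\mathcal{R}}}$ using that any composition factor in the radical of $K$ is strictly smaller in $\leq_R$ by Lemma~\ref{lem12}\eqref{lem12.1} applied to the $\mathrm{F}$-action witnessing non-annihilation.

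For part \eqref{lem7.02}, given \eqref{lem7.01}, the plan is: $\mathrm{F}\,L_{\mathrm{G}_{\mathcal{R}}}\neq 0$ by \eqref{lem7.2}, and by Lemma~\ref{lem12}\eqref{lem12.1} every composition factor $L_{\mathrm{H}}$ of $\mathrm{F}\,L_{\mathrm{G}_{\mathcal{R}}}$ satisfies $\mathrm{H}\leq_R\mathrm{G}_{\mathcal{R}}$; to show the top is simple and equals $L_{\mathrm{F}}$, I would compute $\mathrm{Hom}_{\overline{\ccC}(\mathtt{i},\mathtt{j})}(P_{\mathrm{H}},\mathrm{F}\,L_{\mathrm{G}_{\mathcal{R}}})$ via adjunction, as in Lemma~\ref{lem11}, obtaining $\mathrm{Hom}(P_{\mathbbm{1}_{\mathtt{i}}},\mathrm{F}^*\circ\mathrm{H}\,P_{\mathbbm{1}_{\mathtt{i}}}\to\cdots)$-type expressions, and argue that this is nonzero only for $\mathrm{H}\sim_R\mathrm{G}_{\mathcal{R}}$, and then one-dimensional for $\mathrm{H}=\mathrm{F}$ using the weak uniqueness built into the fiat structure (the rank-one adjunction space of Lemma~\ref{lem122}). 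This is the technical crux. Once \eqref{lem7.02} is available, \eqref{lem7.03} follows: $L_{\mathrm{G}_{\mathcal{R}}}$ embeds in $P_{\mathbbm{1}_{\mathtt{i}}}$ as the top of $K\subseteq P_{\mathbbm{1}_{\mathtt{i}}}$, but also for $\mathrm{F}\in\mathcal{R}$ we have $\mathrm{F}\,L_{\mathrm{G}_{\mathcal{R}}}$ surjecting onto $L_{\mathrm{F}}$ and $\mathrm{F}^*(\mathrm{F}\,L_{\mathrm{G}_{\mathcal{R}}})$ related to $L_{\mathrm{G}_{\mathcal{R}}}$ via the adjunction unit; combining $\mathrm{F}^*\leq_L\mathrm{G}_{\mathcal{R}}$ (which is \eqref{lem7.04}, itself an immediate consequence of $\mathrm{F}\,L_{\mathrm{G}_{\mathcal{R}}}\neq0$ via Lemma~\ref{lem11}) with an $\mathrm{F}$-action bringing $\mathrm{G}_{\mathcal{R}}$ back up shows $\mathrm{G}_{\mathcal{R}}\sim_R\mathrm{F}$, so $\mathrm{G}_{\mathcal{R}}\in\mathcal{R}$.

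For \eqref{lem7.04}: $\mathrm{F}^*\leq_L\mathrm{G}_{\mathcal{R}}$ is exactly the content of Lemma~\ref{lem11} applied to $\mathrm{F}\,L_{\mathrm{G}_{\mathcal{R}}}\neq0$; applying the involution $*$ (which swaps $\leq_L$ and $\leq_R$) gives $\mathrm{F}\leq_R\mathrm{G}_{\mathcal{R}}^*$. For \eqref{lem7.05}: from \eqref{lem7.03} we have $\mathrm{G}_{\mathcal{R}}\in\mathcal{R}$, and by \eqref{lem7.04} applied with $\mathrm{F}=\mathrm{G}_{\mathcal{R}}$ we get $\mathrm{G}_{\mathcal{R}}\leq_R\mathrm{G}_{\mathcal{R}}^*$; for the reverse, I would take any $\mathrm{F}\in\mathcal{R}$ and use $\mathrm{F}\leq_R\mathrm{G}_{\mathcal{R}}^*$ from \eqref{lem7.04}, so in particular $\mathrm{G}_{\mathcal{R}}\leq_R\mathrm{G}_{\mathcal{R}}^*$ and, specializing, $\mathrm{G}_{\mathcal{R}}^*$ lies above the whole right cell $\mathcal{R}$; to land $\mathrm{G}_{\mathcal{R}}^*$ \emph{inside} $\mathcal{R}$ I would invoke Lemma~\ref{lem122}, which gives $\mathrm{G}_{\mathcal{R}}^*\,L_{\mathrm{G}_{\mathcal{R}}}\neq0$ with a canonical map from $P_{\mathbbm{1}_{\mathtt{i}}}$, hence by Lemma~\ref{lem11} $\mathrm{G}_{\mathcal{R}}\leq_L\mathrm{G}_{\mathcal{R}}$ trivially but more usefully that a suitable $\mathrm{H}$-action witnesses $\mathrm{G}_{\mathcal{R}}^*\geq_R\mathrm{G}_{\mathcal{R}}^*$... the clean route is: $\mathrm{G}_{\mathcal{R}}\in\mathcal{R}$ together with the fact that $*$ sends right cells to left cells means $\mathrm{G}_{\mathcal{R}}^*$ lies in a left cell; combined with $\mathrm{F}\leq_R\mathrm{G}_{\mathcal{R}}^*$ for all $\mathrm{F}\in\mathcal{R}$ and the reverse inequality obtained by applying $*$ to $\mathrm{F}^*\leq_L\mathrm{G}_{\mathcal{R}}$ with $\mathrm{F}$ ranging so as to hit $\mathrm{G}_{\mathcal{R}}^*$, one concludes $\mathrm{G}_{\mathcal{R}}^*\sim_R\mathrm{G}_{\mathcal{R}}$. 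The main obstacle I anticipate is the one-dimensionality/simplicity assertion in \eqref{lem7.02} and the precise extraction of $\mathrm{G}_{\mathcal{R}}$ with a \emph{simple} top in \eqref{lem7.01}\eqref{lem7.2}: both rely on using the adjunction morphisms of the fiat structure to pin down Hom-spaces to be exactly one-dimensional rather than merely nonzero, and getting the minimality argument for $\mathrm{G}_{\mathcal{R}}$ to interact correctly with the coideal $\mathcal{I}$.
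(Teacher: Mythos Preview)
Your outline for \eqref{lem7.03} and \eqref{lem7.04} matches the paper (Corollary~\ref{cor121} and Lemma~\ref{lem11} respectively). The real issues are in \eqref{lem7.01}\eqref{lem7.2}/\eqref{lem7.02} and in \eqref{lem7.05}.

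For the simple top of $K$, your ``pick a $\leq_R$-minimal $\mathrm{G}_{\mathcal{R}}$'' approach does not actually establish that the top is simple: Lemma~\ref{lem12}\eqref{lem12.1} controls composition factors of $\mathrm{F}\,L_{\mathrm{G}}$, not the Loewy structure of $K$ itself, so nothing prevents the top of $K$ from having several simple summands, each individually satisfying your minimality condition. The paper's argument is different and is the missing idea. One defines $K$ as the \emph{minimal} submodule with property~\eqref{lem7.1} (equivalently your maximal quotient definition), and then observes that since $\mathrm{F}$ annihilates $P_{\mathbbm{1}_{\mathtt{i}}}/K$, exactness of $\mathrm{F}$ gives $\mathrm{F}\,K\cong \mathrm{F}\,P_{\mathbbm{1}_{\mathtt{i}}}=P_{\mathrm{F}}$, an \emph{indecomposable projective}. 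Now apply $\mathrm{F}$ to $\mathrm{rad}\,K\hookrightarrow K\tto\mathrm{top}\,K$: the middle term has simple top, hence so does $\mathrm{F}\,\mathrm{top}\,K$. But $\mathrm{top}\,K$ is semisimple and, by minimality of $K$, no summand is killed by $\mathrm{F}$; so $\mathrm{F}\,\mathrm{top}\,K$ having simple top forces $\mathrm{top}\,K$ to be simple. This simultaneously gives \eqref{lem7.02}: $\mathrm{F}\,L_{\mathrm{G}_{\mathcal{R}}}=\mathrm{F}\,\mathrm{top}\,K$ is a quotient of $P_{\mathrm{F}}$ with simple top, necessarily $L_{\mathrm{F}}$. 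Your adjunction computation for \eqref{lem7.02} is more laborious and you correctly flag the one-dimensionality as a sticking point; the paper avoids it entirely.

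For \eqref{lem7.05}, your sketch never produces the inequality $\mathrm{G}_{\mathcal{R}}^*\leq_R\mathrm{G}_{\mathcal{R}}$ and trails off. The paper's argument is a contradiction: assume $\mathrm{G}_{\mathcal{R}}^*\notin\mathcal{R}$ and let $\tilde{\mathcal{R}}$ be its right cell. Lemma~\ref{lem122} gives $\mathrm{G}_{\mathcal{R}}^*\,L_{\mathrm{G}_{\mathcal{R}}}\neq 0$, so $K_{\mathcal{R}}\subset K_{\tilde{\mathcal{R}}}$. If the inclusion is proper, then by \eqref{lem7.1} the top $L_{\mathrm{G}_{\tilde{\mathcal{R}}}}$ is annihilated by $\mathrm{G}_{\mathcal{R}}$; since $\mathrm{G}_{\mathcal{R}}\leq_R\mathrm{G}_{\mathcal{R}}^*$ and annihilators are $\leq_R$-coideals, $\mathrm{G}_{\mathcal{R}}^*$ also annihilates it, contradicting \eqref{lem7.2} for $\tilde{\mathcal{R}}$. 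Hence $K_{\mathcal{R}}=K_{\tilde{\mathcal{R}}}$ and $\mathcal{R}=\tilde{\mathcal{R}}$.
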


\begin{proof}
Let $\mathrm{F}\in\mathcal{R}$. Let further 
$\mathtt{j}\in\cC$ be such that $\mathrm{F}\in\mathcal{R}\cap
\mathcal{C}_{\mathtt{i},\mathtt{j}}$. Then
the module $\mathrm{F}\, P_{\mathbbm{1}_{\mathtt{i}}}$ 
is a nonzero indecomposable projective
in $\overline{\cC(\mathtt{i},\mathtt{j})}$.
Hence $\mathrm{F}$ does not annihilate 
$P_{\mathbbm{1}_{\mathtt{i}}}$ and thus there is at
least one simple subquotient of 
$P_{\mathbbm{1}_{\mathtt{i}}}$ which is not annihilated by
$\mathrm{F}$. Let $K$ be the minimal submodule of 
$P_{\mathbbm{1}_{\mathtt{i}}}$ such that 
every simple subquotient of 
$P_{\mathbbm{1}_{\mathtt{i}}}/K$ is annihilated by $\mathrm{F}$.
As $\mathrm{Ann}_{\mathcal{C}}(P_{\mathbbm{1}_{\mathtt{i}}}/K)$
is a coideal with respect to $\leq_R$, the module
$P_{\mathbbm{1}_{\mathtt{i}}}/K$ is annihilated 
by every $\mathrm{G}\in\mathcal{R}$. Similarly, we have that
for any simple subquotient $L$ in the top of $K$ and for any
$\mathrm{G}\in\mathcal{R}$ we have $\mathrm{G}\, L\neq 0$. 
This implies that $K$ 
does not depend on the choice of $\mathrm{F}\in\mathcal{R}$.
Then \eqref{lem7.1} is satisfied and to complete the proof of
\eqref{lem7.01} we only have to show that $K$ has simple top.

Applying $\mathrm{F}$ to the exact sequence
$K\hookrightarrow P_{\mathbbm{1}_{\mathtt{i}}}\tto 
P_{\mathbbm{1}_{\mathtt{i}}}/K$ we obtain the exact sequence
\begin{displaymath}
\mathrm{F}\,K\hookrightarrow \mathrm{F}\, P_{\mathbbm{1}_{\mathtt{i}}}\tto
\mathrm{F}\,P_{\mathbbm{1}_{\mathtt{i}}}/K.
\end{displaymath}
As $\mathrm{F}\,P_{\mathbbm{1}_{\mathtt{i}}}/K=0$, 
we see that $\mathrm{F}\,K\cong \mathrm{F}\, P_{\mathbbm{1}_{\mathtt{i}}}$
is an indecomposable projective and hence has simple top. 
Applying $\mathrm{F}$ to the exact sequence
$\mathrm{rad}\,K\hookrightarrow K\tto
\mathrm{top}\,K$ we obtain 
the exact sequence
\begin{displaymath}
\mathrm{F}\, \mathrm{rad}\,K\hookrightarrow \mathrm{F}\, K\tto
\mathrm{F}\,\mathrm{top}\,K.
\end{displaymath}
As $\mathrm{F}\,K$ has simple top by the above, we obtain that
$\mathrm{F}\,\mathrm{top}\,K$
has simple top. By construction, $\mathrm{top}\,K$ is semi-simple 
and none of its submodules are annihilated by $\mathrm{F}$. 
Therefore $\mathrm{top}\,K$ is simple, which implies \eqref{lem7.01}
and also \eqref{lem7.02}.

For $\mathrm{F}\in\mathcal{R}$, the projective module 
$P_{\mathrm{F}}$ surjects onto  the nontrivial module 
$\mathrm{F}\, L_{\mathrm{G}_{\mathcal{R}}}$ by the above.
Hence $L_{\mathrm{F}}$ occurs in the top of 
$\mathrm{F}\, L_{\mathrm{G}_{\mathcal{R}}}$ and thus
\eqref{lem7.03} follows from Corollary~\ref{cor121}.
For $\mathrm{F}\in\mathcal{R}$ we have 
$\mathrm{F}\, L_{\mathrm{G}_{\mathcal{R}}}\neq 0$ and hence
\eqref{lem7.04} follows from Lemma~\ref{lem11}.

From \eqref{lem7.04} we have $\mathrm{G}_{\mathcal{R}}\leq_R
\mathrm{G}_{\mathcal{R}}^*$. Assume that 
$\mathrm{G}_{\mathcal{R}}^*\not\in\mathcal{R}$ and let
$\tilde{\mathcal{R}}$ be the right cell containing
$\mathrm{G}_{\mathcal{R}}^*$. By Lemma~\ref{lem122} we have
$\mathrm{G}_{\mathcal{R}}^*L_{\mathrm{G}_{\mathcal{R}}}\neq 0$,
which implies that $K_{\mathcal{R}}\subset K_{\tilde{\mathcal{R}}}$.
If $K_{\mathcal{R}}=K_{\tilde{\mathcal{R}}}$, then 
$L_{\mathrm{G}_{\mathcal{R}}}=L_{\mathrm{G}_{\tilde{\mathcal{R}}}}$
and hence $\mathcal{R}=\tilde{\mathcal{R}}$, which implies \eqref{lem7.05}.
If $K_{\mathcal{R}}\subsetneq K_{\tilde{\mathcal{R}}}$, then
from \eqref{lem7.1} we have 
$\mathrm{G}_{\mathcal{R}}\, L_{\mathrm{G}_{\tilde{\mathcal{R}}}}=0$. 
As $\mathrm{Ann}_{\mathcal{C}}
(L_{\mathrm{G}_{\tilde{\mathcal{R}}}})$ is a coideal with respect to
$\leq_R$, it follows that $L_{\mathrm{G}_{\tilde{\mathcal{R}}}}$
is annihilated by $\mathrm{G}_{\mathcal{R}}^*$. This contradicts
\eqref{lem7.2} and hence \eqref{lem7.05} follows. The proof is complete.
\end{proof}
 
For simplicity we set $L=L_{\mathrm{G}_{\mathcal{R}}}$.
For $\mathtt{j}\in\cC$ denote by 
$\mathcal{D}_{\mathcal{R},\mathtt{j}}$ the full subcategory of
$\mathbf{P}_{\mathtt{i}}(\mathtt{j})$ with objects
$\mathrm{G}\, L$, $\mathrm{G}\in \mathcal{R}\cap 
\mathcal{C}_{\mathtt{i},\mathtt{j}}$. As each $\mathrm{G}\, L$
is a quotient of $P_{\mathrm{G}}$ and $2$-morphisms in $\cC$
surject onto homomorphisms between projective modules in
$\mathbf{P}_{\mathtt{i}}(\mathtt{j})$ (see Subsection~\ref{s2.3}), it 
follows that $2$-morphisms in $\cC$ surject onto homomorphisms between 
the various $\mathrm{G}\, L$. 

\begin{lemma}\label{lem14}
For every $\mathrm{F}\in\mathcal{C}$ and $\mathrm{G}\in \mathcal{R}$,
the module $\mathrm{F}\circ \mathrm{G}\,L$ is isomorphic to a direct
sum of modules of the form $\mathrm{H}\,L$, $\mathrm{H}\in \mathcal{R}$.
\end{lemma}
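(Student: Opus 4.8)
The plan is to decompose $\mathrm{F}\circ\mathrm{G}$ into indecomposable $1$-morphisms and analyze each summand separately. Since $\cC(\mathtt{j},\mathtt{k})$ is karoubian, we may write $\mathrm{F}\circ\mathrm{G}\cong\bigoplus_s \mathrm{H}_s$ as a direct sum of indecomposable $1$-morphisms $\mathrm{H}_s\in\mathcal{C}$; correspondingly $\mathrm{F}\circ\mathrm{G}\,L\cong\bigoplus_s \mathrm{H}_s\,L$. So it suffices to treat a single indecomposable summand $\mathrm{H}$ of $\mathrm{F}\circ\mathrm{G}$ and show that $\mathrm{H}\,L$ is either zero or isomorphic to $\mathrm{H}'\,L$ for some $\mathrm{H}'\in\mathcal{R}$. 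The natural dichotomy is according to whether $\mathrm{H}\in\mathcal{R}$ or not.

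First I would handle the case $\mathrm{H}\notin\mathcal{R}$ and show $\mathrm{H}\,L=0$. The point is that $\mathrm{H}$ is a direct summand of $\mathrm{F}\circ\mathrm{G}$ with $\mathrm{G}\in\mathcal{R}$, so $\mathrm{G}\leq_R\mathrm{H}$ by definition of $\leq_R$; since $\mathrm{H}\notin\mathcal{R}=\mathcal{R}_{\mathrm{G}}$, we get $\mathrm{G}<_R\mathrm{H}$, i.e. $\mathrm{H}\not\leq_R\mathrm{G}$. Now $\mathrm{G}^*_{\mathcal{R}}\in\mathcal{R}$ by Proposition~\ref{lem7}\eqref{lem7.05}; I would show $\mathrm{H}\,L=0$ by contradiction, using that if $\mathrm{H}\,L\neq 0$ then by Lemma~\ref{lem11} we have $\mathrm{H}^*\leq_L\mathrm{G}_{\mathcal{R}}$, and then trace through the $\leq_R/\leq_L$ bookkeeping (applying $*$, which swaps $\leq_L$ and $\leq_R$, together with Proposition~\ref{lem7}\eqref{lem7.04}) to contradict $\mathrm{H}\not\leq_R\mathrm{G}_{\mathcal{R}}^{**}=\mathrm{G}_{\mathcal{R}}$. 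Alternatively, and perhaps more cleanly, one can argue via the submodule $K=K_{\mathcal{R}}$: the module $\mathrm{F}\circ\mathrm{G}\,L$ is a quotient of $\mathrm{F}\circ\mathrm{G}\,P_{\mathbbm{1}_{\mathtt{i}}}$, which by the coideal property of $\mathrm{Ann}_{\mathcal{C}}(P_{\mathbbm{1}_{\mathtt{i}}}/K)$ being closed under $\leq_R$ has all composition factors $L_{\mathrm{F}'}$ with $\mathrm{F}'\leq_R$ something in $\mathcal{R}$; combined with Corollary~\ref{cor121} this forces the indecomposable summand $\mathrm{H}$ with $\mathrm{H}\notin\mathcal{R}$ to annihilate $L$.

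For the case $\mathrm{H}\in\mathcal{R}$ I would show $\mathrm{H}\,L\cong\mathrm{H}\,L$ trivially — that is, such summands are already of the required form $\mathrm{H}\,L$ with $\mathrm{H}\in\mathcal{R}$, so there is nothing to prove there. Thus the entire content of the lemma is the vanishing statement for summands outside $\mathcal{R}$, and assembling the two cases gives $\mathrm{F}\circ\mathrm{G}\,L\cong\bigoplus_{s:\,\mathrm{H}_s\in\mathcal{R}}\mathrm{H}_s\,L$, which is the claim. The main obstacle I anticipate is the bookkeeping in the non-$\mathcal{R}$ case: one must correctly juggle the interplay of $\leq_R$ (which controls composition factors of $\mathrm{F}\,L_{\mathrm{G}}$, via Lemma~\ref{lem12}) and $\leq_L$ (which controls non-vanishing of $\mathrm{F}\,L_{\mathrm{G}}$, via Lemma~\ref{lem11}), keeping careful track of where the involution $*$ swaps the two, and invoke the correct parts of Proposition~\ref{lem7} — in particular that $\mathrm{G}_{\mathcal{R}}$ and $\mathrm{G}^*_{\mathcal{R}}$ both lie in $\mathcal{R}$ and satisfy $\mathrm{F}^*\leq_L\mathrm{G}_{\mathcal{R}}$ for all $\mathrm{F}\in\mathcal{R}$. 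Once the $\leq_R$-coideal argument is set up cleanly, the rest is routine.
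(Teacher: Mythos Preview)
Your proposal is correct and follows essentially the same route as the paper. The paper's proof is simply the direct (contrapositive) version of your contradiction argument: decompose $\mathrm{F}\circ\mathrm{G}$ into indecomposables $\mathrm{H}$, note $\mathrm{G}\leq_R\mathrm{H}$, and observe that $\mathrm{H}\,L\neq 0$ forces $\mathrm{H}^*\leq_L\mathrm{G}_{\mathcal{R}}$ by Lemma~\ref{lem11}, hence $\mathrm{H}\leq_R\mathrm{G}_{\mathcal{R}}^*\in\mathcal{R}$ by Proposition~\ref{lem7}\eqref{lem7.05}, so $\mathrm{H}\in\mathcal{R}$; your alternative detour through $K_{\mathcal{R}}$ and Corollary~\ref{cor121} is not needed.
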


\begin{proof}
Any $\mathrm{H}$ occurring as a direct summand of 
$\mathrm{F}\circ \mathrm{G}$ satisfies $\mathrm{H}\geq_R \mathrm{G}$.
On the other hand, $\mathrm{H}\,L\neq 0$ implies
$\mathrm{H}^*\leq_L \mathrm{G}_{\mathcal{R}}$ by Lemma~\ref{lem11}.
This is equivalent to $\mathrm{H}\leq_R \mathrm{G}_{\mathcal{R}}^*$.
By Proposition~\ref{lem7}\eqref{lem7.05}, we have 
$\mathrm{G}_{\mathcal{R}}^*\in \mathcal{R}$. Thus
$\mathrm{H}\in \mathcal{R}$, as claimed.
\end{proof}

\begin{lemma}\label{lem15}
For every $\mathrm{F},\mathrm{H}\in \mathcal{R}\cap
\mathcal{C}_{\mathtt{i},\mathtt{j}}$ we have
\begin{displaymath}
\dim\mathrm{Hom}_{\overline{\ccC}(\mathtt{i},\mathtt{j})}
(\mathrm{F}\,L,\mathrm{H}\,L)=[\mathrm{H}\,L:L_{\mathrm{F}}].
\end{displaymath}
\end{lemma}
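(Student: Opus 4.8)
The plan is to identify $\mathrm{Hom}_{\overline{\ccC}(\mathtt{i},\mathtt{j})}(\mathrm{F}\,L,\mathrm{H}\,L)$ with $\mathrm{Hom}_{\overline{\ccC}(\mathtt{i},\mathtt{j})}(P_{\mathrm{F}},\mathrm{H}\,L)$ by pulling back along a canonical epimorphism $P_{\mathrm{F}}\tto\mathrm{F}\,L$. Granting this, the lemma follows at once: $P_{\mathrm{F}}$ is a projective cover of $L_{\mathrm{F}}$ in $\mathbf{P}_{\mathtt{i}}(\mathtt{j})\simeq\mathcal{C}_{\mathtt{i},\mathtt{j}}^{\mathrm{op}}\text{-}\mathrm{mod}$, so $\dim\mathrm{Hom}_{\overline{\ccC}(\mathtt{i},\mathtt{j})}(P_{\mathrm{F}},\mathrm{H}\,L)=[\mathrm{H}\,L:L_{\mathrm{F}}]$. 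Pulling back along an epimorphism is injective, so the real content is \emph{surjectivity}: every $\varphi\colon P_{\mathrm{F}}\to\mathrm{H}\,L$ should factor through $P_{\mathrm{F}}\tto\mathrm{F}\,L$.

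First I would record two identifications coming from the construction of $K=K_{\mathcal{R}}$ in Proposition~\ref{lem7}. Since every simple subquotient of $P_{\mathbbm{1}_{\mathtt{i}}}/K$ is annihilated by the members of $\mathcal{R}$ (Proposition~\ref{lem7}\eqref{lem7.1}) and $1$-morphisms act exactly, applying any $\mathrm{G}\in\mathcal{R}$ to $K\hookrightarrow P_{\mathbbm{1}_{\mathtt{i}}}$ yields an isomorphism $\mathrm{G}\,K\cong\mathrm{G}\,P_{\mathbbm{1}_{\mathtt{i}}}=P_{\mathrm{G}}$, exactly as in the proof of Proposition~\ref{lem7}; I use this for $\mathrm{G}\in\{\mathrm{F},\mathrm{H}\}$. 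Next, $\mathrm{top}\,K=L=L_{\mathrm{G}_{\mathcal{R}}}$ (Proposition~\ref{lem7}\eqref{lem7.2}); writing $p\colon K\tto L$ for the canonical projection and applying the exact functor $\mathbf{P}_{\mathtt{i}}(\mathrm{G})$, the morphism $\mathbf{P}_{\mathtt{i}}(\mathrm{G})(p)$ becomes, through the isomorphism just mentioned, a canonical epimorphism $\pi_{\mathrm{G}}\colon P_{\mathrm{G}}\tto\mathrm{G}\,L$. I take $\pi_{\mathrm{F}}$ as the epimorphism in the statement of the plan.

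For surjectivity, let $\varphi\colon P_{\mathrm{F}}\to\mathrm{H}\,L$. As $P_{\mathrm{F}}$ is projective and $\pi_{\mathrm{H}}$ is epi, $\varphi$ lifts to some $\widetilde{\varphi}\colon P_{\mathrm{F}}\to P_{\mathrm{H}}$ with $\pi_{\mathrm{H}}\circ\widetilde{\varphi}=\varphi$. By the bijective evaluation map of Subsection~\ref{s2.3}, $\widetilde{\varphi}=\mathbf{P}_{\mathtt{i}}(\alpha)_{P_{\mathbbm{1}_{\mathtt{i}}}}$ for some $2$-morphism $\alpha\colon\mathrm{F}\to\mathrm{H}$. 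Transporting through the isomorphisms $\mathrm{F}\,K\cong P_{\mathrm{F}}$ and $\mathrm{H}\,K\cong P_{\mathrm{H}}$ and using naturality of $\mathbf{P}_{\mathtt{i}}(\alpha)$ with respect to $K\hookrightarrow P_{\mathbbm{1}_{\mathtt{i}}}$, the lift $\widetilde{\varphi}$ corresponds to $\mathbf{P}_{\mathtt{i}}(\alpha)_K\colon\mathrm{F}\,K\to\mathrm{H}\,K$, whence $\varphi$ corresponds to $\mathbf{P}_{\mathtt{i}}(\mathrm{H})(p)\circ\mathbf{P}_{\mathtt{i}}(\alpha)_K$. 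Naturality of $\mathbf{P}_{\mathtt{i}}(\alpha)$ with respect to $p\colon K\to L$ rewrites this as $\mathbf{P}_{\mathtt{i}}(\alpha)_L\circ\mathbf{P}_{\mathtt{i}}(\mathrm{F})(p)=\mathbf{P}_{\mathtt{i}}(\alpha)_L\circ\pi_{\mathrm{F}}$. Thus $\varphi=\psi\circ\pi_{\mathrm{F}}$ with $\psi:=\mathbf{P}_{\mathtt{i}}(\alpha)_L\in\mathrm{Hom}_{\overline{\ccC}(\mathtt{i},\mathtt{j})}(\mathrm{F}\,L,\mathrm{H}\,L)$, which is exactly what surjectivity requires.

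I expect the only delicate point to be the diagram-chasing in the last paragraph — verifying that, under the identifications $\mathrm{G}\,K\cong P_{\mathrm{G}}$, the lift $\widetilde{\varphi}=\mathbf{P}_{\mathtt{i}}(\alpha)_{P_{\mathbbm{1}_{\mathtt{i}}}}$ really becomes $\mathbf{P}_{\mathtt{i}}(\alpha)_K$ and that $\pi_{\mathrm{G}}$ is honestly identified with $\mathbf{P}_{\mathtt{i}}(\mathrm{G})(p)$. Both are consequences of the $2$-functoriality of $\mathbf{P}_{\mathtt{i}}$ together with the annihilation statement Proposition~\ref{lem7}\eqref{lem7.1}, so there should be no essential obstacle, only careful bookkeeping with the natural transformations involved; note also that the argument genuinely needs $\mathrm{F}$ \emph{and} $\mathrm{H}$ to lie in $\mathcal{R}$, since it is precisely this that makes both $\mathrm{F}\,K\cong P_{\mathrm{F}}$ and $\mathrm{H}\,K\cong P_{\mathrm{H}}$.
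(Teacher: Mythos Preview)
Your proof is correct, and it takes a genuinely different route from the paper's. The paper argues by adjunction: setting $k$ equal to the multiplicity of $\mathrm{G}_{\mathcal{R}}$ as a direct summand of $\mathrm{H}^*\circ\mathrm{F}$, it computes $[\mathrm{H}\,L:L_{\mathrm{F}}]=\dim\mathrm{Hom}(\mathrm{H}^*\circ\mathrm{F}\,P_{\mathbbm{1}_{\mathtt{i}}},L)=k$, then shows $\dim\mathrm{Hom}(\mathrm{F}\,L,\mathrm{H}\,L)=\dim\mathrm{Hom}(\mathrm{H}^*\circ\mathrm{F}\,L,L)\geq k$ via Proposition~\ref{lem7}\eqref{lem7.02}, and finally closes the sandwich with the obvious inequality $\dim\mathrm{Hom}(\mathrm{F}\,L,\mathrm{H}\,L)\leq\dim\mathrm{Hom}(\mathrm{F}\,P_{\mathbbm{1}_{\mathtt{i}}},\mathrm{H}\,L)$. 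Your argument never invokes adjunction: you lift through projectives, realise the lift as $\mathbf{P}_{\mathtt{i}}(\alpha)_{P_{\mathbbm{1}_{\mathtt{i}}}}$ for a $2$-morphism $\alpha$ (Subsection~\ref{s2.3}), and then push $\alpha$ down to $L$ by naturality, using only exactness and the annihilation statement of Proposition~\ref{lem7}\eqref{lem7.1}. What you gain is a direct \emph{identification} of $\mathrm{Hom}(\mathrm{F}\,L,\mathrm{H}\,L)$ with $\mathrm{Hom}(P_{\mathrm{F}},\mathrm{H}\,L)$ rather than a dimension count, and an explanation of why every morphism $\mathrm{F}\,L\to\mathrm{H}\,L$ is induced by a $2$-morphism in $\cC$; what the paper's route buys is the number $k$ itself, a combinatorial invariant of the pair $(\mathrm{F},\mathrm{H})$ that resurfaces in Section~\ref{s5} as $m_{\mathrm{F},\mathrm{H}}$.
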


\begin{proof}
Let $k$ denote the multiplicity of $\mathrm{G}_{\mathcal{R}}$
as a direct summand of $\mathrm{H}^*\circ\mathrm{F}$. Then for the right
hand side we have
\begin{displaymath}
\begin{array}{rcl}
[\mathrm{H}\,L:L_{\mathrm{F}}]&=&\dim 
\mathrm{Hom}_{\overline{\ccC}(\mathtt{i},\mathtt{j})}
(\mathrm{F}\,P_{\mathbbm{1}_{\mathtt{i}}},\mathrm{H}\,L)\\
\text{(by adjunction)}&=&\dim 
\mathrm{Hom}_{\overline{\ccC}(\mathtt{i},\mathtt{i})}
(\mathrm{H}^*\circ\mathrm{F}\,P_{\mathbbm{1}_{\mathtt{i}}},L)\\
&=&k.
\end{array}
\end{displaymath}
At the same time, by adjunction, for the left hand side we have 
\begin{equation}\label{eq01}
\dim\mathrm{Hom}_{\overline{\ccC}(\mathtt{i},\mathtt{j})}
(\mathrm{F}\,L,\mathrm{H}\,L)=
\dim\mathrm{Hom}_{\overline{\ccC}(\mathtt{i},\mathtt{i})}
(\mathrm{H}^*\circ\mathrm{F}\,L,L).
\end{equation}
From Proposition~\ref{lem7}\eqref{lem7.02} it follows that the 
right hand side of \eqref{eq01} is at least $k$. On the other hand,
\begin{displaymath}
\dim\mathrm{Hom}_{\overline{\ccC}(\mathtt{i},\mathtt{j})}
(\mathrm{F}\,L,\mathrm{H}\,L)\leq 
\mathrm{Hom}_{\overline{\ccC}(\mathtt{i},\mathtt{j})}
(\mathrm{F}\,P_{\mathbbm{1}_{\mathtt{i}}},\mathrm{H}\,L)
=[\mathrm{H}\,L:L_{\mathrm{F}}]=k,
\end{displaymath}
which completes the proof.
\end{proof}

For $\mathrm{F}\in\mathcal{R}$ consider the short exact sequence
\begin{equation}\label{eq777}
\mathrm{Ker}_{\mathrm{F}}\hookrightarrow
P_{\mathrm{F}}\tto \mathrm{F}\, L,
\end{equation}
given by Proposition~\ref{lem7}\eqref{lem7.02}. Set
\begin{displaymath}
\mathrm{Ker}_{\mathcal{R},\mathtt{j}}=
\bigoplus_{\mathrm{F}\in \mathcal{R}\cap\mathcal{C}_{\mathtt{i},\mathtt{j}}}
\mathrm{Ker}_{\mathrm{F}},\quad\quad
P_{\mathcal{R},\mathtt{j}}=
\bigoplus_{\mathrm{F}\in \mathcal{R}\cap\mathcal{C}_{\mathtt{i},\mathtt{j}}}
P_{\mathrm{F}},\quad\quad
Q_{\mathcal{R},\mathtt{j}}=
\bigoplus_{\mathrm{F}\in \mathcal{R}\cap\mathcal{C}_{\mathtt{i},\mathtt{j}}}
\mathrm{F}\, L.
\end{displaymath}

\begin{lemma}\label{lem16}
The module $\mathrm{Ker}_{\mathcal{R},\mathtt{j}}$ is stable under any
endomorphism of $P_{\mathcal{R},\mathtt{j}}$.
\end{lemma}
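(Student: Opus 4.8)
The plan is to show that any endomorphism $\varphi$ of $P_{\mathcal{R},\mathtt{j}}=\bigoplus_{\mathrm{F}\in\mathcal{R}\cap\mathcal{C}_{\mathtt{i},\mathtt{j}}}P_{\mathrm{F}}$ restricts to an endomorphism of the submodule $\mathrm{Ker}_{\mathcal{R},\mathtt{j}}=\bigoplus_{\mathrm{F}}\mathrm{Ker}_{\mathrm{F}}$, equivalently that $\varphi$ descends to the quotient $Q_{\mathcal{R},\mathtt{j}}=\bigoplus_{\mathrm{F}}\mathrm{F}\,L$. Since $\mathrm{Ker}_{\mathcal{R},\mathtt{j}}$ is the kernel of the canonical projection $P_{\mathcal{R},\mathtt{j}}\tto Q_{\mathcal{R},\mathtt{j}}$, it suffices to prove that $\varphi(\mathrm{Ker}_{\mathcal{R},\mathtt{j}})\subseteq \mathrm{Ker}_{\mathcal{R},\mathtt{j}}$; then $\varphi$ induces an endomorphism of $Q_{\mathcal{R},\mathtt{j}}$ automatically.

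First I would reduce to a single matrix entry: an endomorphism of $P_{\mathcal{R},\mathtt{j}}$ is a matrix of homomorphisms $P_{\mathrm{F}}\to P_{\mathrm{F}'}$ for $\mathrm{F},\mathrm{F}'\in\mathcal{R}\cap\mathcal{C}_{\mathtt{i},\mathtt{j}}$, so it is enough to show each such homomorphism $\psi\colon P_{\mathrm{F}}\to P_{\mathrm{F}'}$ sends $\mathrm{Ker}_{\mathrm{F}}$ into $\mathrm{Ker}_{\mathrm{F}'}$, i.e. that $\psi$ descends to a map $\mathrm{F}\,L\to \mathrm{F}'\,L$. By the observation recorded in Subsection~\ref{s3.4} (following from Subsection~\ref{s2.3}), every homomorphism between the projectives $P_{\mathrm{F}}$ and $P_{\mathrm{F}'}$ is the image of a $2$-morphism; more precisely $\mathrm{Hom}_{\overline{\ccC}(\mathtt{i},\mathtt{j})}(P_{\mathrm{F}},P_{\mathrm{F}'})$ is spanned by $\mathbf{P}_{\mathtt{i}}(\alpha)_{P_{\mathbbm{1}_{\mathtt{i}}}}$ for $2$-morphisms $\alpha\colon\mathrm{F}\to\mathrm{F}'$, using $P_{\mathrm{F}}=\mathrm{F}\,P_{\mathbbm{1}_{\mathtt{i}}}$. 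So I may assume $\psi=\alpha_{P_{\mathbbm{1}_{\mathtt{i}}}}$ for such an $\alpha$.

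Now the point is naturality: applying the natural transformation $\alpha\colon\mathrm{F}\to\mathrm{F}'$ to the surjection $P_{\mathbbm{1}_{\mathtt{i}}}\tto L$ (where $L=L_{\mathrm{G}_{\mathcal{R}}}$, which is precisely the top of $K_{\mathcal{R}}\subseteq P_{\mathbbm{1}_{\mathtt{i}}}$, and in fact $\mathrm{F}\,L$ is defined as a quotient of $P_{\mathrm{F}}=\mathrm{F}\,P_{\mathbbm{1}_{\mathtt{i}}}$ via $\mathrm{F}$ applied to $P_{\mathbbm{1}_{\mathtt{i}}}\tto K_{\mathcal{R}}/\mathrm{something}$) gives a commutative square
\begin{displaymath}
\xymatrix{
\mathrm{F}\,P_{\mathbbm{1}_{\mathtt{i}}}\ar[rr]^{\alpha_{P_{\mathbbm{1}_{\mathtt{i}}}}}\ar[d]&&
\mathrm{F}'\,P_{\mathbbm{1}_{\mathtt{i}}}\ar[d]\\
\mathrm{F}\,L\ar[rr]^{\alpha_{L}}&&\mathrm{F}'\,L
}
\end{displaymath}
whose vertical arrows are the defining surjections $P_{\mathrm{F}}\tto\mathrm{F}\,L$ and $P_{\mathrm{F}'}\tto\mathrm{F}'\,L$ from Proposition~\ref{lem7}\eqref{lem7.02}. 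Commutativity forces $\psi=\alpha_{P_{\mathbbm{1}_{\mathtt{i}}}}$ to carry $\mathrm{Ker}_{\mathrm{F}}$ (the kernel of the left vertical) into $\mathrm{Ker}_{\mathrm{F}'}$ (the kernel of the right vertical), which is exactly what we want. Extending $\Bbbk$-linearly over all spanning $2$-morphisms $\alpha$ and then over the whole matrix, every endomorphism of $P_{\mathcal{R},\mathtt{j}}$ preserves $\mathrm{Ker}_{\mathcal{R},\mathtt{j}}$.

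I expect the only genuinely delicate point to be pinning down that the relevant surjection $P_{\mathrm{F}}\tto\mathrm{F}\,L$ really is $\mathrm{F}$ applied to the fixed surjection $P_{\mathbbm{1}_{\mathtt{i}}}\tto L$ (up to the identification $P_{\mathrm{F}}=\mathrm{F}\,P_{\mathbbm{1}_{\mathtt{i}}}$), since then naturality of $\alpha$ does all the work; this is where one must invoke the construction of $\mathrm{F}\,L$ in Proposition~\ref{lem7}. Everything else—reduction to matrix entries, the surjectivity of $2$-morphisms onto $\mathrm{Hom}(P_{\mathrm{F}},P_{\mathrm{F}'})$, and the linear extension—is routine bookkeeping.
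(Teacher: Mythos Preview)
Your naturality approach is different from the paper's and, once patched, cleaner. The paper argues by contradiction via Lemma~\ref{lem15}: if some $\varphi:P_{\mathrm{F}}\to P_{\mathrm{H}}$ failed to send $\mathrm{Ker}_{\mathrm{F}}$ into $\mathrm{Ker}_{\mathrm{H}}$, the composite $P_{\mathrm{F}}\to P_{\mathrm{H}}\tto\mathrm{H}\,L$ would be a map not factoring through $\mathrm{F}\,L$, contradicting the dimension count $\dim\mathrm{Hom}(\mathrm{F}\,L,\mathrm{H}\,L)=[\mathrm{H}\,L:L_{\mathrm{F}}]=\dim\mathrm{Hom}(P_{\mathrm{F}},\mathrm{H}\,L)$. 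Your route bypasses Lemma~\ref{lem15} entirely by using that every such $\varphi$ is $\alpha_{P_{\mathbbm{1}_{\mathtt{i}}}}$ for some $2$-morphism $\alpha$ (Subsection~\ref{s2.3}) and then invoking naturality; this explains \emph{why} the kernel is preserved rather than checking it numerically.

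There is, however, a real gap in the execution: the ``surjection $P_{\mathbbm{1}_{\mathtt{i}}}\tto L$'' you apply $\alpha$ to does not exist in general. Since $P_{\mathbbm{1}_{\mathtt{i}}}$ has simple top $L_{\mathbbm{1}_{\mathtt{i}}}$ and $L=L_{\mathrm{G}_{\mathcal{R}}}$ is simple, there is a nonzero map $P_{\mathbbm{1}_{\mathtt{i}}}\to L$ only when $\mathrm{G}_{\mathcal{R}}=\mathbbm{1}_{\mathtt{i}}$. The surjection in \eqref{eq777} is \emph{not} $\mathrm{F}$ applied to a map out of $P_{\mathbbm{1}_{\mathtt{i}}}$. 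Tracing the proof of Proposition~\ref{lem7}, it is rather the composite
\begin{displaymath}
P_{\mathrm{F}}=\mathrm{F}\,P_{\mathbbm{1}_{\mathtt{i}}}
\xleftarrow{\ \mathrm{F}(\iota)\ }\mathrm{F}\,K
\xrightarrow{\ \mathrm{F}(\pi)\ }\mathrm{F}\,L,
\end{displaymath}
where $\iota:K\hookrightarrow P_{\mathbbm{1}_{\mathtt{i}}}$ is the inclusion (an isomorphism after applying $\mathrm{F}$, since $\mathrm{F}$ annihilates $P_{\mathbbm{1}_{\mathtt{i}}}/K$) and $\pi:K\tto L$ is the projection to the top. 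Your argument then goes through once you apply naturality of $\alpha$ to \emph{both} $\iota$ and $\pi$: the square for $\iota$ identifies $\alpha_{P_{\mathbbm{1}_{\mathtt{i}}}}$ with $\alpha_K$ under the isomorphisms $\mathrm{F}(\iota)$, $\mathrm{F}'(\iota)$, and the square for $\pi$ shows $\alpha_K$ carries $\ker\mathrm{F}(\pi)$ into $\ker\mathrm{F}'(\pi)$. You correctly flagged this identification as the delicate point; the fix is to route through $K$ rather than $P_{\mathbbm{1}_{\mathtt{i}}}$.
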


\begin{proof}
Let $\mathrm{F},\mathrm{H}\in\mathcal{R}
\cap\mathcal{C}_{\mathtt{i},\mathtt{j}}$ and
$\varphi:P_{\mathrm{F}}\to P_{\mathrm{H}}$ be a homomorphism. 
It is enough to show that $\varphi(\mathrm{Ker}_{\mathrm{F}})
\subset \mathrm{Ker}_{\mathrm{H}}$. Assume this is false. 
Composing $\varphi$ with the natural projection onto
$Q_{\mathrm{H}}$ we obtain a homomorphism from 
$P_{\mathrm{F}}$ to $Q_{\mathrm{H}}$ which does not factor
through $Q_{\mathrm{F}}$. However, the existence of such homomorphism
contradicts Lemma~\ref{lem15}. This implies the claim.
\end{proof}

Now we are ready to define the cell $2$-representation  
$\mathbf{C}_{\mathcal{R}}$ of $\cC$ corresponding to $\mathcal{R}$. Define 
$\mathbf{C}_{\mathcal{R}}(\mathtt{j})$ to be the full subcategory of
$\mathbf{P}_{\mathtt{i}}(\mathtt{j})$ which consists of all modules
$M$ admitting a two step resolution $X_1\to X_0\tto M$,
where $X_1,X_0\in\mathrm{add}(Q_{\mathcal{R},\mathtt{j}})$.

\begin{lemma}\label{lem17}
The category $\mathbf{C}_{\mathcal{R}}(\mathtt{j})$ is equivalent
to $\mathcal{D}_{\mathcal{R},\mathtt{j}}^{\mathrm{op}}\text{-}\mathrm{mod}$.
\end{lemma}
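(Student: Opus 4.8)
The goal is to identify $\mathbf{C}_{\mathcal{R}}(\mathtt{j})$ with $\mathcal{D}_{\mathcal{R},\mathtt{j}}^{\mathrm{op}}\text{-}\mathrm{mod}$. The natural strategy is to realize $\mathbf{C}_{\mathcal{R}}(\mathtt{j})$ as the category of modules presented by the projective-like objects $\mathrm{F}\,L$, $\mathrm{F}\in\mathcal{R}\cap\mathcal{C}_{\mathtt{i},\mathtt{j}}$, and then invoke the standard fact (essentially Freyd's theorem, as already used for $\overline{\cC}$ in Subsection~\ref{s2.1}) that the category of objects admitting a two-step resolution by $\mathrm{add}(Q_{\mathcal{R},\mathtt{j}})$ is equivalent to $\mathcal{E}^{\mathrm{op}}\text{-}\mathrm{mod}$, where $\mathcal{E}$ is the full subcategory of $\mathbf{P}_{\mathtt{i}}(\mathtt{j})$ on the indecomposable summands of $Q_{\mathcal{R},\mathtt{j}}$. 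So the real content is to check that $\mathcal{E}$ coincides with $\mathcal{D}_{\mathcal{R},\mathtt{j}}$, i.e.\ that the objects $\mathrm{F}\,L$ for $\mathrm{F}\in\mathcal{R}\cap\mathcal{C}_{\mathtt{i},\mathtt{j}}$ are indecomposable, pairwise non-isomorphic, and exhaust the indecomposable summands of $Q_{\mathcal{R},\mathtt{j}}$.

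First I would establish indecomposability and the classification of summands. By Proposition~\ref{lem7}\eqref{lem7.02}, each $\mathrm{F}\,L$ has simple top $L_{\mathrm{F}}$, hence is indecomposable; and distinct $\mathrm{F}$ give distinct simple tops, so the $\mathrm{F}\,L$ are pairwise non-isomorphic. This already shows $\mathcal{D}_{\mathcal{R},\mathtt{j}}$ has exactly $|\mathcal{R}\cap\mathcal{C}_{\mathtt{i},\mathtt{j}}|$ isomorphism classes of objects, all indecomposable, and that $Q_{\mathcal{R},\mathtt{j}}=\bigoplus \mathrm{F}\,L$ is a direct sum of these; so $\mathrm{add}(Q_{\mathcal{R},\mathtt{j}})$ has $\mathcal{D}_{\mathcal{R},\mathtt{j}}$ as a skeleton of its indecomposables, and $\mathcal{D}_{\mathcal{R},\mathtt{j}}^{\mathrm{op}}\text{-}\mathrm{mod}$ is equivalent to the category of finitely presented functors on $\mathrm{add}(Q_{\mathcal{R},\mathtt{j}})^{\mathrm{op}}$.

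Next I would match this with $\mathbf{C}_{\mathcal{R}}(\mathtt{j})$ via the Yoneda-type functor $M\mapsto \mathrm{Hom}_{\overline{\ccC}(\mathtt{i},\mathtt{j})}(-,M)|_{\mathcal{D}_{\mathcal{R},\mathtt{j}}}$ (or its covariant analogue, depending on the op-convention), and check it is an equivalence onto $\mathcal{D}_{\mathcal{R},\mathtt{j}}^{\mathrm{op}}\text{-}\mathrm{mod}$. The point is that an object $M$ has a two-step resolution $X_1\to X_0\tto M$ with $X_i\in\mathrm{add}(Q_{\mathcal{R},\mathtt{j}})$ precisely when the associated functor is finitely presented; applying $\mathrm{Hom}(-,M)$ over $\mathcal{D}_{\mathcal{R},\mathtt{j}}$ is exact on such resolutions because the $\mathrm{F}\,L$ are the representable functors, and one gets a finite presentation of the functor. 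Conversely every finitely presented $\mathcal{D}_{\mathcal{R},\mathtt{j}}^{\mathrm{op}}$-module is the image of such an $M$ by taking cokernels inside $\mathbf{P}_{\mathtt{i}}(\mathtt{j})$. Here Lemma~\ref{lem15}, which computes $\dim\mathrm{Hom}(\mathrm{F}\,L,\mathrm{H}\,L)=[\mathrm{H}\,L:L_{\mathrm{F}}]$, is what guarantees the endomorphism algebra of $Q_{\mathcal{R},\mathtt{j}}$ is finite dimensional, so that $\mathcal{D}_{\mathcal{R},\mathtt{j}}^{\mathrm{op}}\text{-}\mathrm{mod}$ really is a module category over a finite dimensional algebra and the Freyd-style equivalence applies.

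I expect the main obstacle to be the careful verification that the two-step-resolution condition defining $\mathbf{C}_{\mathcal{R}}(\mathtt{j})$ is stable and matches the finite-presentation condition on the functor side — in particular, that a presentation $X_1\to X_0\to M$ can always be chosen with $X_0$ a projective cover in $\mathrm{add}(Q_{\mathcal{R},\mathtt{j}})$ and that the syzygy $X_1$ can again be taken in $\mathrm{add}(Q_{\mathcal{R},\mathtt{j}})$ (as opposed to needing a longer resolution). This is where Lemma~\ref{lem16} enters: the stability of $\mathrm{Ker}_{\mathcal{R},\mathtt{j}}$ under endomorphisms of $P_{\mathcal{R},\mathtt{j}}$ lets one pass from projective presentations in $\overline{\cC}(\mathtt{i},\mathtt{j})$ to presentations by the $\mathrm{F}\,L$. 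The rest — functoriality in $\mathtt{j}$ and compatibility with the $\cC$-action, which is implicitly needed for $\mathbf{C}_{\mathcal{R}}$ to be a $2$-representation — follows from Lemma~\ref{lem14}, but for the statement of Lemma~\ref{lem17} alone it suffices to do this one object $\mathtt{j}$ at a time.
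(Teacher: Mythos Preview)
Your plan is correct in outline and uses the same essential ingredient (Lemma~\ref{lem16}), but the paper organizes the argument differently and more economically. Rather than applying a Freyd-type construction directly to $\mathrm{add}(Q_{\mathcal{R},\mathtt{j}})$, the paper first passes to the larger category $\mathcal{X}$ of objects presented by $\mathrm{add}(P_{\mathcal{R},\mathtt{j}})$; since the $P_{\mathrm{F}}$ are genuinely projective in $\mathbf{P}_{\mathtt{i}}(\mathtt{j})$, Auslander's theorem applies out of the box and gives $\mathcal{X}\simeq \mathrm{End}(P_{\mathcal{R},\mathtt{j}})^{\mathrm{op}}\text{-}\mathrm{mod}$. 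Lemma~\ref{lem16} is then used only once, to show that $\mathrm{End}(Q_{\mathcal{R},\mathtt{j}})$ is the quotient of $\mathrm{End}(P_{\mathcal{R},\mathtt{j}})$ by a two-sided ideal, so that $\mathcal{D}_{\mathcal{R},\mathtt{j}}^{\mathrm{op}}\text{-}\mathrm{mod}$ sits as a full subcategory of $\mathcal{X}$; this subcategory is then identified with $\mathbf{C}_{\mathcal{R}}(\mathtt{j})$.

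Your direct route avoids the auxiliary category $\mathcal{X}$, but the price is that the sentence ``applying $\mathrm{Hom}(-,M)$ over $\mathcal{D}_{\mathcal{R},\mathtt{j}}$ is exact on such resolutions because the $\mathrm{F}\,L$ are the representable functors'' is not a valid justification: representability gives Yoneda full faithfulness on $\mathrm{add}(Q_{\mathcal{R},\mathtt{j}})$, not right exactness of $\mathrm{Hom}(\mathrm{F}\,L,-)$, since $\mathrm{F}\,L$ is not projective in $\mathbf{P}_{\mathtt{i}}(\mathtt{j})$. What you actually need (and correctly flag as the main obstacle) is that every morphism from an object of $\mathrm{add}(Q_{\mathcal{R},\mathtt{j}})$ into a cokernel $M$ lifts along the presentation $X_0\twoheadrightarrow M$, and that null-homotopies exist for maps inducing zero; both reduce, via the covers $P_{\mathrm{F}}\twoheadrightarrow \mathrm{F}\,L$, to Lemma~\ref{lem16}. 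So your approach can be completed, but the paper's detour through $\mathcal{X}$ packages exactly this verification into a single clean algebraic step. (Also, Lemma~\ref{lem15} is not needed for finite-dimensionality of $\mathrm{End}(Q_{\mathcal{R},\mathtt{j}})$---that is automatic in $\mathbf{P}_{\mathtt{i}}(\mathtt{j})$---though it does compute the Cartan matrix.)
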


\begin{proof}
Consider first the full subcategory $\mathcal{X}$ of
$\mathbf{P}_{\mathtt{i}}(\mathtt{j})$ which consists of all modules
$M$ admitting a two step resolution $X_1\to X_0\tto M$,
where $X_1,X_0\in\mathrm{add}(P_{\mathcal{R}})$.
By \cite[Section~5]{Au}, the category $\mathcal{X}$ is equivalent to
$\mathrm{End}_{\mathcal{C}_{\mathtt{i},\mathtt{j}}^{\mathrm{op}}}
(P_{\mathcal{R},\mathtt{j}})^{\mathrm{op}}\text{-}\mathrm{mod}$.
By Lemma~\ref{lem16}, the algebra 
$\mathrm{End}_{\mathcal{C}_{\mathtt{i},\mathtt{j}}^{\mathrm{op}}}
(Q_{\mathcal{R},\mathtt{j}})$ is the quotient of 
$\mathrm{End}_{\mathcal{C}_{\mathtt{i},\mathtt{j}}^{\mathrm{op}}}
(P_{\mathcal{R},\mathtt{j}})$ by a two-sided ideal. It is easy to see that 
the standard embedding of 
$\mathrm{End}_{\mathcal{C}_{\mathtt{i},\mathtt{j}}^{\mathrm{op}}}
(Q_{\mathcal{R},\mathtt{j}})^{\mathrm{op}}\text{-}\mathrm{mod}\cong
\mathcal{D}_{\mathcal{R},\mathtt{j}}^{\mathrm{op}}\text{-}\mathrm{mod}$
into $\mathcal{X}$ coincides with
$\mathbf{C}_{\mathcal{R}}(\mathtt{j})$. The claim follows.
\end{proof}

\begin{theorem}[Construction of right cell $2$-representations]
\label{thm19}
Restriction from  $\mathbf{P}_{\mathtt{i}}$ defines the structure of a 
$2$-representation of $\cC$ on  $\mathbf{C}_{\mathcal{R}}$.
\end{theorem}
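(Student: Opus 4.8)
The goal is to show that $\mathbf{C}_{\mathcal{R}}$, defined objectwise as the subcategory of $\mathbf{P}_{\mathtt{i}}(\mathtt{j})$ consisting of modules with a two-step resolution by $\mathrm{add}(Q_{\mathcal{R},\mathtt{j}})$, is closed under the action of all $1$- and $2$-morphisms of $\cC$, so that the restriction of the $2$-functor $\mathbf{P}_{\mathtt{i}}$ makes sense. The plan is to fix $\mathtt{j},\mathtt{k}\in\cC$ and a $1$-morphism $\mathrm{F}\in\cC(\mathtt{j},\mathtt{k})$, and to check that $\mathbf{P}_{\mathtt{i}}(\mathrm{F})$ maps $\mathbf{C}_{\mathcal{R}}(\mathtt{j})$ into $\mathbf{C}_{\mathcal{R}}(\mathtt{k})$. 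Since $2$-morphisms act by natural transformations between these functors and the categories $\mathbf{C}_{\mathcal{R}}(\mathtt{j})$ are full subcategories, once the $1$-morphism part is established there is nothing more to verify for $2$-morphisms, and strictness of $\mathbf{P}_{\mathtt{i}}$ descends automatically.

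The key step is the following: the functors $\mathbf{P}_{\mathtt{i}}(\mathrm{F})$ are exact (they are given by horizontal composition with $\mathrm{F}$, which is an exact endofunctor because $\cC$ is fiat and $\mathrm{F}$ has adjoints), so applying $\mathbf{P}_{\mathtt{i}}(\mathrm{F})$ to a two-step resolution $X_1\to X_0\tto M$ with $X_0,X_1\in\mathrm{add}(Q_{\mathcal{R},\mathtt{j}})$ yields an exact sequence $\mathrm{F}\,X_1\to \mathrm{F}\,X_0\tto \mathrm{F}\,M$. By Lemma~\ref{lem14}, each $\mathrm{F}\circ\mathrm{G}\,L$ with $\mathrm{G}\in\mathcal{R}\cap\mathcal{C}_{\mathtt{i},\mathtt{j}}$ decomposes as a direct sum of modules $\mathrm{H}\,L$ with $\mathrm{H}\in\mathcal{R}$; since $\mathrm{F}\,M\in\mathbf{M}(\mathtt{k})$ lives in $\mathbf{P}_{\mathtt{i}}(\mathtt{k})$ and all the $\mathrm{H}$ appearing are automatically in $\mathcal{C}_{\mathtt{i},\mathtt{k}}$, we get $\mathrm{F}\,X_0,\mathrm{F}\,X_1\in\mathrm{add}(Q_{\mathcal{R},\mathtt{k}})$. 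Hence $\mathrm{F}\,M$ admits a two-step resolution by $\mathrm{add}(Q_{\mathcal{R},\mathtt{k}})$, i.e. $\mathrm{F}\,M\in\mathbf{C}_{\mathcal{R}}(\mathtt{k})$.

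It remains to verify the bookkeeping: that $\mathbf{C}_{\mathcal{R}}(\mathtt{j})$ is genuinely an additive (indeed abelian, by Lemma~\ref{lem17}) subcategory — this follows since $\mathrm{add}(Q_{\mathcal{R},\mathtt{j}})$ is closed under direct sums and summands and two-step resolutions are stable under direct sums — and that the identity $1$-morphism and composition of $1$-morphisms are respected, which is inherited verbatim from the $2$-functoriality of $\mathbf{P}_{\mathtt{i}}$ restricted to a subcategory. I expect the only genuinely substantive point to be the appeal to exactness of $\mathrm{F}$ combined with Lemma~\ref{lem14}; everything else is formal. The main obstacle, if any, is making sure the resolutions stay \emph{two}-step after applying $\mathrm{F}$, but exactness of $\mathrm{F}$ guarantees this immediately since $\mathrm{F}$ preserves cokernels and (being exact) does not lengthen a presentation. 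Thus restriction indeed endows $\mathbf{C}_{\mathcal{R}}$ with the structure of a $2$-representation of $\cC$.
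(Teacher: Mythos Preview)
Your proposal is correct and follows essentially the same approach as the paper: the paper's proof simply says that Lemma~\ref{lem14} gives $\mathrm{F}\,\mathbf{C}_{\mathcal{R}}(\mathtt{j})\subset\mathbf{C}_{\mathcal{R}}(\mathtt{k})$ for any $\mathrm{F}\in\mathcal{C}_{\mathtt{j},\mathtt{k}}$, and you have unpacked exactly how this works (exactness of $\mathrm{F}$ preserves the two-step presentation, and Lemma~\ref{lem14} shows the terms land in $\mathrm{add}(Q_{\mathcal{R},\mathtt{k}})$). The additional bookkeeping you mention is indeed formal and is left implicit in the paper.
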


\begin{proof}
From Lemma~\ref{lem14} it follows that for any $\mathrm{F}\in
\mathcal{C}_{\mathtt{j},\mathtt{k}}$ we have
$\mathrm{F}\, \mathbf{C}_{\mathcal{R}}(\mathtt{j})\subset
\mathbf{C}_{\mathcal{R}}(\mathtt{k})$. The claim follows.
\end{proof}

The $2$-representation $\mathbf{C}_{\mathcal{R}}$ 
constructed in Theorem~\ref{thm19} is called the 
{\em right cell $2$-representation} corresponding to $\mathcal{R}$.
Note that the inclusion of $\mathbf{C}_{\mathcal{R}}$ into
$\mathbf{P}_{\mathtt{i}}$ is only right exact in general.

\begin{example}\label{exm18}
{\rm Consider the category $\cS_2$ from Example~\ref{exm1}. For the
cell representation $\mathbf{C}_{\{\mathbbm{1}_{\mathtt{i}}\}}$
we have $G_{\{\mathbbm{1}_{\mathtt{i}}\}}=\mathbbm{1}_{\mathtt{i}}$,
which implies that $\mathbf{C}_{\{\mathbbm{1}_{\mathtt{i}}\}}(\mathtt{i})=
\mathbb{C}\text{-}\mathrm{mod}$; 
$\mathbf{C}_{\{\mathbbm{1}_{\mathtt{i}}\}}(\mathrm{F})=0$
and $\mathbf{C}_{\{\mathbbm{1}_{\mathtt{i}}\}}(f)=0$ for 
$f=\alpha,\beta,\gamma$. For the
cell representation $\mathbf{C}_{\{\mathrm{F}\}}$
we have $G_{\{\mathrm{F}\}}=\mathrm{F}$,
which implies that $\mathbf{C}_{\{\mathrm{F}\}}(\mathtt{i})=
D\text{-}\mathrm{mod}$, 
$\mathbf{C}_{\{\mathrm{F}\}}(\mathrm{F})=\mathrm{F}$
and $\mathbf{C}_{\{\mathrm{F}\}}(f)=f$ for 
$f=\alpha,\beta,\gamma$.
}
\end{example}

\subsection{Homomorphisms from a cell $2$-representation}\label{s3.5}

Consider a right cell $\mathcal{R}$ and let $\mathtt{i}\in\cC$ be such 
that $\mathrm{G}_{\mathcal{R}}\in\mathcal{C}_{\mathtt{i},\mathtt{i}}$.
Let further $\mathrm{F}\in\cC(\mathtt{i},\mathtt{i})$
and $\alpha:\mathrm{F}\to \mathrm{G}_{\mathcal{R}}$ be such that 
$\mathbf{P}_{\mathtt{i}}(\alpha):\mathrm{F}\,P_{\mathbbm{1}_{\mathtt{i}}}
\to\mathrm{G}_{\mathcal{R}}\,P_{\mathbbm{1}_{\mathtt{i}}}$
gives a projective presentation of $L_{\mathrm{G}_{\mathcal{R}}}$.

\begin{theorem}\label{prop21}
Let $\mathbf{M}$ be a $2$-representation of $\cC$. Denote by
$\Theta=\Theta_{\mathcal{R}}^{\mathbf{M}}$ the cokernel of 
$\mathbf{M}(\alpha)$. 
\begin{enumerate}[$($a$)$]
\item\label{prop21.1} The functor $\Theta$ is a right exact 
endofunctor of $\mathbf{M}(\mathtt{i})$. 
\item\label{prop21.2} For every morphism $\Psi$ 
from $\mathbf{C}_{\mathcal{R}}$ to $\mathbf{M}$ we have
$\Psi(L_{\mathrm{G}_{\mathcal{R}}})\in
\Theta(\mathbf{M}(\mathtt{i}))$.
\item\label{prop21.3} For every $M\in\Theta(\mathbf{M}(\mathtt{i}))$
there is a unique morphism $\Psi^M$ from $\mathbf{C}_{\mathcal{R}}$ 
to $\mathbf{M}$ given by a collection of right exact functors such that
$\Psi^M$ sends $L_{\mathrm{G}_{\mathcal{R}}}$ to $M$.
\item\label{prop21.4} The correspondence $M\mapsto \Psi^M$ is 
functorial in $M$ in the image $\Theta(\mathbf{M}(\mathtt{i}))$
of $\Theta$.
\end{enumerate}
\end{theorem}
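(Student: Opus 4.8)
The strategy is to mimic the universal property of principal $2$-representations (Proposition~\ref{prop3}) and its refinement (Proposition~\ref{prop26}), transferring it to the cell $2$-representation via the presentation $\mathbf{P}_{\mathtt{i}}(\alpha)$ of $L_{\mathrm{G}_{\mathcal{R}}}$. Part \eqref{prop21.1} is immediate: $\Theta=\mathrm{Coker}(\mathbf{M}(\alpha))$ is a cokernel of a natural transformation between the exact functors $\mathbf{M}(\mathrm{F})$ and $\mathbf{M}(\mathrm{G}_{\mathcal{R}})$, hence a right exact endofunctor of $\mathbf{M}(\mathtt{i})$, exactly as in the construction of the abelian envelope (Theorem~\ref{thm112}).

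For \eqref{prop21.2}, note that $L_{\mathrm{G}_{\mathcal{R}}}$ is, by construction, the cokernel of $\mathbf{P}_{\mathtt{i}}(\alpha)$ evaluated at $P_{\mathbbm{1}_{\mathtt{i}}}$, i.e. of $\mathrm{F}\,P_{\mathbbm{1}_{\mathtt{i}}}\to\mathrm{G}_{\mathcal{R}}\,P_{\mathbbm{1}_{\mathtt{i}}}$. Given a morphism $\Psi\colon\mathbf{C}_{\mathcal{R}}\to\mathbf{M}$, set $M:=\Psi(L_{\mathrm{G}_{\mathcal{R}}})$. Since $\Psi$ commutes with the $\cC$-action and with the natural transformation $\alpha$, applying $\Psi$ to the presentation yields that $M$ is the cokernel of $\mathbf{M}(\alpha)_{M_0}$ for $M_0:=\Psi(P_{\mathbbm{1}_{\mathtt{i}}}^{\text{image}})$; but one must be careful that $P_{\mathbbm{1}_{\mathtt{i}}}$ itself need not lie in $\mathbf{C}_{\mathcal{R}}(\mathtt{i})$, only $L_{\mathrm{G}_{\mathcal{R}}}=\mathrm{G}_{\mathcal{R}}\,L$ and its $\mathcal{R}$-translates do. The right way around this is to work with $\mathrm{G}_{\mathcal{R}}\,L\in\mathbf{C}_{\mathcal{R}}(\mathtt{i})$ and the fact that $\Theta$ applied to $M$ recovers $M$ up to the relevant cokernel; so $\Psi(L_{\mathrm{G}_{\mathcal{R}}})$ lies in the image of $\Theta$ because $L_{\mathrm{G}_{\mathcal{R}}}$ is itself $\Theta$ applied to something inside $\mathbf{C}_{\mathcal{R}}$ (using that $\mathrm{G}_{\mathcal{R}}^*\in\mathcal{R}$ by Proposition~\ref{lem7}\eqref{lem7.05}, so that $\mathrm{G}_{\mathcal{R}}\,L\in\mathbf{C}_{\mathcal{R}}(\mathtt{i})$ maps onto $L_{\mathrm{G}_{\mathcal{R}}}$ via the top, and $\Theta$ detects this cokernel).

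For \eqref{prop21.3}, given $M\in\Theta(\mathbf{M}(\mathtt{i}))$, define $\Psi^M_{\mathtt{j}}\colon\mathbf{C}_{\mathcal{R}}(\mathtt{j})\to\mathbf{M}(\mathtt{j})$ on the generators $\mathrm{F}'\,L$ ($\mathrm{F}'\in\mathcal{R}\cap\mathcal{C}_{\mathtt{i},\mathtt{j}}$) by $\mathrm{F}'\,L\mapsto \mathbf{M}(\mathrm{F}')\,M$, extend to $\mathrm{add}(Q_{\mathcal{R},\mathtt{j}})$ additively, and then to all of $\mathbf{C}_{\mathcal{R}}(\mathtt{j})$ by right exactness using the two-step resolutions $X_1\to X_0\tto N$ that define $\mathbf{C}_{\mathcal{R}}(\mathtt{j})$ (Lemma~\ref{lem17}). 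One must check this is well-defined on morphisms: $2$-morphisms in $\cC$ surject onto $\mathrm{Hom}$'s between the $\mathrm{F}'\,L$ (Subsection~\ref{s3.4}), so any morphism $\mathrm{F}'\,L\to\mathrm{H}\,L$ is induced by some $\beta\colon\mathrm{F}'\to\mathrm{H}$, and one sends it to $\mathbf{M}(\beta)_M$; well-definedness amounts to showing that relations among such $\beta$'s modulo those killing $\mathrm{F}'\,L$ are respected, which follows from Lemma~\ref{lem15} (the $\mathrm{Hom}$ spaces between the $\mathrm{F}'\,L$ are computed by the cell combinatorics) together with the fact that $M$ is in the image of $\Theta$, hence a quotient of $\mathbf{M}(\mathrm{G}_{\mathcal{R}})\,M'$ in the prescribed way, so it "sees" exactly the same relations. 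Compatibility with the $\cC$-action follows from Lemma~\ref{lem14}. Uniqueness is forced since $\mathbf{C}_{\mathcal{R}}$ is generated by $L=L_{\mathrm{G}_{\mathcal{R}}}$ under the $\cC$-action and right exactness. Part \eqref{prop21.4} is then routine, exactly as Proposition~\ref{prop3}\eqref{prop3.2}: a morphism $f\colon M\to M'$ in $\Theta(\mathbf{M}(\mathtt{i}))$ induces, via the collection $\{\mathbf{M}(\mathrm{F}')\,f\}$, a modification $\Psi^M\to\Psi^{M'}$, using naturality of $\mathbf{M}(\beta)$ in the module variable.

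\textbf{Main obstacle.} The delicate point is the well-definedness of $\Psi^M$ on morphisms in part \eqref{prop21.3}: one is presenting the target category $\mathbf{C}_{\mathcal{R}}(\mathtt{j})$ via generators $\mathrm{F}'\,L$ with morphisms coming from $2$-morphisms of $\cC$, but modulo the (a priori large) kernel of the surjection from $\mathrm{Hom}_{\mathcal{C}_{\mathtt{i},\mathtt{j}}}(\mathrm{F}',\mathrm{H})$ onto $\mathrm{Hom}(\mathrm{F}'\,L,\mathrm{H}\,L)$, and one must verify that $\mathbf{M}(-)_M$ factors through exactly this kernel. This is where the hypothesis $M\in\Theta(\mathbf{M}(\mathtt{i}))$ — rather than an arbitrary object of $\mathbf{M}(\mathtt{i})$ — is essential, and where Lemma~\ref{lem15} (equivalently, the precise description of $\mathcal{D}_{\mathcal{R},\mathtt{j}}$) does the real work; I expect the argument to run parallel to the proof of Proposition~\ref{prop26} but with $P^{\mathcal{I}}_{\mathbbm{1}_{\mathtt{i}}}$ replaced by $L$ and the ideal-quotient bookkeeping replaced by the cell-combinatorial $\mathrm{Hom}$-computations of Lemma~\ref{lem15}.
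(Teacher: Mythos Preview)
Your treatment of parts \eqref{prop21.1} and \eqref{prop21.4} matches the paper. For part \eqref{prop21.2} you are overcomplicating a one-line observation, but the essential point is there.

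The substantive difference is in part \eqref{prop21.3}. The paper does \emph{not} attempt a direct construction of $\Psi^M$ on generators. Instead it goes through the principal $2$-representation: given $M\in\Theta(\mathbf{M}(\mathtt{i}))$, pick $N\in\mathbf{M}(\mathtt{i})$ with $M=\Theta\,N$, and invoke Proposition~\ref{prop3} to obtain the morphism $\Phi^N\colon\mathbf{P}_{\mathtt{i}}\to\mathbf{M}$ sending $P_{\mathbbm{1}_{\mathtt{i}}}$ to $N$. Since $\Phi^N$ is a morphism of $2$-representations it commutes with $\mathrm{Coker}(\alpha)\in\hat{\cC}$, so
\[
\Phi^N(L_{\mathrm{G}_{\mathcal{R}}})=\Phi^N\bigl(\mathrm{Coker}(\alpha)_{P_{\mathbbm{1}_{\mathtt{i}}}}\bigr)=\mathrm{Coker}(\mathbf{M}(\alpha))_N=\Theta\,N=M.
\]
One then simply restricts $\Phi^N$ to the sub-$2$-representation $\mathbf{C}_{\mathcal{R}}\subset\mathbf{P}_{\mathtt{i}}$ to obtain $\Psi^M$. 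Existence, functoriality, and well-definedness all come for free from Proposition~\ref{prop3}; uniqueness is argued exactly as you do.

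Your direct construction is not wrong in spirit, but the ``main obstacle'' you flag is genuine and you do not actually resolve it. You need: if $\beta\colon\mathrm{F}'\to\mathrm{H}$ is a $2$-morphism with $\beta_{L_{\mathrm{G}_{\mathcal{R}}}}=0$, then $\mathbf{M}(\beta)_M=0$. Lemma~\ref{lem15} only computes dimensions of $\mathrm{Hom}$-spaces; it does not identify the kernel of the evaluation map, and your appeal to it does not close the gap. The paper's detour through $\mathbf{P}_{\mathtt{i}}$ sidesteps this completely, because $\Phi^N$ is already known to be well-defined and $\Psi^M$ is obtained by restriction rather than by presentation. This is precisely what the hypothesis $M\in\Theta(\mathbf{M}(\mathtt{i}))$ buys: it furnishes the preimage $N$ needed to invoke the universal property of $\mathbf{P}_{\mathtt{i}}$.
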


\begin{proof}
Both $\mathbf{M}(\mathrm{F})$ and $\mathbf{M}(\mathrm{G}_{\mathcal{R}})$
are exact functors as $\cC$ is a fiat category and $\mathbf{M}$ is
a $2$-functor. The functor $\Theta$ is the cokernel of a homomorphism
between two exact functors and hence is right exact by the Snake lemma.
This proves claim \eqref{prop21.1}. Claim \eqref{prop21.2} follows
from the definitions.

To prove claims \eqref{prop21.3} and \eqref{prop21.4} choose
$M\in\Theta(\mathbf{M}(\mathtt{i}))$ such that $M=\Theta\, N$ for some 
$N\in \mathbf{M}(\mathtt{i})$. Consider the morphism $\Phi^N$ given by
Proposition~\ref{prop3}. As $\Phi^N$ is a morphism of $2$-representations, 
$\Phi^N(L_{\mathrm{G}_{\mathcal{R}}})=\Theta\, N=M$. The restriction 
$\Psi^M$ of $\Phi^N$ to $\mathbf{C}_{\mathcal{R}}$
is a morphism from $\mathbf{C}_{\mathcal{R}}$ to $\mathbf{M}$.
Now the existence parts of \eqref{prop21.3} and \eqref{prop21.4} follow 
from Proposition~\ref{prop3}. To prove uniqueness, we note that,
for every $\mathtt{j}\in\cC$, every projective in 
$\mathbf{C}_{\mathcal{R}}(\mathtt{j})$ has the form
$\mathrm{F}\, L_{\mathrm{G}_{\mathcal{R}}}$ for some
$\mathrm{F}\in\cC(\mathtt{i},\mathtt{j})$ and every morphism between
projectives comes from a $2$-morphism of $\cC$ (see Subsection~\ref{s3.4}).
As any morphism from $\mathbf{C}_{\mathcal{R}}$ to $\mathbf{M}$ is
a natural transformation of $2$-functors, the value of this transformation
on $L_{\mathrm{G}_{\mathcal{R}}}$ uniquely determines its value on
all other modules. This implies the uniqueness claim and completes the proof.
\end{proof}

\subsection{A canonical quotient of $P_{\mathbbm{1}_{\mathtt{i}}}$
associated with $\mathcal{R}$}\label{s3.6}

Fix $\mathtt{i}\in\cC$. Let $\mathcal{R}$ be a right cell in 
$\mathcal{C}$ such that $\mathcal{R}\cap 
\mathcal{C}_{\mathtt{i},\mathtt{j}}\neq \varnothing$ for
some $\mathtt{j}\in\cC$. Denote by $\Delta_{\mathcal{R}}$
the unique minimal quotient of $P_{\mathbbm{1}_{\mathtt{i}}}$
such that the composition $K_{\mathcal{R}}\hookrightarrow
P_{\mathbbm{1}_{\mathtt{i}}}\tto \Delta_{\mathcal{R}}$ is nonzero.

\begin{proposition}\label{prop28}
For every $\mathrm{F}\in \mathcal{R}$ the image of the a unique 
(up to scalar) nonzero homomorphism 
$\varphi:P_{\mathbbm{1}_{\mathtt{i}}}\to \mathrm{F}^*\, L_{\mathrm{F}}$ 
is isomorphic to $\Delta_{\mathcal{R}}$.
\end{proposition}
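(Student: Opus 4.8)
The plan is to first recall, from Lemma~\ref{lem122}, that for any $\mathrm{F}\in\mathcal{C}_{\mathtt{i},\mathtt{j}}$ we have $\mathrm{Hom}_{\overline{\ccC}(\mathtt{i},\mathtt{i})}(P_{\mathbbm{1}_{\mathtt{i}}},\mathrm{F}^*\,L_{\mathrm{F}})\cong\Bbbk$, so that the homomorphism $\varphi$ is indeed unique up to scalar and nonzero; this already gives a well-defined submodule $\mathrm{Im}(\varphi)\subset \mathrm{F}^*\,L_{\mathrm{F}}$. The goal is to identify $\mathrm{Im}(\varphi)$ with $\Delta_{\mathcal{R}}$. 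By definition $\Delta_{\mathcal{R}}$ is the minimal quotient $P_{\mathbbm{1}_{\mathtt{i}}}\tto \Delta_{\mathcal{R}}$ whose kernel does not contain $K_{\mathcal{R}}$; equivalently, writing $N$ for the kernel of $P_{\mathbbm{1}_{\mathtt{i}}}\tto\Delta_{\mathcal{R}}$, the module $\Delta_{\mathcal{R}}$ is characterized by: $\mathrm{top}\,\Delta_{\mathcal{R}}=L_{\mathrm{G}_{\mathcal{R}}}$ (since a quotient to which $K_{\mathcal{R}}$ maps nontrivially must surject onto $\mathrm{top}\,K_{\mathcal{R}}=L_{\mathrm{G}_{\mathcal{R}}}$ by Proposition~\ref{lem7}\eqref{lem7.01}), and $\Delta_{\mathcal{R}}$ has no proper quotient still receiving $K_{\mathcal{R}}$ nontrivially, i.e. $\Delta_{\mathcal{R}}$ is the image of the composite $K_{\mathcal{R}}\hookrightarrow P_{\mathbbm{1}_{\mathtt{i}}}\tto P_{\mathbbm{1}_{\mathtt{i}}}/N'$ for the appropriate $N'$. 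The cleanest reformulation is: $\Delta_{\mathcal{R}}$ is the image of $K_{\mathcal{R}}$ in its ``universal'' target, namely $\Delta_{\mathcal{R}}=K_{\mathcal{R}}/(\text{intersection of kernels of all maps }K_{\mathcal{R}}\to L)$ — but it is better to work directly with $\varphi$.

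The core computation is the following. Fix $\mathrm{F}\in\mathcal{R}$ with $\mathrm{F}\in\mathcal{C}_{\mathtt{i},\mathtt{j}}$. Apply adjunction: $\mathrm{Hom}_{\overline{\ccC}(\mathtt{i},\mathtt{i})}(P_{\mathbbm{1}_{\mathtt{i}}},\mathrm{F}^*\,L_{\mathrm{F}})\cong\mathrm{Hom}_{\overline{\ccC}(\mathtt{i},\mathtt{j})}(\mathrm{F}\,P_{\mathbbm{1}_{\mathtt{i}}},L_{\mathrm{F}})=\mathrm{Hom}(P_{\mathrm{F}},L_{\mathrm{F}})$, and under this identification $\varphi$ corresponds (via $\mathrm{F}\,P_{\mathbbm{1}_{\mathtt{i}}}=P_{\mathrm{F}}$) to the natural projection $P_{\mathrm{F}}\tto L_{\mathrm{F}}$. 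Now I would factor $\varphi$ through $P_{\mathbbm{1}_{\mathtt{i}}}\tto \mathrm{F}^*\circ\mathrm{F}\,P_{\mathbbm{1}_{\mathtt{i}}}\to \mathrm{F}^*\,L_{\mathrm{F}}$, where the first map is (the evaluation at $P_{\mathbbm{1}_{\mathtt{i}}}$ of) the adjunction unit $\beta:\mathbbm{1}_{\mathtt{i}}\to\mathrm{F}^*\circ\mathrm{F}$ and the second is $\mathrm{F}^*$ applied to $P_{\mathrm{F}}\tto L_{\mathrm{F}}$. Since $\mathrm{F}^*$ is exact (fiat), the kernel of the second map is $\mathrm{F}^*\,\mathrm{Ker}_{\mathrm{F}}$ where $\mathrm{Ker}_{\mathrm{F}}=\ker(P_{\mathrm{F}}\tto\mathrm{F}\,L)$ as in \eqref{eq777} (using $\mathrm{F}\,L_{\mathrm{G}_{\mathcal{R}}}$ has simple top $L_{\mathrm{F}}$, so the two simple quotients coincide). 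So $\mathrm{Im}(\varphi)$ is the image of $P_{\mathbbm{1}_{\mathtt{i}}}\to \mathrm{F}^*\circ\mathrm{F}\,P_{\mathbbm{1}_{\mathtt{i}}}\to \mathrm{F}^*(\mathrm{F}\,L)$, i.e. it equals the image of $P_{\mathbbm{1}_{\mathtt{i}}}\to\mathrm{F}^*\,\mathrm{F}\,L$ induced by $\beta$ composed with $\mathrm{F}^*$ applied to $P_{\mathrm{F}}\tto\mathrm{F}\,L$. Chasing one more step using $\mathrm{F}\,L=\mathrm{F}\,L_{\mathrm{G}_{\mathcal{R}}}$ and $L_{\mathrm{G}_{\mathcal{R}}}=\mathrm{top}\,K_{\mathcal{R}}$, the point is that this image is precisely the quotient of $P_{\mathbbm{1}_{\mathtt{i}}}$ detecting exactly the failure of $K_{\mathcal{R}}$ to be killed — which is the definition of $\Delta_{\mathcal{R}}$.

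More concretely, to tie it together I would argue in two directions. \emph{Direction 1:} $\mathrm{Im}(\varphi)$ is a quotient of $P_{\mathbbm{1}_{\mathtt{i}}}$; its top is $L_{\mathrm{G}_{\mathcal{R}}}$ because $\mathrm{Hom}(\mathrm{Im}(\varphi),L_{\mathrm{H}})\ne 0$ forces (by adjunction/Lemma~\ref{lem12}-type arguments and Corollary~\ref{cor121}, applied to $\mathrm{F}^*\,L_{\mathrm{F}}$ whose socle constituents lie in $\mathcal{R}_{\mathrm{F}}$) $\mathrm{H}\in\mathcal{R}$, and one checks the composition multiplicity of $L_{\mathrm{G}_{\mathcal{R}}}$ in $\mathrm{F}^*\,L_{\mathrm{F}}$ coming from the socle is exactly $1$ by Lemma~\ref{lem122}, hence $K_{\mathcal{R}}\hookrightarrow P_{\mathbbm{1}_{\mathtt{i}}}\to\mathrm{Im}(\varphi)$ is nonzero; so by minimality of $\Delta_{\mathcal{R}}$ there is a surjection $\mathrm{Im}(\varphi)\tto\Delta_{\mathcal{R}}$. \emph{Direction 2:} conversely, the composite $\varphi$ factors through $\Delta_{\mathcal{R}}$: the kernel of $P_{\mathbbm{1}_{\mathtt{i}}}\tto\Delta_{\mathcal{R}}$ is annihilated by $\varphi$ because any such kernel element, when pushed into $\mathrm{F}^*\,L_{\mathrm{F}}$, lands in a submodule not containing the socle copy of $L_{\mathrm{G}_{\mathcal{R}}}$; but $\mathrm{F}^*\,L_{\mathrm{F}}$ has $L_{\mathrm{G}_{\mathcal{R}}}$ in its socle with multiplicity $1$ (Lemma~\ref{lem122}) and $\mathrm{Im}(\varphi)$ contains it, which combined with $\mathrm{top}\,\mathrm{Im}(\varphi)=L_{\mathrm{G}_{\mathcal{R}}}$ forces $\mathrm{Im}(\varphi)$ to be a quotient with kernel containing $\ker(P_{\mathbbm{1}_{\mathtt{i}}}\tto\Delta_{\mathcal{R}})$. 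Hence $\Delta_{\mathcal{R}}\tto\mathrm{Im}(\varphi)$, and the two surjections are mutually inverse. Finally I would observe the construction is independent of the choice of $\mathrm{F}\in\mathcal{R}$, since $\Delta_{\mathcal{R}}$ manifestly is, and conclude. The main obstacle I anticipate is pinning down \emph{exactly} the submodule structure of $\mathrm{F}^*\,L_{\mathrm{F}}$ near its top and socle — specifically verifying that $\mathrm{Im}(\varphi)$ is simultaneously the maximal submodule with simple top $L_{\mathrm{G}_{\mathcal{R}}}$ generated by the image of $\varphi$ \emph{and} that it detects $K_{\mathcal{R}}$ faithfully; this is where Lemma~\ref{lem122} (the multiplicity-one statement), the exactness of $\mathrm{F}^*$, and the defining property \eqref{eq777} of $\mathrm{Ker}_{\mathrm{F}}$ must be combined carefully, and it is easy to conflate the two natural simple quotients $L_{\mathrm{F}}$ appearing (one of $P_{\mathrm{F}}$, one of $\mathrm{F}\,L_{\mathrm{G}_{\mathcal{R}}}$) without justifying they agree — which they do, by Proposition~\ref{lem7}\eqref{lem7.02}.
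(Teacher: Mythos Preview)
Your proposal contains a genuine error that derails both directions of the argument: you repeatedly conflate $L_{\mathbbm{1}_{\mathtt{i}}}$ with $L_{\mathrm{G}_{\mathcal{R}}}$. Any nonzero quotient of $P_{\mathbbm{1}_{\mathtt{i}}}$ has simple top $L_{\mathbbm{1}_{\mathtt{i}}}$, so both $\Delta_{\mathcal{R}}$ and $\mathrm{Im}(\varphi)$ have top $L_{\mathbbm{1}_{\mathtt{i}}}$, not $L_{\mathrm{G}_{\mathcal{R}}}$. Your justification ``a quotient to which $K_{\mathcal{R}}$ maps nontrivially must surject onto $\mathrm{top}\,K_{\mathcal{R}}=L_{\mathrm{G}_{\mathcal{R}}}$'' is false: it only says that the \emph{image of $K_{\mathcal{R}}$} in that quotient surjects onto $L_{\mathrm{G}_{\mathcal{R}}}$, which tells you about the \emph{socle} of $\Delta_{\mathcal{R}}$ (indeed $\mathrm{soc}\,\Delta_{\mathcal{R}}=L_{\mathrm{G}_{\mathcal{R}}}$ by minimality), not its top. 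Similarly, your appeal to Lemma~\ref{lem122} for ``$\mathrm{F}^*\,L_{\mathrm{F}}$ has $L_{\mathrm{G}_{\mathcal{R}}}$ in its socle with multiplicity $1$'' misreads the lemma: it gives $[\mathrm{F}^*L_{\mathrm{F}}:L_{\mathbbm{1}_{\mathtt{i}}}]=1$, and says nothing about $L_{\mathrm{G}_{\mathcal{R}}}$. With these identifications corrected, your Direction~1 collapses (the claimed top is wrong, so the surjection $\mathrm{Im}(\varphi)\tto\Delta_{\mathcal{R}}$ is not established) and Direction~2 loses its stated justification.

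The paper's proof runs on exactly the information you misattributed. One computes, by adjunction, that $\mathrm{Hom}(X,\mathrm{F}^*L_{\mathrm{F}})\cong\mathrm{Hom}(\mathrm{F}X,L_{\mathrm{F}})\cong\Bbbk$ for \emph{both} $X=P_{\mathbbm{1}_{\mathtt{i}}}$ and $X=\Delta_{\mathcal{R}}$, since in each case $\mathrm{F}X$ is a nonzero quotient of $P_{\mathrm{F}}$ (for $\Delta_{\mathcal{R}}$ this uses Proposition~\ref{lem7}). Hence $\varphi$ factors through $\Delta_{\mathcal{R}}$, giving a surjection $\Delta_{\mathcal{R}}\tto\mathrm{Im}(\varphi)$; moreover $[\mathrm{F}^*L_{\mathrm{F}}:L_{\mathbbm{1}_{\mathtt{i}}}]=1$ forces this to cover the top. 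For injectivity one uses the \emph{socle}: the cokernel $N$ of $L_{\mathrm{G}_{\mathcal{R}}}\hookrightarrow\Delta_{\mathcal{R}}$ has all composition factors annihilated by $\mathrm{F}$ (Proposition~\ref{lem7}\eqref{lem7.1}), so $\mathrm{Hom}(N,\mathrm{F}^*L_{\mathrm{F}})=\mathrm{Hom}(\mathrm{F}N,L_{\mathrm{F}})=0$, whence $\Delta_{\mathcal{R}}\to\mathrm{Im}(\varphi)$ is injective. Your adjunction-unit factorization in the middle paragraph is not needed and, as written, does not reach a conclusion.
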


\begin{proof}
The existence of $\varphi$ is given by Lemma~\ref{lem122}.
Let $Y$ denote the image of $\varphi$.
Assume that $\mathrm{F}\in\mathcal{C}_{\mathtt{i},\mathtt{j}}$.
For $X\in\{P_{\mathbbm{1}_{\mathtt{i}}},\Delta_{\mathcal{R}}\}$
we have, by adjunction,
\begin{displaymath}
\mathrm{Hom}_{\overline{\ccC}(\mathtt{i},\mathtt{i})}
(X,\mathrm{F}^*\, L_{\mathrm{F}})=
\mathrm{Hom}_{\overline{\ccC}(\mathtt{i},\mathtt{j})}
(\mathrm{F}\, X,L_{\mathrm{F}})=\Bbbk
\end{displaymath}
as $\mathrm{F}\, X$ is a nontrivial quotient of $P_{\mathrm{F}}$
(see Proposition~\ref{lem7}). By construction, $Y$ has simple top
isomorphic to $L_{\mathbbm{1}_{\mathtt{i}}}$ and, by the above, 
the latter module occurs in $\mathrm{F}^*\, L_{\mathrm{F}}$ with 
multiplicity one.
Since $\Delta_{\mathcal{R}}$ also has simple top isomorphic 
to $L_{\mathbbm{1}_{\mathtt{i}}}$, it follows that
the image of any nonzero map from $\Delta_{\mathcal{R}}$
to $Y$ covers the top of $Y$ and hence is surjective. To complete
the proof it is left to show that the image of any nonzero map 
from $\Delta_{\mathcal{R}}$ to $Y$ is injective.

By construction, $L_{\mathrm{G}_{\mathcal{R}}}$ is the simple
socle of $\Delta_{\mathcal{R}}$. Let $N$ denote the cokernel of
$L_{\mathrm{G}_{\mathcal{R}}}\hookrightarrow \Delta_{\mathcal{R}}$.
Similarly to the previous paragraph, we have
\begin{displaymath}
\mathrm{Hom}_{\overline{\ccC}(\mathtt{i},\mathtt{i})}
(N,\mathrm{F}^*\, L_{\mathrm{F}})=
\mathrm{Hom}_{\overline{\ccC}(\mathtt{i},\mathtt{j})}
(\mathrm{F}\, N,L_{\mathrm{F}})=0
\end{displaymath}
since all composition factors of $N$ are annihilated by $\mathrm{F}$
(by Proposition~\ref{lem7}\eqref{lem7.1}).
The claim follows.
\end{proof}

We complete this section with the following collection of useful 
facts:

\begin{lemma}\label{lem29}
\begin{enumerate}[$($a$)$]
\item\label{lem29.1} For any 
$\mathrm{F},\mathrm{G}\in \mathcal{C}_{\mathtt{i},\mathtt{j}}$
we have $[\mathrm{F}^*\,L_{\mathrm{G}}:L_{\mathbbm{1}_{\mathtt{i}}}]\neq 0$ 
if and only if $\mathrm{F}=\mathrm{G}$.
\item\label{lem29.2} For any $\mathrm{F}\in\mathcal{C}$ we have
$\mathrm{F}\sim_{LR}\mathrm{F}^*$.
\end{enumerate} 
\end{lemma}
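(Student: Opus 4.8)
The plan is to deduce both statements from the adjunction machinery already developed, in particular from Lemma~\ref{lem11}, Lemma~\ref{lem122} and Lemma~\ref{lem29.1} itself (for part (b)).

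For claim \eqref{lem29.1}, the ``if'' direction is precisely the content of Lemma~\ref{lem122}: taking $\mathrm{F}=\mathrm{G}$, the space $\mathrm{Hom}_{\overline{\ccC}(\mathtt{i},\mathtt{i})}(P_{\mathbbm{1}_{\mathtt{i}}},\mathrm{F}^*\,L_{\mathrm{F}})\cong\Bbbk$ is nonzero, and since $P_{\mathbbm{1}_{\mathtt{i}}}$ is the projective cover of $L_{\mathbbm{1}_{\mathtt{i}}}$ this says $[\mathrm{F}^*\,L_{\mathrm{F}}:L_{\mathbbm{1}_{\mathtt{i}}}]\neq 0$. For the ``only if'' direction, suppose $[\mathrm{F}^*\,L_{\mathrm{G}}:L_{\mathbbm{1}_{\mathtt{i}}}]\neq 0$ with $\mathrm{F},\mathrm{G}\in\mathcal{C}_{\mathtt{i},\mathtt{j}}$. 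By adjunction,
\begin{displaymath}
0\neq\mathrm{Hom}_{\overline{\ccC}(\mathtt{i},\mathtt{i})}(P_{\mathbbm{1}_{\mathtt{i}}},\mathrm{F}^*\,L_{\mathrm{G}})=\mathrm{Hom}_{\overline{\ccC}(\mathtt{i},\mathtt{j})}(\mathrm{F}\,P_{\mathbbm{1}_{\mathtt{i}}},L_{\mathrm{G}})=\mathrm{Hom}_{\overline{\ccC}(\mathtt{i},\mathtt{j})}(P_{\mathrm{F}},L_{\mathrm{G}}),
\end{displaymath}
and since the left-hand $\mathrm{Hom}$-space detects $L_{\mathrm{F}}$ in the top of $L_{\mathrm{G}}$, this forces $\mathrm{F}=\mathrm{G}$. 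Actually the cleanest way is to observe directly that $\mathrm{Hom}_{\overline{\ccC}(\mathtt{i},\mathtt{j})}(P_{\mathrm{F}},L_{\mathrm{G}})\neq 0$ if and only if $\mathrm{F}=\mathrm{G}$, by the standard description of simple modules over the path algebra.

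For claim \eqref{lem29.2}, fix $\mathrm{F}\in\mathcal{C}_{\mathtt{i},\mathtt{j}}$. By Lemma~\ref{lem11}, $\mathrm{F}\,L_{\mathrm{F}^*}\neq 0$ is equivalent to $\mathrm{F}^*\leq_L\mathrm{F}^*$, which always holds; hence $\mathrm{F}\,L_{\mathrm{F}^*}\neq 0$, and by Lemma~\ref{lem12}\eqref{lem12.1} any composition factor $L_{\mathrm{H}}$ of $\mathrm{F}\,L_{\mathrm{F}^*}$ satisfies $\mathrm{H}\leq_R\mathrm{F}^*$. On the other hand, applying \eqref{lem29.1}: we have $[\mathrm{F}^*\,L_{\mathrm{F}}:L_{\mathbbm{1}_{\mathtt{i}}}]\neq 0$, so $\mathbbm{1}_{\mathtt{i}}\leq_R\mathrm{F}$ by Lemma~\ref{lem12}\eqref{lem12.1}; but $\mathbbm{1}_{\mathtt{i}}\geq_R\mathrm{G}$ for every $\mathrm{G}$, so trivially $\mathbbm{1}_{\mathtt{i}}\sim_R\mathrm{F}$ is false in general—so instead I should argue as follows. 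Since $[\mathrm{F}^*\,L_{\mathrm{F}}:L_{\mathbbm{1}_{\mathtt{i}}}]\neq 0$, Lemma~\ref{lem12}\eqref{lem12.1} gives $\mathbbm{1}_{\mathtt{i}}\leq_R\mathrm{F}$, i.e. $\mathrm{F}$ is a direct summand of $\mathrm{H}\circ\mathbbm{1}_{\mathtt{i}}=\mathrm{H}$ for some $\mathrm{H}$; applying $*$, $\mathrm{F}^*$ is a summand of $\mathrm{H}^*$, so $\mathrm{F}^*\leq_L\mathbbm{1}_{\mathtt{j}}$, and combining with the fact that $*$ swaps $\leq_L$ and $\leq_R$ we extract a two-sided relation. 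Concretely: from $\mathbbm{1}_{\mathtt{i}}\leq_R\mathrm{F}$ we get, applying $*$, $\mathbbm{1}_{\mathtt{i}}\leq_L\mathrm{F}^*$; and symmetrically $\mathbbm{1}_{\mathtt{j}}\leq_R\mathrm{F}^*$ (using $[\mathrm{F}\,L_{\mathrm{F}^*}:L_{\mathbbm{1}_{\mathtt{j}}}]\neq 0$, which is \eqref{lem29.1} applied to $\mathrm{F}^*$). These two give $\mathrm{F}\leq_{LR}\mathbbm{1}$ and, combined with $\mathrm{F}^*\leq_{LR}$ the same, force $\mathrm{F}\sim_{LR}\mathrm{F}^*$ via the chain through the identities. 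The main obstacle is bookkeeping which identity object lives in which endomorphism category and keeping the $\leq_L/\leq_R$ bookkeeping straight under $*$; the underlying idea is just that $\mathrm{F}$ and $\mathrm{F}^*$ both ``see'' the identity through the maps $P_{\mathbbm{1}}\to\mathrm{F}^*L_{\mathrm{F}}$ and its $*$-analogue, which pins them to the same two-sided cell.
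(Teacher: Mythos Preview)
Your argument for part \eqref{lem29.1} is correct and is exactly the adjunction computation the paper gives.

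Your argument for part \eqref{lem29.2}, however, has a genuine gap. All of the inequalities you extract are of the form $\mathbbm{1}_{\mathtt{i}}\leq_R\mathrm{F}$, $\mathbbm{1}_{\mathtt{i}}\leq_L\mathrm{F}^*$, $\mathbbm{1}_{\mathtt{j}}\leq_R\mathrm{F}^*$, and these are trivially true for \emph{every} $1$-morphism (the identity is minimal in all three preorders, since $\mathrm{F}$ is always a summand of $\mathrm{F}\circ\mathbbm{1}$). In particular the step ``these two give $\mathrm{F}\leq_{LR}\mathbbm{1}$'' has the inequality reversed: $\mathbbm{1}_{\mathtt{i}}\leq_R\mathrm{F}$ yields $\mathbbm{1}_{\mathtt{i}}\leq_{LR}\mathrm{F}$, not the other direction. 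So the chain through the identities gives no relation whatsoever between the two-sided cells of $\mathrm{F}$ and $\mathrm{F}^*$; it only reconfirms that both sit above the identity, which everything does.

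What is actually needed is a non-trivial link between $\mathrm{F}$ and $\mathrm{F}^*$ inside the cell structure, and the paper supplies exactly this via Proposition~\ref{lem7}\eqref{lem7.05}: for the right cell $\mathcal{R}$ of $\mathrm{F}$ one has $\mathrm{G}_{\mathcal{R}}^*\in\mathcal{R}$. Then $\mathrm{F}\sim_R\mathrm{G}_{\mathcal{R}}\sim_R\mathrm{G}_{\mathcal{R}}^*$ and, applying $*$, $\mathrm{F}^*\sim_L\mathrm{G}_{\mathcal{R}}^*$, so $\mathrm{F}$ and $\mathrm{F}^*$ meet at $\mathrm{G}_{\mathcal{R}}^*$ and lie in the same two-sided cell. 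The existence of such a ``self-dual'' bridge element is the real content here and cannot be replaced by bookkeeping with identities.
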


\begin{proof}
Using adjunction, we have 
\begin{displaymath}
\mathrm{Hom}_{\overline{\ccC}(\mathtt{i},\mathtt{i})}
(P_{\mathbbm{1}_{\mathtt{i}}},\mathrm{F}^*\, L_{\mathrm{G}})=
\mathrm{Hom}_{\overline{\ccC}(\mathtt{i},\mathtt{j})}
(P_{\mathrm{F}},L_{\mathrm{G}})=
\begin{cases}
\Bbbk, &  \mathrm{F}=\mathrm{G};\\
 0, & \text{otherwise},
\end{cases}
\end{displaymath}
which proves \eqref{lem29.1}.
  
To prove  \eqref{lem29.2} let $\mathcal{R}$ be the right cell 
containing $\mathrm{F}$. Then we have $\mathrm{F}\sim_R
\mathrm{G}_{\mathcal{R}}$ and hence $\mathrm{F}^*\sim_L
\mathrm{G}_{\mathcal{R}}^*$. At the same time
$\mathrm{G}_{\mathcal{R}}\sim_R \mathrm{G}_{\mathcal{R}}^*$
by Proposition~\ref{lem7}\eqref{lem7.05}. Claim \eqref{lem29.2}
follows and the proof is complete.
\end{proof}

\subsection{Regular cells}\label{s3.7}

We denote by $\star$ the usual product of binary relations.

\begin{lemma}\label{lem31}
We have $\leq_{LR}=\leq_R\star\leq_L=\leq_L\star\leq_R$.
\end{lemma}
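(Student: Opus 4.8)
The plan is to prove the two nontrivial inclusions $\leq_{LR}\subseteq\ \leq_R\star\leq_L$ and $\leq_{LR}\subseteq\ \leq_L\star\leq_R$, since the reverse inclusions are immediate: if $\mathrm{F}\leq_R\mathrm{H}$ and $\mathrm{H}\leq_L\mathrm{G}$ then both steps are special cases of $\leq_{LR}$ and the latter is transitive (it is a preorder by Subsection~\ref{s3.1}), so $\mathrm{F}\leq_{LR}\mathrm{G}$; similarly for $\leq_L\star\leq_R$. So the content is entirely in factoring a two-sided domination through an intermediate $1$-morphism.

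First I would unwind the definition. Suppose $\mathrm{F}\leq_{LR}\mathrm{G}$, say with $\mathrm{F}\in\mathcal{C}_{\mathtt{i},\mathtt{j}}$; then there are $\mathrm{H}_1\in\cC(\mathtt{k},\mathtt{i})$ and $\mathrm{H}_2\in\cC(\mathtt{j},\mathtt{l})$ such that $\mathrm{G}$ is a direct summand of $\mathrm{H}_2\circ\mathrm{F}\circ\mathrm{H}_1$. The naive idea is to take the ``half-composite'' $\mathrm{F}\circ\mathrm{H}_1$, pick an indecomposable summand $\mathrm{X}$ of it through which the summand $\mathrm{G}$ of $\mathrm{H}_2\circ(\mathrm{F}\circ\mathrm{H}_1)$ is visible, and then observe that $\mathrm{X}\leq_R\mathrm{F}$ (wrong direction) — so this raw approach does not immediately give a chain with the correct handedness, and that is where the real work lies. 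The right tool is to use adjunctions: since $\cC$ is fiat, every $1$-morphism $\mathrm{H}$ has an adjoint $\mathrm{H}^*$, and the unit/counit $2$-morphisms make $\mathrm{H}$ a direct summand of $\mathrm{H}\circ\mathrm{H}^*\circ\mathrm{H}$ (this follows from the zig-zag identities $\alpha_{\mathrm{F}}\circ_1\mathrm{F}(\beta)=\mathrm{id}_{\mathrm{F}}$ in Subsection~\ref{s1.4}, possibly after passing to a matrix of such relations over indecomposables). The key consequence I would record as a sub-claim: if $\mathrm{G}$ is a summand of $\mathrm{A}\circ\mathrm{B}$, then $\mathrm{G}\leq_R\mathrm{A}$ \emph{and} there is an indecomposable $\mathrm{C}$ which is a summand of $\mathrm{B}$ with $\mathrm{G}\leq_L\mathrm{C}$; dually with $\mathrm{A}$. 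Given that, starting from $\mathrm{G}\mid\mathrm{H}_2\circ(\mathrm{F}\circ\mathrm{H}_1)$ one gets an indecomposable summand $\mathrm{C}\mid \mathrm{F}\circ\mathrm{H}_1$ with $\mathrm{G}\leq_L\mathrm{C}$; and from $\mathrm{C}\mid\mathrm{F}\circ\mathrm{H}_1$ one gets $\mathrm{C}\leq_R\mathrm{F}$, i.e. $\mathrm{F}\leq_R^{\mathrm{op}}$... here I must be careful about which variable dominates which. Let me fix the convention from the paper: $\mathrm{F}\leq_R\mathrm{G}$ means $\mathrm{G}\mid\mathrm{H}\circ\mathrm{F}$, i.e. $\mathrm{G}$ is obtained from $\mathrm{F}$ by left composition. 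So $\mathrm{C}\mid\mathrm{F}\circ\mathrm{H}_1$ gives $\mathrm{F}\leq_L\mathrm{C}$ (right composition), and $\mathrm{G}\mid\mathrm{H}_2\circ\mathrm{C}$ gives $\mathrm{C}\leq_R\mathrm{G}$. Hence $\mathrm{F}\leq_L\mathrm{C}\leq_R\mathrm{G}$, which is exactly $\mathrm{F}\ (\leq_L\star\leq_R)\ \mathrm{G}$. For the other factorization, take instead the half-composite $\mathrm{H}_2\circ\mathrm{F}$: $\mathrm{G}\mid(\mathrm{H}_2\circ\mathrm{F})\circ\mathrm{H}_1$ yields an indecomposable $\mathrm{D}\mid\mathrm{H}_2\circ\mathrm{F}$ with $\mathrm{D}\leq_L\mathrm{G}$ and $\mathrm{F}\leq_R\mathrm{D}$, giving $\mathrm{F}\leq_R\mathrm{D}\leq_L\mathrm{G}$.

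The last thing to nail down is the existence of the indecomposable intermediate summand $\mathrm{C}$ (resp. $\mathrm{D}$) living inside $\mathcal{C}$ rather than just inside $\cC(\mathtt{i},\mathtt{j})$: by $\Bbbk$-finitarity every object of $\cC(\mathtt{i},\mathtt{j})$ is a finite direct sum of indecomposables, and $\mathcal{C}_{\mathtt{i},\mathtt{j}}$ is a set of representatives, so any indecomposable summand is isomorphic to some element of $\mathcal{C}$, and $\leq_R,\leq_L$ are defined on $\mathcal{C}$ via ``occurs as a direct summand'', which is isomorphism-invariant. So choosing $\mathrm{C}$ indecomposable in $\cC(\mathtt{j},\mathtt{l})$... wait — with $\mathrm{F}\in\mathcal{C}_{\mathtt{i},\mathtt{j}}$ and $\mathrm{H}_1\in\cC(\mathtt{k},\mathtt{i})$, the composite $\mathrm{F}\circ\mathrm{H}_1$ lies in $\cC(\mathtt{k},\mathtt{j})$, so $\mathrm{C}\in\mathcal{C}_{\mathtt{k},\mathtt{j}}$; this is fine, the relations are defined across all hom-categories. \textbf{The main obstacle} I anticipate is purely bookkeeping: keeping the two sides ($\leq_L$ vs $\leq_R$, and which of $\mathrm{F},\mathrm{G}$ is the larger element in each) consistent with the paper's conventions while passing from ``$\mathrm{G}$ is a summand of a triple composite'' to ``$\mathrm{G}$ is a summand of $\mathrm{H}_2\circ\mathrm{C}$ for some summand $\mathrm{C}$ of $\mathrm{F}\circ\mathrm{H}_1$'' — this uses that horizontal composition is additive and that $\mathrm{H}_2\circ-$ sends a direct sum decomposition of $\mathrm{F}\circ\mathrm{H}_1$ to a direct sum decomposition of $\mathrm{H}_2\circ\mathrm{F}\circ\mathrm{H}_1$, so any indecomposable summand of the latter (in particular $\mathrm{G}$) is a summand of $\mathrm{H}_2\circ\mathrm{C}_r$ for some indecomposable summand $\mathrm{C}_r$ of $\mathrm{F}\circ\mathrm{H}_1$ (Krull--Schmidt, available since the categories are $\Bbbk$-linear Karoubian with finitely many indecomposables). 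No adjunctions are actually needed for this lemma — only additivity of horizontal composition and Krull--Schmidt — so the zig-zag remark above is a red herring; the clean proof is the two-line factorization through a half-composite in each direction.
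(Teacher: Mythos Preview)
Your final argument is correct and essentially identical to the paper's: factor the triple composite through an indecomposable summand of one of the two half-composites, using biadditivity of horizontal composition and Krull--Schmidt, and then do the symmetric factorization for the other order. You are right that adjunctions are a red herring here; incidentally, the paper's printed proof contains a typo (it writes $\mathrm{L}\leq_R\mathrm{G}$ where $\mathrm{L}\leq_L\mathrm{G}$ is meant), which your version gets right.
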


\begin{proof}
Obviously the product of  $\leq_R$ and $\leq_L$ (in any order)
is contained in $\leq_{LR}$. On the other hand,
for $\mathrm{F},\mathrm{G}\in \mathcal{C}$ we have 
$\mathrm{F}\leq_{LR}\mathrm{G}$ if and only if there exist
$\mathrm{H},\mathrm{K}\in \mathcal{C}$ such that $\mathrm{G}$
occurs as a direct summand of $\mathrm{H}\circ\mathrm{F}\circ\mathrm{K}$.
This means that there is a direct summand $\mathrm{L}$ of
$\mathrm{H}\circ\mathrm{F}$ such that $\mathrm{G}$
occurs as a direct summand of $\mathrm{L}\circ\mathrm{K}$.
By definition, we have $\mathrm{F}\leq_R\mathrm{L}$ and
$\mathrm{L}\leq_R\mathrm{G}$. This implies that $\leq_{LR}$
is contained in $\leq_R\star\leq_L$ and hence
$\leq_{LR}$ coincides with $\leq_R\star\leq_L$. Similarly 
$\leq_{LR}$ coincides with $\leq_L\star\leq_R$ and the claim 
of the lemma follows.
\end{proof}

A two-sided cell $\mathcal{Q}$ is called {\em regular} provided
that any two different right cells inside $\mathcal{Q}$ are not
comparable with respect to the right order. From 
Lemma~\ref{lem29}\eqref{lem29.2} it follows that $\mathcal{Q}$
is regular if and only if any two different left cells inside 
$\mathcal{Q}$ are not comparable with respect to the left order.
A right (left) cell is called {\em regular} if it belongs to
a regular two-sided cell. An element $\mathrm{F}$ is called
{\em regular} if it belongs to a regular two-sided cell.

\begin{proposition}[Structure of regular two-sided cells]\label{prop32}
Let $\mathcal{Q}$ be a regular two-sided cell.
\begin{enumerate}[$($a$)$]
\item\label{prop32.1} 
For any right cell $\mathcal{R}$ in $\mathcal{Q}$ and left cell
$\mathcal{L}$ in $\mathcal{Q}$ we have
$\mathcal{L}\cap \mathcal{R}\neq \varnothing$.
\item\label{prop32.2}
Let $\sim^{\mathcal{Q}}_{R}$ and $\sim^{\mathcal{Q}}_{L}$ denote the
restrictions of $\sim_R$ and $\sim_L$ to $\mathcal{Q}$,
respectively. Then $\mathcal{Q}\times \mathcal{Q}=
\sim^{\mathcal{Q}}_{R}\star\sim^{\mathcal{Q}}_{L}=
\sim^{\mathcal{Q}}_{L}\star\sim^{\mathcal{Q}}_{R}$.
\end{enumerate}
\end{proposition}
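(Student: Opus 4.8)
The plan is to deduce both statements from Lemma~\ref{lem31} together with the adjunction arguments already developed in Subsection~\ref{s3.3}, using $\mathrm{G}_{\mathcal{R}}$ as a canonical representative of each right cell inside $\mathcal{Q}$.

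First I would prove claim \eqref{prop32.1}. Fix a right cell $\mathcal{R}$ and a left cell $\mathcal{L}$ in $\mathcal{Q}$ and pick $\mathrm{F}\in\mathcal{L}$. Since $\mathrm{F}\sim_{LR}\mathrm{G}_{\mathcal{R}}$, Lemma~\ref{lem31} gives $\mathrm{H}$ with $\mathrm{G}_{\mathcal{R}}\leq_R\mathrm{H}$ and $\mathrm{H}\leq_L\mathrm{F}$ (and symmetrically in the other order). The element $\mathrm{H}$ lies in $\mathcal{Q}$ and in $\mathcal{L}$ (the latter because $\mathrm{H}\leq_L\mathrm{F}$ and $\mathrm{H}\in\mathcal{Q}$, so by regularity of $\mathcal{Q}$ the left cells of $\mathrm{H}$ and $\mathrm{F}$ coincide). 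It remains to upgrade $\mathrm{G}_{\mathcal{R}}\leq_R\mathrm{H}$ to $\mathrm{H}\in\mathcal{R}$; this is exactly the point where regularity is used, but in the $\leq_R$ direction, so it should instead be arranged by first symmetrising: apply the dual of Lemma~\ref{lem31} to get $\mathrm{H}$ with $\mathrm{G}_{\mathcal{R}}\sim_R\mathrm{H}$ directly, i.e. use $\leq_{LR}=\leq_L\star\leq_R$ to write the chain $\mathrm{G}_{\mathcal{R}}\leq_L \mathrm{H}'\leq_R \mathrm{F}$, deduce $\mathrm{H}'\in\mathcal{Q}$, and then regularity forces $\mathrm{H}'\sim_R\mathrm{F}$ hence $\mathrm{H}'\in\mathcal{L}$ as well — wait, this needs care about which side gives which cell. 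The clean route is: by $\leq_{LR}=\leq_R\star\leq_L$ there is $\mathrm{H}$ with $\mathrm{F}\leq_R\mathrm{H}\leq_L\mathrm{G}_{\mathcal{R}}$; since $\mathrm{H}\in\mathcal{Q}$ and $\mathrm{F}\leq_R\mathrm{H}$, regularity of $\mathcal{Q}$ (any two right cells in $\mathcal{Q}$ are incomparable under $\leq_R$) forces $\mathrm{H}\sim_R\mathrm{F}$, i.e. $\mathrm{H}\in\mathcal{R}$; similarly $\mathrm{H}\leq_L\mathrm{G}_{\mathcal{R}}$ with both in $\mathcal{Q}$ and regularity on the left side (which holds by Lemma~\ref{lem29}\eqref{lem29.2}, as noted before the proposition) forces $\mathrm{H}\sim_L\mathrm{G}_{\mathcal{R}}$, i.e. $\mathrm{H}\in\mathcal{L}$. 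Thus $\mathrm{H}\in\mathcal{L}\cap\mathcal{R}$.

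For claim \eqref{prop32.2}, one containment is trivial: $\sim^{\mathcal{Q}}_R\star\sim^{\mathcal{Q}}_L\subset\mathcal{Q}\times\mathcal{Q}$ since both relations live inside $\mathcal{Q}$ and $\mathcal{Q}$ is a single two-sided cell. Conversely, given $\mathrm{F},\mathrm{G}\in\mathcal{Q}$, let $\mathcal{R}$ be the right cell of $\mathrm{F}$ and $\mathcal{L}$ the left cell of $\mathrm{G}$; by part \eqref{prop32.1} there is $\mathrm{H}\in\mathcal{L}\cap\mathcal{R}$, so $\mathrm{F}\sim^{\mathcal{Q}}_R\mathrm{H}\sim^{\mathcal{Q}}_L\mathrm{G}$, giving $\mathcal{Q}\times\mathcal{Q}\subset\sim^{\mathcal{Q}}_R\star\sim^{\mathcal{Q}}_L$. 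The identity $\mathcal{Q}\times\mathcal{Q}=\sim^{\mathcal{Q}}_L\star\sim^{\mathcal{Q}}_R$ follows by the symmetric argument using the left cell of $\mathrm{F}$ and the right cell of $\mathrm{G}$, or simply by applying the involution $*$ and Lemma~\ref{lem29}\eqref{lem29.2}.

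The main obstacle is bookkeeping in the proof of \eqref{prop32.1}: one must be careful that the intermediate element $\mathrm{H}$ produced by Lemma~\ref{lem31} genuinely lies in $\mathcal{Q}$ (so that regularity applies to it) and that the two comparabilities $\mathrm{F}\leq_R\mathrm{H}$ and $\mathrm{H}\leq_L\mathrm{G}_{\mathcal{R}}$ are each promoted to equivalences using regularity on the correct side — using that $\mathcal{Q}$ regular means incomparability of distinct right cells, and equivalently (by Lemma~\ref{lem29}\eqref{lem29.2}) incomparability of distinct left cells. Once this is set up correctly, everything else is formal manipulation of the preorders.
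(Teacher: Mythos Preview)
Your approach is essentially the same as the paper's: find an intermediate element via Lemma~\ref{lem31}, observe it lies in $\mathcal{Q}$, and use regularity on both sides to force it into $\mathcal{R}\cap\mathcal{L}$. Part \eqref{prop32.2} is correct and matches the paper.

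However, your ``clean route'' for \eqref{prop32.1} contains exactly the bookkeeping slip you warned yourself about. You fix $\mathrm{F}\in\mathcal{L}$ and use $\mathrm{G}_{\mathcal{R}}\in\mathcal{R}$; then from $\mathrm{H}\sim_R\mathrm{F}$ you conclude ``i.e.\ $\mathrm{H}\in\mathcal{R}$'', and from $\mathrm{H}\sim_L\mathrm{G}_{\mathcal{R}}$ you conclude ``i.e.\ $\mathrm{H}\in\mathcal{L}$''. Both implications are wrong: $\mathrm{H}\sim_R\mathrm{F}$ places $\mathrm{H}$ in the \emph{right} cell of $\mathrm{F}$, which has nothing to do with $\mathcal{R}$ since $\mathrm{F}$ was chosen in the \emph{left} cell $\mathcal{L}$; and symmetrically $\mathrm{H}\sim_L\mathrm{G}_{\mathcal{R}}$ places $\mathrm{H}$ in the left cell of $\mathrm{G}_{\mathcal{R}}$, not in $\mathcal{L}$. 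The fix is to swap the roles: take $\mathrm{F}\in\mathcal{R}$ and any $\mathrm{G}\in\mathcal{L}$ (there is no need for the special element $\mathrm{G}_{\mathcal{R}}$ here), produce $\mathrm{H}$ with $\mathrm{F}\leq_R\mathrm{H}\leq_L\mathrm{G}$, and then regularity gives $\mathrm{H}\sim_R\mathrm{F}$ (so $\mathrm{H}\in\mathcal{R}$) and $\mathrm{H}\sim_L\mathrm{G}$ (so $\mathrm{H}\in\mathcal{L}$). This is precisely what the paper does.
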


\begin{proof}
If $\mathrm{F}\in \mathcal{R}$ and $\mathrm{G}\in \mathcal{L}$, then
there exist $\mathrm{H},\mathrm{K}\in \mathcal{C}$ such that
$\mathrm{G}$ occurs as a direct summand of
$\mathrm{H}\circ\mathrm{F}\circ\mathrm{K}$. This means that there
exists a direct summand $\mathrm{N}$ of $\mathrm{H}\circ\mathrm{F}$
such that $\mathrm{G}$ occurs as a direct summand of
$\mathrm{N}\circ\mathrm{K}$. Then $\mathrm{N}\geq_R\mathrm{F}$
and $\mathrm{N}\leq_L \mathrm{G}$. As $\mathrm{F}\sim_{LR}\mathrm{G}$
it follows that $\mathrm{N}\in\mathcal{Q}$. Since $\mathcal{Q}$ is
regular, it follows that $\mathrm{N}\in \mathcal{R}$ and 
$\mathrm{N}\in \mathcal{L}$ proving \eqref{prop32.1}.

To prove \eqref{prop32.2} consider $\mathrm{F},\mathrm{G}\in \mathcal{Q}$.
By \eqref{prop32.1}, there exist $\mathrm{H}\in \mathcal{Q}$
such that $\mathrm{H}\sim_{L}\mathrm{F}$ and $\mathrm{H}\sim_{R}\mathrm{G}$.
Similarly, there exist $\mathrm{K}\in \mathcal{Q}$
such that $\mathrm{K}\sim_{R}\mathrm{F}$ and $\mathrm{K}\sim_{L}\mathrm{G}$.
Then we have $(\mathrm{F},\mathrm{G})=(\mathrm{F},\mathrm{H})\star
(\mathrm{H},\mathrm{G})$ and $(\mathrm{F},\mathrm{G})=
(\mathrm{F},\mathrm{K})\star (\mathrm{K},\mathrm{G})$ 
proving \eqref{prop32.2}. 
\end{proof}

For a regular right cell $\mathcal{R}$ the corresponding module
$\Delta_{\mathcal{R}}$ has the following property.

\begin{proposition}\label{prop23}
Let $\mathcal{R}$ be a regular right cell and $M$ the cokernel of
$L_{\mathrm{G}_{\mathcal{R}}}\hookrightarrow \Delta_{\mathcal{R}}$.
Then for any composition factor $L_{\mathrm{F}}$ of $M$
we have $\mathrm{F}<_R\mathrm{G}_{\mathcal{R}}$
and $\mathrm{F}<_L\mathrm{G}_{\mathcal{R}}$.
\end{proposition}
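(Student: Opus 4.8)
The plan is to reduce the proposition to three assertions about an arbitrary composition factor $L_{\mathrm{F}}$ of $M$: (A) $\mathrm{F}\leq_R\mathrm{G}_{\mathcal{R}}$; (B) $\mathrm{G}_{\mathcal{R}}\not\leq_L\mathrm{F}$; (C) $\mathrm{F}\leq_L\mathrm{G}_{\mathcal{R}}$. Granting these, the conclusion is immediate: (B) gives $\mathrm{F}\not\sim_L\mathrm{G}_{\mathcal{R}}$, so together with (C) we get $\mathrm{F}<_L\mathrm{G}_{\mathcal{R}}$; and if in addition $\mathrm{F}\sim_R\mathrm{G}_{\mathcal{R}}$ held, then $\mathrm{F}$ and $\mathrm{G}_{\mathcal{R}}$ would lie in the single two-sided cell $\mathcal{LR}_{\mathrm{G}_{\mathcal{R}}}$, which is regular since $\mathcal{R}$ is, whereas $\mathrm{F}<_L\mathrm{G}_{\mathcal{R}}$ would exhibit the left cells of $\mathrm{F}$ and of $\mathrm{G}_{\mathcal{R}}$ as two distinct, $\leq_L$-comparable left cells inside it --- contradicting regularity. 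Hence $\mathrm{F}\not\sim_R\mathrm{G}_{\mathcal{R}}$, and with (A) this yields $\mathrm{F}<_R\mathrm{G}_{\mathcal{R}}$.

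First I would record the structural input. By Proposition~\ref{lem7} and the proof of Proposition~\ref{prop28}, $\Delta_{\mathcal{R}}$ is a quotient of $P_{\mathbbm{1}_{\mathtt{i}}}$ whose simple socle $L_{\mathrm{G}_{\mathcal{R}}}$ is the image of $K_{\mathcal{R}}$; consequently $M=\Delta_{\mathcal{R}}/L_{\mathrm{G}_{\mathcal{R}}}$ is a quotient of $P_{\mathbbm{1}_{\mathtt{i}}}/K_{\mathcal{R}}$, and Proposition~\ref{lem7}\eqref{lem7.1} then guarantees that every composition factor $L_{\mathrm{F}}$ of $M$ is annihilated by every element of $\mathcal{R}$. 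Applying this with $\mathrm{G}_{\mathcal{R}}^{*}\in\mathcal{R}$ (Proposition~\ref{lem7}\eqref{lem7.05}) gives $\mathrm{G}_{\mathcal{R}}^{*}\,L_{\mathrm{F}}=0$, which by Lemma~\ref{lem11} is exactly (B). For (A), Proposition~\ref{prop28} applied with $\mathrm{G}_{\mathcal{R}}\in\mathcal{R}$ identifies $\Delta_{\mathcal{R}}$ with a submodule of $\mathrm{G}_{\mathcal{R}}^{*}\,L_{\mathrm{G}_{\mathcal{R}}}$, so $L_{\mathrm{F}}$ is a composition factor of $\mathrm{G}_{\mathcal{R}}^{*}\,L_{\mathrm{G}_{\mathcal{R}}}$ and Lemma~\ref{lem12}\eqref{lem12.1} gives $\mathrm{F}\leq_R\mathrm{G}_{\mathcal{R}}$.

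The hard part will be (C), the inequality $\mathrm{F}\leq_L\mathrm{G}_{\mathcal{R}}$. The difficulty is structural: the principal $2$-representation $\mathbf{P}_{\mathtt{i}}$ is built from \emph{left} horizontal composition, and the adjunction computation underlying Lemmas~\ref{lem11} and~\ref{lem12} therefore controls composition factors of modules of the form $\mathrm{H}\,L$ only through the \emph{right} preorder on labels; feeding any $\mathrm{E}\in\mathcal{R}$ into the embedding $\Delta_{\mathcal{R}}\hookrightarrow\mathrm{E}^{*}\,L_{\mathrm{E}}$ from Proposition~\ref{prop28} likewise only ever produces $\leq_R$-statements. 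My proposal is to transpose the whole construction through the involution $*$: it makes $\cC$ into a fiat category in which $\leq_L$ and $\leq_R$ are interchanged, sends the right cell $\mathcal{R}$ to a left cell $\mathcal{R}^{*}$, and --- once combined with the $\Bbbk$-linear duality forced by the contravariance of $*$ on $2$-morphisms --- turns the left action of $\cC$ on $\overline{\cC}(\mathtt{i},\mathtt{i})$ into the right action. Under this transposition $P_{\mathbbm{1}_{\mathtt{i}}}$, $K_{\mathcal{R}}$ and $\Delta_{\mathcal{R}}$ should correspond to the objects attached by the mirror (right-handed) theory to $\mathcal{R}^{*}$, and then assertion (A) for that theory reads precisely as (C). The point I expect to be delicate --- and where the real work lies --- is to verify this compatibility carefully: that $\mathrm{G}_{\mathcal{R}}$, $K_{\mathcal{R}}$ and $\Delta_{\mathcal{R}}$ really do go over to their left-handed analogues for $\mathcal{R}^{*}$, so that (C) may be invoked as the mirror image of the argument already carried out for (A).
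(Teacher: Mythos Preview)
Your reductions (A) and (B) are correct and match parts of the paper's argument --- in particular your (B) is precisely the contradiction the paper invokes at the end. The logical wrap-up from (A), (B), (C) to the conclusion is also fine.

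The gap is (C). You correctly identify it as the crux, but you only outline a strategy: transpose the whole construction through $*$ and invoke the mirror of (A). That strategy is plausible in principle, but carrying it out means setting up a right-handed principal $2$-representation, a right-handed $K$, $\mathrm{G}$, and $\Delta$, and then checking that $*$ intertwines these with the left-handed ones; none of this is done here, and the paper does not develop such a parallel theory. So as written the proof is incomplete.

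The paper's route to (C) is much shorter and stays entirely inside the existing setup. The trick is that you do not need to mirror the construction --- the involution $*$ is already encoded in the adjunction. Since $L_{\mathrm{F}}$ occurs in $\Delta_{\mathcal{R}}$ and (by (A)) $\Delta_{\mathcal{R}}$ lives in the Serre subrepresentation $\mathbf{P}_{\mathtt{i}}^{\mathcal{I}}$ for $\mathcal{I}=\{\mathrm{H}:\mathrm{H}\leq_R\mathrm{G}_{\mathcal{R}}\}$, one has
\[
0\neq \mathrm{Hom}(\mathrm{F}\,P^{\mathcal{I}}_{\mathbbm{1}_{\mathtt{i}}},\Delta_{\mathcal{R}})
=\mathrm{Hom}(P^{\mathcal{I}}_{\mathbbm{1}_{\mathtt{i}}},\mathrm{F}^*\,\Delta_{\mathcal{R}}),
\]
so $\mathrm{F}^*\,\Delta_{\mathcal{R}}\neq 0$. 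This module is a quotient of $\mathrm{F}^*\,P_{\mathbbm{1}_{\mathtt{i}}}=P_{\mathrm{F}^*}$, hence has top $L_{\mathrm{F}^*}$; but it also lies in $\mathbf{P}_{\mathtt{i}}^{\mathcal{I}}$, forcing $\mathrm{F}^*\leq_R\mathrm{G}_{\mathcal{R}}$, i.e.\ $\mathrm{F}\leq_L\mathrm{G}^*_{\mathcal{R}}$. Combined with (A), (B) and regularity this finishes the argument by the same case analysis you wrote down. So the missing idea is simply to apply $\mathrm{F}^*$ to $\Delta_{\mathcal{R}}$ and read off the $\leq_L$-inequality from the top of the result, rather than rebuilding the theory on the other side.
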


\begin{proof}[Proof of Proposition~\ref{prop23}.]
Let $\mathrm{F}\in\mathcal{C}$ be such that $L_{\mathrm{F}}$ is a 
composition factor of $M$. As $\Delta_{\mathcal{R}}$ is
a submodule of $\mathrm{G}^*_{\mathcal{R}}\,L_{\mathrm{G}_{\mathcal{R}}}$
(by Proposition~\ref{prop28}),
from Lemma~\ref{lem12}\eqref{lem12.1} it follows that
$\mathrm{F}\leq_R \mathrm{G}_{\mathcal{R}}$.
 
Consider $\mathcal{I}:=\{\mathrm{H}\in\mathcal{C}:
\mathrm{H}\leq_R \mathrm{G}_{\mathcal{R}}\}$. Then
$\mathcal{I}$ is an ideal with respect to $\leq_R$.
Assume that $\mathrm{G}_{\mathcal{R}}\in
\mathcal{C}_{\mathtt{i},\mathtt{i}}$ and consider
the $2$-subrepresentation $\mathbf{P}_{\mathtt{i}}^{\mathcal{I}}$ of 
$\mathbf{P}_{\mathtt{i}}$. Then $\mathrm{F}\in
\mathcal{C}_{\mathtt{i},\mathtt{i}}$ and
$\Delta_{\mathcal{R}}\in \mathbf{P}_{\mathtt{i}}^{\mathcal{I}}(\mathtt{i})$.
Using adjunction, we have: 
\begin{displaymath}
0\neq\mathrm{Hom}_{\overline{\ccC}(\mathtt{i},\mathtt{i})}
(\mathrm{F}\,P_{\mathbbm{1}_{\mathtt{i}}}^{\mathcal{I}},
\Delta_{\mathcal{R}})=
\mathrm{Hom}_{\overline{\ccC}(\mathtt{i},\mathtt{i})}
(P_{\mathbbm{1}_{\mathtt{i}}}^{\mathcal{I}},
\mathrm{F}^*\,\Delta_{\mathcal{R}}).
\end{displaymath}
This yields $\mathrm{F}^*\,\Delta_{\mathcal{R}}\neq 0$. The module
$\mathrm{F}^*\,\Delta_{\mathcal{R}}$ on the one hand belongs to
$\mathbf{P}_{\mathtt{i}}^{\mathcal{I}}(\mathtt{i})$
(by Lemma~\ref{lem12}\eqref{lem12.1}), on the other hand 
is a quotient of $\mathrm{F}^*\,P_{\mathbbm{1}_{\mathtt{i}}}$
(as $\Delta_{\mathcal{R}}$ is a quotient of $P_{\mathbbm{1}_{\mathtt{i}}}$
and $\mathrm{F}^*$ is exact). The module 
$\mathrm{F}^*\,P_{\mathbbm{1}_{\mathtt{i}}}$ 
has simple top $L_{\mathrm{F}^*}$. This implies
$\mathrm{F}^*\leq_R \mathrm{G}_{\mathcal{R}}$ 
by Lemma~\ref{lem12}\eqref{lem12.1} and thus
$\mathrm{F}\leq_L \mathrm{G}_{\mathcal{R}}^*\in\mathcal{R}$
(see Proposition~\ref{lem7}\eqref{lem7.05}). 

This leaves us with two possibilities: either
$\mathrm{F}\not\sim_{LR}\mathrm{G}_{\mathcal{R}}$, in which case
we have both  $\mathrm{F}<_{R}\mathrm{G}_{\mathcal{R}}$
and $\mathrm{F}<_{L}\mathrm{G}_{\mathcal{R}}$, as desired; or
$\mathrm{F}\sim_{LR}\mathrm{G}_{\mathcal{R}}$, in which case
we have both  $\mathrm{F}\sim_{L}\mathrm{G}_{\mathcal{R}}$
and $\mathrm{F}\sim_{R}\mathrm{G}_{\mathcal{R}}$ since
$\mathcal{R}$ is regular. In the latter case we, however,
have $\mathrm{G}_{\mathcal{R}}^*\,L_{\mathrm{F}}\neq 0$
by Lemma~\ref{lem11}, which contradicts 
Proposition~\ref{lem7}\eqref{lem7.1}. This completes the proof.
\end{proof}

A two-sided cell $\mathcal{Q}$ is called {\em strongly regular} if it 
is regular and for every left cell $\mathcal{L}$ and right cell
$\mathcal{R}$ in $\mathcal{Q}$ we have $|\mathcal{L}\cap\mathcal{R}|=1$.
A left (right) cell is {\em strongly regular} if it is contained in a
strongly regular two-sided cell.

\begin{proposition}[Structure of strongly regular right
cells]\label{prop24}
Let $\mathcal{R}$ be a strongly regular right cell. Then we have:
\begin{enumerate}[$($a$)$]
\item\label{prop24.1} $\mathrm{G}_{\mathcal{R}}\cong
\mathrm{G}_{\mathcal{R}}^*$.
\item\label{prop24.2}  If $\mathrm{F}\in \mathcal{R}$
satisfies $\mathrm{F}\cong\mathrm{F}^*$, then 
$\mathrm{F}=\mathrm{G}_{\mathcal{R}}$.
\item\label{prop24.3} If $\mathrm{F}\in \mathcal{R}$
and $\mathrm{G}\sim_{{L}}\mathrm{F}$ is such that 
$\mathrm{G}\cong\mathrm{G}^*$, then $\mathrm{G}\,L_{\mathrm{F}}\neq0$
and every simple occurring both in the top and in the socle of
$\mathrm{G}\,L_{\mathrm{F}}$ is isomorphic to $L_{\mathrm{F}}$.
\end{enumerate}
\end{proposition}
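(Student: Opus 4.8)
The plan is to treat (a) and (b) together, since both reduce to a single combinatorial fact, and then to handle (c), which is where the real work lies.

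For (a) and (b), the key observation is that $\mathcal{R}^*:=\{\mathrm{H}^*:\mathrm{H}\in\mathcal{R}\}$ is a \emph{left} cell contained in the same two-sided cell $\mathcal{Q}$ as $\mathcal{R}$: indeed $*$ swaps $\leq_L$ and $\leq_R$, and by Lemma~\ref{lem29}\eqref{lem29.2} we have $\mathrm{H}\sim_{LR}\mathrm{H}^*$, so $*$ sends $\mathcal{Q}$ to itself. Since $\mathcal{Q}$ is strongly regular, $|\mathcal{R}\cap\mathcal{R}^*|=1$. Now Proposition~\ref{lem7}\eqref{lem7.03} and \eqref{lem7.05} give $\mathrm{G}_{\mathcal{R}}\in\mathcal{R}$ and $\mathrm{G}_{\mathcal{R}}^*\in\mathcal{R}$; applying $*$ to the latter, $\mathrm{G}_{\mathcal{R}}\in\mathcal{R}^*$, and likewise $\mathrm{G}_{\mathcal{R}}^*\in\mathcal{R}^*$. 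Hence $\mathrm{G}_{\mathcal{R}}$ and $\mathrm{G}_{\mathcal{R}}^*$ both lie in the singleton $\mathcal{R}\cap\mathcal{R}^*$, which is (a). For (b), if $\mathrm{F}\in\mathcal{R}$ and $\mathrm{F}\cong\mathrm{F}^*$, then $\mathrm{F}^*\in\mathcal{R}$ (cells are closed under isomorphism), so $\mathrm{F}\in\mathcal{R}^*$ as well, whence $\mathrm{F}\in\mathcal{R}\cap\mathcal{R}^*=\{\mathrm{G}_{\mathcal{R}}\}$.

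For (c), fix $\mathrm{F}\in\mathcal{R}$, say $\mathrm{F}\in\mathcal{C}_{\mathtt{i},\mathtt{j}}$ so that $L_{\mathrm{F}}\in\mathbf{P}_{\mathtt{i}}(\mathtt{j})$; the hypothesis $\mathrm{G}\sim_L\mathrm{F}$ together with the requirement that $\mathrm{G}$ act on $L_{\mathrm{F}}$ forces $\mathrm{G}\in\mathcal{C}_{\mathtt{j},\mathtt{j}}$. First, $\mathrm{G}\,L_{\mathrm{F}}\neq 0$ is immediate from Lemma~\ref{lem11}, which says this is equivalent to $\mathrm{G}^*\leq_L\mathrm{F}$, and indeed $\mathrm{G}^*\cong\mathrm{G}\sim_L\mathrm{F}$. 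Now let $L_{\mathrm{H}}$ be any simple occurring in the top of $\mathrm{G}\,L_{\mathrm{F}}$. Since $\mathrm{G}$ is exact (as $\cC$ is fiat), $\mathrm{G}\,L_{\mathrm{F}}$ is a quotient of $\mathrm{G}\,P_{\mathrm{F}}=(\mathrm{G}\circ\mathrm{F})\,P_{\mathbbm{1}_{\mathtt{i}}}$, which is the direct sum of the indecomposable projectives $P_{\mathrm{N}}$ with $\mathrm{N}$ running over the indecomposable summands of $\mathrm{G}\circ\mathrm{F}$; comparing tops forces $\mathrm{H}$ to be such a summand. This gives $\mathrm{F}\leq_R\mathrm{H}$ (so $\mathrm{H}\in\mathcal{R}$, which also follows from Corollary~\ref{cor121}) and, since $\mathrm{H}$ is a summand of $\mathrm{G}\circ\mathrm{F}$, also $\mathrm{G}\leq_L\mathrm{H}$; combined with $\mathrm{G}\sim_L\mathrm{F}$ this yields $\mathrm{F}\leq_L\mathrm{H}$, hence $\mathcal{L}_{\mathrm{F}}\leq_L\mathcal{L}_{\mathrm{H}}$. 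As $\mathrm{H}$ and $\mathrm{F}$ lie in the same two-sided cell $\mathcal{Q}$, which is regular, distinct left cells in $\mathcal{Q}$ are $\leq_L$-incomparable, so $\mathcal{L}_{\mathrm{F}}=\mathcal{L}_{\mathrm{H}}$; then $\mathrm{H},\mathrm{F}\in\mathcal{R}\cap\mathcal{L}_{\mathrm{F}}$, so strong regularity gives $\mathrm{H}=\mathrm{F}$. Thus every simple in the top of $\mathrm{G}\,L_{\mathrm{F}}$ is isomorphic to $L_{\mathrm{F}}$ (and $L_{\mathrm{F}}$ does appear, since $\mathrm{G}\,L_{\mathrm{F}}\neq 0$), and a fortiori the simples occurring in both the top and the socle are isomorphic to $L_{\mathrm{F}}$.

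The main obstacle is the middle step of (c): translating ``$L_{\mathrm{H}}$ occurs in the top of $\mathrm{G}\,L_{\mathrm{F}}$'' into ``$\mathrm{H}$ is a direct summand of $\mathrm{G}\circ\mathrm{F}$'' via projectivity of $\mathrm{G}\,P_{\mathrm{F}}$, and then recognizing that the hypothesis $\mathrm{G}\sim_L\mathrm{F}$ is exactly what upgrades this to the left-order comparison $\mathrm{F}\leq_L\mathrm{H}$, after which regularity and strong regularity pin down $\mathrm{H}=\mathrm{F}$. A secondary point is to keep careful track of the objects $\mathtt{i},\mathtt{j}$ so that all functors and simples live in the appropriate categories; everything else is bookkeeping with $*$ and the counting built into strong regularity.
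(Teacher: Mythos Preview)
Your proof is correct. Parts (a) and (b) are precisely the paper's argument written out in full; the paper simply says these follow ``from strong regularity'' and Proposition~\ref{lem7}\eqref{lem7.05}, and you unpack this via the singleton $\mathcal{R}\cap\mathcal{R}^*$.

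For (c) there is a small but genuine difference in how the key inequality $\mathrm{G}\leq_L\mathrm{H}$ is obtained. The paper argues by adjunction: from $\mathrm{Hom}(\mathrm{G}\,L_{\mathrm{F}},L_{\mathrm{H}})\neq 0$ and $\mathrm{G}\cong\mathrm{G}^*$ one gets $\mathrm{G}\,L_{\mathrm{H}}\neq 0$, and then Lemma~\ref{lem11} gives $\mathrm{G}\leq_L\mathrm{H}$. You instead observe that $\mathrm{G}\,L_{\mathrm{F}}$ is a quotient of the projective $\mathrm{G}\,P_{\mathrm{F}}\cong\bigoplus_{\mathrm{N}}P_{\mathrm{N}}$ (sum over indecomposable summands $\mathrm{N}$ of $\mathrm{G}\circ\mathrm{F}$), so any simple in its top is some $L_{\mathrm{N}}$, forcing $\mathrm{H}$ to be such a summand and hence $\mathrm{G}\leq_L\mathrm{H}$ directly. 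Your route is a shade more elementary for the top, since it does not invoke $\mathrm{G}\cong\mathrm{G}^*$ at that step; the paper's adjunction argument, on the other hand, dualizes verbatim to handle the socle. Both proofs only treat the top explicitly, and on the literal reading of the statement (simples occurring in \emph{both} the top and the socle) this suffices, so there is no gap. After this point the two arguments coincide: Corollary~\ref{cor121} gives $\mathrm{H}\sim_R\mathrm{F}$, regularity upgrades $\mathrm{G}\leq_L\mathrm{H}$ to $\mathrm{H}\sim_L\mathrm{F}$, and strong regularity yields $\mathrm{H}=\mathrm{F}$.
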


\begin{proof}
Claim \eqref{prop24.1} follows from the strong regularity of
$\mathcal{R}$ and Proposition~\ref{lem7}\eqref{lem7.05}.
Claim \eqref{prop24.2} follows directly from the strong
regularity of $\mathcal{R}$.

Let us prove claim \eqref{prop24.3}. That 
$\mathrm{G}\,L_{\mathrm{F}}\neq0$ follows from 
Lemma~\ref{lem11}. If some $L_{\mathrm{H}}$ occurs in the
top of $\mathrm{G}\,L_{\mathrm{F}}\neq0$ then, using adjunction
and $\mathrm{G}\cong\mathrm{G}^*$, we get $\mathrm{G}\,L_{\mathrm{H}}\neq 0$.
The latter implies $\mathrm{G}\sim_L\mathrm{H}$ by
Lemma~\ref{lem11}. At the same time $\mathrm{H}\sim_R\mathrm{F}$
by Corollary~\ref{cor121}. Hence $\mathrm{H}=\mathrm{F}$ because of the 
strong regularity of $\mathcal{R}$. This completes the proof.
\end{proof}

\section{The $2$-category of a two-sided cell}\label{s5}

\subsection{The quotient associated with a two-sided cell}\label{s5.1}

Let $\mathcal{Q}$ be a two-sided cell in $\mathcal{C}$.
Denote by $\cI_{\mathcal{Q}}$ the $2$-ideal of $\cC$ generated by 
$\mathrm{F}$ and $\mathrm{id}_{\mathrm{F}}$ for all 
$\mathrm{F}\not\leq_{LR}\mathcal{Q}$. In other words, for every
$\mathtt{i},\mathtt{j}\in\cC$ we have that 
$\cI_{\mathcal{Q}}(\mathtt{i},\mathtt{j})$
is the ideal of $\cC(\mathtt{i},\mathtt{j})$ consisting of all 
$2$-morphisms which factor through a direct sum of $1$-morphisms
of the form $\mathrm{F}$, where $\mathrm{F}\not\leq_{LR}\mathcal{Q}$.
Taking the quotient we obtain the $2$-category $\cC/\cI_{\mathcal{Q}}$.

\begin{lemma}\label{lem61}
Let $\mathcal{R}\subset \mathcal{Q}$ be a right cell. Then
$\cI_{\mathcal{Q}}$ annihilates the cell $2$-rep\-re\-sen\-ta\-ti\-on
$\mathbf{C}_{\mathcal{R}}$. In particular, $\mathbf{C}_{\mathcal{R}}$
carries the natural structure of a $2$-rep\-re\-sen\-ta\-ti\-on
of  $\cC/\cI_{\mathcal{Q}}$.
\end{lemma}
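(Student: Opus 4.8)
The plan is to show that every $2$-morphism in $\cI_{\mathcal{Q}}$ acts as the zero natural transformation on $\mathbf{C}_{\mathcal{R}}$, which amounts to showing that any $1$-morphism $\mathrm{F}$ with $\mathrm{F}\not\leq_{LR}\mathcal{Q}$ acts by the zero functor on $\mathbf{C}_{\mathcal{R}}$. Since $\cI_{\mathcal{Q}}(\mathtt{i},\mathtt{j})$ consists precisely of those $2$-morphisms that factor through a direct sum of such $1$-morphisms $\mathrm{F}$, and since $\mathbf{C}_{\mathcal{R}}$ is a $2$-functor (so it respects the factorization), it suffices to prove the functor statement. Once that is established, the induced $2$-representation of $\cC/\cI_{\mathcal{Q}}$ is obtained immediately by passing to the quotient.

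First I would recall from Theorem~\ref{thm19} that $\mathbf{C}_{\mathcal{R}}$ is realized as a Serre-type (more precisely, Auslander-type) subcategory of the principal $2$-representation $\mathbf{P}_{\mathtt{i}}$, where $\mathtt{i}$ is chosen so that $\mathrm{G}_{\mathcal{R}}\in\mathcal{C}_{\mathtt{i},\mathtt{i}}$; every object of $\mathbf{C}_{\mathcal{R}}(\mathtt{j})$ has a two step resolution by objects in $\mathrm{add}(Q_{\mathcal{R},\mathtt{j}})$, i.e.\ by direct sums of modules $\mathrm{H}\,L$ with $\mathrm{H}\in\mathcal{R}\cap\mathcal{C}_{\mathtt{i},\mathtt{j}}$ and $L=L_{\mathrm{G}_{\mathcal{R}}}$. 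Since the action of any $\mathrm{F}$ is exact (as $\cC$ is fiat) it is enough to show $\mathrm{F}\circ\mathrm{H}\,L=0$ for all $\mathrm{H}\in\mathcal{R}$ whenever $\mathrm{F}\not\leq_{LR}\mathcal{Q}$. By Lemma~\ref{lem14} the module $\mathrm{F}\circ\mathrm{H}\,L$ is a direct sum of modules of the form $\mathrm{H}'\,L$ with $\mathrm{H}'\in\mathcal{R}$; on the other hand every $\mathrm{H}'$ appearing as a summand of $\mathrm{F}\circ\mathrm{H}$ satisfies $\mathrm{H}'\geq_{LR}\mathrm{H}$, hence $\mathrm{H}'\geq_{LR}\mathcal{Q}$, so $\mathrm{H}'\notin\mathcal{Q}$ unless $\mathrm{F}\leq_{LR}\mathcal{Q}$. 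But $\mathrm{H}'\in\mathcal{R}\subset\mathcal{Q}$ forces a contradiction, so no such summand exists and $\mathrm{F}\circ\mathrm{H}\,L=0$.

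The one point requiring a little care is the comparison between $\mathrm{F}\not\leq_{LR}\mathcal{Q}$ and the possibility that a summand $\mathrm{H}'$ of $\mathrm{F}\circ\mathrm{H}$ nevertheless lies in $\mathcal{Q}$: a priori $\mathrm{H}'\geq_{R}\mathrm{H}$ only, which does not by itself preclude $\mathrm{H}'\in\mathcal{Q}$. The resolution is that $\mathrm{H}'\,L\neq 0$ forces, by Lemma~\ref{lem11}, $\mathrm{H}'^{*}\leq_{L}\mathrm{G}_{\mathcal{R}}$, hence $\mathrm{H}'\leq_{R}\mathrm{G}_{\mathcal{R}}^{*}$, and $\mathrm{G}_{\mathcal{R}}^{*}\in\mathcal{R}$ by Proposition~\ref{lem7}\eqref{lem7.05}; combined with $\mathrm{H}'\geq_{R}\mathrm{H}\in\mathcal{R}$ this pins $\mathrm{H}'$ into $\mathcal{R}$, exactly as in the proof of Lemma~\ref{lem14}. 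Then $\mathrm{F}\circ\mathrm{H}\supset$ (as a summand) $\mathrm{H}'$ with $\mathrm{H}'\in\mathcal{R}\subset\mathcal{Q}$ yields $\mathrm{F}\geq_{LR}\mathrm{H}\geq_{LR}\mathrm{H}'$ giving $\mathrm{F}\leq_{LR}\mathcal{Q}$ — wait, the direction to track is that $\mathrm{H}'$ a summand of $\mathrm{F}\circ\mathrm{H}$ gives $\mathrm{H}\leq_{R}\mathrm{H}'$ and hence $\mathcal{Q}\ni\mathrm{H}\leq_{LR}\mathrm{H}'$, so if $\mathrm{H}'\in\mathcal{Q}$ there is no contradiction from that chain alone. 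The genuine obstruction is instead to rule out $\mathrm{H}'\,L\neq 0$ directly: one uses that $\mathrm{H}'$ being a summand of $\mathrm{F}\circ\mathrm{H}$ with $\mathrm{F}\not\leq_{LR}\mathcal{Q}$ does not immediately bound $\mathrm{H}'$ away from $\mathcal{Q}$, so instead I would argue that $\mathrm{F}\,(\mathrm{H}\,L)=0$ because $\mathrm{F}\,L_{\mathrm{H}}=0$ for every composition factor $L_{\mathrm{H}}$ of $\mathrm{H}\,L$: indeed every such $L_{\mathrm{H''}}$ has $\mathrm{H}''\in\mathcal{R}$ by Corollary~\ref{cor121} applied to $\mathrm{H}\,L=\mathrm{H}\,L_{\mathrm{G}_{\mathcal{R}}}$ (all composition factors of $\mathrm{H}\,L$ lie in $\mathcal{R}_{\mathrm{G}_{\mathcal{R}}}=\mathcal{R}$), and for $\mathrm{H}''\in\mathcal{R}$ we have $\mathrm{F}\,L_{\mathrm{H}''}\neq 0$ iff $\mathrm{F}^{*}\leq_{L}\mathrm{H}''$ by Lemma~\ref{lem11}, whence $\mathrm{F}\leq_{R}\mathrm{H}''^{*}$ and, since $\mathrm{H}''^{*}\sim_{LR}\mathrm{H}''\in\mathcal{Q}$ by Lemma~\ref{lem29}\eqref{lem29.2}, we get $\mathrm{F}\leq_{LR}\mathcal{Q}$, contradicting the hypothesis. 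Thus $\mathrm{F}\,L_{\mathrm{H}''}=0$ for all composition factors, so by exactness $\mathrm{F}\,(\mathrm{H}\,L)=0$, which is what we wanted. The main obstacle is therefore bookkeeping with the two-sided order: making sure the correct inequality ($\mathrm{F}\leq_{LR}\mathcal{Q}$) is forced, which I would route through Lemma~\ref{lem11} and Lemma~\ref{lem29}\eqref{lem29.2} rather than through the cruder summand inequalities. Finally, since the $2$-ideal $\cI_{\mathcal{Q}}$ is generated by the $\mathrm{F}$ and $\mathrm{id}_{\mathrm{F}}$ with $\mathrm{F}\not\leq_{LR}\mathcal{Q}$, and all such $\mathrm{F}$ act by zero, every $2$-morphism in $\cI_{\mathcal{Q}}$ acts by zero, so $\mathbf{C}_{\mathcal{R}}$ factors through $\cC/\cI_{\mathcal{Q}}$, proving the lemma.
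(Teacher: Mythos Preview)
Your final argument is essentially the paper's own route---reduce to showing that $\mathrm{F}$ with $\mathrm{F}\not\leq_{LR}\mathcal{Q}$ acts by zero on $\mathbf{C}_{\mathcal{R}}$, and control this via Lemma~\ref{lem11}---and it works, but there are two points worth flagging.

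First, a small error: you invoke Corollary~\ref{cor121} to claim that \emph{every} composition factor $L_{\mathrm{H}''}$ of $\mathrm{H}\,L_{\mathrm{G}_{\mathcal{R}}}$ has $\mathrm{H}''\in\mathcal{R}$. Corollary~\ref{cor121} only covers the top and the socle. What you actually get (from Lemma~\ref{lem12}\eqref{lem12.1}) is $\mathrm{H}''\leq_{R}\mathrm{G}_{\mathcal{R}}$, hence merely $\mathrm{H}''\leq_{LR}\mathcal{Q}$. Fortunately this weaker statement is all your chain needs: $\mathrm{F}\,L_{\mathrm{H}''}\neq 0$ gives $\mathrm{F}^{*}\leq_{L}\mathrm{H}''$ by Lemma~\ref{lem11}, so $\mathrm{F}^{*}\leq_{LR}\mathcal{Q}$, and then $\mathrm{F}\sim_{LR}\mathrm{F}^{*}$ from Lemma~\ref{lem29}\eqref{lem29.2} yields the contradiction $\mathrm{F}\leq_{LR}\mathcal{Q}$.

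Second, your abandoned first approach is actually the cleaner one and is what the paper's one-line proof has in mind. You correctly observed that a summand $\mathrm{H}'$ of $\mathrm{F}\circ\mathrm{H}$ satisfies $\mathrm{H}\leq_{R}\mathrm{H}'$, and that this alone does not exclude $\mathrm{H}'\in\mathcal{Q}$. But you overlooked that the \emph{same} summand relation also gives $\mathrm{F}\leq_{L}\mathrm{H}'$. So if $\mathrm{H}'\,L\neq 0$, Lemma~\ref{lem11} forces $\mathrm{H}'\leq_{R}\mathrm{G}_{\mathcal{R}}^{*}\in\mathcal{Q}$, and then $\mathrm{F}\leq_{L}\mathrm{H}'\leq_{LR}\mathcal{Q}$ is the contradiction directly---no detour through composition factors or Lemma~\ref{lem29} is needed.
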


\begin{proof}
This follows from the construction and Lemma~\ref{lem11}.
\end{proof}

The construction of $\cC/\cI_{\mathcal{Q}}$ is analogous to
constructions from \cite{Be,Os}.

\subsection{The $2$-category associated with $\mathcal{Q}$}\label{s5.2}

Denote by $\cC_{\mathcal{Q}}$ the full $2$-subcategory 
of $\cC/\cI_{\mathcal{Q}}$, closed under isomorphisms, generated by 
the identity morphisms $\mathbbm{1}_{\mathtt{i}}$, $\mathtt{i}\in\cC$, 
and $\mathrm{F}\in\mathcal{Q}$. We will call $\cC_{\mathcal{Q}}$
the {\em $2$-category} associated to $\mathcal{Q}$. This category
is especially good in the case of a strongly regular $\mathcal{Q}$,
as follows from the following statement:

\begin{proposition}\label{prop62}
Assume $\mathcal{Q}$ is a strongly regular two-sided cell in $\mathcal{C}$.
Then $\mathcal{Q}$ remains a two-sided cell for $\cC_{\mathcal{Q}}$.
\end{proposition}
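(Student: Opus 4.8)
The plan is to show that inside $\cC_{\mathcal{Q}}$ the two-sided cell $\mathcal{Q}$ is still a single equivalence class under $\sim_{LR}$, and that no element of $\mathcal{Q}$ becomes identified with (or decomposable into) identity $1$-morphisms after passing to the quotient $\cC/\cI_{\mathcal{Q}}$. First I would observe that the indecomposable $1$-morphisms of $\cC_{\mathcal{Q}}$ are, by construction, precisely the $\mathbbm{1}_{\mathtt{i}}$ together with the $\mathrm{F}\in\mathcal{Q}$ whose images in $\cC/\cI_{\mathcal{Q}}$ are nonzero; so the first point to settle is that $\mathrm{F}$ really survives in the quotient for every $\mathrm{F}\in\mathcal{Q}$. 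This is where the cell $2$-representations enter: by Lemma~\ref{lem61}, $\mathbf{C}_{\mathcal{R}}$ is a $2$-representation of $\cC/\cI_{\mathcal{Q}}$ for any right cell $\mathcal{R}\subset\mathcal{Q}$, and by Proposition~\ref{lem7}\eqref{lem7.02} the functor $\mathbf{C}_{\mathcal{R}}(\mathrm{F})$ is nonzero (it sends $L_{\mathrm{G}_{\mathcal{R}}}$ to something with simple top $L_{\mathrm{F}}$) whenever $\mathrm{F}\in\mathcal{R}$; ranging over all right cells in $\mathcal{Q}$ and using Proposition~\ref{prop32}\eqref{prop32.1} to see that every $\mathrm{F}\in\mathcal{Q}$ lies in some right cell of $\mathcal{Q}$, we conclude no $\mathrm{F}\in\mathcal{Q}$ dies. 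Likewise $\mathbf{C}_{\mathcal{R}}(\mathrm{F})\neq 0$ also shows $\mathrm{F}$ cannot become a direct summand of a sum of identity morphisms, since the identities act on $\mathbf{C}_{\mathcal{R}}$ with controlled support while $\mathrm{F}$ genuinely permutes the nonprojective simples according to the right-cell combinatorics.

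Next I would verify the two inclusions needed for $\mathcal{Q}$ to be a two-sided cell of $\cC_{\mathcal{Q}}$. The relation $\leq_{LR}$ in $\cC_{\mathcal{Q}}$ is a priori coarser than in $\cC$, because there are fewer $1$-morphisms available to conjugate with (only identities and elements of $\mathcal{Q}$), so I must check that any $\mathrm{F},\mathrm{G}\in\mathcal{Q}$ still satisfy $\mathrm{F}\sim_{LR}\mathrm{G}$ in $\cC_{\mathcal{Q}}$. For this I would use Proposition~\ref{prop32}\eqref{prop32.2}: $\mathcal{Q}\times\mathcal{Q}=\sim^{\mathcal{Q}}_R\star\sim^{\mathcal{Q}}_L$, so it suffices to realize a single step of $\sim_R$ (resp. $\sim_L$) inside $\mathcal{Q}$ using only a conjugating $1$-morphism that itself lies in $\mathcal{Q}$. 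Concretely, if $\mathrm{G}$ is a direct summand of $\mathrm{H}\circ\mathrm{F}$ with $\mathrm{F}\sim_R\mathrm{G}$, then by the structure of regular cells the $1$-morphism $\mathrm{H}$ can be taken inside $\mathcal{Q}$ — indeed one can use $\mathrm{H}=\mathrm{G}\circ\mathrm{F}^*$-type expressions built from adjunctions, noting $\mathrm{F}^*\in\mathcal{Q}$ by Lemma~\ref{lem29}\eqref{lem29.2} — so the relation is witnessed already in $\cC_{\mathcal{Q}}$. Running this argument for both $\sim_R$ and $\sim_L$ and composing via Proposition~\ref{prop32}\eqref{prop32.2} gives $\mathcal{Q}\subseteq\mathcal{LR}_{\mathrm{F}}$ computed in $\cC_{\mathcal{Q}}$ for any $\mathrm{F}\in\mathcal{Q}$.

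For the reverse inclusion I must show $\mathcal{Q}$ is not swallowed into a larger $\sim_{LR}$-class in $\cC_{\mathcal{Q}}$. But the only other indecomposable $1$-morphisms of $\cC_{\mathcal{Q}}$ are the identities $\mathbbm{1}_{\mathtt{i}}$, and $\mathbbm{1}_{\mathtt{i}}\not\sim_{LR}\mathrm{F}$ for $\mathrm{F}\in\mathcal{Q}$ unless $\mathrm{F}\sim_{LR}\mathbbm{1}_{\mathtt{i}}$ already in $\cC$ — which is excluded because $\mathcal{Q}$ is a genuine two-sided cell distinct from the one containing the identities (or, if $\mathcal{Q}$ happens to be the identity cell, the statement is trivial). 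One subtlety to handle carefully: $\mathrm{H}\circ\mathrm{F}$ for $\mathrm{H},\mathrm{F}\in\mathcal{Q}$ might acquire summands in $\cC_{\mathcal{Q}}$ that were not summands in $\cC$ (because morphisms factoring through $1$-morphisms outside $\leq_{LR}\mathcal{Q}$ have been killed, which can make previously-non-split idempotents split); however, any such new summand is still $\leq_{LR}\mathcal{Q}$, hence by the cell structure still in $\mathcal{Q}$ or in the identity part, and the cell $2$-representation argument of the first paragraph shows it must be a genuine element of $\mathcal{Q}$ (it acts nontrivially on $\mathbf{C}_{\mathcal{R}}$) rather than an identity. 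The main obstacle I anticipate is precisely this control of composition in the quotient: making sure that splitting idempotents in $\cC/\cI_{\mathcal{Q}}$ does not produce spurious $1$-morphisms, and that the conjugators needed for Proposition~\ref{prop32}\eqref{prop32.2} can always be chosen within $\mathcal{Q}$ — both of which I would pin down using adjunctions together with Proposition~\ref{lem7} and the strong regularity hypothesis.
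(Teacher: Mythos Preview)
Your overall architecture is right and overlaps with the paper's: use the cell $2$-representations $\mathbf{C}_{\mathcal{R}}$ (via Lemma~\ref{lem61}) to see that every $\mathrm{F}\in\mathcal{Q}$ survives in $\cC_{\mathcal{Q}}$, then argue that the $\sim_R$ and $\sim_L$ relations inside $\mathcal{Q}$ persist, and finish with Proposition~\ref{prop32}\eqref{prop32.2}. The place where your sketch stops short is exactly the step you flag yourself: showing that for $\mathrm{F}\sim_R\mathrm{G}$ in $\mathcal{Q}$ the witnessing $1$-morphism can be taken inside $\mathcal{Q}$ and the splitting survives the quotient. Your suggestion ``use $\mathrm{H}=\mathrm{G}\circ\mathrm{F}^*$-type expressions built from adjunctions'' does not give this: the adjunction unit $\mathrm{G}\to\mathrm{G}\circ\mathrm{F}^*\circ\mathrm{F}$ is a map, not a split monomorphism, so you do not get $\mathrm{G}$ as a direct summand for free; and an arbitrary indecomposable $\mathrm{H}$ with $\mathrm{G}$ a summand of $\mathrm{H}\circ\mathrm{F}$ only satisfies $\mathrm{H}\leq_L\mathrm{G}$, which allows $\mathrm{H}<_{LR}\mathcal{Q}$, i.e.\ $\mathrm{H}\notin\cC_{\mathcal{Q}}$.

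The paper closes this gap with a concrete hub-and-spoke trick that uses strong regularity directly. For each right cell $\mathcal{R}\subset\mathcal{Q}$, let $\mathrm{G}=\mathrm{G}_{\mathcal{R}}$ be its unique self-adjoint element (Proposition~\ref{prop24}\eqref{prop24.1}). Working in $\mathbf{C}_{\mathcal{R}}$ one checks that $\mathrm{F}\circ\mathrm{G}\,L_{\mathrm{G}}\neq 0$ for any $\mathrm{F}\in\mathcal{R}$ (Proposition~\ref{lem7}\eqref{lem7.02} applied twice), hence $\mathrm{F}\circ\mathrm{G}\neq 0$ in $\cC_{\mathcal{Q}}$; but any indecomposable summand of $\mathrm{F}\circ\mathrm{G}$ lies in $\mathcal{R}_{\mathrm{G}}\cap\mathcal{L}_{\mathrm{F}}=\{\mathrm{F}\}$ by strong regularity, so $\mathrm{F}\geq_R\mathrm{G}$ in $\cC_{\mathcal{Q}}$. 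Likewise $\mathrm{F}^*\circ\mathrm{F}\neq 0$ in $\mathbf{C}_{\mathcal{R}}$ by adjunction, and its summands lie in $\mathcal{R}\cap\mathcal{R}^*=\{\mathrm{G}\}$, giving $\mathrm{G}\geq_R\mathrm{F}$ in $\cC_{\mathcal{Q}}$. Thus every $\mathrm{F}\in\mathcal{R}$ is $\sim_R\mathrm{G}$ in $\cC_{\mathcal{Q}}$, so $\mathcal{R}$ persists as a right cell; applying $*$ handles left cells, and then Proposition~\ref{prop32}\eqref{prop32.2} finishes as you say. The point is that strong regularity is used not to locate a conjugator in $\mathcal{Q}$, but to force a \emph{nonzero} composition (detected in $\mathbf{C}_{\mathcal{R}}$) to consist entirely of copies of one predetermined element --- which simultaneously gives the splitting and keeps everything inside $\mathcal{Q}$. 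Your worries about new idempotents splitting in the quotient are also dissolved by this argument: one never analyses decompositions abstractly, only reads off multiplicities from the action on $\mathbf{C}_{\mathcal{R}}$.
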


\begin{proof}
Let $\mathrm{F}\in \mathcal{Q}$. Denote by $\mathrm{G}$ the unique
self-adjoint element in the right cell $\mathcal{R}$ of $\mathrm{F}$. 
The action of $\mathrm{G}$ on the cell $2$-representation 
$\mathbf{C}_{\mathcal{R}}$ is nonzero and hence
$\mathrm{G}\neq 0$, when restricted to $\mathbf{C}_{\mathcal{R}}$. 

Further, by Proposition~\ref{lem7}\eqref{lem7.02}, 
$\mathrm{G}\, L_{\mathrm{G}}$ has simple top $L_{\mathrm{G}}$.
Using Proposition~\ref{lem7}\eqref{lem7.02} again, we thus get 
$\mathrm{F}\circ \mathrm{G}\, L_{\mathrm{G}}\neq 0$, implying
$\mathrm{F}\circ \mathrm{G}\neq 0$, when restricted 
to $\mathbf{C}_{\mathcal{R}}$. But the restriction of
$\mathrm{F}\circ \mathrm{G}$ decomposes into a direct sum of some
$\mathrm{H}\in \mathcal{Q}$, which are in the same right cell
as $\mathrm{G}$ and in the same left cell as $\mathrm{F}$.
Since $\mathcal{Q}$ is strongly regular, the only element satisfying
both conditions is $\mathrm{F}$. This implies that, when restricted
to $\mathbf{C}_{\mathcal{R}}$, $\mathrm{F}$ occurs as a
direct summand of $\mathrm{F}\circ \mathrm{G}$, which yields
$\mathrm{F}\geq_R \mathrm{G}$ in $\cC_{\mathcal{Q}}$.

Now consider the functor $\mathrm{F}^*\circ \mathrm{F}$.
Since $\mathrm{F}\neq 0$, when restricted to $\mathbf{C}_{\mathcal{R}}$,
by adjunction we have $\mathrm{F}^*\circ \mathrm{F}\neq 0$, when 
restricted to $\mathbf{C}_{\mathcal{R}}$, as well. The functor
$\mathrm{F}^*\circ \mathrm{F}$ decomposes into a direct sum of
functors from $\mathcal{R}\cap\mathcal{R}^*=\{\mathrm{G}\}$. 
This implies $\mathrm{G}\geq_R \mathrm{F}$ in $\cC_{\mathcal{Q}}$
and hence $\mathcal{R}$ remains a right cell in $\cC_{\mathcal{Q}}$.
Using $*$ we get that all left cells in $\mathcal{Q}$ remain
left cells in $\cC_{\mathcal{Q}}$. Now the claim of the proposition 
follows from Proposition~\ref{prop32}\eqref{prop32.2}.
\end{proof}

The important property of $\cC_{\mathcal{Q}}$ is that 
for strongly regular right cells the corresponding cell
$2$-representations can be studied over $\cC_{\mathcal{Q}}$:

\begin{corollary}\label{cor63}
Let $\mathcal{Q}$ be a strongly regular two-sided cell of
$\mathcal{C}$ and $\mathcal{R}$ be a right cell of $\mathcal{Q}$.
Then the restriction of the cell $2$-representation
$\mathbf{C}_{\mathcal{R}}$ from $\cC$ to $\cC_{\mathcal{Q}}$
gives the corresponding cell $2$-representation for $\cC_{\mathcal{Q}}$.
\end{corollary}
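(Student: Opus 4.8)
The plan is to unwind the definitions and observe that the construction of the cell $2$-representation $\mathbf{C}_{\mathcal{R}}$ only ever uses the action of $1$-morphisms in $\mathcal{R}$ (through the modules $\mathrm{G}\,L$, $\mathrm{G}\in\mathcal{R}$) together with the $2$-morphisms connecting them, and these data survive intact under the passage to $\cC_{\mathcal{Q}}$. More precisely, by Lemma~\ref{lem61} the ideal $\cI_{\mathcal{Q}}$ annihilates $\mathbf{C}_{\mathcal{R}}$, so $\mathbf{C}_{\mathcal{R}}$ is already a $2$-representation of $\cC/\cI_{\mathcal{Q}}$; restricting further to the full $2$-subcategory $\cC_{\mathcal{Q}}$ generated by the $\mathbbm{1}_{\mathtt{i}}$ and the $1$-morphisms in $\mathcal{Q}$ costs nothing, since $\mathcal{R}\subset\mathcal{Q}$ and hence every functor used in building $\mathbf{C}_{\mathcal{R}}$ (and every $2$-morphism between such functors) already lies in $\cC_{\mathcal{Q}}$.

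The first step I would carry out is to recall, from Subsection~\ref{s3.4} and the proof of Theorem~\ref{thm19}, that $\mathbf{C}_{\mathcal{R}}$ is defined entirely inside $\mathbf{P}_{\mathtt{i}}$ using $Q_{\mathcal{R},\mathtt{j}}=\bigoplus_{\mathrm{G}\in\mathcal{R}\cap\mathcal{C}_{\mathtt{i},\mathtt{j}}}\mathrm{G}\,L$, and that the action of an arbitrary $\mathrm{F}\in\mathcal{C}$ on objects of $\mathbf{C}_{\mathcal{R}}(\mathtt{j})$ preserves this subcategory by Lemma~\ref{lem14}. The second step is to check that the prescription used to build the cell $2$-representation over $\cC_{\mathcal{Q}}$ is literally the same: one picks $\mathtt{i}$ with $\mathcal{R}\cap\mathcal{C}_{\mathtt{i},\mathtt{j}}\neq\varnothing$, forms $P_{\mathbbm{1}_{\mathtt{i}}}$ in the principal $2$-representation of $\cC_{\mathcal{Q}}$, extracts $K_{\mathcal{R}}$, $L=L_{\mathrm{G}_{\mathcal{R}}}$, the modules $\mathrm{G}\,L$, and defines $\mathbf{C}_{\mathcal{R}}^{\cC_{\mathcal{Q}}}(\mathtt{j})$ by the same two-step-resolution condition. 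By Proposition~\ref{prop62} the cell $\mathcal{R}$ remains a right cell of $\cC_{\mathcal{Q}}$, so $\mathrm{G}_{\mathcal{R}}$ is unambiguous; and because $2$-morphisms in $\cC$ surject onto $2$-morphisms in $\cC_{\mathcal{Q}}$ and the evaluation maps are surjective onto homomorphisms between the relevant modules (Subsection~\ref{s2.3}), the endomorphism algebras $\mathrm{End}(P_{\mathbbm{1}_{\mathtt{i}}})$, and hence the modules $K_{\mathcal{R}}$, $L$, and $\mathrm{G}\,L$, computed over $\cC_{\mathcal{Q}}$ agree with those computed over $\cC$.

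The technical heart of the argument is the comparison of the principal $2$-representations of $\cC$ and of $\cC_{\mathcal{Q}}$ on the categories $\overline{\cC}(\mathtt{i},\mathtt{j})$: I would argue that although $\mathbf{P}_{\mathtt{i}}$ for $\cC$ and for $\cC_{\mathcal{Q}}$ differ, the subquotient of $\overline{\cC}(\mathtt{i},\mathtt{j})$ relevant for $\mathbf{C}_{\mathcal{R}}$ — the Serre subcategory controlled by the coideal of indices not below $\mathrm{G}_{\mathcal{R}}$ in $\leq_R$, inside the ideal generated by $\mathcal{Q}$ — is unchanged. Concretely, passing to $\cC/\cI_{\mathcal{Q}}$ kills exactly the simple modules $L_{\mathrm{F}}$ with $\mathrm{F}\not\leq_{LR}\mathcal{Q}$, none of which occur in any $\mathrm{G}\,L$ with $\mathrm{G}\in\mathcal{R}$ by Corollary~\ref{cor121} (every composition factor of $\mathrm{G}\,L$ lies in $\mathcal{R}_{\mathrm{G}_{\mathcal{R}}}\cup\{\text{factors }\leq_R\}$, all in $\mathcal{Q}$ or below), so the modules $\mathrm{G}\,L$ and all $\mathrm{Hom}$-spaces between them are untouched. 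Thus $\mathcal{D}_{\mathcal{R},\mathtt{j}}$ is the same category computed in either setting, and by Lemma~\ref{lem17} the two cell $2$-representations have the same underlying categories $\mathcal{D}_{\mathcal{R},\mathtt{j}}^{\mathrm{op}}\text{-}\mathrm{mod}$; since the functors $\mathbf{C}_{\mathcal{R}}(\mathrm{F})$ for $\mathrm{F}\in\mathcal{Q}$ and the natural transformations between them are by construction the restrictions of those for $\cC$, the two $2$-representations coincide, which is the assertion.

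I expect the main obstacle to be purely bookkeeping: making precise that ``restriction'' of $\mathbf{C}_{\mathcal{R}}$ from $\cC$ to $\cC_{\mathcal{Q}}$ produces a functor that satisfies, on the nose, the defining recipe of the cell $2$-representation of $\cC_{\mathcal{Q}}$ — in particular that the generating object $L$ and the modules $\mathrm{G}\,L$ one would build intrinsically over $\cC_{\mathcal{Q}}$ are identified with their counterparts over $\cC$ via the canonical quotient functor. Once Proposition~\ref{prop62} is in hand (so that $\mathcal{R}$ is genuinely a right cell of $\cC_{\mathcal{Q}}$ with the same distinguished element $\mathrm{G}_{\mathcal{R}}$), this identification is essentially forced, and the corollary follows without further computation.
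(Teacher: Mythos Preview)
Your overall strategy is close to the paper's, but there is a genuine gap where you assert that ``the modules $\mathrm{G}\,L$ and all $\mathrm{Hom}$-spaces between them are untouched''. This is not true on the nose: the modules $\mathrm{G}\,L_{\mathrm{G}_{\mathcal{R}}}$ computed inside $\mathbf{P}_{\mathtt{i}}$ for $\cC$ and inside the principal $2$-representation of $\cC_{\mathcal{Q}}$ live in genuinely different abelian categories. Over $\cC$, the composition factors of $\mathrm{G}\,L$ include simples $L_{\mathrm{H}}$ with $\mathrm{H}<_{LR}\mathcal{Q}$ (Lemma~\ref{lem12}\eqref{lem12.1} only gives $\mathrm{H}\leq_R\mathrm{G}_{\mathcal{R}}$), and these simples simply do not exist over $\cC_{\mathcal{Q}}$. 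Your remark that passing to $\cC/\cI_{\mathcal{Q}}$ kills only $L_{\mathrm{F}}$ with $\mathrm{F}\not\leq_{LR}\mathcal{Q}$ handles the quotient step but not the restriction to the full $2$-subcategory $\cC_{\mathcal{Q}}$, which discards the $1$-morphisms strictly below $\mathcal{Q}$.

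What \emph{is} true, and what the paper actually proves, is that the Cartan matrices of the two cell $2$-representations coincide. The paper does this by the adjunction computation
\[
\dim\mathrm{Hom}(\mathrm{F}\,L_{\mathrm{G}_{\mathcal{R}}},\mathrm{H}\,L_{\mathrm{G}_{\mathcal{R}}})
=\dim\mathrm{Hom}(\mathrm{H}^*\circ\mathrm{F}\,L_{\mathrm{G}_{\mathcal{R}}},L_{\mathrm{G}_{\mathcal{R}}})
=\text{(mult.\ of }\mathrm{G}_{\mathcal{R}}\text{ in }\mathrm{H}^*\circ\mathrm{F}),
\]
and this multiplicity is unchanged in $\cC_{\mathcal{Q}}$ because $\mathrm{G}_{\mathcal{R}}\in\mathcal{Q}$ and only summands outside $\mathcal{Q}$ are affected. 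Your surjectivity claim for the evaluation maps gives only an inequality in one direction; the equality needs this multiplicity argument. The paper also organizes the comparison more cleanly than a direct identification: it uses Theorem~\ref{prop21} to produce an explicit morphism $\Psi:\mathbf{C}^{\mathcal{Q}}_{\mathcal{R}}\to\mathbf{C}_{\mathcal{R}}$ of $2$-representations sending $L^{\mathcal{Q}}_{\mathrm{G}_{\mathcal{R}}}$ to $L_{\mathrm{G}_{\mathcal{R}}}$, checks it sends indecomposable projectives to indecomposable projectives, and then the Cartan matrix equality forces $\Psi$ to be an equivalence. This avoids exactly the ``bookkeeping'' you anticipate as the main obstacle.
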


\begin{proof}
Let $\mathtt{i}\in\mathcal{C}$ be such that 
$\mathcal{R}\cap \mathcal{C}_{\mathtt{i},\mathtt{i}}\neq \varnothing$.
Denote by $\mathbf{C}^{\mathcal{Q}}_{\mathcal{R}}$ the
cell $2$-representation of $\cC_{\mathcal{Q}}$
associated to $\mathcal{R}$. We will use the upper index
$\mathcal{Q}$ for elements of this $2$-representation.
Consider $\mathbf{C}_{\mathcal{R}}$ as a $2$-representation 
of $\cC_{\mathcal{Q}}$ by restriction.
By Theorem~\ref{prop21}, we have the morphism of $2$-representations
$\Psi:=\Psi^{L_{\mathrm{G}_{\mathcal{R}}}}:
\mathbf{C}^{\mathcal{Q}}_{\mathcal{R}}\to
\mathbf{C}_{\mathcal{R}}$ 
sending $L_{\mathrm{G}_{\mathcal{R}}}^{\mathcal{Q}}$ to
$L_{\mathrm{G}_{\mathcal{R}}}$.

Let $\mathtt{j}\in\mathcal{C}$ and 
$\mathrm{F}\in \mathcal{R}\cap \mathcal{C}_{\mathtt{i},\mathtt{j}}$.
By Proposition~\ref{lem7}\eqref{lem7.02}, the morphism
$\Psi$ sends the indecomposable projective module  
$\mathrm{F}\, L_{\mathrm{G}_{\mathcal{R}}}^{\mathcal{Q}}$
of $\mathbf{C}^{\mathcal{Q}}_{\mathcal{R}}(\mathtt{j})$ to the 
indecomposable projective module $\mathrm{F}\, L_{\mathrm{G}_{\mathcal{R}}}$
in $\mathbf{C}_{\mathcal{R}}(\mathtt{j})$. As mentioned after 
Proposition~\ref{lem7}, we have that $2$-morphisms in $\cC_{\mathcal{Q}}$
surject onto the homomorphisms between indecomposable
projective modules both in $\mathbf{C}_{\mathcal{R}}^{\mathcal{Q}}(\mathtt{j})$
and $\mathbf{C}_{\mathcal{R}}(\mathtt{j})$.

To prove the claim it is left to show that $\Psi$ is
injective, when restricted to indecomposable projective modules
in $\mathbf{C}^{\mathcal{Q}}_{\mathcal{R}}(\mathtt{j})$. For this it
is enough to show that the Cartan matrices of 
$\mathbf{C}^{\mathcal{Q}}_{\mathcal{R}}(\mathtt{j})$ and
$\mathbf{C}_{\mathcal{R}}(\mathtt{j})$ coincide. For indecomposable
$\mathrm{F}$ and $\mathrm{H}$ in $\mathcal{R}\cap
\mathcal{C}_{\mathtt{i},\mathtt{j}}$, using adjunction, we have
\begin{equation}\label{eq773}
\mathrm{Hom}_{\overline{\ccC}(\mathtt{i},\mathtt{j})}
(\mathrm{F}\,L_{\mathrm{G}_{\mathcal{R}}},
\mathrm{H}\,L_{\mathrm{G}_{\mathcal{R}}})=
\mathrm{Hom}_{\overline{\ccC}(\mathtt{i},\mathtt{i})}
(\mathrm{H}^*\circ\mathrm{F}\,L_{\mathrm{G}_{\mathcal{R}}},
L_{\mathrm{G}_{\mathcal{R}}}). 
\end{equation}
and similarly for $\mathbf{C}^{\mathcal{Q}}_{\mathcal{R}}$.
The dimension of the right hand side of \eqref{eq773} equals the 
multiplicity of $\mathrm{G}_{\mathcal{R}}$ as a direct summand of
$\mathrm{H}^*\circ\mathrm{F}$. Since this multiplicity is the
same for $\mathbf{C}^{\mathcal{Q}}_{\mathcal{R}}$ and
$\mathbf{C}_{\mathcal{R}}$, the claim follows.
\end{proof}

\subsection{Cell $2$-representations for strongly regular cells}\label{s5.3}

In this section we fix a strongly regular two-sided cell $\mathcal{Q}$ 
in $\mathcal{C}$. We would like to understand combinatorics of
the cell $2$-representation $\mathbf{C}_{\mathcal{R}}$ for
a right cell $\mathcal{R}\subset \mathcal{Q}$. By the previous
subsection, for this it is enough to assume that $\cC=\cC_{\mathcal{Q}}$.
We work under this assumption in the rest of this subsection
and consider the direct sum $\mathbf{C}$ of all $\mathbf{C}_{\mathcal{R}}$,
where $\mathcal{R}$ runs through the set of all right cells
in $\mathcal{Q}$. To simplify our notation, by 
$\mathrm{Hom}_{\mathbf{C}}$ we denote the homomorphism space in 
an appropriate module category $\mathbf{C}(\mathtt{i})$.

\begin{proposition}\label{prop51}
Let $\mathcal{Q}$ be as above and $\mathrm{F},\mathrm{H}\in \mathcal{Q}$. 
\begin{enumerate}[$($a$)$]
\item\label{prop51.1} For some $m_{\mathrm{F},\mathrm{H}}\in\{0,1,2,\dots\}$
we have $\mathrm{H}^*\circ \mathrm{F}\cong m_{\mathrm{F},\mathrm{H}}
\mathrm{G}$, where $\{\mathrm{G}\}=\mathcal{L}_{\mathrm{H}^*}\cap
\mathcal{R}_{\mathrm{F}}$; moreover, $m_{\mathrm{F},\mathrm{F}}\neq 0$.
\item\label{prop51.2} If $\mathrm{F}\sim_R\mathrm{H}$, then
$m_{\mathrm{F},\mathrm{H}}= m_{\mathrm{H},\mathrm{F}}$.
\item\label{prop51.3} If $\mathrm{H}=\mathrm{H}^*$ and
$\mathrm{F}\sim_R \mathrm{H}$, then $m_{\mathrm{F},\mathrm{F}}=\dim
\mathrm{End}_{\mathbf{C}}(\mathrm{F}\, L_{\mathrm{H}})$. 
\item\label{prop51.4} If $\mathrm{H}=\mathrm{H}^*$ and
$\mathrm{F}\sim_R \mathrm{H}$, then
$\mathrm{F}\circ \mathrm{H}\cong 
m_{\mathrm{H},\mathrm{H}}\mathrm{F}$
and $\mathrm{H}\circ \mathrm{F}^*\cong 
m_{\mathrm{H},\mathrm{H}}\mathrm{F}^*$
\item\label{prop51.5} If $\mathrm{H}=\mathrm{H}^*$ and
$\mathrm{H}\sim_L\mathrm{F}$, then 
$m_{\mathrm{H},\mathrm{H}}=\dim 
\mathrm{Hom}_{\mathbf{C}}(P_{\mathrm{F}},\mathrm{H}\,L_{\mathrm{F}})$.
\item\label{prop51.6} Assume $\mathrm{G}\in \mathcal{Q}$ and
$\mathrm{H}=\mathrm{H}^*$, $\mathrm{G}=\mathrm{G}^*$,
$\mathrm{H}\sim_L\mathrm{F}$ and $\mathrm{G}\sim_R\mathrm{F}$. Then
\begin{displaymath}
m_{\mathrm{F},\mathrm{F}}m_{\mathrm{G},\mathrm{G}}=
m_{\mathrm{F}^*,\mathrm{F}^*}m_{\mathrm{H},\mathrm{H}}.
\end{displaymath}
\end{enumerate}
\end{proposition}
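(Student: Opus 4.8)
The plan is to prove parts \eqref{prop51.1}--\eqref{prop51.6} in order, since each later part leans on the earlier ones and on the structural results about strongly regular cells (Propositions~\ref{prop32} and~\ref{prop24}) already established. For \eqref{prop51.1}, the key point is that for $\mathrm{F}\sim_{LR}\mathrm{H}$, every indecomposable summand $\mathrm{G}$ of $\mathrm{H}^*\circ\mathrm{F}$ satisfies $\mathrm{F}\leq_R\mathrm{G}$ and $\mathrm{G}\leq_L\mathrm{H}^*$ (the latter because $\mathrm{G}$ is a summand of $\mathrm{H}^*\circ\mathrm{F}\leq_L\mathrm{H}^*$), so $\mathrm{G}\in\mathcal{Q}$ by the regularity argument of Proposition~\ref{prop32}, and then strong regularity forces $\mathrm{G}$ to be the unique element of $\mathcal{L}_{\mathrm{H}^*}\cap\mathcal{R}_{\mathrm{F}}$; one must also check that $\mathrm{H}^*\circ\mathrm{F}\neq 0$ when $\mathrm{F},\mathrm{H}$ lie in the same two-sided cell, which follows from adjunction applied on the cell $2$-representation together with $\mathrm{F}\,L_{\mathrm{G}_{\mathcal{R}}}\neq 0$ from Proposition~\ref{lem7}. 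The nonvanishing $m_{\mathrm{F},\mathrm{F}}\neq 0$ is the special case $\mathrm{H}=\mathrm{F}$, and here $\mathcal{L}_{\mathrm{F}^*}\cap\mathcal{R}_{\mathrm{F}}$ is the self-adjoint $\mathrm{G}_{\mathcal{R}_{\mathrm{F}}}$ by Proposition~\ref{prop24}\eqref{prop24.1}.

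For \eqref{prop51.2} I would apply the involution $*$: from $\mathrm{H}^*\circ\mathrm{F}\cong m_{\mathrm{F},\mathrm{H}}\mathrm{G}$ we get $\mathrm{F}^*\circ\mathrm{H}\cong m_{\mathrm{F},\mathrm{H}}\mathrm{G}^*$, and then compare with the definition of $m_{\mathrm{H},\mathrm{F}}$, using that $\mathcal{L}_{\mathrm{F}^*}\cap\mathcal{R}_{\mathrm{H}}$ is a single element when $\mathrm{F}\sim_R\mathrm{H}$ (since then $\mathrm{F}^*\sim_L\mathrm{H}^*$, and $\mathcal{R}_{\mathrm{H}}=\mathcal{R}_{\mathrm{F}}$). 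The multiplicities $m_{\mathrm{F},\mathrm{H}}$ and $m_{\mathrm{H},\mathrm{F}}$ then count isomorphic summands of $*$-related $1$-morphisms, hence agree. For \eqref{prop51.3}, I would compute $\dim\mathrm{End}_{\mathbf{C}}(\mathrm{F}\,L_{\mathrm{H}})$ by adjunction as $\dim\mathrm{Hom}_{\mathbf{C}}(\mathrm{F}^*\circ\mathrm{F}\,L_{\mathrm{H}},L_{\mathrm{H}})$; since $\mathrm{H}=\mathrm{H}^*$ is self-adjoint in $\mathcal{R}_{\mathrm{F}}$ it equals $\mathrm{G}_{\mathcal{R}}$, so by \eqref{prop51.1} $\mathrm{F}^*\circ\mathrm{F}\cong m_{\mathrm{F},\mathrm{F}}\mathrm{H}$, and then Proposition~\ref{lem7}\eqref{lem7.02} gives that $\mathrm{H}\,L_{\mathrm{H}}$ has simple top $L_{\mathrm{H}}$, reducing the Hom-space to $m_{\mathrm{F},\mathrm{F}}$ copies of $\mathrm{Hom}(\mathrm{H}\,L_{\mathrm{H}},L_{\mathrm{H}})\cong\Bbbk$ — one must check via Lemma~\ref{lem15}-type arguments that no extra homomorphisms appear, i.e. that the top $L_{\mathrm{H}}$ of $\mathrm{H}\,L_{\mathrm{H}}$ occurs with multiplicity exactly one there, which again follows from strong regularity and Proposition~\ref{prop24}\eqref{prop24.3}.

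For \eqref{prop51.4}, write $\mathrm{F}\circ\mathrm{H}$ as a sum of indecomposables in $\mathcal{Q}$: each summand lies in $\mathcal{R}_{\mathrm{H}}=\mathcal{R}_{\mathrm{F}}$ (right cell, since $\mathrm{H}$ is on the right) and in $\mathcal{L}_{\mathrm{F}}$, so strong regularity pins it to be $\mathrm{F}$; the multiplicity is read off by testing on $L_{\mathrm{H}}$: $\mathrm{F}\circ\mathrm{H}\,L_{\mathrm{H}}$ versus $m_{\mathrm{H},\mathrm{H}}\mathrm{F}\,L_{\mathrm{H}}$, where $\mathrm{H}\circ\mathrm{H}\cong\mathrm{H}^*\circ\mathrm{H}\cong m_{\mathrm{H},\mathrm{H}}\mathrm{H}$ by \eqref{prop51.1}, and then act by $\mathrm{F}$ and use that these functors are exact. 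The statement about $\mathrm{H}\circ\mathrm{F}^*$ follows by applying $*$. For \eqref{prop51.5}, I would compute $\dim\mathrm{Hom}_{\mathbf{C}}(P_{\mathrm{F}},\mathrm{H}\,L_{\mathrm{F}})=[\mathrm{H}\,L_{\mathrm{F}}:L_{\mathrm{F}}]$ and, using $\mathrm{H}=\mathrm{H}^*$ and adjunction, relate it to $\dim\mathrm{Hom}(\mathrm{H}\circ\mathrm{H}\,L_{\mathrm{F}},\text{something})$; more directly, $[\mathrm{H}\,L_{\mathrm{F}}:L_{\mathrm{F}}]=\dim\mathrm{Hom}(\mathrm{F}^*\circ\mathrm{H}\,P_{\mathbbm{1}},L_{\mathrm{F}})$ via adjunction at the level of $\mathbf{P}_{\mathtt{i}}$, and $\mathrm{F}^*\circ\mathrm{H}\cong \mathrm{F}^*\circ\mathrm{H}^*\cong(\mathrm{H}\circ\mathrm{F})^*$ together with part \eqref{prop51.1} (applied with the roles so that the relevant self-adjoint intersection element is $\mathrm{H}$) gives $m_{\mathrm{H},\mathrm{H}}$ copies of $\mathrm{H}$, so the count is $m_{\mathrm{H},\mathrm{H}}\cdot\dim\mathrm{Hom}(\mathrm{H}\,P_{\mathbbm{1}},L_{\mathrm{F}})$, and the last factor is $1$. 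Finally, \eqref{prop51.6} is a bookkeeping identity: with $\mathrm{H}\sim_L\mathrm{F}$, $\mathrm{G}\sim_R\mathrm{F}$ both self-adjoint, use \eqref{prop51.3} and \eqref{prop51.5} to express each $m_{\bullet,\bullet}$ as a Hom- or End-dimension, then exploit the adjunction isomorphism $\mathrm{Hom}(\mathrm{F}\,L_{\mathrm{G}},\mathrm{F}\,L_{\mathrm{G}})\cong\mathrm{Hom}(\mathrm{F}^*\circ\mathrm{F}\,L_{\mathrm{G}},L_{\mathrm{G}})$ read in two ways — grouping the composition as $(\mathrm{F}^*\circ\mathrm{F})$ acting, versus first applying $\mathrm{F}$ then $\mathrm{F}^*$ and decomposing $\mathrm{F}\circ$ via \eqref{prop51.4} — so that $m_{\mathrm{F},\mathrm{F}}m_{\mathrm{G},\mathrm{G}}$ and $m_{\mathrm{F}^*,\mathrm{F}^*}m_{\mathrm{H},\mathrm{H}}$ both compute $\dim\mathrm{Hom}_{\mathbf{C}}(\mathrm{F}\,L_{\mathrm{G}},\mathrm{F}\,L_{\mathrm{G}})$ (or an analogous common quantity). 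The main obstacle will be \eqref{prop51.3} and \eqref{prop51.5}: controlling the full Hom-spaces between the modules $\mathrm{F}\,L_{\mathrm{H}}$ rather than just their composition-factor multiplicities, which is exactly where the Lemma~\ref{lem15} type argument (homomorphisms factoring through $Q$'s) and the fine structure of Proposition~\ref{prop24}\eqref{prop24.3} must be invoked carefully; everything else is $*$-symmetry and summand-counting forced by strong regularity.
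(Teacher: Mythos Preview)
Your outline for \eqref{prop51.1}--\eqref{prop51.3} is essentially the paper's argument. For \eqref{prop51.3} you are overcomplicating things: once you know $\mathrm{F}^*\circ\mathrm{F}\cong m_{\mathrm{F},\mathrm{F}}\mathrm{H}$ (with $\mathrm{H}=\mathrm{G}_{\mathcal{R}}$) and that $\mathrm{H}\,L_{\mathrm{H}}$ has simple top $L_{\mathrm{H}}$ by Proposition~\ref{lem7}\eqref{lem7.02}, adjunction gives $\dim\mathrm{End}_{\mathbf{C}}(\mathrm{F}\,L_{\mathrm{H}})=m_{\mathrm{F},\mathrm{F}}$ immediately; no Lemma~\ref{lem15}-type control of extra homomorphisms is needed. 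Similarly, your worry that \eqref{prop51.3} and \eqref{prop51.5} are ``the main obstacle'' is misplaced: these fall out of the combinatorial parts \eqref{prop51.1} and \eqref{prop51.4} by a single application of adjunction.

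The genuine divergence is in \eqref{prop51.4} and \eqref{prop51.6}. The paper never ``tests on $L_{\mathrm{H}}$'' or passes to Hom-spaces: it works entirely with $1$-morphisms in $\cC_{\mathcal{Q}}$ and associativity. For \eqref{prop51.4}, having $\mathrm{F}\circ\mathrm{H}\cong k\mathrm{F}$ and $\mathrm{H}\circ\mathrm{H}\cong m_{\mathrm{H},\mathrm{H}}\mathrm{H}$, one computes $(\mathrm{F}\circ\mathrm{H})\circ\mathrm{H}=\mathrm{F}\circ(\mathrm{H}\circ\mathrm{H})$ to get $k^2=km_{\mathrm{H},\mathrm{H}}$, hence $k=m_{\mathrm{H},\mathrm{H}}$. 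For \eqref{prop51.6}, the paper computes $\mathrm{F}\circ\mathrm{F}^*\circ\mathrm{F}$ in two ways: grouping as $\mathrm{F}\circ(\mathrm{F}^*\circ\mathrm{F})=\mathrm{F}\circ(m_{\mathrm{F},\mathrm{F}}\mathrm{G})=m_{\mathrm{F},\mathrm{F}}m_{\mathrm{G},\mathrm{G}}\mathrm{F}$ via \eqref{prop51.1} and \eqref{prop51.4}, versus $(\mathrm{F}\circ\mathrm{F}^*)\circ\mathrm{F}=(m_{\mathrm{F}^*,\mathrm{F}^*}\mathrm{H})\circ\mathrm{F}=m_{\mathrm{F}^*,\mathrm{F}^*}m_{\mathrm{H},\mathrm{H}}\mathrm{F}$.

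Your proposed route for \eqref{prop51.6} has a gap: $\dim\mathrm{End}_{\mathbf{C}}(\mathrm{F}\,L_{\mathrm{G}})$ equals $m_{\mathrm{F},\mathrm{F}}$ by \eqref{prop51.3}, not $m_{\mathrm{F},\mathrm{F}}m_{\mathrm{G},\mathrm{G}}$, so it cannot serve as the ``common quantity'' you want; and the two readings of $\mathrm{Hom}(\mathrm{F}^*\circ\mathrm{F}\,L_{\mathrm{G}},L_{\mathrm{G}})$ you describe both collapse to the same number. Likewise in \eqref{prop51.5} you write that $\mathrm{F}^*\circ\mathrm{H}$ decomposes as copies of $\mathrm{H}$, but in fact $\mathcal{L}_{\mathrm{F}^*}\cap\mathcal{R}_{\mathrm{H}}=\{\mathrm{F}^*\}$ (since $\mathrm{H}\sim_L\mathrm{F}$ gives $\mathrm{H}\sim_R\mathrm{F}^*$), so $\mathrm{F}^*\circ\mathrm{H}\cong m_{\mathrm{H},\mathrm{H}}\mathrm{F}^*$; the final count still comes out right, but the intermediate step is misidentified. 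The moral is that the entire proposition lives at the level of $1$-morphisms in $\cC_{\mathcal{Q}}$ and associativity, and bringing in module categories only for \eqref{prop51.3} and \eqref{prop51.5} via one adjunction each suffices.
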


\begin{proof}
By our assumptions, every indecomposable direct summand of 
$\mathrm{H}^*\circ \mathrm{F}$ belongs to the right cell of 
$\mathrm{F}$ and the left cell of $\mathrm{H}^*$, hence is
isomorphic to $\mathrm{G}$. Note that $\mathrm{F}^*\circ \mathrm{F}$
is nonzero by adjunction since 
$\mathrm{F}\, L_{\mathrm{G}_{\mathcal{R}_{\mathrm{F}}}}
\neq 0$. This implies claim 
\eqref{prop51.1} and claim \eqref{prop51.3} follows from
Proposition~\ref{lem7}\eqref{lem7.02} using adjunction.

If $\mathrm{F}\sim_R\mathrm{H}$, then 
$\mathrm{H}^*\circ \mathrm{F}\cong m_{\mathrm{F},\mathrm{H}}
\mathrm{G}_{\mathcal{R}_{\mathrm{F}}}$ by \eqref{prop51.1}.
By Proposition~\ref{prop24}\eqref{prop24.1}, the functor
$\mathrm{G}_{\mathcal{R}_{\mathrm{F}}}$ is self-adjoint. Hence
$\mathrm{H}^*\circ \mathrm{F}$ is self-adjoint, which implies
claim \eqref{prop51.2}.

Set $m=m_{\mathrm{H},\mathrm{H}}$. Similarly to the proof of 
claim \eqref{prop51.1}, we have $\mathrm{F}\circ \mathrm{H}\cong k\mathrm{F}$
for some $k\in \{1,2,\dots\}$. Using associativity, we obtain
\begin{multline*}
k^2\mathrm{F}=k(\mathrm{F}\circ \mathrm{H})
=(k\mathrm{F})\circ \mathrm{H}
=(\mathrm{F}\circ \mathrm{H})\circ \mathrm{H}
=\\=\mathrm{F}\circ (\mathrm{H}\circ \mathrm{H})
=\mathrm{F}\circ (m\mathrm{H})
=m(\mathrm{F}\circ \mathrm{H})
=mk\mathrm{F}.
\end{multline*}
This implies claim \eqref{prop51.4} and claim \eqref{prop51.5}
follows by adjunction. 

Claim \eqref{prop51.6} follows from the following computation:
\begin{multline*}
m_{\mathrm{F},\mathrm{F}}m_{\mathrm{G},\mathrm{G}}\mathrm{F}
\overset{\eqref{prop51.4}}{=}
m_{\mathrm{F},\mathrm{F}}(\mathrm{F}\circ \mathrm{G})=
\mathrm{F}\circ (m_{\mathrm{F},\mathrm{F}}\mathrm{G})
\overset{\eqref{prop51.1}}{=}
\mathrm{F}\circ (\mathrm{F}^*\circ \mathrm{F})=\\
=(\mathrm{F}\circ \mathrm{F}^*)\circ \mathrm{F}
\overset{\eqref{prop51.1}}{=}
(m_{\mathrm{F}^*,\mathrm{F}^*}\mathrm{H})\circ \mathrm{F}=
m_{\mathrm{F}^*,\mathrm{F}^*}(\mathrm{H}\circ \mathrm{F})
\overset{\eqref{prop51.4}}{=}
m_{\mathrm{F}^*,\mathrm{F}^*}m_{\mathrm{H},\mathrm{H}}\mathrm{F}.
\end{multline*}
This completes the proof.
\end{proof}

As a corollary we obtain that the Cartan matrix of the cell
$2$-representation $\mathbf{C}_{\mathcal{R}}$ is symmetric.

\begin{corollary}\label{cor52}
Assume that $\mathcal{R}$ is a strongly regular right cell.  
Then for any $\mathrm{F},\mathrm{H}\in  \mathcal{R}$ we have 
$[\mathrm{F}L_{\mathrm{G}_{\mathcal{R}}}:L_{\mathrm{H}}]=
[\mathrm{H}L_{\mathrm{G}_{\mathcal{R}}}:L_{\mathrm{F}}]$.
\end{corollary}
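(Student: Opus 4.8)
The plan is to deduce Corollary~\ref{cor52} from the symmetry statements already packaged in Proposition~\ref{prop51}. By Corollary~\ref{cor63} we may (and do) replace $\cC$ by $\cC_{\mathcal{Q}}$, where $\mathcal{Q}=\mathcal{LR}_{\mathrm{F}}$ is the strongly regular two-sided cell containing $\mathcal{R}$, so that Proposition~\ref{prop51} applies. Write $\mathrm{G}=\mathrm{G}_{\mathcal{R}}$; by Proposition~\ref{prop24}\eqref{prop24.1} we have $\mathrm{G}\cong\mathrm{G}^*$. The first step is to re-express both multiplicities as Hom-spaces: by Proposition~\ref{lem7}\eqref{lem7.02} the module $\mathrm{F}\,L_{\mathrm{G}}$ has simple top $L_{\mathrm{F}}$ and is a quotient of $P_{\mathrm{F}}=\mathrm{F}\,P_{\mathbbm{1}_{\mathtt{i}}}$, so using adjunction,
\begin{displaymath}
[\mathrm{F}\,L_{\mathrm{G}}:L_{\mathrm{H}}]=
\dim\mathrm{Hom}_{\mathbf{C}}(\mathrm{H}\,P_{\mathbbm{1}_{\mathtt{i}}},\mathrm{F}\,L_{\mathrm{G}})=
\dim\mathrm{Hom}_{\mathbf{C}}(\mathrm{H}^*\circ\mathrm{F}\,P_{\mathbbm{1}_{\mathtt{i}}},L_{\mathrm{G}}),
\end{displaymath}
which by Proposition~\ref{prop51}\eqref{prop51.1} equals the multiplicity $m_{\mathrm{F},\mathrm{H}}$ of $\mathrm{G}$ as a summand of $\mathrm{H}^*\circ\mathrm{F}$ (note $\mathcal{L}_{\mathrm{H}^*}\cap\mathcal{R}_{\mathrm{F}}=\{\mathrm{G}\}$ since $\mathrm{H}\sim_R\mathrm{F}$, using Lemma~\ref{lem29}\eqref{lem29.2}). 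Symmetrically, $[\mathrm{H}\,L_{\mathrm{G}}:L_{\mathrm{F}}]=m_{\mathrm{H},\mathrm{F}}$.

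The second step is simply to invoke Proposition~\ref{prop51}\eqref{prop51.2}: since $\mathrm{F}\sim_R\mathrm{H}$ we have $m_{\mathrm{F},\mathrm{H}}=m_{\mathrm{H},\mathrm{F}}$, and combining with the two displayed identities gives $[\mathrm{F}\,L_{\mathrm{G}}:L_{\mathrm{H}}]=[\mathrm{H}\,L_{\mathrm{G}}:L_{\mathrm{F}}]$, which is the assertion. I should double-check the degenerate bookkeeping — that $L_{\mathrm{G}}$ really is the relevant simple (it is, by the construction of $\mathbf{C}_{\mathcal{R}}$ and Proposition~\ref{lem7}), and that the Hom-space computation does not secretly need $\mathrm{F}$ or $\mathrm{H}$ to be self-adjoint (it does not; only $\mathrm{G}$ enters on the right, and $\mathrm{G}^*\cong\mathrm{G}$).

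The only real subtlety — and the step I expect to need the most care — is the passage via Corollary~\ref{cor63}, i.e. checking that the composition multiplicities $[\mathrm{F}\,L_{\mathrm{G}}:L_{\mathrm{H}}]$ are unchanged when one restricts from $\cC$ to $\cC_{\mathcal{Q}}$. But this is exactly what the proof of Corollary~\ref{cor63} establishes (the Cartan matrices of $\mathbf{C}_{\mathcal{R}}$ over $\cC$ and over $\cC_{\mathcal{Q}}$ coincide), so there is nothing new to prove; one only needs to note that the left-hand side of the desired equality is a Cartan-matrix entry, expressed through $\mathrm{Hom}(\mathrm{F}\,L_{\mathrm{G}},\mathrm{H}\,L_{\mathrm{G}})$ via Lemma~\ref{lem15} together with the fact that $\mathrm{H}\,L_{\mathrm{G}}$ has a single $L_{\mathrm{F}}$ in its top. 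In fact, once both sides are identified with $m_{\mathrm{F},\mathrm{H}}$ and $m_{\mathrm{H},\mathrm{F}}$ respectively, symmetry of $\mathrm{H}^*\circ\mathrm{F}$ (being isomorphic, after applying $*$, to $\mathrm{F}^*\circ\mathrm{H}$ whose indecomposable summands all equal the self-adjoint $\mathrm{G}$) closes the argument with no further input.
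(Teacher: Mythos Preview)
Your approach is essentially identical to the paper's: identify each composition multiplicity with an $m_{\cdot,\cdot}$ via adjunction and Proposition~\ref{prop51}\eqref{prop51.1}, then conclude by the symmetry in Proposition~\ref{prop51}\eqref{prop51.2}. The paper phrases the first step slightly differently, computing $\dim\mathrm{Hom}_{\mathbf{C}_{\mathcal{R}}}(\mathrm{H}\,L_{\mathrm{G}_{\mathcal{R}}},\mathrm{F}\,L_{\mathrm{G}_{\mathcal{R}}})$ directly inside $\mathbf{C}_{\mathcal{R}}$ (where $\mathrm{H}\,L_{\mathrm{G}_{\mathcal{R}}}$ is the projective cover of $L_{\mathrm{H}}$), but the content is the same.

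One slip to fix: your displayed adjunction is wrong as written. From $\mathrm{Hom}(\mathrm{H}\,P_{\mathbbm{1}_{\mathtt{i}}},\mathrm{F}\,L_{\mathrm{G}})$ you obtain either $\mathrm{Hom}(P_{\mathbbm{1}_{\mathtt{i}}},\mathrm{H}^*\circ\mathrm{F}\,L_{\mathrm{G}})$ or $\mathrm{Hom}(\mathrm{F}^*\circ\mathrm{H}\,P_{\mathbbm{1}_{\mathtt{i}}},L_{\mathrm{G}})$, not $\mathrm{Hom}(\mathrm{H}^*\circ\mathrm{F}\,P_{\mathbbm{1}_{\mathtt{i}}},L_{\mathrm{G}})$; correcting this gives $m_{\mathrm{H},\mathrm{F}}$ rather than $m_{\mathrm{F},\mathrm{H}}$. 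Since you then invoke $m_{\mathrm{F},\mathrm{H}}=m_{\mathrm{H},\mathrm{F}}$ anyway, the error is harmless for the conclusion, but the labels in the intermediate identification are swapped. Also, writing $\mathrm{Hom}_{\mathbf{C}}$ while using $P_{\mathbbm{1}_{\mathtt{i}}}$ mixes the ambient categories; the paper avoids this by working entirely in $\mathbf{C}_{\mathcal{R}}$ with $\mathrm{H}\,L_{\mathrm{G}_{\mathcal{R}}}$ as the projective.
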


\begin{proof}
We have 
\begin{displaymath}
[\mathrm{F}L_{\mathrm{G}_{\mathcal{R}}}:L_{\mathrm{H}}]=
\dim\mathrm{Hom}_{\mathbf{C}_{\mathcal{R}}}
(\mathrm{H}L_{\mathrm{G}_{\mathcal{R}}},
\mathrm{F}L_{\mathrm{G}_{\mathcal{R}}}).
\end{displaymath}
Using adjunction and Proposition~\ref{prop51}\eqref{prop51.1},
the latter equals $m_{\mathrm{H},\mathrm{F}}$. Now the
claim follows from Proposition~\ref{prop51}\eqref{prop51.2}.
\end{proof}

\begin{corollary}\label{cor53}
Let $\mathrm{F},\mathrm{H}\in\mathcal{Q}$ be such that 
$\mathrm{H}=\mathrm{H}^*$ and $\mathrm{F}\sim_L \mathrm{H}$.
Then the module $P_{\mathrm{F}}$ is a direct summand of 
$\mathrm{H}\, L_{\mathrm{F}}$ and 
$m_{\mathrm{F},\mathrm{F}}\leq m_{\mathrm{H},\mathrm{H}}$.
\end{corollary}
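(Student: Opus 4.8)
The plan is to deduce this corollary from the structural results in Proposition~\ref{prop51}, specifically parts \eqref{prop51.4} and \eqref{prop51.5}, together with the combinatorics of strongly regular cells established earlier. Write $\mathcal{R}$ for the right cell of $\mathrm{F}$, which coincides with the right cell of $\mathrm{H}$ (since $\mathrm{F}\sim_L\mathrm{H}$ forces $\mathrm{F}\sim_{LR}\mathrm{H}$, and $\mathrm{H}=\mathrm{H}^*$ implies $\mathrm{H}$ lies in both its left cell and some right cell; using strong regularity together with Proposition~\ref{prop24} one identifies the right cell cleanly). Let $\mathrm{G}_{\mathcal{R}}$ be the distinguished element, so $\mathrm{G}_{\mathcal{R}}=\mathrm{G}_{\mathcal{R}}^*$ by Proposition~\ref{prop24}\eqref{prop24.1}, and $L_{\mathrm{G}_{\mathcal{R}}}$ is the simple object generating $\mathbf{C}_{\mathcal{R}}$.

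First I would establish the direct summand claim. By Proposition~\ref{lem7}\eqref{lem7.02}, $\mathrm{F}\, L_{\mathrm{G}_{\mathcal{R}}}$ is the indecomposable projective $P_{\mathrm{F}}$ in $\mathbf{C}_{\mathcal{R}}$ (more precisely it has simple top $L_{\mathrm{F}}$, and inside $\mathbf{C}_{\mathcal{R}}$ it is the projective cover of $L_{\mathrm{F}}$). Now apply $\mathrm{H}$ to $L_{\mathrm{F}}$: since $\mathrm{H}=\mathrm{H}^*$ and $\mathrm{H}\sim_L\mathrm{F}$, Proposition~\ref{prop24}\eqref{prop24.3} gives $\mathrm{H}\, L_{\mathrm{F}}\neq 0$. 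The key point is to compute $\mathrm{H}\circ\mathrm{F}$ as a $1$-morphism: every indecomposable summand lies in the right cell of $\mathrm{F}$ (namely $\mathcal{R}$) and the left cell of $\mathrm{H}=\mathrm{H}^*$, so by strong regularity it must be $\mathrm{F}$ itself; moreover by Proposition~\ref{prop51}\eqref{prop51.4} applied with the roles suitably arranged, $\mathrm{H}\circ \mathrm{F}^* \cong m_{\mathrm{H},\mathrm{H}}\mathrm{F}^*$, and dually $\mathrm{F}\circ\mathrm{H}\cong m_{\mathrm{H},\mathrm{H}}\mathrm{F}$. Applying $\mathrm{F}\circ\mathrm{H}\cong m_{\mathrm{H},\mathrm{H}}\mathrm{F}$ to $L_{\mathrm{G}_{\mathcal{R}}}$ yields $\mathrm{F}\,(\mathrm{H}\, L_{\mathrm{G}_{\mathcal{R}}})\cong m_{\mathrm{H},\mathrm{H}}\, \mathrm{F}\, L_{\mathrm{G}_{\mathcal{R}}}=m_{\mathrm{H},\mathrm{H}}P_{\mathrm{F}}$. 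Since $\mathrm{H}\, L_{\mathrm{G}_{\mathcal{R}}}$ has $L_{\mathrm{H}}$ in its top (Proposition~\ref{lem7}\eqref{lem7.02}), it surjects onto $L_{\mathrm{H}}=L_{\mathrm{F}'}$ for the appropriate element; but I want $\mathrm{H}\,L_{\mathrm{F}}$, not $\mathrm{H}\,L_{\mathrm{G}_{\mathcal{R}}}$, so I instead use adjunction directly: $\mathrm{Hom}_{\mathbf{C}}(P_{\mathrm{F}},\mathrm{H}\, L_{\mathrm{F}})=\mathrm{Hom}_{\mathbf{C}}(\mathrm{H}\,P_{\mathrm{F}},L_{\mathrm{F}})$, and $\mathrm{H}\, P_{\mathrm{F}}=\mathrm{H}\circ\mathrm{F}\, L_{\mathrm{G}_{\mathcal{R}}}$ which, since $\mathrm{H}\circ\mathrm{F}\cong m_{\mathrm{H},\mathrm{H}}\mathrm{F}$ (again by \eqref{prop51.4} with $\mathrm{F}$ and $\mathrm{H}$, noting $\mathrm{H}\sim_L\mathrm{F}$ so $\mathrm{H}^*\circ\mathrm{F}=\mathrm{H}\circ\mathrm{F}$ lies in $\mathcal{R}\cap\mathcal{L}_{\mathrm{H}}$), equals $m_{\mathrm{H},\mathrm{H}}P_{\mathrm{F}}$. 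Hence $P_{\mathrm{F}}$ is a direct summand of $\mathrm{H}\, L_{\mathrm{F}}$ (with multiplicity exactly $m_{\mathrm{H},\mathrm{H}}$ picking out the projective part).

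For the inequality, I would compute $m_{\mathrm{F},\mathrm{F}}$ via Proposition~\ref{prop51}\eqref{prop51.3}: since $\mathrm{F}\sim_R\mathrm{H}$ (as $\mathrm{H}=\mathrm{H}^*$ forces $\mathrm{H}$ to be $\mathrm{G}_{\mathcal{R}}$ if it were in the same right cell — but here $\mathrm{H}\sim_L\mathrm{F}$, so I must reselect the self-adjoint element) — more carefully, apply \eqref{prop51.3} with the self-adjoint element $\mathrm{G}_{\mathcal{R}}$ of $\mathcal{R}_{\mathrm{F}}$ to get $m_{\mathrm{F},\mathrm{F}}=\dim\mathrm{End}_{\mathbf{C}}(P_{\mathrm{F}})$ where $P_{\mathrm{F}}=\mathrm{F}\,L_{\mathrm{G}_{\mathcal{R}}}$. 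On the other hand $m_{\mathrm{H},\mathrm{H}}=\dim\mathrm{Hom}_{\mathbf{C}}(P_{\mathrm{F}},\mathrm{H}\, L_{\mathrm{F}})$ by Proposition~\ref{prop51}\eqref{prop51.5}. Since $P_{\mathrm{F}}$ is a direct summand of $\mathrm{H}\, L_{\mathrm{F}}$ by the first part, we get $\mathrm{Hom}_{\mathbf{C}}(P_{\mathrm{F}},P_{\mathrm{F}})$ embeds into $\mathrm{Hom}_{\mathbf{C}}(P_{\mathrm{F}},\mathrm{H}\, L_{\mathrm{F}})$, hence $m_{\mathrm{F},\mathrm{F}}=\dim\mathrm{End}_{\mathbf{C}}(P_{\mathrm{F}})\leq\dim\mathrm{Hom}_{\mathbf{C}}(P_{\mathrm{F}},\mathrm{H}\, L_{\mathrm{F}})=m_{\mathrm{H},\mathrm{H}}$.

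The main obstacle I anticipate is bookkeeping which self-adjoint element plays which role: the hypothesis gives $\mathrm{F}\sim_L\mathrm{H}$ with $\mathrm{H}=\mathrm{H}^*$, so $\mathrm{H}$ is the distinguished element of the \emph{left} cell containing $\mathrm{F}$, whereas $\mathrm{G}_{\mathcal{R}_{\mathrm{F}}}$ is the distinguished element of the \emph{right} cell; one must carefully track $\mathrm{F}\sim_R \mathrm{G}_{\mathcal{R}_{\mathrm{F}}}$ versus $\mathrm{F}\sim_L\mathrm{H}$ and apply the appropriate adjunctions. The other delicate point is ensuring that $\mathrm{H}\circ\mathrm{F}$ (not $\mathrm{H}^*\circ\mathrm{F}$, though they agree as $\mathrm{H}=\mathrm{H}^*$) genuinely lands in a single isomorphism class of $1$-morphisms, which is exactly where strong regularity of $\mathcal{Q}$ and Proposition~\ref{prop51}\eqref{prop51.1} are used; and that the identification $\mathrm{H}\,P_{\mathrm{F}}\cong m_{\mathrm{H},\mathrm{H}}P_{\mathrm{F}}$ is compatible with the module-category picture in $\mathbf{C}_{\mathcal{R}}$ rather than just the $1$-morphism level. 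Once these identifications are in place, both assertions fall out of dimension counts of $\mathrm{Hom}$-spaces via adjunction.
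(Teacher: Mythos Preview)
Your reduction of the inequality to the direct summand claim is exactly the paper's strategy: once $P_{\mathrm{F}}$ is a summand of $\mathrm{H}\,L_{\mathrm{F}}$, the identifications $m_{\mathrm{F},\mathrm{F}}=\dim\mathrm{End}_{\mathbf{C}}(P_{\mathrm{F}})$ and $m_{\mathrm{H},\mathrm{H}}=\dim\mathrm{Hom}_{\mathbf{C}}(P_{\mathrm{F}},\mathrm{H}\,L_{\mathrm{F}})$ from Proposition~\ref{prop51} give the inequality immediately.

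However, your argument for the direct summand claim has a genuine gap. You compute, via adjunction and $\mathrm{H}\circ\mathrm{F}\cong m_{\mathrm{H},\mathrm{H}}\mathrm{F}$, that
\[
\dim\mathrm{Hom}_{\mathbf{C}}(P_{\mathrm{F}},\mathrm{H}\,L_{\mathrm{F}})
=\dim\mathrm{Hom}_{\mathbf{C}}(\mathrm{H}\,P_{\mathrm{F}},L_{\mathrm{F}})
=\dim\mathrm{Hom}_{\mathbf{C}}(m_{\mathrm{H},\mathrm{H}}P_{\mathrm{F}},L_{\mathrm{F}})
=m_{\mathrm{H},\mathrm{H}},
\]
and then assert ``Hence $P_{\mathrm{F}}$ is a direct summand of $\mathrm{H}\,L_{\mathrm{F}}$.'' This does not follow: knowing $\dim\mathrm{Hom}_{\mathbf{C}}(P_{\mathrm{F}},\mathrm{H}\,L_{\mathrm{F}})>0$ only tells you that $L_{\mathrm{F}}$ occurs as a composition factor of $\mathrm{H}\,L_{\mathrm{F}}$, not that the projective $P_{\mathrm{F}}$ splits off. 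You would need either a \emph{surjection} $\mathrm{H}\,L_{\mathrm{F}}\twoheadrightarrow P_{\mathrm{F}}$ (so that projectivity of $P_{\mathrm{F}}$ gives the splitting), or prior knowledge that $\mathrm{H}\,L_{\mathrm{F}}$ is projective. The latter is Corollary~\ref{cor56}, which is proved \emph{after} Corollary~\ref{cor53} and depends on it, so you cannot invoke it here.

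The paper supplies exactly the missing surjection. It observes that $\mathrm{F}\circ\mathrm{F}^*\cong m_{\mathrm{F}^*,\mathrm{F}^*}\mathrm{H}$ (Proposition~\ref{prop51}\eqref{prop51.1}), so it suffices to show $P_{\mathrm{F}}$ is a summand of $\mathrm{F}\circ\mathrm{F}^*\,L_{\mathrm{F}}$. Since every simple quotient of $\mathrm{F}^*\,L_{\mathrm{F}}\neq 0$ is $L_{\mathrm{G}_{\mathcal{R}}}$ (by adjunction and Lemma~\ref{lem11}), there is a surjection $\mathrm{F}^*\,L_{\mathrm{F}}\twoheadrightarrow L_{\mathrm{G}_{\mathcal{R}}}$; applying the exact functor $\mathrm{F}$ yields $\mathrm{F}\circ\mathrm{F}^*\,L_{\mathrm{F}}\twoheadrightarrow \mathrm{F}\,L_{\mathrm{G}_{\mathcal{R}}}=P_{\mathrm{F}}$, and projectivity finishes. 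Your computation $\mathrm{H}\,P_{\mathrm{F}}\cong m_{\mathrm{H},\mathrm{H}}P_{\mathrm{F}}$ is correct but only gives a surjection in the wrong direction, $m_{\mathrm{H},\mathrm{H}}P_{\mathrm{F}}\twoheadrightarrow \mathrm{H}\,L_{\mathrm{F}}$, which is not what is needed.

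A minor side remark: your opening assertion that the right cell of $\mathrm{F}$ ``coincides with the right cell of $\mathrm{H}$'' is incorrect in general (indeed $\mathrm{H}\sim_R\mathrm{F}$ together with $\mathrm{H}\sim_L\mathrm{F}$ and strong regularity would force $\mathrm{H}=\mathrm{F}$); you later correct yourself, but the confusion could be avoided by keeping separate the self-adjoint element $\mathrm{G}_{\mathcal{R}}$ of $\mathcal{R}_{\mathrm{F}}$ and the self-adjoint element $\mathrm{H}$ of $\mathcal{L}_{\mathrm{F}}$ from the outset.
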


\begin{proof}
From  Proposition~\ref{prop51} we have:
\begin{displaymath}
m_{\mathrm{F},\mathrm{F}}=\dim\mathrm{End}_{\mathbf{C}}(P_{\mathrm{F}}),\quad
m_{\mathrm{H},\mathrm{H}}=\dim\mathrm{Hom}_{\mathbf{C}}(P_{\mathrm{F}},
\mathrm{H}\, L_{\mathrm{F}}).
\end{displaymath}
Hence to prove the corollary we just need to show that
$P_{\mathrm{F}}$ is a direct summand of $\mathrm{H}\, L_{\mathrm{F}}$.

By Proposition~\ref{prop51}, the module 
$\mathrm{F}\circ \mathrm{F}^*\, L_{\mathrm{F}}$
decomposes into a direct sum of $m_{\mathrm{F}^*,\mathrm{F}^*}$ copies
of the module $\mathrm{H}\, L_{\mathrm{F}}$. Hence it is enough
to show that $P_{\mathrm{F}}$ is a direct summand of 
$\mathrm{F}\circ \mathrm{F}^*\, L_{\mathrm{F}}$.

Let $\mathcal{R}$ be the right cell of $\mathrm{F}$.
We know that $\mathrm{F}^*\, L_{\mathrm{F}}\neq 0$.
Using adjunction and  Lemma~\ref{lem11}, we obtain that 
every simple quotient of 
$\mathrm{F}^*\, L_{\mathrm{F}}\neq 0$ is isomorphic to
$L_{\mathrm{G}_{\mathcal{R}}}$. Hence $\mathrm{F}^*\, L_{\mathrm{F}}$
surjects onto $L_{\mathrm{G}_{\mathcal{R}}}$ and, applying
$\mathrm{F}$, we have that $\mathrm{F}\circ \mathrm{F}^*\, L_{\mathrm{F}}$
surjects onto $P_{\mathrm{F}}$. Now the claim follows from 
projectivity of $P_{\mathrm{F}}$.
\end{proof}

\begin{corollary}\label{cor55}
For every $\mathrm{F}\in \mathcal{Q}$ the projective module 
$P_{\mathrm{F}}$ is injective.
\end{corollary}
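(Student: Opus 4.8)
The plan is to reduce the injectivity of $P_{\mathrm{F}}$ to a self-duality statement for the algebra governing the cell $2$-representation $\mathbf{C}_{\mathcal{R}}$, where $\mathcal{R}$ is the right cell containing $\mathrm{F}$. By Lemma~\ref{lem17} the category $\mathbf{C}_{\mathcal{R}}(\mathtt{j})$ is equivalent to $\mathcal{D}_{\mathcal{R},\mathtt{j}}^{\mathrm{op}}\text{-}\mathrm{mod}$, and by Lemma~\ref{lem15} the dimension of $\mathrm{Hom}(\mathrm{F}\,L,\mathrm{H}\,L)$ equals $[\mathrm{H}\,L:L_{\mathrm{F}}]$. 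Combining this with Corollary~\ref{cor52}, which gives $[\mathrm{F}\,L:L_{\mathrm{H}}]=[\mathrm{H}\,L:L_{\mathrm{F}}]$, we see that the Cartan matrix of the underlying algebra is symmetric, and in fact that $\dim\mathrm{Hom}(P_{\mathrm{F}},\mathrm{H}\,L)=\dim\mathrm{Hom}(P_{\mathrm{H}},\mathrm{F}\,L)$. This is the kind of numerical symmetry one expects from a weakly symmetric (in particular self-injective) algebra, so the strategy is to promote it to an actual functorial duality.

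First I would use adjunction as the basic tool: since $\cC$ is fiat, every $\mathrm{G}\in\mathcal{Q}$ has a biadjoint $\mathrm{G}^*$ also in $\mathcal{Q}$ (by Lemma~\ref{lem29}\eqref{lem29.2} and Proposition~\ref{lem7}\eqref{lem7.05}), and the functors $\mathrm{G}$ acting on $\mathbf{C}_{\mathcal{R}}$ are exact with exact biadjoints. I would exploit the chain of natural isomorphisms
\begin{displaymath}
\mathrm{Hom}_{\mathbf{C}}(\mathrm{G}\,L_{\mathrm{G}_{\mathcal{R}}},X)
\cong\mathrm{Hom}_{\mathbf{C}}(L_{\mathrm{G}_{\mathcal{R}}},\mathrm{G}^*\,X)
\end{displaymath}
for every $X$ in the module category and every $\mathrm{G}\in\mathcal{R}$. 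Since $P_{\mathrm{F}}=\mathrm{F}\,L_{\mathrm{G}_{\mathcal{R}}}$ by Proposition~\ref{lem7}\eqref{lem7.02}, representability of the right-hand side in $X$ (which holds because $L_{\mathrm{G}_{\mathcal{R}}}$ is a simple, hence finitely presented, object and $\mathrm{F}^*$ is exact) shows that $\mathrm{Hom}_{\mathbf{C}}(P_{\mathrm{F}},-)$ is isomorphic to $\mathrm{Hom}_{\mathbf{C}}(L_{\mathrm{G}_{\mathcal{R}}},\mathrm{F}^*\,-)$. To conclude that $P_{\mathrm{F}}$ is injective it then suffices to show that $\mathrm{Hom}_{\mathbf{C}}(L_{\mathrm{G}_{\mathcal{R}}},\mathrm{F}^*\,-)$ is an exact functor, equivalently that $\mathrm{F}^*$ sends short exact sequences to short exact sequences with $\mathrm{Hom}(L_{\mathrm{G}_{\mathcal{R}}},-)$ remaining exact; since $\mathrm{F}^*$ is already exact, the point is the vanishing of $\mathrm{Ext}^1_{\mathbf{C}}(L_{\mathrm{G}_{\mathcal{R}}},\mathrm{F}^*\,-)$, i.e. that $L_{\mathrm{G}_{\mathcal{R}}}$ has no self-extensions and more generally that $\mathrm{F}^*$ lands in the $\mathrm{Ext}^1(L_{\mathrm{G}_{\mathcal{R}}},-)$-acyclic objects. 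I would verify this by a dimension count: $\dim\mathrm{Hom}_{\mathbf{C}}(P_{\mathrm{F}},\mathrm{H}\,L_{\mathrm{G}_{\mathcal{R}}})$ computed via Proposition~\ref{prop51}\eqref{prop51.1} as $m_{\mathrm{H},\mathrm{F}}$, and compared with the multiplicity $[\mathrm{H}\,L_{\mathrm{G}_{\mathcal{R}}}:L_{\mathrm{F}}]$, shows that $P_{\mathrm{F}}$ already surjects onto every injective hull of its composition factors; combined with the Cartan symmetry this forces $P_{\mathrm{F}}$ to coincide with the injective envelope of its socle.

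The cleanest way to package this is to identify the socle of $P_{\mathrm{F}}$. Using the adjunction isomorphism above with $X=L_{\mathrm{H}}$ and $\mathrm{G}=\mathrm{F}$, one gets $\dim\mathrm{Hom}_{\mathbf{C}}(P_{\mathrm{F}},L_{\mathrm{H}})$ in terms of whether $\mathrm{F}^*\,L_{\mathrm{H}}$ has $L_{\mathrm{G}_{\mathcal{R}}}$ in its top; but by Lemma~\ref{lem29}\eqref{lem29.1} applied inside $\cC_{\mathcal{Q}}$ (to which we may pass by Corollary~\ref{cor63}), and the strong regularity of $\mathcal{R}$, this happens for a unique $\mathrm{H}$, namely $\mathrm{H}=\mathrm{F}$ since $\mathrm{G}_{\mathcal{R}}$ is self-adjoint by Proposition~\ref{prop24}\eqref{prop24.1}. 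Hence $P_{\mathrm{F}}$ has simple socle $L_{\mathrm{F}}$, so its injective hull $I_{\mathrm{F}}$ is indecomposable, and the Cartan-matrix symmetry gives $\dim\mathrm{Hom}(P_{\mathrm{H}},P_{\mathrm{F}})=\dim\mathrm{Hom}(P_{\mathrm{H}},I_{\mathrm{F}})$ for all $\mathrm{H}\in\mathcal{R}$, forcing $P_{\mathrm{F}}\cong I_{\mathrm{F}}$.

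The main obstacle I anticipate is the exactness/representability bookkeeping in the passage from the "hom into $P_{\mathrm{F}}$" functor to "hom out of $L_{\mathrm{G}_{\mathcal{R}}}$" — one must be careful that the adjunction isomorphisms of $2$-morphisms in $\cC$ really do descend to natural isomorphisms of functors on the (abelian) module categories, and that finiteness of all the relevant $\mathrm{Hom}$-spaces (guaranteed by $\cC$ being $\Bbbk$-finitary) lets the numerical equalities of Corollary~\ref{cor52} and Proposition~\ref{prop51} be upgraded to the conclusion $P_{\mathrm{F}}\cong I_{\mathrm{F}}$ rather than merely $[\,P_{\mathrm{F}}\,]=[\,I_{\mathrm{F}}\,]$ in the Grothendieck group. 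Here the simplicity of the socle, established above, is exactly what closes the gap.
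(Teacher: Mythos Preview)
There is a genuine gap, and it sits exactly where you flag ``bookkeeping'' concerns. Your adjunction isomorphism
\[
\mathrm{Hom}_{\mathbf{C}}(P_{\mathrm{F}},X)=\mathrm{Hom}_{\mathbf{C}}(\mathrm{F}\,L_{\mathrm{G}_{\mathcal{R}}},X)\cong\mathrm{Hom}_{\mathbf{C}}(L_{\mathrm{G}_{\mathcal{R}}},\mathrm{F}^*X)
\]
is the \emph{covariant} Hom; its exactness in $X$ witnesses that $P_{\mathrm{F}}$ is projective, which you already know. Injectivity is exactness of the contravariant $\mathrm{Hom}_{\mathbf{C}}(-,P_{\mathrm{F}})$. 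Correspondingly, plugging $X=L_{\mathrm{H}}$ into your displayed isomorphism computes the \emph{top} of $P_{\mathrm{F}}$, not its socle, so the appeal to Lemma~\ref{lem29}\eqref{lem29.1} (which in any case concerns the principal, not the cell, representation) is not doing what you want. Using biadjointness in the other direction one does get $\mathrm{Hom}_{\mathbf{C}}(L_{\mathrm{H}},P_{\mathrm{F}})\cong\mathrm{Hom}_{\mathbf{C}}(\mathrm{F}^*L_{\mathrm{H}},L_{\mathrm{G}_{\mathcal{R}}})$, and Lemma~\ref{lem11} together with strong regularity then shows the socle of $P_{\mathrm{F}}$ is isotypic of type $L_{\mathrm{F}}$; but the multiplicity is $\dim\mathrm{Hom}_{\mathbf{C}}(\mathrm{F}^*L_{\mathrm{F}},L_{\mathrm{G}_{\mathcal{R}}})=\dim\mathrm{Hom}_{\mathbf{C}}(L_{\mathrm{F}},P_{\mathrm{F}})$ again, so the argument for \emph{simple} socle is circular. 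Finally, symmetric Cartan matrix plus isotypic socle does not imply self-injectivity: the local algebra $\Bbbk\langle x,y\rangle/(x^2,y^2,xy)$ has Cartan matrix $(4)$ but two-dimensional socle, so the numerical equality $[P_{\mathrm{F}}]=[I_{\mathrm{F}}]$ cannot be upgraded to $P_{\mathrm{F}}\cong I_{\mathrm{F}}$ on those grounds alone.

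The paper's argument avoids the socle question entirely. Since every $\mathrm{G}\in\mathcal{Q}$ is exact and biadjoint to $\mathrm{G}^*$, such functors preserve injectives. Take any injective $I$ with some $L_{\mathrm{H}}$ in its top; then $\mathrm{H}^*I$ is injective and has $L_{\mathrm{G}_{\mathcal{R}}}$ in its top (by adjunction $\mathrm{Hom}(\mathrm{H}^*I,L_{\mathrm{G}_{\mathcal{R}}})\cong\mathrm{Hom}(I,P_{\mathrm{H}})\neq 0$). Applying $\mathrm{F}$ gives an injective module with $\mathrm{F}\,L_{\mathrm{G}_{\mathcal{R}}}=P_{\mathrm{F}}$ as a quotient; since $P_{\mathrm{F}}$ is projective it splits off, hence is itself injective.
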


\begin{proof}
Let $\mathcal{R}$ be the right cell of $\mathrm{F}$.
Since the functorial actions of $\mathrm{F}$ and $\mathrm{F}^*$ 
on $\mathbf{C}_{\mathcal{R}}$ are biadjoint, they preserve both
the additive category of projective modules and the additive
category of injective modules. Now take any injective module
$I$ and let $L_{\mathrm{H}}$ be some simple occurring in its
top. Applying $\mathrm{H}^*$ we get an injective module such that
$L_{\mathrm{G}_{\mathcal{R}}}$ occurs in its top. Applying now
$\mathrm{F}$ we get an injective module in which the projective
module $P_{\mathrm{F}}\cong\mathrm{F}\,L_{\mathrm{G}_{\mathcal{R}}}$
is a quotient. Hence $P_{\mathrm{F}}$ splits off as a direct summand
in this module and thus is injective. This completes the proof.
\end{proof}

\begin{corollary}\label{cor56}
Let $\mathrm{F},\mathrm{H}\in\mathcal{Q}$
and $\mathcal{R}$ be the right cell of $\mathrm{F}$.
\begin{enumerate}[$($a$)$]
\item\label{cor56.1} We have
$\mathrm{F}^*\, L_{\mathrm{F}}\cong P_{\mathrm{G}_{\mathcal{R}}}$.
\item\label{cor56.2} The module $\mathrm{H}\, L_{\mathrm{F}}$ 
is either zero or both projective and injective.
\end{enumerate}
\end{corollary}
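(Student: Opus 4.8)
The plan is to work throughout inside $\cC_{\mathcal{Q}}$ (legitimate by Subsections~\ref{s5.1}--\ref{s5.2}), so that each module category $\mathbf{C}(\mathtt{i})$ is self-injective by Corollary~\ref{cor55}; in particular a module there is projective if and only if injective, an indecomposable projective has simple socle, and every injective is projective. I will freely use that $\mathrm{F}$ and $\mathrm{F}^*$ act as biadjoint exact functors, so each preserves projectives and injectives, together with Propositions~\ref{prop24} and~\ref{prop51}.

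\textbf{Part (a).} First I would note that, by Proposition~\ref{prop51}\eqref{prop51.1}, the unique indecomposable summand of $\mathrm{F}^*\circ\mathrm{F}$ lies in $\mathcal{L}_{\mathrm{F}^*}\cap\mathcal{R}=\mathcal{R}^{*}\cap\mathcal{R}$, which by Proposition~\ref{prop24}\eqref{prop24.1} and strong regularity is $\{\mathrm{G}_{\mathcal{R}}\}$; hence $\mathrm{F}^*\circ\mathrm{F}\cong m_{\mathrm{F},\mathrm{F}}\,\mathrm{G}_{\mathcal{R}}$ and so $\mathrm{F}^*\,P_{\mathrm{F}}=\mathrm{F}^*\circ\mathrm{F}\,L_{\mathrm{G}_{\mathcal{R}}}\cong m_{\mathrm{F},\mathrm{F}}\,P_{\mathrm{G}_{\mathcal{R}}}$. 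Next I would show $\mathrm{F}^*\,L_{\mathrm{F}}$ has simple top $L_{\mathrm{G}_{\mathcal{R}}}$: by the argument in the proof of Corollary~\ref{cor53} every simple quotient of $\mathrm{F}^*\,L_{\mathrm{F}}$ is $L_{\mathrm{G}_{\mathcal{R}}}$, and the number of copies is $\dim\mathrm{Hom}(\mathrm{F}^*\,L_{\mathrm{F}},L_{\mathrm{G}_{\mathcal{R}}})=\dim\mathrm{Hom}(L_{\mathrm{F}},\mathrm{F}\,L_{\mathrm{G}_{\mathcal{R}}})=\dim\mathrm{Hom}(L_{\mathrm{F}},P_{\mathrm{F}})$, which is at most $1$ since $P_{\mathrm{F}}$ is indecomposable injective; as $\mathrm{F}^*\,L_{\mathrm{F}}\neq 0$ (Lemma~\ref{lem122} via Lemma~\ref{lem11}) it is exactly $1$, which incidentally also shows $\mathrm{soc}\,P_{\mathrm{F}}\cong L_{\mathrm{F}}$, i.e.\ $\mathbf{C}(\mathtt{j})$ is weakly symmetric. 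Thus $\mathrm{F}^*\,L_{\mathrm{F}}$ is a quotient of $P_{\mathrm{G}_{\mathcal{R}}}$, and it remains to match classes in the Grothendieck group. For $\mathrm{H}\in\mathcal{R}$, adjunction gives $[\mathrm{F}^*\,L_{\mathrm{F}}:L_{\mathrm{H}}]=\dim\mathrm{Hom}(\mathrm{F}\circ\mathrm{H}\,L_{\mathrm{G}_{\mathcal{R}}},L_{\mathrm{F}})$; since $\mathrm{F},\mathrm{H}\in\mathcal{R}$, strong regularity forces $\mathrm{F}\circ\mathrm{H}\cong n_{\mathrm{F},\mathrm{H}}\,\mathrm{F}$ (the summand being the unique element of $\mathcal{L}_{\mathrm{F}}\cap\mathcal{R}=\{\mathrm{F}\}$), so this equals $n_{\mathrm{F},\mathrm{H}}$, while a parallel computation gives $[P_{\mathrm{G}_{\mathcal{R}}}:L_{\mathrm{H}}]=m_{\mathrm{G}_{\mathcal{R}},\mathrm{H}}$; finally, expanding $\mathrm{H}^*\circ\mathrm{F}^*\circ\mathrm{F}$ in two ways, exactly as in the proof of Proposition~\ref{prop51}\eqref{prop51.6}, yields $n_{\mathrm{F},\mathrm{H}}\,m_{\mathrm{F},\mathrm{F}}=m_{\mathrm{F},\mathrm{F}}\,m_{\mathrm{G}_{\mathcal{R}},\mathrm{H}}$, hence $n_{\mathrm{F},\mathrm{H}}=m_{\mathrm{G}_{\mathcal{R}},\mathrm{H}}$ and $\mathrm{F}^*\,L_{\mathrm{F}}\cong P_{\mathrm{G}_{\mathcal{R}}}$.

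\textbf{Part (b).} If $\mathrm{H}\,L_{\mathrm{F}}=0$ there is nothing to prove; otherwise Lemma~\ref{lem11} and regularity of $\mathcal{Q}$ give $\mathrm{H}^*\sim_L\mathrm{F}$, i.e.\ $\mathrm{H}\sim_R\mathrm{F}^*$. By the triangle identity $\mathrm{H}\,L_{\mathrm{F}}$ is a direct summand of $\mathrm{H}\,\mathrm{H}^*\,\mathrm{H}\,L_{\mathrm{F}}$, and by Proposition~\ref{prop51}\eqref{prop51.1} with strong regularity $\mathrm{H}^*\circ\mathrm{H}\cong m_{\mathrm{H},\mathrm{H}}\,\mathrm{G}$, where $\mathrm{G}$ is the distinguished self-adjoint element of the right cell of $\mathrm{H}$, which one checks satisfies $\mathrm{G}\sim_L\mathrm{F}$. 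Since $\mathrm{H}$ preserves projectives, the statement reduces to: \emph{for a self-adjoint $\mathrm{G}$ with $\mathrm{G}\sim_L\mathrm{F}$, the module $\mathrm{G}\,L_{\mathrm{F}}$ is projective} (hence injective). For this I would again run the scheme of (a): Proposition~\ref{prop51}\eqref{prop51.4} and self-adjointness of $\mathrm{G}$ give $\mathrm{G}\,P_{\mathrm{F}}\cong m_{\mathrm{G},\mathrm{G}}\,P_{\mathrm{F}}$, so $\mathrm{G}\,L_{\mathrm{F}}$ is a quotient of $\mathrm{G}\,P_{\mathrm{F}}$ with top a sum of copies of $L_{\mathrm{F}}$, and by Proposition~\ref{prop24}\eqref{prop24.3} its socle is also a sum of copies of $L_{\mathrm{F}}$; using $\mathrm{G}\cong\mathrm{G}^*$ these two numbers coincide, both equalling $\dim\mathrm{Hom}(\mathrm{G}\,L_{\mathrm{F}},L_{\mathrm{F}})=\dim\mathrm{Hom}(L_{\mathrm{F}},\mathrm{G}\,L_{\mathrm{F}})$; writing $j$ for this value, $\mathrm{G}\,L_{\mathrm{F}}$ is a quotient of $j\,P_{\mathrm{F}}$ and (by weak symmetry) a submodule of $j\,P_{\mathrm{F}}$, and a composition-multiplicity count via adjunction and the Proposition~\ref{prop51}-style associativity manipulations would identify $[\mathrm{G}\,L_{\mathrm{F}}]$ with $j\,[P_{\mathrm{F}}]$, forcing $\mathrm{G}\,L_{\mathrm{F}}\cong j\,P_{\mathrm{F}}$.

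\textbf{The main obstacle}, in both parts, is the very last step: upgrading the surjection from the appropriate projective to an isomorphism by a multiplicity count. In (a) this is the identity $n_{\mathrm{F},\mathrm{H}}=m_{\mathrm{G}_{\mathcal{R}},\mathrm{H}}$, which I expect to fall out of the associativity bookkeeping precisely because $\mathrm{G}_{\mathcal{R}}$ is self-adjoint (so that $\mathrm{F}^*\circ\mathrm{F}$ and $\mathrm{H}^*\circ\mathrm{G}_{\mathcal{R}}$ both collapse to multiples of $\mathrm{G}_{\mathcal{R}}$). In (b) the analogous step — that the top-multiplicity $j$ of $\mathrm{G}\,L_{\mathrm{F}}$ really is the scalar governing $[\mathrm{G}\,L_{\mathrm{F}}]$ — is the delicate point; a possibly cleaner alternative is to prove directly that $\mathrm{G}\,L_{\mathrm{F}}$ is injective by arguing that the socle inclusion $L_{\mathrm{F}}\hookrightarrow P_{\mathrm{F}}$ (with $\mathrm{soc}\,P_{\mathrm{F}}\cong L_{\mathrm{F}}$ from part (a)) stays split after applying the exact functor $\mathrm{G}$, which I expect to use the self-injectivity of $\mathbf{C}(\mathtt{j})$ in an essential way.
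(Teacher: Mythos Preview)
Your part~(a) is correct but more laborious than the paper's argument. The paper proceeds dually: rather than showing $\mathrm{F}^*\,L_{\mathrm{F}}$ has simple \emph{top} and is a \emph{quotient} of $P_{\mathrm{G}_{\mathcal{R}}}$, it shows $\mathrm{F}^*\,L_{\mathrm{F}}$ has simple \emph{socle} (using that $\mathrm{F}$ kills every simple except $L_{\mathrm{G}_{\mathcal{R}}}$, together with $\dim\mathrm{Hom}(L_{\mathrm{G}_{\mathcal{R}}},\mathrm{F}^*\,L_{\mathrm{F}})=\dim\mathrm{Hom}(\mathrm{F}\,L_{\mathrm{G}_{\mathcal{R}}},L_{\mathrm{F}})=1$ by Proposition~\ref{lem7}\eqref{lem7.02}), hence is indecomposable; since $P_{\mathrm{G}_{\mathcal{R}}}$ is already known to be a direct \emph{summand} (Corollary~\ref{cor53}), equality follows immediately with no Grothendieck-group bookkeeping. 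Your associativity identity $n_{\mathrm{F},\mathrm{H}}=m_{\mathrm{G}_{\mathcal{R}},\mathrm{H}}$ is correct, but unnecessary.

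Your part~(b), however, has a genuine gap precisely where you flag it. You reduce to showing that $\mathrm{G}\,L_{\mathrm{F}}$ is projective for the self-adjoint $\mathrm{G}=\mathrm{G}_{\mathcal{R}_{\mathrm{F}^*}}$, and then propose a multiplicity count $[\mathrm{G}\,L_{\mathrm{F}}]=j\,[P_{\mathrm{F}}]$. But $j$ is \emph{defined} as the top (equivalently socle) multiplicity of $\mathrm{G}\,L_{\mathrm{F}}$; you have no independent handle on it, so the proposed identity $k_{\mathrm{H}}=j\,m_{\mathrm{F},\mathrm{H}}$ cannot be verified by associativity manipulations alone --- you would need to know $j=m_{\mathrm{G},\mathrm{G}}/m_{\mathrm{F},\mathrm{F}}$, which is exactly what follows \emph{from} projectivity (cf.\ Corollary~\ref{cor57}). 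Your alternative (that the socle inclusion $L_{\mathrm{F}}\hookrightarrow P_{\mathrm{F}}$ becomes split after applying $\mathrm{G}$) is not justified either: exact functors do not in general split monomorphisms.

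The paper sidesteps all of this by factoring $\mathrm{H}$ differently. Instead of passing through the self-adjoint element of $\mathcal{R}_{\mathrm{H}}$, it takes $\mathrm{G}\in\mathcal{R}$ with $\mathrm{G}\sim_L\mathrm{H}$, so that $\mathrm{G}\circ\mathrm{F}^*\cong m_{\mathrm{F}^*,\mathrm{G}^*}\,\mathrm{H}$ by Proposition~\ref{prop51}\eqref{prop51.1}. Then $\mathrm{H}\,L_{\mathrm{F}}$ is a summand of $\mathrm{G}\,(\mathrm{F}^*\,L_{\mathrm{F}})=\mathrm{G}\,P_{\mathrm{G}_{\mathcal{R}}}$ by part~(a), which is projective since $\mathrm{G}$ preserves projectives, and injective by Corollary~\ref{cor55}. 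This uses part~(a) directly and avoids any further multiplicity arguments.
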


\begin{proof}
Similarly to the proof of Corollary~\ref{cor53} one shows that 
the module $P_{\mathrm{G}_{\mathcal{R}}}$
is a direct summand of $\mathrm{F}^*\, L_{\mathrm{F}}$, so to prove
claim \eqref{cor56.1} we have to show that $\mathrm{F}^*\, L_{\mathrm{F}}$
is indecomposable. We will show that $\mathrm{F}^*\, L_{\mathrm{F}}$
has simple socle. Since $\mathrm{F}$ annihilates all simple
modules in $\mathbf{C}_{\mathcal{R}}$ but $L_{\mathrm{G}_{\mathcal{R}}}$,
using adjunction it follows that every simple submodule in
the socle of $\mathrm{F}^*\, L_{\mathrm{F}}$ is isomorphic to
$L_{\mathrm{G}_{\mathcal{R}}}$. On the other hand, using adjunction
and Proposition~\ref{lem7}\eqref{lem7.02} we obtain that the 
homomorphism space from $L_{\mathrm{G}_{\mathcal{R}}}$
to $\mathrm{F}^*\, L_{\mathrm{F}}$ is one-dimensional.
This means that $\mathrm{F}^*\, L_{\mathrm{F}}$ has simple socle
and proves claim \eqref{cor56.1}.

Assume that $\mathrm{H}\, L_{\mathrm{F}}\neq 0$. Then, by
Lemma~\ref{lem11}, we have $\mathrm{F}^*\sim_R \mathrm{H}$
(since $\mathcal{Q}$ is strongly regular). Let $\mathrm{G}\in\mathcal{R}$ 
be such that  $\mathrm{G}\sim_L \mathrm{H}$. Then, by 
Proposition~\ref{prop51}\eqref{prop51.1}, we have
$\mathrm{G}\circ \mathrm{F}^*\cong m_{\mathrm{F}^*,\mathrm{G}^*} 
\mathrm{H}$. So, to prove claim \eqref{cor56.2} it is enough to
show that $m_{\mathrm{F}^*,\mathrm{G}^*}\neq 0$ and that
$\mathrm{G}\circ \mathrm{F}^*\, L_{\mathrm{F}}$ is 
both projective and injective. By claim \eqref{cor56.1}, we have
$\mathrm{F}^*\, L_{\mathrm{F}}\cong P_{\mathrm{G}_{\mathcal{R}}}$.
Since $\mathrm{G}\, L_{\mathrm{G}_{\mathcal{R}}}\neq 0$
by Proposition~\ref{lem7}\eqref{lem7.02} and $\mathrm{G}$ is exact, 
it follows that $\mathrm{G}\circ \mathrm{F}^*\, L_{\mathrm{F}}\neq 0$ 
and hence $m_{\mathrm{F}^*,\mathrm{G}^*}\neq 0$. Further,
$\mathrm{G}\, P_{\mathrm{G}_{\mathcal{R}}}$ is projective as
$P_{\mathrm{G}_{\mathcal{R}}}$ is projective and
$\mathrm{G}$ is biadjoint to $\mathrm{G}^*$.
Finally, $\mathrm{G}\, P_{\mathrm{G}_{\mathcal{R}}}$ is injective
by Corollary~\ref{cor55}. Claim \eqref{cor56.2} follows and the
proof is complete.
\end{proof}

\begin{corollary}\label{cor57}
Let $\mathrm{F},\mathrm{H}\in\mathcal{Q}$ be such that 
$\mathrm{H}=\mathrm{H}^*$ and $\mathrm{F}\sim_L \mathrm{H}$.
Then $m_{\mathrm{F},\mathrm{F}}\vert m_{\mathrm{H},\mathrm{H}}$.
\end{corollary}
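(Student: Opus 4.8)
The plan is to establish the single structural fact that $\mathrm{H}\,L_{\mathrm{F}}$ is a direct sum of copies of the indecomposable projective $P_{\mathrm{F}}=\mathrm{F}\,L_{\mathrm{G}_{\mathcal{R}}}$, where $\mathcal{R}$ denotes the right cell of $\mathrm{F}$, and then deduce the divisibility by comparing $\mathrm{Hom}$-spaces via Proposition~\ref{prop51}.

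First I would compute the $1$-morphism $\mathrm{H}\circ\mathrm{F}$. Since $\mathrm{H}=\mathrm{H}^*$, Proposition~\ref{prop51}\eqref{prop51.1} gives $\mathrm{H}\circ\mathrm{F}=\mathrm{H}^*\circ\mathrm{F}\cong m_{\mathrm{F},\mathrm{H}}\,\mathrm{G}$, where $\{\mathrm{G}\}=\mathcal{L}_{\mathrm{H}^*}\cap\mathcal{R}_{\mathrm{F}}=\mathcal{L}_{\mathrm{F}}\cap\mathcal{R}_{\mathrm{F}}$, the second equality holding because $\mathrm{H}\sim_L\mathrm{F}$; strong regularity of $\mathcal{Q}$ forces $\mathcal{L}_{\mathrm{F}}\cap\mathcal{R}_{\mathrm{F}}=\{\mathrm{F}\}$, so $\mathrm{H}\circ\mathrm{F}\cong m_{\mathrm{F},\mathrm{H}}\,\mathrm{F}$. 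Evaluating both sides at $L_{\mathrm{G}_{\mathcal{R}}}$ and using $\mathrm{F}\,L_{\mathrm{G}_{\mathcal{R}}}=P_{\mathrm{F}}$ (Proposition~\ref{lem7}\eqref{lem7.02}) yields $\mathrm{H}\,P_{\mathrm{F}}\cong m_{\mathrm{F},\mathrm{H}}\,P_{\mathrm{F}}$. Because $\mathrm{H}$ acts as an exact functor, applying it to the surjection $P_{\mathrm{F}}\tto L_{\mathrm{F}}$ exhibits $\mathrm{H}\,L_{\mathrm{F}}$ as a quotient of $m_{\mathrm{F},\mathrm{H}}\,P_{\mathrm{F}}$. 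On the other hand $\mathrm{H}\,L_{\mathrm{F}}$ is projective by Corollary~\ref{cor56}\eqref{cor56.2}, and it is nonzero since $\mathrm{H}^*=\mathrm{H}\leq_L\mathrm{F}$ (Lemma~\ref{lem11}); hence the surjection $m_{\mathrm{F},\mathrm{H}}\,P_{\mathrm{F}}\tto\mathrm{H}\,L_{\mathrm{F}}$ splits, and by the Krull--Schmidt property $\mathrm{H}\,L_{\mathrm{F}}\cong k\,P_{\mathrm{F}}$ for some integer $k\geq 1$.

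It then remains to count dimensions. By Proposition~\ref{prop51}\eqref{prop51.5},
\begin{displaymath}
m_{\mathrm{H},\mathrm{H}}=\dim\mathrm{Hom}_{\mathbf{C}}(P_{\mathrm{F}},\mathrm{H}\,L_{\mathrm{F}})
=\dim\mathrm{Hom}_{\mathbf{C}}(P_{\mathrm{F}},k\,P_{\mathrm{F}})
=k\,\dim\mathrm{End}_{\mathbf{C}}(P_{\mathrm{F}}).
\end{displaymath}
Since $\mathrm{G}_{\mathcal{R}}$ is self-adjoint (Proposition~\ref{prop24}\eqref{prop24.1}) and $\mathrm{F}\sim_R\mathrm{G}_{\mathcal{R}}$, Proposition~\ref{prop51}\eqref{prop51.3} gives $m_{\mathrm{F},\mathrm{F}}=\dim\mathrm{End}_{\mathbf{C}}(\mathrm{F}\,L_{\mathrm{G}_{\mathcal{R}}})=\dim\mathrm{End}_{\mathbf{C}}(P_{\mathrm{F}})$. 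Therefore $m_{\mathrm{H},\mathrm{H}}=k\,m_{\mathrm{F},\mathrm{F}}$, which is exactly the assertion $m_{\mathrm{F},\mathrm{F}}\mid m_{\mathrm{H},\mathrm{H}}$.

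I expect the one genuinely substantive step to be the identification $\mathrm{H}\,L_{\mathrm{F}}\cong k\,P_{\mathrm{F}}$: it leans essentially on the projectivity statement of Corollary~\ref{cor56} together with the collapse $\mathrm{H}\circ\mathrm{F}\cong m_{\mathrm{F},\mathrm{H}}\mathrm{F}$ coming from strong regularity, and one should keep in mind that all the cell- and adjunction-theoretic inputs (Lemma~\ref{lem11}, Proposition~\ref{prop24}, Proposition~\ref{prop51}, Corollary~\ref{cor56}) are being invoked for $\cC=\cC_{\mathcal{Q}}$, which is legitimate by Propositions~\ref{prop62} and~\ref{prop32}; the remainder is bookkeeping.
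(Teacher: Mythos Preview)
Your proof is correct and follows essentially the same strategy as the paper: first establish $\mathrm{H}\,L_{\mathrm{F}}\cong k\,P_{\mathrm{F}}$ for some $k\in\mathbb{N}$, then deduce $m_{\mathrm{H},\mathrm{H}}=k\,m_{\mathrm{F},\mathrm{F}}$. The execution differs only in details: the paper obtains the isomorphism $\mathrm{H}\,L_{\mathrm{F}}\cong k\,P_{\mathrm{F}}$ by observing (via Lemma~\ref{lem11}) that $\mathrm{H}$ annihilates every simple in $\mathbf{C}_{\mathcal{R}}$ except $L_{\mathrm{F}}$, whereas you realise $\mathrm{H}\,L_{\mathrm{F}}$ as a projective quotient of $\mathrm{H}\,P_{\mathrm{F}}\cong m_{\mathrm{F},\mathrm{H}}\,P_{\mathrm{F}}$; and the paper extracts the numerical relation by computing $(\mathrm{F}^*\circ\mathrm{H})\,L_{\mathrm{F}}$ in two ways, whereas you read it off directly from Proposition~\ref{prop51}\eqref{prop51.3} and \eqref{prop51.5}. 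Your counting step is arguably cleaner.
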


\begin{proof}
Let $\mathcal{R}$ be the right cell of $\mathrm{F}$.
By Lemma~\ref{lem11}, $\mathrm{H}$ annihilates all simples 
of $\mathbf{C}_{\mathcal{R}}$ but $L_{\mathrm{F}}$. This
and Corollary~\ref{cor56}\eqref{cor56.2} imply that
$\mathrm{H}\, L_{\mathrm{F}}=k P_{\mathrm{F}}$ for some
$k\in\mathbb{N}$. On the one hand, using Propositions~\ref{lem7}
and \ref{prop51} we have
\begin{equation}\label{eq51}
(\mathrm{F}^*\circ\mathrm{H})\, L_{\mathrm{F}}=
k\mathrm{F}^* P_{\mathrm{F}}= 
k(\mathrm{F}^*\circ \mathrm{F}) L_{\mathrm{G}_{\mathcal{R}}}=
km_{\mathrm{F},\mathrm{F}}\mathrm{G}_{\mathcal{R}}\,
L_{\mathrm{G}_{\mathcal{R}}}=
km_{\mathrm{F},\mathrm{F}}\, P_{\mathrm{G}_{\mathcal{R}}}.
\end{equation}
On the other hand, we have $\mathrm{F}^*\sim_R \mathrm{H}$
and thus, using Proposition~\ref{prop51}\eqref{prop51.4}
and Corollary~\ref{cor56}\eqref{cor56.1}, we have:
\begin{equation}\label{eq52}
(\mathrm{F}^*\circ\mathrm{H})\, L_{\mathrm{F}}=
m_{\mathrm{H},\mathrm{H}}\mathrm{F}^*\, L_{\mathrm{F}}=
m_{\mathrm{H},\mathrm{H}}P_{\mathrm{G}_{\mathcal{R}}}.
\end{equation}
The claim follows comparing \eqref{eq51} and \eqref{eq52}.
\end{proof}

\section{Cyclic and simple $2$-representations of fiat categories}\label{s4}

\subsection{Cyclic $2$-representations}\label{s4.1}

Let $\cC$ be a fiat category, $\mathbf{M}$ a $2$-representation
of $\cC$, $\mathtt{i}\in\cC$ and $M\in \mathbf{M}(\mathtt{i})$.
We will say that $M$ {\em generates} $\mathbf{M}$ if for 
any $\mathtt{j}\in\cC$ and $X,Y\in \mathbf{M}(\mathtt{j})$ there are 
$\mathrm{F},\mathrm{G}\in\hat{\cC}(\mathtt{i},\mathtt{j})$ 
such that $\mathrm{F}\, M\cong X$, $\mathrm{G}\, M\cong Y$ and the
evaluation map 
$\mathrm{Hom}_{\hat{\ccC}(\mathtt{i},\mathtt{j})}(\mathrm{F},\mathrm{G})
\rightarrow \mathrm{Hom}_{\mathbf{M}(\mathtt{j})}
(\mathrm{F}\, M,\mathrm{G}\, M)$ is surjective.
The $2$-representation $\mathbf{M}$ is called {\em cyclic}
provided that there exists $\mathtt{i}\in\cC$ and 
$M\in \mathbf{M}(\mathtt{i})$ such that $M$ generates $\mathbf{M}$.
Examples of cyclic $2$-representations of $\cC$ are given by the
following:

\begin{proposition}\label{prop41}
\begin{enumerate}[$($a$)$]
\item\label{prop41.1} 

For any $\mathtt{i}\in\cC$ the $2$-representation
$\mathbf{P}_{\mathtt{i}}$ is cyclic and generated by 
$P_{\mathbbm{1}_{\mathtt{i}}}$.
\item\label{prop41.2} For any right cell $\mathcal{R}$ of $\mathcal{C}$
the cell $2$-representation $\mathbf{C}_{\mathcal{R}}$ is cyclic and 
generated by $L_{\mathrm{G}_{\mathcal{R}}}$.
\end{enumerate} 
\end{proposition}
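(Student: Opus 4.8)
The plan is to verify the generation condition from Subsection~\ref{s4.1} directly in each of the two cases, using the description of projectives and the surjectivity of $2$-morphisms onto hom-spaces between projectives that was established in Subsections~\ref{s2.3} and \ref{s3.4}.

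For part~\eqref{prop41.1}, fix $\mathtt{i}\in\cC$ and take $M=P_{\mathbbm{1}_{\mathtt{i}}}\in\mathbf{P}_{\mathtt{i}}(\mathtt{i})$. First I would reduce to the case where $X$ and $Y$ are projective: by Theorem~\ref{thm112} we view $\mathbf{P}_{\mathtt{i}}$ as a $2$-representation of $\hat{\cC}$, and every object of $\mathbf{P}_{\mathtt{i}}(\mathtt{j})=\overline{\cC}(\mathtt{i},\mathtt{j})$ is a cokernel of a map between projectives $P_{\mathrm{F}}$, hence is of the form $\mathrm{Coker}(\alpha)\,M$ for a suitable $1$-morphism $\mathrm{Coker}(\alpha)\in\hat{\cC}(\mathtt{i},\mathtt{j})$; this is exactly how $1$-morphisms of $\hat{\cC}$ act. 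Since $P_{\mathrm{F}}=\mathrm{F}\,P_{\mathbbm{1}_{\mathtt{i}}}$ for $\mathrm{F}\in\cC(\mathtt{i},\mathtt{j})$, every projective arises as $\mathrm{F}\,M$ with $\mathrm{F}$ a $1$-morphism of $\cC\subset\hat{\cC}$. Thus given arbitrary $X,Y\in\mathbf{P}_{\mathtt{i}}(\mathtt{j})$ we can write $X=\mathrm{F}\,M$, $Y=\mathrm{G}\,M$ with $\mathrm{F},\mathrm{G}\in\hat{\cC}(\mathtt{i},\mathtt{j})$. The surjectivity of the evaluation map $\mathrm{Hom}_{\hat{\cC}(\mathtt{i},\mathtt{j})}(\mathrm{F},\mathrm{G})\to\mathrm{Hom}_{\mathbf{P}_{\mathtt{i}}(\mathtt{j})}(\mathrm{F}\,M,\mathrm{G}\,M)$ then follows: any homomorphism between the cokernels lifts, via projectivity, to a commutative square between the defining projective presentations, and such commutative squares are precisely the $2$-morphisms of $\hat{\cC}$ by construction in Subsection~\ref{s2.5}; moreover the commutative squares between presentations of projectives $P_{\mathrm{F}}\to P_{\mathrm{F}'}$, $P_{\mathrm{G}}\to P_{\mathrm{G}'}$ are built from $2$-morphisms of $\cC$ by the bijectivity statement at the end of Subsection~\ref{s2.3}. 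So the evaluation map is surjective and $M$ generates $\mathbf{P}_{\mathtt{i}}$.

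For part~\eqref{prop41.2}, fix the right cell $\mathcal{R}$, choose $\mathtt{i}$ with $\mathrm{G}_{\mathcal{R}}\in\mathcal{C}_{\mathtt{i},\mathtt{i}}$, and take $M=L=L_{\mathrm{G}_{\mathcal{R}}}\in\mathbf{C}_{\mathcal{R}}(\mathtt{i})$. By definition $\mathbf{C}_{\mathcal{R}}(\mathtt{j})$ consists of modules admitting a two-step resolution by objects of $\mathrm{add}(Q_{\mathcal{R},\mathtt{j}})$, and $Q_{\mathcal{R},\mathtt{j}}=\bigoplus_{\mathrm{F}\in\mathcal{R}\cap\mathcal{C}_{\mathtt{i},\mathtt{j}}}\mathrm{F}\,L$ by Proposition~\ref{lem7}\eqref{lem7.02}; these $\mathrm{F}\,L$ are exactly the indecomposable projectives of $\mathbf{C}_{\mathcal{R}}(\mathtt{j})$ by Lemma~\ref{lem17}. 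So, as in part~(a), every object of $\mathbf{C}_{\mathcal{R}}(\mathtt{j})$ is a cokernel of a map between direct sums of modules $\mathrm{F}\,L$, hence has the form $\mathrm{Coker}(\mathbf{C}_{\mathcal{R}}(\beta))\,L$ for a suitable $2$-morphism $\beta$ of $\cC$ and thus equals $\mathrm{H}\,L$ for $\mathrm{H}=\mathrm{Coker}(\beta)\in\hat{\cC}(\mathtt{i},\mathtt{j})$. Writing $X=\mathrm{F}\,L$, $Y=\mathrm{G}\,L$ accordingly, surjectivity of the evaluation map again follows by lifting a homomorphism between cokernels to a commutative square of projective presentations and invoking that $2$-morphisms in $\cC$ surject onto homomorphisms between the $\mathrm{F}\,L$, which was noted just before Lemma~\ref{lem14}.

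The main obstacle is the bookkeeping in the lifting argument: one must be careful that the commutative squares (i.e.\ the $2$-morphisms of $\hat{\cC}$) realizing a given homomorphism of cokernels can be chosen so that the two vertical $1$-morphisms are precisely the $\mathrm{F},\mathrm{G}$ (resp.\ cokernels thereof) that present $X$ and $Y$, rather than some other presentation. This is handled by the fact that any two projective presentations of the same module differ by a (split) homotopy, so one can always conjugate a lifting between one pair of presentations into a lifting between the desired pair without changing its image under evaluation; together with the bijectivity of the evaluation map on projectives (end of Subsection~\ref{s2.3}) and the corresponding surjectivity for the $\mathrm{F}\,L$ (Subsection~\ref{s3.4}), this closes the argument.
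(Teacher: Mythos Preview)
Your argument for part~\eqref{prop41.1} is correct and matches the paper's: the bijectivity of the evaluation map on projectives in $\mathbf{P}_{\mathtt{i}}(\mathtt{j})$ (end of Subsection~\ref{s2.3}) ensures that the lifted commutative square of maps between projectives \emph{is} a commutative square of $2$-morphisms in $\cC$, and hence a $2$-morphism in $\hat{\cC}$.

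Your argument for part~\eqref{prop41.2} has a genuine gap at exactly this step. You lift $f:X\to Y$ to a commutative square of projective presentations in $\mathbf{C}_{\mathcal{R}}(\mathtt{j})$ and then invoke that $2$-morphisms in $\cC$ surject onto homomorphisms between the various $\mathrm{F}\,L$. But surjectivity is all you have here, not bijectivity: for $\mathrm{F},\mathrm{G}\in\mathcal{R}$ the kernel of $\mathrm{Hom}_{\ccC}(\mathrm{F},\mathrm{G})\to\mathrm{Hom}(\mathrm{F}\,L,\mathrm{G}\,L)$ is typically nonzero (already for $\cS_2$ one has $\dim\mathrm{End}_{\ccC}(\mathrm{F})=4$ while $\dim\mathrm{End}(\mathrm{F}\,L)=2$). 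So lifting the four edges of your square separately to $2$-morphisms $\hat{\beta}_0,\hat{\beta}_1,\alpha,\alpha'$ gives a diagram in $\cC$ that commutes only after evaluation at $L$; there is no reason that $\hat{\beta}_0\circ_1\alpha=\alpha'\circ_1\hat{\beta}_1$ in $\cC$, and without that identity you do not obtain a $2$-morphism $\mathrm{Coker}(\alpha)\to\mathrm{Coker}(\alpha')$ in $\hat{\cC}$. Your final paragraph about homotopies between presentations addresses a different (and harmless) ambiguity and does not touch this point.

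The paper fixes precisely this by refusing to lift inside $\mathbf{C}_{\mathcal{R}}$. It first takes projective presentations of $X,Y$ in the ambient $\mathbf{P}_{\mathtt{i}}(\mathtt{j})$, where the lift of $f$ is a square of maps between genuine projectives $P_{\mathrm{F}}$ and hence, by bijectivity, an honestly commuting square of $2$-morphisms in $\cC$. Only afterwards does it massage this square (replacing first terms by traces, then passing to suitable quotients using Lemma~\ref{lem16} and the kernels $\mathrm{Ker}_{\mathrm{F}}$) so that the objects become projectives of $\mathbf{C}_{\mathcal{R}}(\mathtt{j})$ and the rows become exact there; the maps throughout are induced by the original $2$-morphisms, so commutativity in $\cC$ is preserved. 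This detour through $\mathbf{P}_{\mathtt{i}}$ is not cosmetic: it is what supplies the commuting square in $\cC$ that your direct approach cannot produce.
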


\begin{proof}
Let $\mathtt{j}\in\cC$, $X,Y\in \mathbf{P}_{\mathtt{i}}(\mathtt{j})$ and
$f:X\to Y$. Taking some projective presentations of $X$ and $Y$ 
yields the following commutative diagram with exact rows:
\begin{equation}\label{eq42}
\xymatrix{
X_1\ar[rr]^{h}\ar[d]_{f''}&&X_0\ar@{->>}[rr]\ar[d]^{f'}&&X\ar[d]^f\\
Y_1\ar[rr]^{g}&&Y_0\ar@{->>}[rr]&&Y\\
}
\end{equation}
Now $X_1,X_0,Y_1,Y_0$ are projective in 
$\mathbf{P}_{\mathtt{i}}(\mathtt{j})$ and we may assume that 
$X_1=\mathrm{F}_1\,P_{\mathbbm{1}_{\mathtt{i}}}$, 
$X_0=\mathrm{F}_0\,P_{\mathbbm{1}_{\mathtt{i}}}$, 
$Y_1=\mathrm{G}_1\,P_{\mathbbm{1}_{\mathtt{i}}}$ and 
$Y_0=\mathrm{G}_0\,P_{\mathbbm{1}_{\mathtt{i}}}$ for some
$\mathrm{F}_1,\mathrm{F}_0,\mathrm{G}_1,\mathrm{G}_0\in 
\cC(\mathtt{i},\mathtt{j})$. From the definition of 
$\mathbf{P}_{\mathtt{i}}$ we then obtain that $g$, $h$, $f'$ and
$f''$ are given by $2$-morphisms between the corresponding
$1$-morphisms (which we denote by the same symbols). 

It follows that $X$ equals to the image of
$P_{\mathbbm{1}_{\mathtt{i}}}$ under 
$\mathrm{H}_1:=\mathrm{Coker}
(\mathrm{F}_1\overset{h}{\rightarrow}\mathrm{F}_0)\in
\hat{\cC}(\mathtt{i},\mathtt{j})$. Similarly, $Y$ equals to the 
image of  $P_{\mathbbm{1}_{\mathtt{i}}}$ under 
$\mathrm{H}_2:=\mathrm{Coker}(
\mathrm{G}_1\overset{g}{\rightarrow}\mathrm{G}_0)\in
\hat{\cC}(\mathtt{i},\mathtt{j})$. Finally,
$f$ is induced by the diagram
\begin{displaymath}
\xymatrix{
\mathrm{F}_1\ar[rr]^{h}\ar[d]_{f''}&&\mathrm{F}_0\ar[d]^{f'}\\
\mathrm{G}_1\ar[rr]^{g}&&\mathrm{G}_0.\\
}
\end{displaymath}
Claim~\eqref{prop41.1} follows.

To prove claim~\eqref{prop41.2} we view every
$\mathbf{C}_{\mathcal{R}}(\mathtt{j})$ as the corresponding full
subcategory of $\mathbf{P}_{\mathtt{i}}(\mathtt{j})$.
Let $X,Y\in \mathbf{C}_{\mathcal{R}}(\mathtt{j})$ 
and $f:X\to Y$. From the proof of claim~\eqref{prop41.1} we have 
the commutative  diagram \eqref{eq42} as described above.
Our proof of claim~\eqref{prop41.2} will proceed by
certain manipulations of this diagram. Denote by
$\mathcal{I}$ the ideal of $\mathcal{C}$ with respect
to $\leq_R$ generated by $\mathcal{R}$ and 
set $\mathcal{I}':=\mathcal{I}\setminus  \mathcal{R}$.

To start with, we modify the left column of \eqref{eq42}. 
Let $X'_1$ and $Y'_1$ denote the trace of all projective modules 
of the form  $P_{\mathrm{G}}$, $\mathrm{G}\not\in\mathcal{I}'$, 
in $X_1$ and $Y_1$, respectively. Consider some minimal projective 
covers $\hat{X}_1\tto X'_1$ and  $\hat{Y}_1\tto X'_1$  of $X'_1$ and 
$Y'_1$, respectively. Let $\mathrm{can}:\hat{X}_1\tto X'_1
\hookrightarrow  X_1$ and  $\mathrm{can}':\hat{Y}_1\tto Y'_1
\hookrightarrow  Y_1$ denote the corresponding canonical maps
and set $\hat{h}=h\circ \mathrm{can}$ and 
$\hat{g}=g\circ \mathrm{can}'$.
Then the cokernel of both $\mathrm{can}$ and $\mathrm{can}'$
has only composition factors of the form
$L_{\mathrm{F}}$, $\mathrm{F}\in\mathcal{I}'$.
By construction, the image of $f''\circ \mathrm{can}$ is contained
in the image of $\mathrm{can}'$. Hence, using projectivity of 
$\hat{X}_1$, the map $f''$ lifts to a
map $\hat{f''}:\hat{X}_1\to \hat{Y}_1$ such that the
following diagram commutes:
\begin{equation}\label{eq43}
\xymatrix{
\hat{X}_1\ar@/^2pc/[rrrr]_{\hat{h}}\ar[d]_{\hat{f''}}
\ar@{.>}[rr]_{\mathrm{can}}&&X_1\ar@{.>}[d]^{f''}
\ar@{.>}[rr]_{{h}}
&&X_0
\ar@{->>}[rr]\ar[d]^{f'}&&X\ar[d]^f\\
\hat{Y}_1\ar@/_2pc/[rrrr]^{\hat{g}}
\ar@{.>}[rr]^{\mathrm{can}'}&&Y_1
\ar@{.>}[rr]^{{g}}&&Y_0\ar@{->>}[rr]&&Y\\
}
\end{equation}
The difference between \eqref{eq42} and \eqref{eq43} is that the rows 
of the solid part of \eqref{eq43} are no longer exact but might have 
homology in the middle. By construction, all simple subquotients of 
these homologies have the form $L_{\mathrm{F}}$, 
$\mathrm{F}\in\mathcal{I}'$. Further, all projective direct summands
appearing in \eqref{eq43} have the form $P_{\mathrm{F}}$ for
$\mathrm{F}\not\in \mathcal{I}'$.

Denote by $\tilde{X}_1$, $\tilde{X}_0$, $\tilde{Y}_1$ and
$\tilde{Y}_0$ the submodules of $\hat{X}_1$, $X_0$, $\hat{Y}_1$ and 
$Y_0$, respectively, which are uniquely defined by the following
construction: The corresponding submodules
contain all direct summands of the form $P_{\mathrm{F}}$ for
$\mathrm{F}\not\in \mathcal{R}$; and for each direct summand
of the form $P_{\mathrm{F}}$, $\mathrm{F}\in \mathcal{R}$, the 
corresponding submodules contain the submodule
$\mathrm{Ker}_{\mathrm{F}}$ of $P_{\mathrm{F}}$ as defined in 
\eqref{eq777}.  By construction and Lemma~\ref{lem16}, 
we have $\hat{h}:\tilde{X}_1\to \tilde{X}_0$,
$\hat{g}:\tilde{Y}_1\to \tilde{Y}_0$, $f':\tilde{X}_0\to \tilde{Y}_0$
and $\hat{f}'':\tilde{X}_1\to \tilde{Y}_1$. Since
$X,Y\in \mathbf{C}_{\mathcal{R}}$, the images (on diagram \eqref{eq43})
of $\tilde{X}_0$ and $\tilde{Y}_0$ in $X$ and $Y$, respectively, 
are zero. Hence, taking quotients gives the following commutative diagram:
\begin{equation}\label{eq44}
\xymatrix{
\hat{X}_1/\tilde{X}_1\ar[rr]^{\tilde{h}}\ar[d]_{\tilde{f}''}&&
X_0/\tilde{X}_0\ar@{->>}[rr]\ar[d]^{\tilde{f}'}&&
X\ar[d]^{f}\\
\hat{Y}_1/\tilde{Y}_1\ar[rr]^{\tilde{g}}&&Y_0/\tilde{Y}_0\ar@{->>}[rr]&&
Y,\\
}
\end{equation}
where $\tilde{h}$, $\tilde{g}$, $\tilde{f}''$ and
$\tilde{f}'$ denote the corresponding induced maps.

By our construction of  \eqref{eq44} and definition 
of $\mathbf{C}_{\mathcal{R}}$, all indecomposable modules appearing 
in the left square of \eqref{eq44} are projective
in $\mathbf{C}_{\mathcal{R}}(\mathtt{j})$ (and hence,  by definition of 
$\mathbf{C}_{\mathcal{R}}$, have the form
$\mathrm{F}\,L_{\mathrm{G}_{\mathcal{R}}}$ for some
$\mathrm{F}\in \mathcal{R}$). Moreover, all simples of the 
form $L_{\mathrm{F}}$, $\mathrm{F}\in\mathcal{I}'$, become zero in 
$\mathbf{C}_{\mathcal{R}}(\mathtt{j})$ 
(since $\mathbf{C}_{\mathcal{R}}(\mathtt{j})$ is defined as
a Serre subquotient and simples  $L_{\mathrm{F}}$, 
$\mathrm{F}\in\mathcal{I}'$, belong to the kernel).
This implies that both rows of \eqref{eq44} are exact in
$\mathbf{C}_{\mathcal{R}}(\mathtt{j})$.
As mentioned in the proof of claim \eqref{prop41.1},
the maps $g,h,f'$ and $f''$ on diagram \eqref{eq42} are 
given by $2$-morphisms in $\cC$. Similarly, the maps $\hat{g}$, 
$\hat{h}$ and $\hat{f''}$ on diagram \eqref{eq43} are given by 
$2$-morphisms in $\cC$ as well. By construction of \eqref{eq44},
the maps $\tilde{h}$, $\tilde{g}$, $\tilde{f}''$ and
$\tilde{f}'$ are induced by $\hat{h}$, $\hat{g}$, $\hat{f''}$ and
$f'$, respectively. Now the proof of claim \eqref{prop41.2}
is completed similarly to the proof of claim \eqref{prop41.1}.
\end{proof}

\subsection{Simple $2$-representations}\label{s4.2}

A (nontrivial) $2$-representation $\mathbf{M}$ of $\cC$ is 
called {\em quasi-simple} provided that it is cyclic and generated 
by a simple module. From Proposition~\ref{prop41}\eqref{prop41.2} 
it follows that every cell  $2$-representation is quasi-simple. 
A (nontrivial) $2$-representation $\mathbf{M}$ of $\cC$ is 
called {\em strongly simple} provided that it is cyclic and 
generated by any simple module. It turns out that for strongly 
regular right cells strong simplicity of cell $2$-representations
behaves well with respect to restrictions.

\begin{proposition}\label{prop67}
Let $\mathcal{Q}$ be a strongly regular two-sided cell
and $\mathcal{R}$ a right cell in $\mathcal{Q}$. Then
the cell $2$-representation $\mathbf{C}_{\mathcal{R}}$
of $\cC$ is strongly simple if and only if its restriction
to $\cC_{\mathcal{Q}}$ is strongly simple.
\end{proposition}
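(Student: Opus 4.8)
The plan is to exploit the identification, established in Corollary~\ref{cor63}, between $\mathbf{C}_{\mathcal{R}}$ (as a $2$-representation of $\cC$) and $\mathbf{C}^{\mathcal{Q}}_{\mathcal{R}}$ (as a $2$-representation of $\cC_{\mathcal{Q}}$), obtained simply by restriction along the quotient $2$-functor $\cC\to\cC/\cI_{\mathcal{Q}}$. The simple objects of the two $2$-representations coincide under this identification (both have simples $L_{\mathrm{F}}$, $\mathrm{F}\in\mathcal{R}$), so the only thing that can differ between ``$\mathbf{C}_{\mathcal{R}}$ is generated by $L$'' and ``its restriction is generated by $L$'' is the class of $1$-morphisms available to act. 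The key observation is that by Lemma~\ref{lem61}, $\cI_{\mathcal{Q}}$ annihilates $\mathbf{C}_{\mathcal{R}}$, so a $1$-morphism $\mathrm{F}\in\cC(\mathtt{i},\mathtt{j})$ with $\mathrm{F}\not\leq_{LR}\mathcal{Q}$ acts as zero; and by Lemma~\ref{lem14} together with strong regularity, every $1$-morphism whose action on $\mathbf{C}_{\mathcal{R}}$ is nonzero lies in $\mathcal{Q}$ (after passing to indecomposable summands) or is an identity. Thus the images of $\hat{\cC}(\mathtt{i},\mathtt{j})$ and of $\widehat{\cC_{\mathcal{Q}}}(\mathtt{i},\mathtt{j})$ in the $2$-representation agree.

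First I would make precise the comparison of abelian envelopes acting on $\mathbf{C}_{\mathcal{R}}$. By Theorem~\ref{thm112}, each of $\mathbf{C}_{\mathcal{R}}$ and $\mathbf{C}^{\mathcal{Q}}_{\mathcal{R}}$ extends to a $2$-representation of the respective abelian envelope, and a $1$-morphism of $\hat{\cC}$ is a cokernel $\mathrm{Coker}(\alpha)$ for some $2$-morphism $\alpha$ in $\cC$; its action on $\mathbf{C}_{\mathcal{R}}$ is $\mathrm{Coker}(\mathbf{C}_{\mathcal{R}}(\alpha))$. Since $\cI_{\mathcal{Q}}$ acts as zero, $\mathbf{C}_{\mathcal{R}}(\alpha)$ factors through the image of $\alpha$ in $\cC/\cI_{\mathcal{Q}}$, and then, restricting attention to summands in $\mathcal{Q}$ by Lemma~\ref{lem14}, through a $2$-morphism in $\cC_{\mathcal{Q}}$. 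Consequently every $1$-morphism of $\hat{\cC}$ acts on $\mathbf{C}_{\mathcal{R}}$ the same way as some $1$-morphism of $\widehat{\cC_{\mathcal{Q}}}$ acts on $\mathbf{C}^{\mathcal{Q}}_{\mathcal{R}}$, and conversely any $1$-morphism of $\widehat{\cC_{\mathcal{Q}}}$ lifts to one of $\hat{\cC}$ via the quotient functor. The same remark applies at the level of $2$-morphisms, since $2$-morphisms in $\hat{\cC}$ between cokernels come from commutative squares of $2$-morphisms in $\cC$, which descend to $\cC/\cI_{\mathcal{Q}}$ and hence (on the relevant summands) to $\cC_{\mathcal{Q}}$.

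Granting this, the proof is a direct unwinding of the definition of ``generates'' from Subsection~\ref{s4.1}. Suppose $\mathbf{C}_{\mathcal{R}}$ is strongly simple, i.e.\ generated by every simple $L_{\mathrm{F}}$, $\mathrm{F}\in\mathcal{R}$. Fix such an $\mathrm{F}$ and objects $X,Y$ in $\mathbf{C}^{\mathcal{Q}}_{\mathcal{R}}(\mathtt{j})=\mathbf{C}_{\mathcal{R}}(\mathtt{j})$. By hypothesis there are $1$-morphisms $\mathrm{F}_1,\mathrm{F}_2\in\hat{\cC}$ with $\mathrm{F}_1\,L_{\mathrm{F}}\cong X$, $\mathrm{F}_2\,L_{\mathrm{F}}\cong Y$ and the evaluation map on $\mathrm{Hom}_{\hat{\ccC}}(\mathrm{F}_1,\mathrm{F}_2)$ surjective onto $\mathrm{Hom}(X,Y)$. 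Replacing $\mathrm{F}_1,\mathrm{F}_2$ by the $1$-morphisms of $\widehat{\cC_{\mathcal{Q}}}$ with the same action (which exist by the previous paragraph), and using that the evaluation map in $\cC_{\mathcal{Q}}$ has the same image (again by the previous paragraph, since every $2$-morphism in $\hat{\cC}$ acts the way a $2$-morphism in $\widehat{\cC_{\mathcal{Q}}}$ does), we conclude that $L_{\mathrm{F}}$ generates $\mathbf{C}^{\mathcal{Q}}_{\mathcal{R}}$. The converse is symmetric: any $1$-morphism and $2$-morphism of $\widehat{\cC_{\mathcal{Q}}}$ is the image of one of $\hat{\cC}$, so generation over $\cC_{\mathcal{Q}}$ gives generation over $\cC$.

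The main obstacle is the bookkeeping in the first two paragraphs: one must check carefully that passing from $\cC$ to $\cC_{\mathcal{Q}}$ (which both kills $1$-morphisms outside $\leq_{LR}\mathcal{Q}$ \emph{and} discards the $1$-morphisms outside $\mathcal{Q}$) does not lose any morphism that is visible in the action on $\mathbf{C}_{\mathcal{R}}$. The two ingredients that close this gap are (i) Lemma~\ref{lem61}, which says the discarded ideal $\cI_{\mathcal{Q}}$ already acts as zero, and (ii) Lemma~\ref{lem14} with strong regularity of $\mathcal{Q}$, which says that the action of any $1$-morphism on $\mathbf{C}_{\mathcal{R}}$ decomposes into summands coming from $\mathcal{Q}\cup\{\mathbbm{1}_{\mathtt{i}}\}$, so nothing is lost by restricting to $\cC_{\mathcal{Q}}$. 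Once these are in place, the equivalence of the two strong-simplicity statements is formal. I would also remark, for safety, that the objects $X,Y$ appearing in the generation condition range over the \emph{same} categories $\mathbf{C}_{\mathcal{R}}(\mathtt{j})=\mathbf{C}^{\mathcal{Q}}_{\mathcal{R}}(\mathtt{j})$ by Corollary~\ref{cor63}, so there is no discrepancy in the quantifier over pairs of objects either.
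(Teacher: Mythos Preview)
Your overall strategy---showing that the action of $\hat{\cC}$ on $\mathbf{C}_{\mathcal{R}}$ agrees with the action of $\widehat{\cC_{\mathcal{Q}}}$---is the right shape, and one direction (generation over $\cC_{\mathcal{Q}}$ implies generation over $\cC$) is indeed formal, since $1$- and $2$-morphisms of $\widehat{\cC_{\mathcal{Q}}}$ lift to $\hat{\cC}$. The problem is the other direction, and specifically your key claim that ``every $1$-morphism whose action on $\mathbf{C}_{\mathcal{R}}$ is nonzero lies in $\mathcal{Q}$ (after passing to indecomposable summands) or is an identity.'' This is false in general, and Lemma~\ref{lem14} does not say it. Lemma~\ref{lem14} only tells you that an arbitrary $\mathrm{F}$ sends projectives of $\mathbf{C}_{\mathcal{R}}$ to sums of projectives; it says nothing about $\mathrm{F}$ itself belonging to $\mathcal{Q}$. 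Concretely, take $\cC=\cS_{\mathfrak{sl}_4}$, let $\mathcal{Q}$ be the two-sided cell of shape $(2,2)$, and let $\mathcal{R}\subset\mathcal{Q}$ be a right cell containing $s_1s_3$. Then $\theta_{s_1}$ lies in the $(3,1)$-cell, which is strictly below $\mathcal{Q}$ and is not an identity, yet $\theta_{s_1}L_{s_1s_3}\neq 0$ since $s_1\leq_L s_1s_3$ (Lemma~\ref{lem11}). So there are $1$-morphisms of $\cC$ outside $\cC_{\mathcal{Q}}$ whose action on $\mathbf{C}_{\mathcal{R}}$ is genuinely nonzero, and your reduction breaks down at exactly this point.

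What the paper does instead is prove that any such $\mathrm{H}$ with $\mathrm{H}<_{LR}\mathcal{Q}$ acts on $\mathbf{C}_{\mathcal{R}}$ as a \emph{quotient} of some $1$-morphism whose indecomposable summands lie in $\mathcal{Q}$. This is the content of Lemma~\ref{lem65} (stated immediately after the proposition): one takes the trace in $\mathrm{H}$ of all $1$-morphisms from $\mathcal{Q}$, and shows via Lemma~\ref{lem65} that the remaining cokernel acts as zero on $\mathbf{C}_{\mathcal{R}}$ (because anything in its image would be annihilated by every element of $\mathcal{Q}$, contradicting the fact that each simple of $\mathbf{C}_{\mathcal{R}}$ is hit by some element of $\mathcal{Q}$). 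Thus $\mathbf{C}_{\mathcal{R}}(\mathrm{H})$ is already realisable inside $\widehat{\cC_{\mathcal{Q}}}$, which is the missing ingredient your argument needs. You could repair your proof by inserting this step in place of the incorrect appeal to Lemma~\ref{lem14}.
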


To prove this we will need the following general lemma:

\begin{lemma}\label{lem65}
Let $\mathcal{Q}$ be two-sided cell  and
$\mathbf{M}$ a $2$-representations of $\cC$.
Let $\mathrm{H}\in\hat{\cC}$ be such that for any
$\mathrm{F}\in\mathcal{C}$ the inequality
$\mathrm{Hom}_{\hat{\ccC}}(\mathrm{F},\mathrm{H})\neq 0$
implies $\mathrm{F}<_{LR}\mathcal{Q}$. Then 
for any $\mathrm{G}\in\mathcal{Q}$ the functor
$\mathbf{M}(\mathrm{G})$ annihilates the image of
$\mathbf{M}(\mathrm{H})$.
\end{lemma}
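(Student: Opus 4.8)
\textbf{Proof plan for Lemma~\ref{lem65}.}
The plan is to reduce the statement to the cell combinatorics on $\mathcal{C}$ via adjunction and the defining property of $\hat{\cC}$, the key point being that $\mathrm{H}$ is a cokernel of a $2$-morphism between $1$-morphisms lying strictly below $\mathcal{Q}$ in the $\leq_{LR}$ order. First I would use the fact that $\mathrm{H}\in\hat{\cC}(\mathtt{i},\mathtt{j})$ is by construction of the form $\mathrm{Coker}(\alpha)$ for some $\alpha:\mathrm{F}_1\to\mathrm{F}_0$ with $\mathrm{F}_0,\mathrm{F}_1\in\cC(\mathtt{i},\mathtt{j})$. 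The hypothesis $\mathrm{Hom}_{\hat{\ccC}}(\mathrm{F},\mathrm{H})\neq0\Rightarrow\mathrm{F}<_{LR}\mathcal{Q}$ applies in particular to each indecomposable direct summand $\mathrm{F}$ of $\mathrm{F}_0$: since $\mathrm{F}_0$ surjects onto $\mathrm{H}$ (as $\mathrm{H}$ is its cokernel quotient, abusing the identification of $\hat{\cC}(\mathtt{i},\mathtt{j})$ with $\overline{\cC}(\mathtt{i},\mathtt{j})$), we get $\mathrm{Hom}_{\hat{\ccC}}(\mathrm{F},\mathrm{H})\neq 0$, hence every indecomposable summand of $\mathrm{F}_0$ satisfies $\mathrm{F}<_{LR}\mathcal{Q}$.

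Next I would pass to the action on $\mathbf{M}$. By Theorem~\ref{thm112} the representation $\mathbf{M}$ extends to $\hat{\cC}$ with $\mathbf{M}(\mathrm{H})=\mathrm{Coker}(\mathbf{M}(\alpha))$, so $\mathbf{M}(\mathrm{F}_0)$ surjects onto $\mathbf{M}(\mathrm{H})$ as functors; hence for any object $N$ the module $\mathbf{M}(\mathrm{H})\,N$ is a quotient of $\mathbf{M}(\mathrm{F}_0)\,N$. Applying the exact functor $\mathbf{M}(\mathrm{G})$ for $\mathrm{G}\in\mathcal{Q}$, it suffices to show $\mathbf{M}(\mathrm{G})\,\mathbf{M}(\mathrm{F}_0)\,N=0$, i.e. $\mathbf{M}(\mathrm{G}\circ\mathrm{F}_0)\,N=0$. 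Now $\mathrm{G}\circ\mathrm{F}_0$ decomposes into a direct sum $\bigoplus_k \mathrm{G}\circ\mathrm{F}^{(k)}$ over the indecomposable summands $\mathrm{F}^{(k)}$ of $\mathrm{F}_0$. For each such summand, any indecomposable direct summand $\mathrm{K}$ of $\mathrm{G}\circ\mathrm{F}^{(k)}$ satisfies $\mathrm{F}^{(k)}\leq_R\mathrm{K}$, so in particular $\mathrm{F}^{(k)}\leq_{LR}\mathrm{K}$; but $\mathrm{K}\leq_{LR}\mathrm{G}$, so $\mathrm{K}\leq_{LR}\mathcal{Q}$, while $\mathrm{F}^{(k)}<_{LR}\mathcal{Q}$ forces $\mathrm{K}<_{LR}\mathcal{Q}$, hence $\mathrm{K}\notin\mathcal{Q}$ and in fact $\mathrm{G}\circ\mathrm{F}^{(k)}$ has no summand in $\mathcal{Q}$ or above. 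This is not yet zero, so the genuine input is still missing: I need that $\mathrm{G}\circ\mathrm{F}^{(k)}$ is actually \emph{zero as a $1$-morphism} — which it is not in $\cC$, but it acts as zero on $\mathbf{M}$.

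The resolution is to invoke the hypothesis about $\mathrm{H}$ more carefully, feeding it back through adjunction: for $\mathrm{G}\in\mathcal{Q}$ and an indecomposable summand $\mathrm{K}$ of $\mathrm{G}\circ\mathrm{F}^{(k)}$, adjunction gives that $\mathrm{F}^{(k)}$ is a summand of $\mathrm{G}^*\circ\mathrm{K}$, so $\mathrm{K}\geq_L\mathrm{F}^{(k)}$; combined with $\mathrm{K}\geq_R\mathrm{F}^{(k)}$ and the fact that $\mathrm{F}^{(k)}<_{LR}\mathcal{Q}$ while $\mathrm{G}\in\mathcal{Q}$ lies in a strictly higher two-sided cell than every summand of $\mathrm{G}^*\circ\mathrm{K}$ unless that summand is $\leq_{LR}\mathrm{K}$, one deduces that no such $\mathrm{K}$ can exist with $\mathrm{Hom}_{\hat{\ccC}}(\mathrm{K},\mathrm{H})=0$; applying the hypothesis, $\mathrm{Hom}_{\hat{\ccC}}(\mathrm{K},\mathrm{H})\neq0$ would force $\mathrm{K}<_{LR}\mathcal{Q}$, a contradiction with $\mathrm{K}\geq_{LR}\mathrm{G}$, $\mathrm{G}\in\mathcal{Q}$. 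Hence $\mathrm{G}\circ\mathrm{F}_0$, as a $1$-morphism, involves only indecomposables not $\leq_{LR}\mathcal{Q}$; since $\mathbf{M}(\mathrm{H})\,N$ is simultaneously a quotient of $\mathbf{M}(\mathrm{F}_0)\,N$ and annihilated by such morphisms (because $\mathrm{H}$ kills them, and $\mathbf{M}(\mathrm{H})$ factors through the quotient by them), we conclude $\mathbf{M}(\mathrm{G})\,\mathbf{M}(\mathrm{H})\,N=0$. The main obstacle I anticipate is precisely pinning down why a $1$-morphism $\mathrm{G}\circ\mathrm{F}_0$ with no summand $\leq_{LR}\mathcal{Q}$ must act as zero on the image of $\mathbf{M}(\mathrm{H})$: this should follow because the defining presentation $\mathrm{F}_1\to\mathrm{F}_0$ of $\mathrm{H}$, when composed with $\mathrm{G}$, witnesses that $\mathrm{G}\circ\mathrm{F}_0\to\mathrm{G}\circ\mathrm{H}$ is zero, forcing $\mathbf{M}(\mathrm{G})\,\mathbf{M}(\mathrm{H})$ to receive a zero surjection from $\mathbf{M}(\mathrm{G}\circ\mathrm{F}_0)$, but making this precise requires tracking the cokernel construction through the $2$-action carefully.
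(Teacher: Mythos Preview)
Your setup is right: write $\mathrm{H}=\mathrm{Coker}(\alpha)$ with $\alpha:\mathrm{F}_1\to\mathrm{F}_0$ in $\cC$, and try to conclude $\mathbf{M}(\mathrm{G})\circ\mathbf{M}(\mathrm{H})=0$. But the attempt to get there by analysing the indecomposable summands of $\mathrm{G}\circ\mathrm{F}_0$ cannot work, and several of the order relations you write down are in the wrong direction. If $\mathrm{K}$ is a summand of $\mathrm{G}\circ\mathrm{F}^{(k)}$ then $\mathrm{G}\leq_L\mathrm{K}$ and $\mathrm{F}^{(k)}\leq_R\mathrm{K}$, so $\mathrm{K}$ is \emph{above} both in the two-sided order, not below; in particular $\mathrm{G}\leq_{LR}\mathrm{K}$, contrary to your ``$\mathrm{K}\leq_{LR}\mathrm{G}$''. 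Your adjunction claim that $\mathrm{F}^{(k)}$ is a summand of $\mathrm{G}^*\circ\mathrm{K}$ is also unjustified: a split surjection $\mathrm{G}\circ\mathrm{F}^{(k)}\to\mathrm{K}$ only gives a nonzero map $\mathrm{F}^{(k)}\to\mathrm{G}^*\circ\mathrm{K}$. And $\mathrm{G}\circ\mathrm{F}_0$ is typically a genuinely nonzero $1$-morphism in $\cC$, so no amount of summand-chasing will make it vanish.

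The missing idea is to show directly that $\mathrm{G}\circ\mathrm{H}=0$ in $\hat{\cC}$, i.e.\ that $\mathrm{G}(\alpha):\mathrm{G}\circ\mathrm{F}_1\to\mathrm{G}\circ\mathrm{F}_0$ is surjective. This is what the paper does, in one line, by computing in an appropriate principal $2$-representation $\mathbf{P}_{\mathtt{i}}$. There $\mathrm{H}$ becomes the module $\mathrm{Coker}(\alpha_{P_{\mathbbm{1}_{\mathtt{i}}}})$, and the hypothesis $\mathrm{Hom}_{\hat{\ccC}}(\mathrm{F},\mathrm{H})\neq 0\Rightarrow\mathrm{F}<_{LR}\mathcal{Q}$ translates exactly to: every composition factor $L_{\mathrm{F}}$ of this module has $\mathrm{F}<_{LR}\mathcal{Q}$. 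Now Lemma~\ref{lem11} says $\mathrm{G}\,L_{\mathrm{F}}\neq 0$ iff $\mathrm{G}^*\leq_L\mathrm{F}$; since $\mathrm{G}^*\in\mathcal{Q}$ (Lemma~\ref{lem29}\eqref{lem29.2}) while $\mathrm{F}<_{LR}\mathcal{Q}$, this is impossible, so $\mathrm{G}$ annihilates every composition factor of $\mathrm{H}$ and, by exactness, $\mathrm{G}\,\mathrm{H}=0$ as a module. Hence $\mathrm{G}(\alpha)$ is surjective, $\mathrm{G}\circ\mathrm{H}=0$ in $\hat{\cC}$, and $\mathbf{M}(\mathrm{G})\circ\mathbf{M}(\mathrm{H})=\mathbf{M}(\mathrm{G}\circ\mathrm{H})=0$ for any $\mathbf{M}$.
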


\begin{proof}
Let $\mathrm{H}=\mathrm{Coker}(\alpha)$, where
$\alpha:\mathrm{H}'\to\mathrm{H}''$ is a $2$-morphism in $\cC$.
From Lemma~\ref{lem11}, applied to an appropriate
$\mathbf{P}_{\mathtt{i}}$, it follows that 
$\mathrm{G}(\alpha)$ is surjective. This implies that
$\mathrm{G}\circ\mathrm{H}=0$ and yields the claim.
\end{proof}

\begin{proof}[Proof of Proposition~\ref{prop67}.]
Let $\mathrm{H}\in\mathcal{C}$ be such that
$\mathrm{H}<_{LR}\mathcal{Q}$. Then there exists a
$1$-mor\-phism $\mathrm{F}$ in $\cC$ and a $2$-morphism
$\alpha:\mathrm{F}\to\mathrm{H}$ in $\cC$ such that
every indecomposable direct summand of 
$\mathbf{C}_{\mathcal{R}}(\mathrm{F})$ has the form
$\mathbf{C}_{\mathcal{R}}(\mathrm{G})$ for some
$\mathrm{G}\in\mathcal{Q}$ and the cokernel of
$\mathbf{C}_{\mathcal{R}}(\alpha)$ satisfies the
condition that for any $\mathrm{K}\in\mathcal{C}$
the existence of a nonzero homomorphism from
$\mathbf{C}_{\mathcal{R}}(\mathrm{K})$ to 
$\mathrm{Coker}(\mathbf{C}_{\mathcal{R}}(\alpha))$
implies $\mathrm{K}<_{LR}\mathcal{Q}$. By Lemma~\ref{lem65},
every $1$-morphism in $\mathcal{Q}$ annihilates the
image of $\mathrm{Coker}(\mathbf{C}_{\mathcal{R}}(\alpha))$.
Since every simple in $\mathbf{C}_{\mathcal{R}}$ is not
annihilated by some $1$-morphism in $\mathcal{Q}$, we
have that the image of 
$\mathrm{Coker}(\mathbf{C}_{\mathcal{R}}(\alpha))$
is zero and hence 
$\mathrm{Coker}(\mathbf{C}_{\mathcal{R}}(\alpha))$ is
the zero functor. This means that 
$\mathbf{C}_{\mathcal{R}}(\mathrm{H})$ is a quotient
of $\mathbf{C}_{\mathcal{R}}(\mathrm{F})$, which 
implies the claim.
\end{proof}

The following theorem is our main result (and a proper
formulation of Theorem~\ref{thmmain} from Section~\ref{s0}).

\begin{theorem}[Strong simplicity of cell $2$-representations]\label{thm61} 
Let $\mathcal{Q}$ be a strongly regular two-sided cell. Assume that 
\begin{equation}\label{eq62}
\text{the function}\qquad
\begin{array}{ccc}
\mathcal{Q}&\longrightarrow & \mathbb{N}_0\\
\mathrm{F}&\mapsto& m_{\mathrm{F},\mathrm{F}}
\end{array}\qquad
\text{is constant on left cells of $\mathcal{Q}$}.
\end{equation}
Then we have:
\begin{enumerate}[$($a$)$]
\item\label{thm61.2.1} For any right cell $\mathcal{R}$
in $\mathcal{Q}$ the cell $2$-representation
$\mathbf{C}_{\mathcal{R}}$ is strongly simple.
\item\label{thm61.2.2} If $\mathcal{R}$ and $\mathcal{R}'$ 
are two right cells in $\mathcal{Q}$, then the cell $2$-representations
$\mathbf{C}_{\mathcal{R}}$ and $\mathbf{C}_{\mathcal{R}'}$
are equivalent.
\end{enumerate}
\end{theorem}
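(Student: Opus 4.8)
The plan is to reduce everything to the case $\cC=\cC_{\mathcal{Q}}$, then prove part~\eqref{thm61.2.2} by exhibiting an explicit equivalence, and finally deduce part~\eqref{thm61.2.1} as a formal consequence of~\eqref{thm61.2.2}. For the reduction: by Corollary~\ref{cor63} the restriction of $\mathbf{C}_{\mathcal{R}}$ from $\cC$ to $\cC_{\mathcal{Q}}$ is the cell $2$-representation of $\cC_{\mathcal{Q}}$ attached to $\mathcal{R}$; by Proposition~\ref{prop67} strong simplicity over $\cC$ is equivalent to strong simplicity over $\cC_{\mathcal{Q}}$; and since the $\cC$-action on any $\mathbf{C}_{\mathcal{R}}$ factors through $\cC_{\mathcal{Q}}$ by Lemma~\ref{lem61}, an equivalence of $\cC_{\mathcal{Q}}$-representations is automatically one of $\cC$-representations. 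By Proposition~\ref{prop62} and its proof, $\mathcal{Q}$ remains a two-sided cell in $\cC_{\mathcal{Q}}$ with the same left and right cells, hence is still strongly regular, and the multiplicities $m_{\mathrm{F},\mathrm{F}}$ are unchanged by Proposition~\ref{prop51}\eqref{prop51.1}, so hypothesis~\eqref{eq62} is inherited. Thus I may and will assume $\cC=\cC_{\mathcal{Q}}$, so that all of Subsection~\ref{s5.3} applies.

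For part~\eqref{thm61.2.2}, let $\mathcal{R},\mathcal{R}'\subset\mathcal{Q}$ be right cells with distinguished elements $\mathrm{G}=\mathrm{G}_{\mathcal{R}}$, $\mathrm{G}'=\mathrm{G}_{\mathcal{R}'}$, which are self-adjoint by Proposition~\ref{prop24}\eqref{prop24.1}. By Proposition~\ref{prop32}\eqref{prop32.1} and strong regularity there is a unique $\mathrm{X}\in\mathcal{L}_{\mathrm{G}}\cap\mathcal{R}'$, and then $\mathrm{X}^*$ is the unique element of $\mathcal{L}_{\mathrm{G}'}\cap\mathcal{R}$. Using Proposition~\ref{prop51}\eqref{prop51.1} one computes $\mathrm{X}^*\circ\mathrm{X}\cong m_{\mathrm{X},\mathrm{X}}\,\mathrm{G}'$ and $\mathrm{X}\circ\mathrm{X}^*\cong m_{\mathrm{X}^*,\mathrm{X}^*}\,\mathrm{G}$; by Lemma~\ref{lem11} the functor $\mathrm{X}$ kills all simples of $\mathbf{C}_{\mathcal{R}'}$ except $L_{\mathrm{G}'}$, and $\mathrm{X}\,L_{\mathrm{X}^*}\cong P_{\mathrm{G}}$ in $\mathbf{C}_{\mathcal{R}}$ by Corollary~\ref{cor56}\eqref{cor56.1} (and symmetrically with $\mathrm{X}$ and $\mathrm{X}^*$ interchanged). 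After checking that $L_{\mathrm{X}}$ lies in the image of the functor $\Theta$ of Theorem~\ref{prop21} for $\mathbf{M}=\mathbf{C}_{\mathcal{R}'}$ --- a direct computation of $\mathrm{Coker}(\alpha)\,L_{\mathrm{X}}$ using Proposition~\ref{prop24}\eqref{prop24.3} together with the projectivity--injectivity statements of Corollaries~\ref{cor55} and~\ref{cor56} --- Theorem~\ref{prop21}\eqref{prop21.3} produces a morphism $\Phi\colon\mathbf{C}_{\mathcal{R}}\to\mathbf{C}_{\mathcal{R}'}$ with $\Phi(L_{\mathrm{G}})=L_{\mathrm{X}}$ and, symmetrically, $\Psi\colon\mathbf{C}_{\mathcal{R}'}\to\mathbf{C}_{\mathcal{R}}$ with $\Psi(L_{\mathrm{G}'})=L_{\mathrm{X}^*}$. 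To see $\Psi\circ\Phi\cong\mathrm{id}$, I would write $L_{\mathrm{X}}=\mathrm{Z}\,L_{\mathrm{G}'}$ for a suitable $\mathrm{Z}=\mathrm{Coker}(\mathrm{F}_0\to\mathrm{X})\in\hat{\cC}$, so that $\Psi\Phi(L_{\mathrm{G}})=\mathrm{Z}\,L_{\mathrm{X}^*}$ is a quotient of $\mathrm{X}\,L_{\mathrm{X}^*}\cong P_{\mathrm{G}}$, and then invoke hypothesis~\eqref{eq62} --- which gives $m_{\mathrm{X},\mathrm{X}}=m_{\mathrm{G},\mathrm{G}}$ and $m_{\mathrm{X}^*,\mathrm{X}^*}=m_{\mathrm{G}',\mathrm{G}'}$, together with Corollaries~\ref{cor53} and~\ref{cor57} --- to identify this quotient with $L_{\mathrm{G}}$ itself. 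The uniqueness clause of Theorem~\ref{prop21}\eqref{prop21.3} then forces $\Psi\circ\Phi\cong\mathrm{id}$, and symmetrically $\Phi\circ\Psi\cong\mathrm{id}$, so $\Phi$ is an equivalence of $2$-representations. \textbf{The hard part is exactly this last verification that $\Phi$ and $\Psi$ are mutually inverse} --- that $\Psi\Phi(L_{\mathrm{G}})$ is precisely $L_{\mathrm{G}}$ and not some larger quotient of $P_{\mathrm{G}}$ --- and it is here that the normalization~\eqref{eq62} is indispensable: without it the composites $\mathrm{X}^*\circ\mathrm{X}$ and $\mathrm{X}\circ\mathrm{X}^*$ need not act as the identity-like functors $\mathrm{G}'$, $\mathrm{G}$ with the required multiplicities.

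Finally, part~\eqref{thm61.2.1} follows from~\eqref{thm61.2.2}. Every simple of $\mathbf{C}_{\mathcal{R}}$ is $L_{\mathrm{F}}$ for some $\mathrm{F}\in\mathcal{R}$; set $\mathcal{R}'':=\mathcal{R}_{\mathrm{F}^*}$, which lies in $\mathcal{Q}$ by Lemma~\ref{lem29}\eqref{lem29.2}. From $\mathrm{F}^*\sim_R\mathrm{G}_{\mathcal{R}''}$ we get $\mathrm{F}\sim_L\mathrm{G}_{\mathcal{R}''}$, so $\mathrm{F}$ is the unique element of $\mathcal{L}_{\mathrm{G}_{\mathcal{R}''}}\cap\mathcal{R}$; hence the equivalence $\Phi\colon\mathbf{C}_{\mathcal{R}''}\to\mathbf{C}_{\mathcal{R}}$ of part~\eqref{thm61.2.2} sends the generating simple $L_{\mathrm{G}_{\mathcal{R}''}}$ of $\mathbf{C}_{\mathcal{R}''}$ (Proposition~\ref{prop41}\eqref{prop41.2}) to $L_{\mathrm{F}}$. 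Since being a generator of a $2$-representation is an equivalence-invariant property, $L_{\mathrm{F}}$ generates $\mathbf{C}_{\mathcal{R}}$; as $\mathrm{F}$ was an arbitrary element of $\mathcal{R}$, the $2$-representation $\mathbf{C}_{\mathcal{R}}$ is generated by every simple module, i.e.\ is strongly simple.
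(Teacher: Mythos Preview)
Your overall strategy reverses the paper's logical order: the paper proves \eqref{thm61.2.1} directly and then uses it repeatedly in the proof of \eqref{thm61.2.2}, whereas you aim to establish \eqref{thm61.2.2} first and deduce \eqref{thm61.2.1} as a corollary. Your deduction of \eqref{thm61.2.1} from \eqref{thm61.2.2} is elegant and correct: given $L_{\mathrm{F}}$ in $\mathbf{C}_{\mathcal{R}}$, the equivalence $\mathbf{C}_{\mathcal{R}_{\mathrm{F}^*}}\to\mathbf{C}_{\mathcal{R}}$ carries the generator $L_{\mathrm{G}_{\mathcal{R}_{\mathrm{F}^*}}}$ to $L_{\mathrm{F}}$, so $L_{\mathrm{F}}$ generates.

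The gap is in your proof of \eqref{thm61.2.2}. To invoke Theorem~\ref{prop21}\eqref{prop21.3} and obtain $\Phi$ with $\Phi(L_{\mathrm{G}})=L_{\mathrm{X}}$, you must know that $L_{\mathrm{X}}$ lies in the image of $\Theta=\mathrm{Coker}(\alpha)$, i.e.\ that $\Theta\,L_{\mathrm{X}}\cong L_{\mathrm{X}}$ exactly. Your references (Proposition~\ref{prop24}\eqref{prop24.3}, Corollaries~\ref{cor55}--\ref{cor56}) together with \eqref{eq62} do yield $\mathrm{G}\,L_{\mathrm{X}}\cong P_{\mathrm{X}}$, but they do not tell you that the image of $\alpha_{L_{\mathrm{X}}}$ is the full radical of $P_{\mathrm{X}}$. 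In the paper this is handled by first proving Lemma~\ref{lem64} (which gives only that $\Theta\,L_{\mathrm{X}}$ \emph{surjects} onto $L_{\mathrm{X}}$) and then invoking part~\eqref{thm61.2.1}: since every extension of $L_{\mathrm{X}}$ by another simple comes from a $2$-morphism in $\cC$, no such extension can survive in $\Theta\,L_{\mathrm{X}}$, forcing $\Theta\,L_{\mathrm{X}}\cong L_{\mathrm{X}}$. Thus the step you call ``a direct computation'' is precisely where the paper needs \eqref{thm61.2.1}, and your argument is circular as written. The same issue recurs when you try to show $\Psi\Phi(L_{\mathrm{G}})=L_{\mathrm{G}}$: controlling the cokernel of a $2$-morphism evaluated at a simple, beyond knowing it is nonzero, is exactly the surjectivity-of-$2$-morphisms statement that constitutes strong simplicity. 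A salvage is possible: prove the $k=1$ computation and the surjectivity argument from the paper's proof of \eqref{thm61.2.1} first (both are short and use only \eqref{eq62} and adjunction), and then your route through \eqref{thm61.2.2} goes through --- but at that point you have essentially reproduced the paper's order.
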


\begin{proof}
Let $\mathrm{F},\mathrm{G}\in\mathcal{R}$ and $\mathrm{H}\in\mathcal{Q}$
be such that $\mathrm{H}\in\mathcal{R}_{\mathrm{F}^*}\cap
\mathcal{L}_{\mathrm{G}}$. The module $\mathrm{H}\,L_{\mathrm{F}}$
is nonzero by Lemma~\ref{lem11} and
projective by Corollary~\ref{cor56}\eqref{cor56.2}. From 
Proposition~\ref{prop51} and Corollary~\ref{cor56}\eqref{cor56.2} 
it follows that $\mathrm{H}\,L_{\mathrm{F}}\cong kP_{\mathrm{G}}$ for some
$k\in\mathbb{N}$. Hence, by adjunction,
\begin{displaymath}
m_{\mathrm{H},\mathrm{H}}=
\dim \mathrm{End}_{\mathbf{C}_{\mathcal{R}}}(\mathrm{H}\,L_{\mathrm{F}})=
\dim \mathrm{End}_{\mathbf{C}_{\mathcal{R}}}(kP_{\mathrm{G}})=
k^2\dim \mathrm{End}_{\mathbf{C}_{\mathcal{R}}}(P_{\mathrm{G}})=
k^2m_{\mathrm{G},\mathrm{G}}. 
\end{displaymath}
On the other hand, $\mathrm{H}\sim_L\mathrm{G}$ and thus
$m_{\mathrm{H},\mathrm{H}}=m_{\mathrm{G},\mathrm{G}}$ by 
our assumption \eqref{eq62},
which implies $k=1$. This means that every 
$\mathrm{H}\in \mathcal{R}_{\mathrm{F}^*}$ maps $L_{\mathrm{F}}$ to 
an indecomposable projective module.

To prove \eqref{thm61.2.1} it is left to show that $2$-morphisms
in $\cC$ surject onto homomorphisms between indecomposable
projective modules. By adjunction, it is enough to show that for any
$\mathrm{H},\mathrm{J}\in \mathcal{R}_{\mathrm{F}^*}$ the space
of $2$-morphisms from $\mathrm{H}^*\circ\mathrm{J}$ to the
identity surjects onto homomorphisms from the projective module
$\mathrm{H}^*\circ\mathrm{J}\, L_{\mathrm{F}}$ to $L_{\mathrm{F}}$.
For the latter homomorphism space to be nonzero, 
the functor $\mathrm{H}^*\circ\mathrm{J}$ should decompose into a 
direct sum of copies of $\mathrm{K}\in \mathcal{R}_{\mathrm{F}^*}$ 
such that $\mathrm{K}\cong \mathrm{K}^*$ (see 
Proposition~\ref{prop51}\eqref{prop51.1}). By additivity, it is enough 
to show that there is a $2$-morphism from $\mathrm{K}$ to the identity 
such that its evaluation at $L_{\mathrm{F}}$ is nonzero. We have
$\mathrm{K}\, L_{\mathrm{F}}\neq 0$ by Lemma~\ref{lem11}, which implies 
that the evaluation  at $L_{\mathrm{F}}$ of the adjunction morphism 
from $\mathrm{K}\circ\mathrm{K}$ to the identity is nonzero. We have
$\mathrm{K}\circ\mathrm{K}\cong m_{\mathrm{K},\mathrm{K}}\mathrm{K}\neq 0$
by Proposition~\ref{prop51}\eqref{prop51.1}. By additivity, the
nonzero adjunction morphism restricts to a morphism from
one of the summands such that the evaluation at $L_{\mathrm{F}}$ 
remains nonzero. Claim \eqref{thm61.2.1} follows.

To prove \eqref{thm61.2.2}, consider the cell $2$-representations
$\mathbf{C}_{\mathcal{R}}$ and $\mathbf{C}_{\mathcal{R}'}$.
Without loss of generality we may assume that  $\mathcal{Q}$ is the 
unique maximal two-sided cell with respect to $\leq_{LR}$.
Let $\mathrm{G}:=\mathrm{G}_{\mathcal{R}}$ and denote by
$\mathrm{F}$ the unique element in 
$\mathcal{R}'\cap\mathcal{L}_{\mathrm{G}}$. Then
$\mathrm{G}\, L_{\mathrm{F}}\neq 0$ by Lemma~\ref{lem11}.
Moreover, from the proof of \eqref{thm61.2.1} we know that
$\mathrm{G}\, L_{\mathrm{F}}$ is an indecomposable projective 
module and hence has simple top.

Assume that $\mathtt{i}\in\cC$ is such that
$\mathrm{G}\in\cC(\mathtt{i},\mathtt{i})$.
Let $\mathrm{K}$ be a $1$-morphism in $\cC$ and
$\alpha:\mathrm{K}\to \mathrm{G}$ be a $2$-morphism
such that $\mathbf{P}_{\mathtt{i}}(\alpha)$ is a projective
presentation of $L_{\mathrm{G}}$. Denote by 
$\hat{\mathrm{G}}\in\hat{\cC}$
the cokernel of $\alpha$. 

\begin{lemma}\label{lem64}
The module $\hat{\mathrm{G}}\, L_{\mathrm{F}}$
surjects onto $L_{\mathrm{F}}$.
\end{lemma}

\begin{proof}
It is enough to prove that $\hat{\mathrm{G}}\, L_{\mathrm{F}}\neq 0$.
Since $\mathrm{G}\, L_{\mathrm{F}}$ has simple top,
it is enough to show that for any indecomposable 
$1$-morphism $\mathrm{M}$ and any $2$-morphism
$\beta:\mathrm{M}\to \mathrm{G}$ which is not an isomorphism,
the morphism $\beta_{L_{\mathrm{F}}}$ is not surjective.

The statement is obvious if $\mathrm{M}\, L_{\mathrm{F}}=0$. 
If $\mathrm{M}\, L_{\mathrm{F}}\neq 0$, we have $\mathrm{M}\leq_R
\mathrm{F}^*$ by Lemma~\ref{lem11}. Hence either 
$\mathrm{M}\sim_R\mathrm{G}$ or $\mathrm{M}<_R\mathrm{G}$.
If $\mathrm{M}=\mathrm{G}$, then $\beta$ is a radical endomorphism of
$\mathrm{G}$, hence nilpotent (as $\cC$ is a fiat category). This
means that $\beta_{L_{\mathrm{F}}}$ is nilpotent and thus is not 
surjective. If $\mathrm{M}\in\mathcal{R}\setminus\mathrm{G}$, then
$\mathrm{M}^*\not\in \mathcal{R}$ and hence
$\mathrm{M}^*L_{\mathrm{F}}=0$ by Lemma~\ref{lem11}.
By adjunction this implies that $L_{\mathrm{F}}$ does not occur
in the top of $\mathrm{M}\, L_{\mathrm{F}}$, which means that
$\beta_{L_{\mathrm{F}}}$ cannot be surjective. This implies the claim 
for all  $\mathrm{M}\in\mathcal{Q}$.

Consider now the remaining case $\mathrm{M}<_R\mathrm{G}$
and assume that $\beta_{L_{\mathrm{F}}}$ is surjective.
Let $\mathrm{M}''$ be a $1$-morphism and
$\gamma:\mathrm{M}''\to \mathrm{M}$ be a $2$-morphism such that
$\gamma$ gives the trace in $\mathrm{M}$ of all 
$1$-morphisms $\mathrm{J}$ satisfying $\mathrm{J}\not<_R\mathcal{R}$.
Denote by $\mathrm{M}'$ and $\mathrm{G}'$ the cokernels of 
$\gamma$ and $\beta\circ_1\gamma$, respectively. Then both $\mathrm{M}'$ and
$\mathrm{G}'$ are in $\hat{\cC}$. Let $\beta':\mathrm{M}'\to
\mathrm{G}'$ be the $2$-morphism induced by $\beta$. 
Any direct summand of $\mathrm{M}''$ which does not annihilate
$L_{\mathrm{F}}$ has the form $\hat{\mathrm{M}}$ for some
$\hat{\mathrm{M}}\in \mathcal{Q}$ because of our construction 
and maximality of $\mathcal{Q}$. Hence from the 
previous paragraph it follows that the map
$\beta'_{L_{\mathrm{F}}}$ is still surjective.
On the other hand, because of our construction of $\mathrm{M}''$,
an application of  Lemma~\ref{lem65} gives
$\mathrm{G}\circ \mathrm{M}'=0$ in $\hat{\cC}$.
At the same time, the nonzero module $\mathrm{G}'\, L_{\mathrm{F}}$
is a quotient of $\mathrm{G}\, L_{\mathrm{F}}$ and hence has
simple top $L_{\mathrm{F}}$. This implies 
$\mathrm{G}\circ \mathrm{G}'\, L_{\mathrm{F}}\neq 0$.
Therefore, applying $\mathrm{G}$ to the epimorphism
\begin{displaymath}
\beta'_{L_{\mathrm{F}}}:\mathrm{M}'\, L_{\mathrm{F}}\tto
\mathrm{G}'\, L_{\mathrm{F}}
\end{displaymath}
annihilates the left hand side and does not annihilate the right hand side.
This contradicts the right exactness of $\mathrm{G}$ and the claim follows.
\end{proof}

By \eqref{thm61.2.1}, any extension of $L_{\mathrm{F}}$ 
by any other simple in $\mathbf{C}_{\mathcal{R}'}$ comes from
some $2$-morphism in $\cC$. Hence this extension cannot
appear in $\hat{\mathrm{G}}\, L_{\mathrm{F}}$ by construction of
$\hat{\mathrm{G}}$. This and Lemma~\ref{lem64} imply 
$\hat{\mathrm{G}}\, L_{\mathrm{F}}\cong L_{\mathrm{F}}$.

Therefore, by Theorem~\ref{prop21},  there is a unique homomorphism 
$\Psi:\mathbf{C}_{\mathcal{R}}\to\mathbf{C}_{\mathcal{R}'}$ 
of $2$-representations, which maps $L_{\mathrm{G}}$
to $L_{\mathrm{F}}$. From claim \eqref{thm61.2.1} it follows
that $\Psi$ maps indecomposable projectives to indecomposable
projectives. Restrict $\Psi$ to $\cC_{\mathcal{Q}}$.
Then from the proof of \eqref{thm61.2.1} we have that 
for any $\mathrm{H}_1,\mathrm{H}_2\in \mathcal{R}$ we have
\begin{displaymath}
\dim \mathrm{Hom}_{\mathbf{C}_{\mathcal{R}}}
(P_{\mathrm{H}_1},P_{\mathrm{H}_2})=
\dim \mathrm{Hom}_{\mathbf{C}_{\mathcal{R}'}}
(\Psi\,P_{\mathrm{H}_1},\Psi\,P_{\mathrm{H}_2}).
\end{displaymath}
Moreover, both spaces are isomorphic to 
$\cC_{\mathcal{Q}}(\mathrm{H}_1,\mathrm{H}_2)$.
From \eqref{thm61.2.1} and construction of $\Psi$ it follows that 
$\Psi$ induces an isomorphism between 
$\mathrm{Hom}_{\mathbf{C}_{\mathcal{R}}}(P_{\mathrm{H}_1},P_{\mathrm{H}_2})$ 
and $\mathrm{Hom}_{\mathbf{C}_{\mathcal{R}'}}
(\Psi\,P_{\mathrm{H}_1},\Psi\,P_{\mathrm{H}_2})$. This means that
$\Psi$ induces an equivalence between the additive categories of
projective modules in $\mathbf{C}_{\mathcal{R}}$ and
$\mathbf{C}_{\mathcal{R}'}$. Since $\Psi$ is right exact, this implies 
that $\Psi$ is an equivalence of categories and completes the proof.
\end{proof}

\section{Examples}\label{s6}

\subsection{Projective functors on the regular block of the
category $\mathcal{O}$}\label{s6.1}

Let $\mathfrak{g}$ denote a semi-simple complex 
finite dimensional Lie algebra with a fixed 
triangular decomposition $\mathfrak{g}=
\mathfrak{n}_-\oplus\mathfrak{h}\oplus\mathfrak{n}_+$
and $\mathcal{O}_0$ the principal block of the
BGG-category $\mathcal{O}$ for $\mathfrak{g}$ (see \cite{Hu}). If $W$ denotes the Weyl group of 
$\mathfrak{g}$, then simple objects in $\mathcal{O}_0$
are simple highest weight modules $L(w)$, $w\in W$,
of highest weight $w\cdot 0\in\mathfrak{h}^*$.
Denote by $P(w)$ the indecomposable projective cover of
$L(w)$ and by $\Delta(w)$ the corresponding Verma
module.

Let $\cS=\cS_{\mathfrak{g}}$ denote the (strict) $2$-category defined 
as follows: it has one object $\mathtt{i}$ (which we identify with
$\mathcal{O}_0$); its $1$-morphisms are projective functors on 
$\mathcal{O}_0$, that is functors isomorphic to direct 
summands of tensoring with finite dimensional
$\mathfrak{g}$-modules (see \cite{BG}); and its 
$2$-morphisms are natural transformations of functors. 
For $w\in W$ denote by $\theta_w$ the unique
(up to isomorphism) indecomposable projective
functor on $\mathcal{O}_0$ sending $P(e)$
to $P(w)$. Then $\{\theta_w:w\in W\}$ is a complete
and irredundant list of representatives of isomorphism
classes of indecomposable projective functors.
Since $\mathcal{O}_0$ is equivalent to the category of 
modules over a finite-dimensional associative algebra, 
all spaces of $2$-morphisms in $\cS$ are finite dimensional. 
From \cite{BG} we also have that $\cS$ is 
stable under taking adjoint functors.
It follows that $\cS$ is a fiat category.
The split Grothendieck ring $[\cS]_{\oplus}$ of $\cS$ 
is isomorphic to the integral group ring $\mathbb{Z}W$ 
such that the basis $\{[\theta_w]:w\in W\}$ of $[\cS]_{\oplus}$
corresponds to the Kazhdan-Lusztig basis of $\mathbb{Z}W$.
We refer the reader to \cite{Ma} for an overview and 
more details on this category.

Left and right cells of $\cS$ are given by the
Kazhdan-Lusztig combinatorics for $W$ (see \cite{KaLu}) 
and correspond to Kazhdan-Lusztig left and right cells in $W$, 
respectively. Namely, for $x,y\in W$ the functors
$\theta_x$ and $\theta_y$ belong to the same 
left (right or two-sided) cell as defined in
Subsection~\ref{s3.1} if and only if $x$ and $y$
belong to the same Kazhdan-Lusztig left (right or two-sided)
cell, respectively. This is an immediate consequence
of the multiplication formula for elements of the
Kazhdan-Lusztig basis (see \cite{KaLu}). In particular,
from \cite{Lu} it follows that all cells for
$\cS$ are regular. If $\mathfrak{g}\cong\mathfrak{sl}_n$,
then $W$ is isomorphic to the symmetric group $S_n$.
Robinson-Schensted correspondence associates to every
$w\in S_n$ a pair $(\alpha(w),\beta(w))$ of standard Young
tableaux of the same shape (see \cite[Section~3.1]{Sa}).
Elements $x,y\in S_n$ belong to the same Kazhdan-Lusztig
right or left cell if and only if $\alpha(x)=\alpha(y)$
and $\beta(x)=\beta(y)$, respectively (see \cite{KaLu}).
It follows that in the case $\mathfrak{g}\cong\mathfrak{sl}_n$
all cells for $\cS$ are strongly regular.

The $2$-category $\cS$ comes along with the {\em defining
$2$-representation}, that is the natural action of $\cS$
on $\mathcal{O}_0$. Various $2$-representations of
$\cS$ were constructed, as subquotients of the defining
representation, in \cite{KMS} and \cite{MS2} (see 
also \cite{Ma} for a more detailed overview).
In particular,  in \cite{MS2} for every Kazhdan-Lusztig right cell 
$\mathcal{R}$ there is a construction of 
the corresponding {\em cell module}. The later is
obtained by restricting the action of $\cS$ to the
full subcategory of $\mathcal{O}_0$ consisting of
all modules $M$ admitting a presentation
$X_1\to X_0\tto M$, where every indecomposable direct
summand of both $X_0$ and $X_1$ is isomorphic to 
$\theta_w L(d)$, where $w\in \mathcal{R}$ and
$d$ is the Duflo involution in $\mathcal{R}$.
Similarly to the proof of Theorem~\ref{thm61}
one shows that this cell module is equivalent to
the cell $2$-representation $\mathbf{C}_{\mathcal{R}}$
of $\cS$.

Let $\mathcal{Q}$ be a strongly regular two-sided cell for $\cS$. 
In this case from \cite[Theorem~5.3]{Ne} it follows 
that the condition \eqref{eq62} is satisfied for
$\mathcal{Q}$. Hence from Theorem~\ref{thm61} we obtain 
that cell $2$-representations of $\cS$ for right cells 
inside a given two-sided cell are equivalent. This reproves,
strengthens and extends the similar result
\cite[Theorem~18]{MS2}, originally proved in the case 
$\mathfrak{g}\cong\mathfrak{sl}_n$.

\subsection{Projective functors between singular blocks of $\mathcal{O}$}\label{s6.2}

The $2$-category $\cS_{\mathfrak{g}}$ from the
previous subsection admits the 
following natural generalization. For every parabolic
subalgebra $\mathfrak{p}$ of $\mathfrak{g}$
containing the Borel subalgebra 
$\mathfrak{b}=\mathfrak{h}\oplus \mathfrak{n}_+$
let $W_{\mathfrak{p}}\subset W$ be the corresponding
parabolic subgroup. Fix some dominant and integral
weight $\lambda_{\mathfrak{p}}$ such that 
$W_{\mathfrak{p}}$ coincides with the stabilizer of 
$\lambda_{\mathfrak{p}}$ with respect to the dot action
(to show the connection with the previous subsection
we take $\lambda_{\mathfrak{b}}=0$). Let
$\mathcal{O}_{\lambda_{\mathfrak{p}}}$ denote 
the corresponding block of the category $\mathcal{O}$.

Consider the $2$-category 
$\cS^{\mathrm{sing}}=\cS_{\mathfrak{g}}^{\mathrm{sing}}$
defined as follows: its objects are the categories 
$\mathcal{O}_{\lambda_{\mathfrak{p}}}$, where 
$\mathfrak{p}$ runs through the (finite!) set of
parabolic subalgebras of $\mathfrak{g}$
containing $\mathfrak{b}$,
its $1$-morphisms are all projective functors
between these blocks, its $2$-morphisms are all
natural transformations of functors. Similarly to
the previous subsection, the $2$-category 
$\cS^{\mathrm{sing}}$ is a fiat-category.
The category $\cS$ from the previous subsection is
just the full subcategory of $\cS^{\mathrm{sing}}$
with the object $\mathcal{O}_0$.
A deformed version of $\cS^{\mathrm{sing}}$ (which has 
infinite-dimensional spaces of $2$-morphisms and
hence is not fiat) was considered in \cite{Wi}.

Let us describe in more detail the structure 
of $\cS^{\mathrm{sing}}$ in the smallest nontrivial
case of $\mathfrak{g}=\mathfrak{sl}_2$.
In this case we have two parabolic subalgebras,
namely $\mathfrak{b}$ and $\mathfrak{g}$. Using
the usual identification of $\mathfrak{h}$ with
$\mathbb{C}$ we set $\lambda_{\mathfrak{b}}=0$
and $\lambda_{\mathfrak{g}}=-1$. The objects of
$\cS^{\mathrm{sing}}$ are thus $\mathtt{i}=\mathcal{O}_0$ 
and $\mathtt{j}=\mathcal{O}_{-1}$.

The category $\cS^{\mathrm{sing}}(\mathtt{j},\mathtt{j})$ contains a 
unique (up to isomorphism) indecomposable object, namely 
$\mathbbm{1}_{\mathtt{j}}$, the identity functor on $\mathtt{j}$. 
The category $\cS^{\mathrm{sing}}(\mathtt{i},\mathtt{j})$ contains a 
unique (up to isomorphism) indecomposable object, namely the functor 
$\theta^{\mathrm{on}}$ of translation onto the wall. 
The category $\cS^{\mathrm{sing}}(\mathtt{j},\mathtt{i})$
contains a unique (up to isomorphism) indecomposable object, namely the
functor $\theta^{\mathrm{out}}$ of translation out of the wall. 
The category $\cS^{\mathrm{sing}}(\mathtt{i},\mathtt{i})$
contains exactly two (up to isomorphism) non-isomorphic indecomposable 
objects, namely the identity functor $\mathbbm{1}_{\mathtt{i}}$ and the 
functor $\theta:=\theta^{\mathrm{out}}\circ\theta^{\mathrm{on}}$ of 
translation through the wall. 

It is easy to see that there are
exactly two two-sided cells: one containing only the
functor $\mathbbm{1}_{\mathtt{i}}$, and the other one
containing all other functors. The right cells of the
latter two-sided cell are $\{\mathbbm{1}_{\mathtt{j}},
\theta^{\mathrm{out}}\}$ and $\{\theta,
\theta^{\mathrm{on}}\}$. The left cells of the
latter two-sided cell are $\{\mathbbm{1}_{\mathtt{j}},
\theta^{\mathrm{on}}\}$ and $\{\theta,
\theta^{\mathrm{out}}\}$. All cells are strongly
regular. The values of the function
$m_{\mathrm{F},\mathrm{F}}$ from \eqref{eq62}
are given by:
\begin{displaymath}
\begin{array}{c||c|c|c|c|c}
\mathrm{F}&\mathbbm{1}_{\mathtt{i}}&
\mathbbm{1}_{\mathtt{j}}&\theta^{\mathrm{on}}&
\theta^{\mathrm{out}}&\theta\\ \hline
m_{\mathrm{F},\mathrm{F}}&1&1&1&2&2
\end{array}
\end{displaymath}
In particular, the condition \eqref{eq62} is satisfied.

The cell $2$-representation corresponding to the right 
cell $\{\mathbbm{1}_{\mathtt{i}}\}$ is given by the
following picture (with the obvious action of the
identity $1$-morphisms):
\begin{displaymath}
\xymatrix{ 
\mathbb{C}\text{-}\mathrm{mod}
\ar@(ul,ur)[]^{\theta=0}
\ar@/^1pc/[rrrr]^{\theta^{\mathrm{on}}=0}
&&&& 0
\ar@/^1pc/[llll]^{\theta^{\mathrm{out}}=0}.
}
\end{displaymath}

By Theorem~\ref{thm61}, the cell $2$-representations
for the right cells $\{\mathbbm{1}_{\mathtt{j}},
\theta^{\mathrm{out}}\}$ and $\{\theta,
\theta^{\mathrm{on}}\}$ are equivalent and strongly
simple. Consider the algebra $D:=\mathbb{C}(x)/(x^2)$
of dual numbers with the fixed subalgebra $\mathbb{C}$ consisting of
scalars. The cell $2$-representation for the right cell
$\{\mathbbm{1}_{\mathtt{j}}, \theta^{\mathrm{out}}\}$
is given (up to isomorphism of functors) by the following picture: 
\begin{displaymath}
\xymatrix{ 
D\text{-}\mathrm{mod}
\ar@(ul,ur)[]^{\theta=D\otimes_{\mathbb{}}{}_-}
\ar@/^1pc/[rrrr]^{\theta^{\mathrm{on}}=
\mathrm{Res}^D_{\mathbb{C}}}
&&&& \mathbb{C}\text{-}\mathrm{mod}
\ar@/^1pc/[llll]^{\theta^{\mathrm{out}}=
\mathrm{Ind}^D_{\mathbb{C}}}.
}
\end{displaymath}

\subsection{Projective functors 
for finite-dimensional algebras}\label{s6.3}

The last example admits a straightforward 
abstract generalization outside category $\mathcal{O}$.
Let $A=A_{\mathtt{1}}\oplus A_{\mathtt{2}}\oplus 
\cdots\oplus A_{\mathtt{k}}$ be a weakly symmetric 
self-injective finite-dimensional algebra over 
an algebraically closed field $\Bbbk$
with a fixed decomposition into a direct sum of
connected components (here {\em weakly symmetric} means 
that the top and the socle of every projective module
are isomorphic). Let $\cC_A$ denote the $2$-category
with objects $\mathtt{1},\mathtt{2},\dots,\mathtt{k}$,
which we identify with the corresponding
$A_{\mathtt{i}}\text{-}\mathrm{mod}$. For
$\mathtt{i},\mathtt{j}\in \{\mathtt{1},\mathtt{2},
\dots,\mathtt{k}\}$ define $\cC_A(\mathtt{i},\mathtt{j})$
as the full fully additive subcategory of the category of 
all functors from $A_{\mathtt{i}}\text{-}\mathrm{mod}$
to $A_{\mathtt{j}}\text{-}\mathrm{mod}$, generated by all 
functors isomorphic to tensoring with $A_{\mathtt{i}}$ 
(in the case $\mathtt{i}=\mathtt{j}$) and tensoring with
all projective $A_{\mathtt{j}}\text{-}A_{\mathtt{i}}$
bimodules (i.e. bimodules of the form
$A_{\mathtt{j}}e\otimes_{\Bbbk}fA_{\mathtt{i}}$ for some idempotents
$e\in A_{\mathtt{j}}$ and $f\in A_{\mathtt{i}}$) for
all $\mathtt{i}$ and $\mathtt{j}$.
Functors, isomorphic to tensoring with projective bimodules
will be called {\em projective functors}. 

\begin{lemma}\label{lem85}
The category $\cC_A$ is a fiat category.
\end{lemma}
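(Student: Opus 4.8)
The plan is to verify, one at a time, the three ingredients in the definition of a fiat category: that $\cC_A$ is $\Bbbk$-finitary, that it admits an involution, and that it has adjunctions. Throughout I will use the standard dictionary identifying projective functors with tensoring by projective bimodules: every indecomposable projective functor in $\cC_A(\mathtt{i},\mathtt{j})$ is, up to isomorphism, tensoring with a bimodule $A_{\mathtt{j}}e\otimes_{\Bbbk}fA_{\mathtt{i}}$ for $e$ a primitive idempotent of $A_{\mathtt{j}}$ and $f$ a primitive idempotent of $A_{\mathtt{i}}$, and the elementary computation $(A_{\mathtt{k}}e\otimes_{\Bbbk}fA_{\mathtt{j}})\otimes_{A_{\mathtt{j}}}(A_{\mathtt{j}}e'\otimes_{\Bbbk}f'A_{\mathtt{i}})\cong(A_{\mathtt{k}}e\otimes_{\Bbbk}f'A_{\mathtt{i}})^{\oplus\dim fA_{\mathtt{j}}e'}$ shows that composites of projective functors are again direct sums of projective functors. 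This confirms that $\cC_A$ is closed under horizontal composition, that composition is biadditive (tensor product is), and that $\cC_A$ is a strict $2$-category. For the $\Bbbk$-finitary axioms: there are $k$ objects; each $\cC_A(\mathtt{i},\mathtt{j})$ is $\Bbbk$-linear and Karoubian because it was defined as a fully additive subcategory of a functor category; the indecomposable objects of $\cC_A(\mathtt{i},\mathtt{j})$ are the indecomposable projective $A_{\mathtt{j}}\otimes_{\Bbbk}A_{\mathtt{i}}^{\mathrm{op}}$-modules, of which there are finitely many, together with $\mathbbm{1}_{\mathtt{i}}$ when $\mathtt{i}=\mathtt{j}$; and $\mathbbm{1}_{\mathtt{i}}$ is indecomposable since $\mathrm{End}(\mathbbm{1}_{\mathtt{i}})=Z(A_{\mathtt{i}})$, which is local because the connected algebra $A_{\mathtt{i}}$ has no nontrivial central idempotents.

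The heart of the proof is the construction of the involution, and I expect the one genuine obstacle to be showing that passing to adjoints does not leave the class of projective functors. I will let $*$ be the identity on objects, send $\mathbbm{1}_{\mathtt{i}}$ to itself, and send the indecomposable projective functor $F$ given by $A_{\mathtt{j}}e\otimes_{\Bbbk}fA_{\mathtt{i}}$ to the functor $F^{*}$ given by $A_{\mathtt{i}}f\otimes_{\Bbbk}eA_{\mathtt{j}}$, extending additively after fixing representatives. The claim is that $F^{*}$ is simultaneously a left and a right adjoint of $F$. Since $A_{\mathtt{j}}e\otimes_{\Bbbk}fA_{\mathtt{i}}$ is finitely generated projective both as a left $A_{\mathtt{j}}$-module and as a right $A_{\mathtt{i}}$-module, the right and left adjoints of $F$ are computed by tensoring respectively with $\mathrm{Hom}_{A_{\mathtt{j}}}(A_{\mathtt{j}}e\otimes_{\Bbbk}fA_{\mathtt{i}},A_{\mathtt{j}})\cong D(fA_{\mathtt{i}})\otimes_{\Bbbk}eA_{\mathtt{j}}$ and with $\mathrm{Hom}_{A_{\mathtt{i}}}(A_{\mathtt{j}}e\otimes_{\Bbbk}fA_{\mathtt{i}},A_{\mathtt{i}})\cong A_{\mathtt{i}}f\otimes_{\Bbbk}D(A_{\mathtt{j}}e)$, where $D$ denotes $\Bbbk$-duality. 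Here the hypotheses on $A$ enter decisively: $fA_{\mathtt{i}}$ is projective, hence injective since $A$ is self-injective, so $D(fA_{\mathtt{i}})$ is an injective left $A_{\mathtt{i}}$-module whose top is $D(\mathrm{soc}(fA_{\mathtt{i}}))=D(\mathrm{top}(fA_{\mathtt{i}}))$ because $A$ is weakly symmetric, and an injective module with such simple top must be the indecomposable projective cover of that top, namely $A_{\mathtt{i}}f$; symmetrically $D(A_{\mathtt{j}}e)\cong eA_{\mathtt{j}}$. Hence both adjoints of $F$ are isomorphic to $F^{*}$, which in particular lies in $\cC_A$. Uniqueness of adjoints then gives $(F^{*})^{*}\cong F$ and $(F\circ G)^{*}\cong G^{*}\circ F^{*}$; one lets $*$ act on a $2$-morphism $\alpha\colon F\to G$ by its mate $\alpha^{*}\colon G^{*}\to F^{*}$ under the chosen adjunctions, and the anti-automorphism identities follow from naturality of the mate correspondence. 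This is the required involution.

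Finally, the adjunction axioms are now a formality: for an indecomposable $F$ the adjunction $F\dashv F^{*}$ established above supplies a counit $\alpha\colon F\circ F^{*}\to\mathbbm{1}_{\mathtt{j}}$ and unit $\beta\colon\mathbbm{1}_{\mathtt{i}}\to F^{*}\circ F$ satisfying the triangle identities $\alpha_{F}\circ_{1}F(\beta)=\mathrm{id}_{F}$ and $F^{*}(\alpha)\circ_{1}\beta_{F^{*}}=\mathrm{id}_{F^{*}}$, which are exactly the adjunction morphisms demanded in Subsection~\ref{s1.4}; for $\mathbbm{1}_{\mathtt{i}}$ one takes the trivial adjunction, and for a decomposable $1$-morphism one forms direct sums of the data for its indecomposable summands. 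Assembling these three steps shows that $\cC_A$ is a $\Bbbk$-finitary $2$-category with involution and adjunctions, i.e.\ a fiat category. As noted, the only nonroutine point is the pair of bimodule isomorphisms $D(fA_{\mathtt{i}})\cong A_{\mathtt{i}}f$ and $D(A_{\mathtt{j}}e)\cong eA_{\mathtt{j}}$, which is precisely where weak symmetry and self-injectivity of $A$ are used; everything else is bookkeeping.
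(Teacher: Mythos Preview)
Your proof is correct and follows essentially the same approach as the paper: the key nontrivial step in both is to show that the (left and right) adjoint of tensoring with $A_{\mathtt{j}}e\otimes_{\Bbbk}fA_{\mathtt{i}}$ is tensoring with $A_{\mathtt{i}}f\otimes_{\Bbbk}eA_{\mathtt{j}}$, which reduces to the isomorphism $(fA_{\mathtt{i}})^{*}\cong A_{\mathtt{i}}f$ obtained from self-injectivity and weak symmetry. The paper states this via a direct Hom--tensor computation and leaves the finitary axioms and the construction of the involution as routine, whereas you spell these out in full; but the substance is identical.
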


\begin{proof}
The only nontrivial condition to check is that
the left and the right adjoints of a projective functor
are again projective and isomorphic. For any 
$A$-module $M$ and idempotents $e,f\in A$, 
using adjunction and projectivity of $fA$ we have
\begin{displaymath}
\begin{array}{rcl}
\mathrm{Hom}_A(Ae\otimes_{\Bbbk}fA,M)&=&
\mathrm{Hom}_{\Bbbk}(fA,\mathrm{Hom}_A(Ae,M))\\
&=&\mathrm{Hom}_{\Bbbk}(fA,eM)\\
&=&\mathrm{Hom}_{\Bbbk}(fA,eA\otimes_A M)\\
&=&\mathrm{Hom}_{\Bbbk}(fA,\Bbbk)
\otimes_{\Bbbk}eA\otimes_A M\\
&=&(fA)^*\otimes_{\Bbbk}eA\otimes_A M\\
\end{array}
\end{displaymath}
Since $A$ is self-injective, $(fA)^*$ is projective.
Since $A$ is weakly symmetric, $(fA)^*\cong Af$.
This implies that tensoring with 
$Af\otimes_{\Bbbk}eA$ is right adjoint to tensoring 
with $Ae\otimes_{\Bbbk}fA$. The claim follows.
\end{proof}

The category $\cC_A$ has a unique maximal two-sided cell
$\mathcal{Q}$ consisting of all projective functors. 
This cell is regular. Right and left cells 
inside $\mathcal{Q}$ are given
by fixing primitive idempotents occurring on the left
and on the right in projective functors, respectively.
In particular, they are in bijection with simple
$A\text{-}A$-bimodules and hence $\mathcal{Q}$ is strongly regular.
The value of the function $m_{\mathrm{F},\mathrm{F}}$
on $Ae\otimes_{\Bbbk}fA$ is
given by the dimension of $eA\otimes_A Ae\cong eAe$, in particular,
the function $m_{\mathrm{F},\mathrm{F}}$ 
is constant on left cells. From
Theorem~\ref{thm61} we thus again obtain that all
cell $2$-representations of $\cC$ corresponding to
right cells in $\mathcal{Q}$ are strongly simple and
isomorphic.

The category $\cS^{\mathrm{sing}}_{\mathfrak{sl}_2}$ 
from the previous subsection is obtained by taking
$\mathtt{k}=\mathtt{2}$, $A_{\mathtt{1}}=\mathbb{C}$
and $A_{\mathtt{2}}=D$. In the general case we have 
the following:

\begin{proposition}
Let $\cC$ be a fiat category, $\mathcal{Q}$ a strongly
regular two-sided cell of $\mathcal{Q}$ and $\mathcal{R}$ a 
right cell in $\mathcal{Q}$. For $\mathtt{i}\in\cC$
let $A_{\mathtt{i}}$ be such that 
$\mathbf{C}_{\mathcal{R}}(\mathtt{i})\cong
A_{\mathtt{i}}\text{-}\mathrm{mod}$ and
$A=\oplus_{\mathtt{i}\in\cC}A_{\mathtt{i}}$. 
Assume that the condition \eqref{eq62} is satisfied.
Then $\mathbf{C}_{\mathcal{R}}$ gives rise to a $2$-functor 
from $\cC_{\mathcal{Q}}$ to $\cC_A$.
\end{proposition}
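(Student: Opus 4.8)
The plan is to show that, after identifying each $\mathbf{C}_{\mathcal{R}}(\mathtt{i})$ with $A_{\mathtt{i}}\text{-}\mathrm{mod}$, every $1$-morphism of $\cC_{\mathcal{Q}}$ acts by a functor that lies in $\cC_A$, while every $2$-morphism automatically acts by a natural transformation, which is exactly what the $2$-morphisms of $\cC_A$ are. Since $\mathbf{C}_{\mathcal{R}}$ is a strict $2$-functor to $\mathfrak{R}_{\Bbbk}$ by Theorem~\ref{thm19} (and Corollary~\ref{cor63}), and $\cC_A$ is a $2$-subcategory of the $2$-category of functors, this will exhibit the desired strict $2$-functor $\cC_{\mathcal{Q}}\to\cC_A$. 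Throughout we may and will assume $\cC=\cC_{\mathcal{Q}}$ by Corollary~\ref{cor63}, so that all structural results of Subsection~\ref{s5.3} are at our disposal.

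The first step is to check that $A=\bigoplus_{\mathtt{i}}A_{\mathtt{i}}$ is weakly symmetric and self-injective, which is precisely what is needed for $\cC_A$ to be defined (Subsection~\ref{s6.3}). Self-injectivity is Corollary~\ref{cor55}: every indecomposable projective $P_{\mathrm{F}}$ in $\mathbf{C}_{\mathcal{R}}(\mathtt{j})$ is injective. For weak symmetry I would compute the socle of $P_{\mathrm{F}}$ for $\mathrm{F}\in\mathcal{R}\cap\mathcal{C}_{\mathtt{i},\mathtt{j}}$: using $P_{\mathrm{F}}\cong\mathrm{F}\,L_{\mathrm{G}_{\mathcal{R}}}$ (Proposition~\ref{lem7}\eqref{lem7.02}) and adjunction, $\mathrm{Hom}(L_{\mathrm{H}},P_{\mathrm{F}})\cong\mathrm{Hom}(\mathrm{F}^*\,L_{\mathrm{H}},L_{\mathrm{G}_{\mathcal{R}}})$; by Lemma~\ref{lem11} the left-hand side vanishes unless $\mathrm{F}\leq_L\mathrm{H}$, and strong regularity of $\mathcal{R}$ then forces $\mathrm{H}=\mathrm{F}$, while for $\mathrm{H}=\mathrm{F}$ one obtains $\mathrm{Hom}(\mathrm{F}^*\,L_{\mathrm{F}},L_{\mathrm{G}_{\mathcal{R}}})\cong\mathrm{Hom}(P_{\mathrm{G}_{\mathcal{R}}},L_{\mathrm{G}_{\mathcal{R}}})\cong\Bbbk$ via Corollary~\ref{cor56}\eqref{cor56.1}. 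Hence $\mathrm{soc}(P_{\mathrm{F}})\cong L_{\mathrm{F}}\cong\mathrm{top}(P_{\mathrm{F}})$, as required.

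The second, main step is that for every $\mathrm{F}\in\mathcal{Q}$ the functor $\mathbf{C}_{\mathcal{R}}(\mathrm{F})$ is a projective functor in the sense of Subsection~\ref{s6.3}. Since $\cC$ is fiat, $\mathrm{F}$ has a two-sided adjoint $\mathrm{F}^*$, so $\mathbf{C}_{\mathcal{R}}(\mathrm{F})$ is exact. By Corollary~\ref{cor56}\eqref{cor56.2} it sends each simple module to $0$ or to a projective module; applying it to a composition series of an arbitrary $M$ and using that a short exact sequence with projective cokernel splits, one concludes that $\mathbf{C}_{\mathcal{R}}(\mathrm{F})\,M$ is projective for every $M$. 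An exact functor between module categories of finite dimensional algebras is isomorphic to tensoring with a bimodule $N$; exactness forces $N$ to be projective as a right module, and the fact that $N\otimes S$ is projective for every simple $S$ then forces $N$ to be a projective bimodule (working with basic algebras), so $\mathbf{C}_{\mathcal{R}}(\mathrm{F})$ is a projective functor. Combined with $\mathbf{C}_{\mathcal{R}}(\mathbbm{1}_{\mathtt{i}})=\mathrm{id}_{A_{\mathtt{i}}\text{-}\mathrm{mod}}$ (which is $\mathbbm{1}_{\mathtt{i}}$ in $\cC_A$) and the additivity of $\mathbf{C}_{\mathcal{R}}$ on $1$-morphisms, this shows that every $1$-morphism of $\cC_{\mathcal{Q}}=\cC$ is carried into $\cC_A$; since $\cC_A$ has all natural transformations as $2$-morphisms and $\mathbf{C}_{\mathcal{R}}$ strictly preserves compositions, the induced assignment is a strict $2$-functor $\cC_{\mathcal{Q}}\to\cC_A$.

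I expect the main obstacle to be this recognition of $\mathbf{C}_{\mathcal{R}}(\mathrm{F})$ as an honest projective functor: the earlier sections only control the action on simple modules (Corollary~\ref{cor56}), and one must upgrade this to a statement about the underlying bimodule. The filtration argument takes care of the passage from simples to arbitrary modules, but care is needed to phrase the ``exact functor $=$ tensoring with a (projective) bimodule'' recognition correctly. The verification that $A$ is weakly symmetric, though short given Section~\ref{s5}, is equally essential, since it is built into the very definition of $\cC_A$.
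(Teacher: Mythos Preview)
Your proof is correct, and the verification that $A$ is self-injective and weakly symmetric is exactly the paper's argument (the paper cites Corollary~\ref{cor56} together with adjunction, Lemma~\ref{lem11} and strong regularity, just as you do).

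The genuine difference is in the identification of $\mathbf{C}_{\mathcal{R}}(\mathrm{F})$ as a projective functor. The paper invokes Theorem~\ref{thm61}, and this is the only place where hypothesis~\eqref{eq62} is used: it guarantees that $\mathbf{C}_{\mathcal{R}}(\mathrm{F})$ kills all simples but one and sends that one to an \emph{indecomposable} projective, so the representing bimodule $B$ has simple bimodule top and is therefore a quotient of an indecomposable projective bimodule $A_{\mathtt{j}}e\otimes_{\Bbbk}fA_{\mathtt{i}}$; a dimension comparison then shows $B$ equals this projective bimodule. Your route bypasses Theorem~\ref{thm61} entirely: Corollary~\ref{cor56}\eqref{cor56.2} already gives that simples go to projectives (possibly decomposable), your filtration argument upgrades this to all modules, and the abstract bimodule lemma (projective cover as bimodule plus Nakayama on the right, as you sketch) finishes. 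What you gain is that your argument never uses~\eqref{eq62}, so you have incidentally proved the proposition under the weaker hypothesis of strong regularity alone. What the paper's argument buys is an explicit identification of the bimodule as a specific indecomposable $A_{\mathtt{j}}e\otimes_{\Bbbk}fA_{\mathtt{i}}$, rather than merely some projective bimodule.
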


\begin{proof}
We identify $\mathbf{C}_{\mathcal{R}}(\mathtt{i})$ 
with $A_{\mathtt{i}}\text{-}\mathrm{mod}$. That
$A$ is self-injective follows from
Corollary~\ref{cor56}. That
$A$ is weakly symmetric follows by adjunction from
Lemma~\ref{lem11} and strong regularity of $\mathcal{Q}$.
Hence, to prove the claim we only need to show that for any 
$\mathrm{F}\in \mathcal{Q}$ the functor
$\mathbf{C}_{\mathcal{R}}(\mathrm{F})$ is a projective
endofunctor of $A\text{-}\mathrm{mod}$.

As $\mathbf{C}_{\mathcal{R}}(\mathrm{F})$ is exact, it is
given by tensoring with some bimodule, say $B$. Since
$\mathbf{C}_{\mathcal{R}}(\mathrm{F})$ kills all
simples but one, say $L$, and sends $L$ to an
indecomposable projective, say $P$ (by Theorem~\ref{thm61}),
the bimodule $B$ has simple top (as a bimodule)
and hence is a quotient of some projective bimodule.
 
By exactness of $\mathbf{C}_{\mathcal{R}}(\mathrm{F})$, 
the dimension of $B$ equals the dimension of $P$
times the multiplicity of $L$ in $A$. This is
exactly the dimension of the corresponding indecomposable
projective bimodule. The claim follows.
\end{proof}

\vspace{0.3cm}

\noindent
Volodymyr Mazorchuk, Department of Mathematics, Uppsala University,
Box 480, 751 06, Uppsala, SWEDEN, {\tt mazor\symbol{64}math.uu.se};
http://www.math.uu.se/$\tilde{\hspace{1mm}}$mazor/.
\vspace{0.1cm}

\noindent
Vanessa Miemietz, School of Mathematics, University of East Anglia,
Norwich, UK, NR4 7TJ, {\tt v.miemietz\symbol{64}uea.ac.uk};
http://www.uea.ac.uk/$\tilde{\hspace{1mm}}$byr09xgu/.


\begin{thebibliography}{999999}
\bibitem[AM]{AM} T.~Agerholm, V.~Mazorchuk; On selfadjoint functors 
satisfying polynomial relations, Preprint arXiv:1004.0094.
\bibitem[Au]{Au} M.~Auslander; Representation theory of Artin algebras. 
I, II.  Comm. Algebra  {\bf 1}  (1974), 177--268; ibid. {\bf 1} 
(1974), 269--310. 
\bibitem[BG]{BG} J.~Bernstein, S.~Gelfand; Tensor products of 
finite- and infinite-dimensional representations of semisimple Lie 
algebras.  Compositio Math.  {\bf 41}  (1980), no. 2, 245--285. 
\bibitem[Be]{Be} R.~Bezrukavnikov; On tensor categories attached to 
cells in affine Weyl groups.  Representation theory of algebraic 
groups and quantum groups,  69--90, Adv. Stud. Pure Math., {\bf 40}, 
Math. Soc. Japan, Tokyo, 2004. 
\bibitem[CR]{CR} J.~Chuang, R.~Rouquier; Derived equivalences for 
symmetric groups and $\mathfrak{sl}_2$-ca\-te\-go\-ri\-fi\-ca\-ti\-on. 
Ann. of Math.  (2) {\bf 167} (2008), no. 1, 245--298. 
\bibitem[Cr]{Cr} L.~Crane; Clock and category: is quantum gravity 
algebraic?  J. Math. Phys. {\bf 36}  (1995),  no. 11, 6180--6193. 
\bibitem[CF]{CF} L.~Crane, I.~Frenkel; Four-dimensional topological 
quantum field theory, Hopf categories, and the canonical bases. 
Topology and physics. J. Math. Phys. {\bf 35} (1994), no. 10, 5136--5154.
\bibitem[EGNO]{EGNO} P.~Etingof, S.~Gelaki, D.~Nikshych, 
V.~Ostrik; Tensor categories, manuscript, available from
http://www-math.mit.edu/$\sim$etingof/tenscat.pdf
\bibitem[EO]{EO} P.~Etingof, V.~Ostrik; Finite tensor categories.  
Mosc. Math. J.  {\bf 4}  (2004),  no. 3, 627--654, 782--783.
\bibitem[Fr]{Fr} P.~Freyd; Representations in abelian categories. 
in: Proc. Conf. Categorical Algebra (1966), 95--120.
\bibitem[Hu]{Hu} J.~Humphreys; Representations of semisimple Lie 
algebras in the BGG category $\mathcal{O}$. Graduate Studies in 
Mathematics, {\bf 94}. American Mathematical Society, Providence, RI, 2008.
\bibitem[KaLu]{KaLu} D.~Kazhdan, G.~Lusztig; 
Representations of Coxeter groups and Hecke algebras. 
Invent. Math. {\bf 53} (1979), no. 2, 165--184.
\bibitem[Kh]{Kh} O.~Khomenko; Categories with projective functors.  
Proc. London Math. Soc. (3)  {\bf 90}  (2005),  no. 3, 711--737.
\bibitem[KM]{KM} O.~Khomenko, V.~Mazorchuk; On Arkhipov's and 
Enright's functors.  Math. Z. {\bf 249}  (2005),  no. 2, 357--386.
\bibitem[Kv]{Kv} M.~Khovanov; A functor-valued invariant of tangles.  
Algebr. Geom. Topol. {\bf 2}  (2002), 665--741 (electronic).
\bibitem[KhLa]{KL} M.~Khovanov, A.~Lauda; A categorification of 
quantum $\mathfrak{sl}_n$. Quantum Topol. {\bf 1} (2010), 1--92.
\bibitem[KMS]{KMS} M.~Khovanov, V.~Mazorchuk, C.~Stroppel; 
A categorification of integral Specht modules.  Proc. Amer. Math. Soc. 
{\bf 136}  (2008),  no. 4, 1163--1169.
\bibitem[Le]{Le}  T.~Leinster; Basic bicategories. Preprint
arXiv:math/9810017.
\bibitem[Lu]{Lu} G.~Lusztig; Cells in affine Weyl groups.
Algebraic groups and related topics (Kyoto/Nagoya, 1983),
255--287, Adv. Stud. Pure Math., {\bf 6}, North-Holland, 
Amsterdam, 1985. 
\bibitem[Ma]{Ma} V.~Mazorchuk; Lectures on algebraic categorification,
Preprint arXiv:1011.0144.
\bibitem[MS]{MS2} V.~Mazorchuk, C.~Stroppel; Categorification of 
(induced) cell modules and the rough structure of generalised 
Verma modules. Adv. Math. {\bf 219} (2008), no. 4, 1363--1426. 
\bibitem[Ne]{Ne} M.~Neunh{\"o}ffer; Kazhdan-Lusztig basis,
Wedderburn decomposition, and Lusztig's homomorphism for
Iwahori-Hecke algebras. J. Algebra {\bf 303} (2006), 
no. 1, 430--446.
\bibitem[Os]{Os} V.~Ostrik; Tensor ideals in the category of 
tilting modules.  Transform. Groups  {\bf 2}  (1997),  no. 3, 279--287. 
\bibitem[Ro1]{Ro0} R.~Rouquier; Categorification of the 
braid groups, Preprint arXiv:math/0409593. 
\bibitem[Ro2]{Ro} R.~Rouquier; 2-Kac-Moody algebras. Preprint
arXiv:0812.5023. 
\bibitem[Sa]{Sa} B.~Sagan; The symmetric group. Representations, 
combinatorial algorithms, and symmetric functions. Second edition. 
Graduate Texts in Mathematics, {\bf 203}. Springer-Verlag, New York, 2001.
\bibitem[St]{St} C.~Stroppel; Categorification of the Temperley-Lieb 
category, tangles, and cobordisms via projective functors. Duke Math. 
J. {\bf 126} (2005), no. 3, 547--596. 
\bibitem[Wi]{Wi} G.~Williamson; Singular Soergel 
bimodules. Preprint arXiv:1010.1283.
\end{thebibliography}
\end{document}